\gappto{\UrlBreaks}{\UrlOrds}
\patchcmd{\section}{\scshape}{\bfseries}{}{}
\renewcommand{\@secnumfont}{\bfseries}
\newtheorem{introtheorem}{Theorem}
\newtheorem{introcorollary}[introtheorem]{Corollary}
\newtheorem{introprop}[introtheorem]{Proposition}
\theoremstyle{definition}
\newtheorem{introexample/}[introtheorem]{Example}
\newenvironment{introexample}
  {%
   \pushQED{\qed}\begin{introexample/}}
  {\popQED\end{introexample/}}
\theoremstyle{plain}
\newtheorem{theorem}{Theorem}[subsection]
\newtheorem{proposition}[theorem]{Proposition}
\newtheorem{lemma}[theorem]{Lemma}
\newtheorem{corollary}[theorem]{Corollary}
\theoremstyle{definition}
\newtheorem{definition}[theorem]{Definition}
\newtheorem*{definition*}{Definition}
\newtheorem{example/}[theorem]{Example}
\newenvironment{example}
  {%
   \pushQED{\qed}\begin{example/}}
  {\popQED\end{example/}}
\newtheorem{examples/}[theorem]{Examples}
\newtheorem*{example*}{Example}
\newtheorem{remark}[theorem]{Remark}
\newtheorem*{remark*}{Remark}
\theoremstyle{plain}
\renewcommand{\bar}{\overline} % looks better?
\renewcommand{\hat}{\widehat} % looks better?
\renewcommand{\tilde}{\widetilde}
\renewcommand{\check}{\widecheck} 
\DeclareMathOperator{\Z}{\mathbf{Z}}
\DeclareMathOperator{\Q}{\mathbf{Q}}
\DeclareMathOperator{\R}{\mathbf{R}}
\DeclareMathOperator{\C}{\mathbf{C}}
\DeclareMathOperator{\F}{\mathbf{F}}
\DeclareMathOperator{\Ho}{H}
\DeclareMathOperator{\Hom}{Hom}
\DeclareMathOperator{\Aut}{Aut}
\DeclareMathOperator{\tr}{tr}
\DeclareMathOperator{\Un}{U}
\DeclareMathOperator{\trans}{\underline{t}}
\DeclareMathOperator{\ab}{ab}
\DeclareMathOperator{\conj}{conj}
\DeclareMathOperator{\Sp}{\sigma}
\DeclareMathOperator{\Irr}{Irr}
\DeclareMathOperator{\Ind}{Ind}
\DeclareMathOperator{\Res}{Res}
\DeclareMathOperator{\one}{\mathbf{1}}
\DeclareMathOperator{\fp}{\mathfrak{p}}
\begin{document}

\date{\today\ (version 1.0)} 
\title{Twisted isospectrality, homological wideness and isometry}
\author[G.~Cornelissen]{Gunther Cornelissen}
\address{\normalfont Mathematisch Instituut, Universiteit Utrecht, Postbus 80.010, 3508 TA Utrecht, Nederland}
\email{g.cornelissen@uu.nl}
\author[N.~Peyerimhoff]{Norbert Peyerimhoff}
\address{\normalfont Department of Mathematical Sciences, Durham University, DH1 3LE, Great Britain}
\email{norbert.peyerimhoff@durham.ac.uk}
\thanks{GC thanks Durham University and NP thanks Utrecht University for hospitality. The authors thank Jens Funke for sparking this project by bringing them together at the 2017 BMC, and Alex Bartel, Stefan Bechtluft-Sachs, Nigel Higson, Gerhard Knieper, Matthias Lesch, Jeroen Sijsling and Harry Smit for patiently answering some of their questions and helping out with some of the proofs.
}

\subjclass[2010]{58J53, 57S17} 
\keywords{\normalfont Isospectrality, Riemannian manifolds, twisted Laplacian, Sunada theory}

\begin{abstract} \noindent Given a manifold (or, more generally, a developable orbifold) $M_0$ and two closed Riemannian manifolds $M_1$ and $M_2$ with a finite covering map to $M_0$, we give a spectral characterisation of when they are equivalent Riemannian covers (in particular, isometric), assuming a representation-theoretic condition of \emph{homological wideness}: if $M$ is a common finite cover of $M_1$ and $M_2$ and $G$ is the covering group of $M$ over $M_0$, the condition involves the action of $G$ on the first homology group of $M$ (it holds, for example, when there exists a rational homology class on $M$ whose orbit under $G$ consists of $|G|$ linearly independent homology classes). 
We prove that, under this condition, Riemannian covering equivalence is the same as isospectrality of finitely many twisted Laplacians on the manifolds, acting on sections of flat bundles corresponding to specific representations of the fundamental groups of the manifolds involved. 
Using the same methods, we provide spectral criteria for weak conjugacy and strong isospectrality. In the negative curvature case, we formulate an analogue of our result for the length spectrum. The proofs are inspired by number-theoretical analogues. 
We study examples where the representation theoretic condition does and does not hold. For example, when $M_1$ and $M_2$ are commensurable non-arithmetic closed Riemann surfaces of negative Euler characteristic, there is always such an $M_0$, and the condition of homological wideness always holds. 
 \end{abstract}

\maketitle

%\begin{flushright} {\small 
%\emph{Come on let's twist again} \\ 
%--- ``Let's twist again'', written by  Kal Mann and Dave Appell}
%\end{flushright} 

\tableofcontents

\newpage

\section*{Introduction}

Let $M_1$ and $M_2$ be a pair  of connected closed oriented smooth Riemannian manifolds. There exist such $M_1$ and $M_2$ that are not isometric, but \emph{isospectral}, i.e., they have the same Laplace spectrum with multiplicities. This leaves open the question whether equality of spectra of other geometrically defined operators on $M_1$ and $M_2$ is equivalent to $M_1$ and $M_2$ being isometric. In this paper we investigate the use of twisted Laplacians (acting on sections of flat bundles constructed from representations of fundamental groups) in answering this question. 
We will see that under two conditions on $M_1$ and $M_2$, equality of finitely many suitably twisted Laplace spectra implies isometry of the manifolds. That at least some condition is necessary for such a result to hold is illustrated by the existence of simply connected isospectral non-isometric manifolds \cite{SchuethAnn}.  

\medskip

The \textbf{first condition} is the following: we suppose that the manifolds $M_1$ and $M_2$ are finite Riemannian coverings of a developable Riemannian orbifold $M_0$ (meaning that the universal covering of $M_0$ is a manifold), expressed through a diagram 
\begin{equation} \label{m0}
  \xymatrix@C=2mm@R=4mm{ M_1\ar[dr]_{p_1} & & M_2\ar[dl]^{p_2} \\ & M_0 & }  
\end{equation} 
Such a diagram may be extended to a diagram of finite coverings
\begin{equation} \label{sunadasetup}
  \xymatrix@C=6mm@R=6mm{ & M \ar[dl]^{H_1}_{q_1} \ar[dd]^(0.56)G_(0.58){q} \ar[dr]_{H_2}^{q_2} & \\ M_1\ar[dr]_{p_1} & & M_2\ar[dl]^{p_2} \\ & M_0 & }
\end{equation} 
where $M$ is a connected closed smooth Riemannian manifold $M$ with $q_1 \colon M \twoheadrightarrow M_1:=H_1 \backslash M$, $q_2 \colon M \twoheadrightarrow  M_2:=H_2 \backslash M$ and $q \colon M  \twoheadrightarrow M_0:=G\backslash M$ Galois covers (Proposition \ref{fromdiagramtodiagram}); in particular, $M_1$ and $M_2$ are commensurable. Commensurability is in general weaker than the existence of a diagram (\ref{m0}) (see Proposition \ref{prop:commdiag}). However, if $M_1$ and $M_2$ are commensurable hyperbolic manifolds, then a diagram (\ref{m0}) (with an orbifold $M_0$) exists if the corresponding lattices are not arithmetic (Proposition \ref{2to1}).  

\medskip

The \textbf{second condition} is related to the action of $G$ on the first homology group of $M$. In terms of the data in diagram  (\ref{sunadasetup}), we require that for some prime number $\ell$ not dividing $|G|$, the $\F_\ell[G]$-module $\Ho_1(M,\F_\ell)$ contains the permutation representation of $G$ acting by left multiplication on the cosets $G/H_i$ for either $i=1$ or $i=2$ (or both), i.e., $(\Ind_{H_i}^G \one) \otimes_{\Z} \F_\ell$ is an $\F_\ell[G]$-submodule of $\Ho_1(M,\F_\ell)$. Concretely, this means that if there are $n$ cosets, then the $\F_\ell$-vector space $\Ho_1(M,\F_\ell)$ contains $n$ linear independent vectors that are permuted in the same way as those $m$ cosets under the action of any $g \in G$. 

This second condition is implied by the stronger requirement that the $G$-action is \emph{$\F_\ell$\hyp{}homologically wide}, where, for a general field $K$, we say that the $G$-action is  \emph{$K$\hyp{}homologically wide} if  the regular representation $K[G]$ of $G$ occurs in the homology representation $G \rightarrow \Aut(\Ho_1(M,K))$ (Lemma \ref{hwstars}). This condition has the advantage of no longer referring to $M_1$ and $M_2$ (or $H_1$ and $H_2$). 

An even stronger, geometrically tangible, condition is that the action of $G$ on $M$ is $\Q$\hyp{}homologically wide; this means that there is a free homology class $\omega$ on $M$ such that the elements $\{g\omega \colon g \in G\}$ are linearly independent in $\Ho_1(M,\Q)$. This is diametric to the condition of homologically trivial group actions found more frequently in the literature (see, e.g., \cite{Weinberger}). 
A $\Q$\hyp{}homologically wide action is $\F_\ell$\hyp{}homologically wide for any $\ell$ coprime to $|G|$ (Lemma \ref{fromQtoell}). 

In Section \ref{exhomwide}, we discuss $\Q$\hyp{}homological wideness  for certain low dimensional manifolds and locally symmetric spaces.  For non-trivial fixed-point free group actions on orientable surfaces, $\Q$\hyp{}homologically wideness is equivalent to any of the spaces $M,M_0,M_1$ or $M_2$ having negative Euler characteristic (Proposition \ref{prop:homwidecritriemsurf}). A (not necessarily fixed point free) holomorphic group action on a Riemann surface $M$ is $\Q$\hyp{}homologically wide if the Euler characteristic of the quotient surface $M_0$ satisfies $\chi_{M_0}<0$ (Proposition \ref{prop:homwidecritriemorb}). In higher dimension, the picture can vary widely: for any $n \geq 3$, we use standard surgery methods to construct an $n$-manifolds with a free action by any given non-trivial group that is or is not $\Q$\hyp{}homologically wide (Proposition \ref{hwdim>3} and Corollary \ref{nhwdim>3}).  Since locally symmetric spaces of rank $\geq 2$ have trivial rational homology, no non-trivial group action on them can be $\Q$\hyp{}homologically wide (see \S \ref{lssr}). On the other hand, in rank one, the condition relates to  decomposition results for automorphic representations (see \S \ref{hwls1}). 
A disadvantage of $\Q$\hyp{}homological wideness is that it discards torsion homology; in \S\ref{torsionex} we give an example of some non-trivial $\F_5$-homologically wide group actions on the Seifert--Weber dodecahedral space (that has homology $\Ho_1(M,\Z) \cong \left({\Z}/{5}{\Z}\right)^3$). 

\medskip

Our results involve spectra of \emph{twisted Laplace operators} 
 $\Delta_\rho$ corresponding to unitary representations $\rho \colon \pi_1(M) \rightarrow \Un_n(\C)$; these  are symmetric second order elliptic differential operators acting on sections of flat bundles $E_\rho$ over $M$; the sections are conveniently described as smooth vector valued functions on the universal cover that are equivariant  with respect to the representation, and on these, $\Delta_\rho$ acts (componentwise) like the usual Laplacian of the universal cover (cf.\ Section \ref{prelim}). In fact, our representations will factor through a specific finite group, allowing for a very concrete description of the operators (cf.\ Subsections \ref{tl} \& \ref{tlf}). Denote the spectrum of such an operator by $\Sp_M(\rho)$, where the index $M$ indicates that the Laplacian is defined on sections over the space $M$. 

\medskip

Returning to the setup in diagram \eqref{m0},  
we call $M_1$ and $M_2$ \emph{equivalent Riemannian covers of $M_0$} if there is an isometry between them induced by a conjugacy of the fundamental groups of $M_1$ and $M_2$ inside that of $M_0$ (since $M_0$ is developable, its universal covering is a manifold and we mean the subgroup of its isometry group that fixes $M_0$ pointwise, see Lemma \ref{over}). Our main result states that in this situation isometry can be detected via the spectra of finitely many specific twisted Laplacians. 

\begin{introtheorem} \label{main} Suppose we have a diagram \textup{(\ref{m0})} and suppose that the action of $G$ on $M$ in the extended diagram \textup{(\ref{sunadasetup})} is $\F_\ell$\hyp{}homologically wide for some prime $\ell$ coprime to $|G|$. 
Then $M_1$ and $M_2$ are equivalent Riemannian covers of $M_0$ \textup{(}in particular, isometric\textup{)} if and only if the multiplicity of zero in the spectra of a finite number of specific twisted Laplacians on $M_1$ and $M_2$ is equal.  
In fact, at most $2 \ell |\mathrm{Hom}(H_2,\C^*)|$ equalities suffice. 
\end{introtheorem}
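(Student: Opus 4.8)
The plan is to reduce the statement about isometry of Riemannian covers to a statement about the equivalence of certain permutation-type representations, and then to detect that equivalence via multiplicities of zero eigenvalues of twisted Laplacians. The starting point is the observation that the multiplicity of zero in $\Sp_{M_i}(\rho)$ equals $\dim \Ho^0(M_i, E_\rho)$, which counts the $\pi_1(M_i)$-invariant vectors of $\rho$; equivalently, after inducing up to $G = \mathrm{Gal}(M/M_0)$, it records the multiplicity of the trivial representation in $\mathrm{Res}_{H_i}^G$ of the relevant $G$-representation, i.e.\ the inner product $\langle \mathrm{Ind}_{H_i}^G \one, \rho_G\rangle$ for the $G$-representations $\rho_G$ that the $\pi_1(M_i)$-representations assemble into. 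So if I run over enough representations $\rho$ of $G$, the collection of zero-multiplicities on $M_i$ determines the class function $g \mapsto \#\{\text{fixed points of } g \text{ on } G/H_i\}$, hence (by the standard Sunada-type argument) the $G$-set $G/H_i$ up to isomorphism — but that is exactly weak conjugacy, which is \emph{not} enough for isometry. The role of homological wideness is precisely to upgrade weak conjugacy to honest conjugacy of $H_1$ and $H_2$ in $\pi_1(M_0)$, which by Lemma \ref{over} is equivalent to $M_1$ and $M_2$ being equivalent Riemannian covers of $M_0$.

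\textbf{Key steps, in order.} First, set up the flat bundles: given a prime $\ell \nmid |G|$ for which the $\F_\ell[G]$-action on $\Ho_1(M,\F_\ell)$ contains $\mathrm{Ind}_{H_i}^G \one$, lift to a characteristic-zero situation and produce, for each character $\chi$ of a suitable abelian quotient and each of the finitely many relevant representations, a unitary representation of $\pi_1(M_1)$ (resp.\ $\pi_1(M_2)$) factoring through a finite group; these are the representations whose twisted Laplacians we compare. Second, express the zero-multiplicity of $\Delta_\rho$ on $M_i$ via dimension of invariants as above, using the preliminary material of Section \ref{prelim}. Third — the heart — show that knowing these finitely many multiplicities for $M_1$ and for $M_2$, and knowing they agree, forces an isomorphism of $G$-modules between the relevant submodules of $\Ho_1(M,\F_\ell)$ compatible with the permutation modules $\mathrm{Ind}_{H_i}^G \one$; the homological wideness hypothesis guarantees these permutation modules actually sit inside $\Ho_1(M,\F_\ell)$, so that homological data "sees" the subgroups $H_i$ and not merely the $G$-sets $G/H_i$. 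Fourth, translate this homological coincidence into conjugacy of $H_1$ and $H_2$ inside $G$ (this is where twisting by the characters $\chi$ of $\mathrm{Hom}(H_2,\C^*)$ enters, accounting for the factor $|\mathrm{Hom}(H_2,\C^*)|$, and the factor $2\ell$ for the two choices of $i$ and the $\ell$-power lift). Fifth, invoke Lemma \ref{over} to conclude that conjugacy of the fundamental groups inside $\pi_1(M_0)$ yields the isometry, and read off the bound $2\ell|\mathrm{Hom}(H_2,\C^*)|$ from the count of representations used. The converse direction is the easy one: an equivalence of Riemannian covers intertwines the two families of flat bundles and hence preserves all twisted spectra, in particular the zero-multiplicities.

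\textbf{Main obstacle.} The crux is the third step: showing that finitely many zero-multiplicities suffice to pin down the $G$-module structure on the homology-detected copy of the permutation representation, rather than merely its character. The subtlety is that two non-conjugate subgroups can give isomorphic permutation $G$-sets (Gassmann triples), so the argument must genuinely exploit that $\mathrm{Ind}_{H_i}^G\one$ embeds in $\Ho_1(M,\F_\ell)$: one has to arrange the test representations $\rho$ so that the vanishing of $H^0(M_1,E_\rho)$ versus $H^0(M_2,E_\rho)$ distinguishes the actual embeddings. I expect this to require a careful choice of the finite quotient through which $\rho$ factors — large enough to separate conjugacy classes of subgroups but small enough to keep the count at $2\ell|\mathrm{Hom}(H_2,\C^*)|$ — together with a pigeonhole/linear-algebra argument over $\F_\ell$ showing that a $G$-equivariant injection of permutation modules into $\Ho_1$ that is compatible on both sides must come from an honest conjugacy. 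Everything else (the bundle formalism, the identification of zero-multiplicity with dimension of invariants, the converse direction) should be routine given the machinery set up earlier in the paper.
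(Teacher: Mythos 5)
Your proposal correctly locates the difficulty — Gassmann triples show that comparing multiplicities of the trivial representation in representations of $G$ itself can never distinguish weakly conjugate subgroups from conjugate ones — but it does not supply the idea that resolves it, and the mechanism you sketch in its place would not work. The paper's resolution is to \emph{enlarge the group}: homological wideness (via condition $(\ast)$, that $(\Ind_{H_1}^G\one)\otimes\F_\ell$ is a quotient of $\Ho_1(M,\F_\ell)$) is used solely to solve a realizability problem, namely to construct a further cover $M'\to M$ so that $M'\to M_0$ is Galois with group the wreath product $\tilde G=(\Z/\ell\Z)^n\rtimes G$ (Proposition \ref{maininterprop0}). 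On $\tilde H_1=C^n\rtimes H_1$ one then has de Smit's $\tilde G$-\emph{solitary} character $\Xi$ (Proposition \ref{solitary}): $\Ind_{\tilde H_1}^{\tilde G}\Xi$ carries a \emph{unique} monomial structure, which as a $\tilde G$-set is $\tilde G/\tilde H_1$. Proposition \ref{solo} converts the four equalities of zero-multiplicities into the representation isomorphism $\Ind_{\tilde H_1}^{\tilde G}\Xi\cong\Ind_{\tilde H_2}^{\tilde G}\chi$, and uniqueness of the monomial structure then forces an isomorphism of $\tilde G$-sets $\tilde G/\tilde H_1\cong\tilde G/\tilde H_2$, i.e.\ honest conjugacy of $\tilde H_1$ and $\tilde H_2$ (Lemma \ref{lem:solitarycharconjgroups}, Proposition \ref{propweakstrong}(ii)), which descends to conjugacy of $H_1,H_2$ in $G$ and hence to the isometry via Lemma \ref{over}. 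The count $2\ell|\Hom(H_2,\C^*)|$ is two spectral equalities for each of the $|\tilde H_2^{\ab}|=\ell|H_2^{\ab}|$ linear characters of $\tilde H_2$, not ``two choices of $i$ times an $\ell$-power lift.''

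By contrast, your third step — that the agreement of finitely many zero-multiplicities should force a $G$-module isomorphism of the copies of $\Ind_{H_i}^G\one$ \emph{inside} $\Ho_1(M,\F_\ell)$, compatibly enough to ``come from an honest conjugacy'' — is not an argument, and I do not see how to make it one: zero-multiplicities of twisted Laplacians only ever compute inner products of characters (Lemma \ref{mult}), so they are blind to anything beyond the isomorphism class of the relevant representations; no pigeonhole over $\F_\ell$ recovers the embedding of a permutation module into homology from such data. The homology of $M$ never needs to ``see'' the subgroups $H_i$ directly; it only needs to be rich enough to realize the wreath product as a covering group, after which the conjugacy detection is pure character theory of $\tilde G$ applied to the specially constructed character $\Xi$. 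Your converse direction and the identification of zero-multiplicity with the dimension of invariants are fine.
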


For a precise formulation of the required twists and a more technical condition (denoted $(\ast)$) that is weaker than homological wideness, the reader is encouraged to glance at Theorem \ref{maindetail}, which is a more detailed formulation of Theorem \ref{main}, our main result. The detailed formulation reveals that the representations occurring in the theorem are constructed explicitly using induction and restriction of special characters (termed ``solitary'' below) on groups corresponding to a specific finite Riemannian covering of $M$, whose existence is guaranteed by the assumption of homological wideness (or the weaker requirement $(\ast)$). Riemannian equivalence over $M_0$ is the same as conjugacy of $H_1$ and $H_2$ in $G$; a merit of the theorem is to show that this can be verified via a spectral geometric criterion using twists,  where the twists on $M_1$ are constructed using information from $M_2$ and vice versa.

Sunada \cite{Sunada} showed that if we are given a diagram of the form (\ref{sunadasetup}), and $H_1$ and $H_2$ are \emph{weakly conjugate} (meaning that the permutation representations given by the action of $G$ by left multiplication on the cosets $G/H_1$ and $G/H_2$ are isomorphic),  $M_1$ and $M_2$ are isospectral for the Laplace operators. The following example illustrates our theorem in such a situation. 

\begin{introexample}  
 We provide the explicit data for what is maybe the oldest example of weakly conjugate subgroups, due to Ga{\ss}mann \cite{Gassmann}: $G=S_6$, and $H_1 = \langle (12)(34), (13)(24) \rangle$, $H_2=\langle (12)(34), (12)(56) \rangle$, both isomorphic to the Klein four-group, but not conjugate inside $S_6$ \cite[pp.\ 674--675]{Gassmann}. As in \cite[p.\ 174]{Sunada}, choose a compact Riemann surface $M_0$ of genus $2$ and a surjective group homomorphism $ \pi_1(M_0) \twoheadrightarrow G$. This leads to a diagram of the form
(\ref{sunadasetup}) \cite[\S 2]{Sunada}, and homological wideness is immediate from Proposition \ref{prop:homwidecritriemsurf}.  In this case, $M_1$ and $M_2$ are inequivalent covers of $M_0$, but they have the same Laplace spectrum \cite[\S 2]{Sunada}. We can set $\ell=7$ in the main theorem, and, with the group $G$ having order $720$, inequivalence of $M_1$ and $M_2$ may be verified purely spectrally by checking $56$ equalities of multiplicities of zero in the spectrum of twisted Laplacians corresponding to representations of dimension $6!/4 = 180$. 
\end{introexample} 

To show the scope of our result, we discuss several more examples in Section \ref{obstructions}. Our result should be contrasted with \cite[Thm.\ 1.1]{Arapura}, where it is shown that large non-arithmetic hyperbolic manifolds $M$ admit arbitrary large sets of strongly isospectral but pairwise non-isometric finite Riemann coverings. 

Our method of proof for Theorem \ref{main}, presented in Section \ref{setup}--\ref{sec:covers}, is based on a similar construction of Solomatin \cite{Solomatin} for algebraic function fields, which in turn is based on number theoretical work of Bart de Smit in \cite{Lseries}. The analogy between number theory and spectral differential geometry was pioneered by Sunada  \cite{Sunada} (see also the survey \cite{SunadaSurvey}), and the importance of representation theoretical techniques was pointed out early on by Sunada \cite{SunadaNT} and Pesce \cite{PesceJFA}. We have given a self-contained presentation, with references to the number theory literature when appropriate. Our construction uses a certain wreath product of $G$ with a cyclic group; in  Subsection \ref{wreathuniversal}, we describe a universality property of such wreath products that should make their appearance look less suprising. 

We have formulated our results using spectra, but the analogy to number theory becomes most apparent by instead using spectral zeta functions of (twisted) Laplacians; that this is equivalent is explained in Subsection \ref{specz} and Proposition \ref{fixm0}, pointing to some subtleties concerning the multiplicity of zero in the spectrum of more general operators. 

From our earlier brief discussion of the two conditions in the theorem, we find the following specific result in dimension two. 
\begin{introcorollary} \label{maincor} Let $M_1,M_2$ be two
  commensurable non-arithmetic closed Riemann surfaces. Then they
  admit a diagram \textup{(\ref{m0})} and, assuming the corresponding
  orbifold $M_0$ satisfies $\chi_{M_0} < 0$, isometry of $M_1$ and
  $M_2$ can be checked by computing the multiplicity of zero in
at most
$4 (\chi_{M_1}\chi_{M_2}/(\chi^{\mathrm{orb}}_{M_0})^2)!)^2 $ twisted Laplace spectra, where
 $\chi^{\mathrm{orb}}_{M_0}$ is the orbifold Euler characteristic, defined in \eqref{deforbeuler}. 
\end{introcorollary}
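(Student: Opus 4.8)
The plan is to deduce Corollary \ref{maincor} from Theorem \ref{main} (equivalently, from its detailed version Theorem \ref{maindetail}) by feeding it the two-dimensional structural results advertised in the introduction. First I would invoke Proposition \ref{2to1}: since $M_1$ and $M_2$ are commensurable non-arithmetic closed Riemann surfaces, the associated lattices in $\mathrm{PSL}_2(\R)$ are non-arithmetic, hence by Margulis their commensurator is itself a lattice, and one obtains a common quotient orbifold $M_0$ fitting into a diagram \eqref{m0}. Extending to the full diagram \eqref{sunadasetup} is then Proposition \ref{fromdiagramtodiagram}, which produces a connected closed Riemann surface $M$ with Galois groups $H_1,H_2,G$ as indicated.

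Next I would verify the homological wideness hypothesis of Theorem \ref{main}. Under the standing assumption $\chi_{M_0}^{\mathrm{orb}}<0$, Proposition \ref{prop:homwidecritriemorb} tells us the holomorphic $G$-action on $M$ is $\Q$\hyp{}homologically wide; by Lemma \ref{fromQtoell} it is then $\F_\ell$\hyp{}homologically wide for every prime $\ell$ coprime to $|G|$, so the hypothesis of Theorem \ref{main} holds for any such $\ell$ (and in particular we never run into an arithmetic obstruction). Therefore isometry of $M_1$ and $M_2$ — in fact equivalence as Riemannian covers of $M_0$ — is detected by at most $2\ell\,|\mathrm{Hom}(H_2,\C^*)|$ equalities of multiplicities of zero among twisted Laplace spectra.

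It then remains to bound this quantity by $4\bigl(\chi_{M_1}\chi_{M_2}/(\chi_{M_0}^{\mathrm{orb}})^2\bigr)!\bigr)^2$ and to choose $\ell$ appropriately. The multiplicativity of (orbifold) Euler characteristic in finite covers gives $|G|=\chi_M/\chi_{M_0}^{\mathrm{orb}}$, $[G:H_i]=\chi_{M_i}/\chi_{M_0}^{\mathrm{orb}}$, hence $|H_i|=\chi_M/\chi_{M_i}$ and $|G|=\chi_{M_1}\chi_{M_2}/(\chi_{M_0}^{\mathrm{orb}})^2\cdot(\chi_{M_0}^{\mathrm{orb}}/\chi_M)\cdot\big(\!\dots\!\big)$; more directly, $[G:H_1][G:H_2]/|G|\cdot|G|$ bookkeeping yields that the group $G$ built in the Sunada construction can be taken of order $n:=\chi_{M_1}\chi_{M_2}/(\chi_{M_0}^{\mathrm{orb}})^2$ embedded in $S_n$ via its action on $G/H_1$ (a point I would state carefully, since it is where the factorial enters). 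Since $H_2$ is a subgroup of this $S_n$, the abelian quotient $\mathrm{Hom}(H_2,\C^*)$ has order at most $|H_2|\le n!$, giving $|\mathrm{Hom}(H_2,\C^*)|\le n!$, and one picks the smallest prime $\ell$ not dividing $|G|$, which is at most $n!$ as well (indeed at most $|G|+1$); then $2\ell|\mathrm{Hom}(H_2,\C^*)|\le 2(n!)^2\le 4(n!)^2$.

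The main obstacle I anticipate is the precise counting in the last paragraph: making sure the bound on $\ell$ and on $|\mathrm{Hom}(H_2,\C^*)|$ is expressed honestly in terms of the orbifold Euler characteristics rather than in terms of $|G|$, and checking that the Sunada group $G$ can genuinely be realised inside $S_n$ for $n=\chi_{M_1}\chi_{M_2}/(\chi_{M_0}^{\mathrm{orb}})^2$ (so that the factorial in the statement is correct and not, say, off by a transitivity or a primitivity subtlety). Everything else is a direct concatenation of Propositions \ref{2to1}, \ref{fromdiagramtodiagram}, \ref{prop:homwidecritriemorb}, Lemma \ref{fromQtoell}, and Theorem \ref{main}, together with Bertrand's postulate to keep $\ell$ small.
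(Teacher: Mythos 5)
Your overall route is exactly the paper's: Proposition \ref{2to1} for the diagram \eqref{m0}, Proposition \ref{fromdiagramtodiagram} for \eqref{sunadasetup}, Proposition \ref{prop:homwidecritriemorb} together with Lemma \ref{fromQtoell} for homological wideness, Theorem \ref{main} for the bound $2\ell\,|H_2^{\mathrm{ab}}|$, and then bookkeeping with $d_i=\chi_{M_i}/\chi^{\mathrm{orb}}_{M_0}$. The one step you rightly flagged as shaky is indeed stated incorrectly: $G$ is \emph{not} of order $n=d_1d_2$, and it does not embed in $S_n$ ``via its action on $G/H_1$'' --- that action has only $d_1=[G:H_1]$ points and is faithful only when the core of $H_1$ in $G$ is trivial, which need not hold here. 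The correct argument (the paper's) is that $G=\Gamma_0/\Gamma$ where $\Gamma$ is the normal core of $\Gamma_1\cap\Gamma_2$ in $\Gamma_0$, i.e.\ the kernel of the permutation action of $\Gamma_0$ on the cosets of $\Gamma_1\cap\Gamma_2$; since $[\Gamma_0:\Gamma_1\cap\Gamma_2]\le d_1d_2=n$, this gives a faithful action of $G$ on at most $n$ points and hence $|G|\le n!$ by \eqref{factorial}. With that substitution, your bounds $|H_2^{\mathrm{ab}}|\le|H_2|\le|G|\le n!$ go through.

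One further small repair: Theorem \ref{maindetail} requires $\ell\ge 3$, and the smallest prime not dividing $|G|$ can be $2$ (e.g.\ $|G|$ odd). The paper avoids this by taking $\ell>|G|$ and invoking Bertrand to get $\ell\le 2|G|$, which still yields $2\ell\,|H_2^{\mathrm{ab}}|\le 4|G|^2\le 4(n!)^2$. Your alternative choice of $\ell$ works only after restricting to primes $\ge 3$, and then the cleanest bound is again Bertrand's.
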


The detailed formulation and proof can be found in Corollary \ref{maincordetail}. If one believes in a positive answer to the (open since several decades) question whether commensurability of such Riemann surfaces is implied by their isospectrality, then a purely spectral formulation of the corollary is possible, replacing ``commensurable'' by ``isospectral''. Intriguingly, the only cases where a positive answer to the open question is known are arithmetic \cite{Reid}, precisely the ones excluded in the corollary.

 In Section \ref{length}, we study the analogue of Theorem \ref{main}  for the \emph{length spectrum} on negatively curved manifolds. The statement about agreement of multiplicities of zero in certain spectra is changed into equality of the pole order of certain $L$-series  (details are found  in Theorem \ref{mainL}).  
 
 \begin{introtheorem} \label{intromainL} Suppose we have a diagram \textup{(\ref{m0})} of negatively curved Riemannian manifolds with (common) volume entropy $h$, and suppose that the action of $G$ on $M$ in the extended diagram \textup{(\ref{sunadasetup})} is $\F_\ell$\hyp{}homologically wide for some $\ell$ coprime to $|G|$. 
Then $M_1$ and $M_2$ are equivalent Riemannian covers of $M_0$ if and only if the pole order at $s=h$ of a finite number of specific $L$-series of representations on $M_1$ and $M_2$ is equal.  
\end{introtheorem}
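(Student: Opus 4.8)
The plan is to mirror the proof of Theorem \ref{main} (the heat-kernel/spectral version) but replace the spectral zeta function of a twisted Laplacian by the Ruelle-type $L$-series attached to a unitary representation $\rho$ of $\pi_1$, so that the role played by ``multiplicity of zero in $\Sp(\Delta_\rho)$'' is taken over by ``pole order at $s=h$ of $L_M(\rho,s)$''. Concretely, for a negatively curved closed manifold $M$ and a unitary representation $\rho\colon\pi_1(M)\to\Un_n(\C)$ one forms $L_M(\rho,s)=\prod_{[\gamma]}\det\bigl(\mathrm{Id}-\rho(\gamma)e^{-s\ell(\gamma)}\bigr)^{-1}$, the product over primitive closed geodesics, convergent for $\Re s>h$; the first step is to record (citing the standard references on the Ruelle zeta function and its meromorphic continuation past $s=h$, e.g.\ via the Selberg-type trace formula or dynamical zeta function theory) that this has a meromorphic continuation to a neighbourhood of $s=h$ and that its pole order there is an isomorphism invariant of $\rho$ that is additive in direct sums and — crucially — satisfies the Artin-type formalism $L_M(\Ind_{H}^{G}\sigma,s)=L_{H\backslash \widetilde M}(\sigma,s)$ under the covering $\widetilde M\to H\backslash\widetilde M$ and $L_M(\rho_1,s)L_M(\rho_2,s)=L_M(\rho_1\oplus\rho_2,s)$. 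This is exactly the $L$-function bookkeeping that, on the number-theory side, de Smit and Solomatin use, and on the spectral side is encapsulated in Subsection \ref{specz} and Proposition \ref{fixm0}.

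Next I would transport the group-theoretic core of the argument verbatim. The hypothesis of $\F_\ell$-homological wideness is used exactly as in Theorem \ref{maindetail}: it guarantees, on a suitable further finite cover of $M$ whose deck group is a wreath product $\Z/\ell\wreath G$, the existence of the ``solitary'' characters whose induced/restricted representations $\rho$ on $M_1$ and $M_2$ have the property that $\langle\text{pole order of }L_{M_i}(\rho,s)\text{ at }s=h\rangle$ distinguishes the conjugacy class of $H_i$ in $G$ — i.e.\ detects whether $H_1$ and $H_2$ are conjugate, hence whether $M_1$ and $M_2$ are equivalent Riemannian covers of $M_0$. Nothing in this part of the argument refers to the Laplacian as opposed to the length spectrum: it is pure representation theory of $G$ together with the two functorialities above. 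The point $s=h$ plays the role that $s=1$ plays for Dedekind zeta functions (or $s=0$ for the Laplacian spectral zeta): the order of the pole of $L_M(\one,s)$ at $s=h$ is $1$ (simple pole, volume-entropy leading term), and for a general $\rho$ the pole order equals the multiplicity of the trivial representation in $\rho$, which is the quantity the wreath-product construction is designed to read off.

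The ``only if'' direction is the easy one: if $M_1$ and $M_2$ are equivalent over $M_0$, an isometry induced by conjugating $H_1$ to $H_2$ in $G$ carries closed geodesics to closed geodesics preserving length and the relevant flat bundles, so the $L$-series — and a fortiori their pole orders at $s=h$ — agree. For the ``if'' direction one runs the contrapositive: if $H_1$ and $H_2$ are not conjugate in $G$, the solitary-character machinery of Theorem \ref{maindetail} produces one of the finitely many representations $\rho$ (there are at most $2\ell|\mathrm{Hom}(H_2,\C^*)|$ of them, exactly as in Theorem \ref{main}) for which the two pole orders differ; the finiteness and the explicit bound are inherited unchanged from the spectral case. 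Assembling these, $M_1$ and $M_2$ are equivalent Riemannian covers of $M_0$ iff the pole orders at $s=h$ of this finite list of $L$-series agree on the two manifolds.

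The main obstacle, and the only place where genuinely new analytic input beyond Theorem \ref{main} is needed, is the meromorphic continuation of $L_M(\rho,s)$ across $\Re s=h$ together with the identification of its pole order at $s=h$ with the multiplicity of $\one$ in $\rho$ (equivalently: that $s=h$ is a simple pole of $L_M(\one,s)$ and that no ``spurious'' poles of the same order are contributed by nontrivial irreducibles). In constant negative curvature this is classical Selberg theory; in variable negative curvature one must invoke the now-standard results on dynamical (Ruelle) zeta functions — analyticity near the axis of convergence, the identification of the leading singularity via topological/measure-theoretic entropy and the Parry measure, and the twisted (vector-bundle) version of these statements. I would state precisely the hypotheses on curvature (pinched negative, or Anosov geodesic flow) under which these results are available, cite them, and flag that outside that range the theorem should be read as conditional on the expected meromorphic continuation; everything downstream of that analytic fact is then a line-by-line translation of the proof of Theorem \ref{maindetail} with ``$\mathrm{ord}_{s=0}\zeta_{\Delta_\rho}$'' replaced by ``$\mathrm{ord}_{s=h}L_M(\rho,s)$''.
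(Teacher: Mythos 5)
Your proposal follows essentially the same route as the paper: the paper cites Parry--Pollicott and Adachi--Sunada for the analytic continuation of $L_M(\rho,s)$ past $\mathrm{Re}(s)=h$ and the simplicity of the pole of $L_M(\one,s)$ (Lemma \ref{ext}), deduces that $-\mathrm{ord}_{s=h}L_M(\rho,s)=\langle\rho,\one\rangle$ (Lemma \ref{multL}), records the $L$-series analogues of Lemma \ref{SNT} and Proposition \ref{solo}, and then transports the wreath-product/solitary-character argument of Theorem \ref{maindetail} verbatim. Your only deviation is the hedge about variable negative curvature: no conditionality is needed, since the geodesic flow of any closed negatively curved manifold is weak-mixing Anosov, which is exactly the setting of the cited dynamical results.
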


 Sunada's result \cite{Sunada} quoted above says that weak conjugacy implies isospectrality, but the converse is not necessarily true; this leaves open the question to characterise weak conjugacy of $H_1$ and $H_2$ in $G$ in a spectral way using the associated manifolds $M_1$ and $M_2$. 
One of our intermediate results answers this question using induction and restriction from the trivial representation $\one$.
\begin{introprop} \label{mainprop} 
If we have a diagram \textup{(\ref{sunadasetup})}, then $H_1$ and $H_2$ are weakly conjugate if and only if the multiplicity of the zero eigenvalue in $\Sp_{M_i}(\Res_{H_i}^G \Ind_{H_j}^G \one)$ is independent of $i,j=1,2$. 
 \end{introprop}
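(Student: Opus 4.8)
The plan is to reduce the statement to elementary character theory of the finite group $G$ by means of Frobenius reciprocity, and then to conclude with a Cauchy--Schwarz-type identity. Recall from Section~\ref{prelim} that if $\rho$ is a unitary representation of $\pi_1(M_i)$ factoring through the finite quotient $\pi_1(M_i)\twoheadrightarrow H_i$ (which is the case for $\Res_{H_i}^G\Ind_{H_j}^G\one$, a permutation representation of $H_i$ and hence unitary), then the kernel of the twisted Laplacian $\Delta_\rho$ on the closed manifold $M_i$ is the space of parallel sections of the flat bundle $E_\rho$, i.e.\ the space of holonomy-invariant vectors, which here is exactly the space of $H_i$-invariants of $\rho$. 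Thus the multiplicity of $0$ in $\Sp_{M_i}(\rho)$ equals $\langle\rho,\one\rangle_{H_i}$. Taking $\rho=\Res_{H_i}^G\Ind_{H_j}^G\one$ and applying Frobenius reciprocity gives
\[
\langle\Res_{H_i}^G\Ind_{H_j}^G\one,\,\one\rangle_{H_i}=\langle\Ind_{H_j}^G\one,\,\Ind_{H_i}^G\one\rangle_{G}.
\]
Writing $P_i:=\Ind_{H_i}^G\one$ for the permutation representation of $G$ on $G/H_i$, the four multiplicities in the statement are precisely the numbers $\langle P_i,P_j\rangle_G$ for $i,j\in\{1,2\}$, and weak conjugacy of $H_1$ and $H_2$ means by definition that $P_1\cong P_2$ as $G$-representations.

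Granting this dictionary, the forward implication is immediate: if $P_1\cong P_2$ then all four inner products agree. For the converse, decompose $P_1=\sum_{\chi\in\Irr(G)}a_\chi\chi$ and $P_2=\sum_{\chi\in\Irr(G)}b_\chi\chi$ with $a_\chi,b_\chi\in\Z_{\geq0}$; since permutation characters are real-valued, $\langle P_1,P_2\rangle_G=\langle P_2,P_1\rangle_G$ automatically, so the hypothesis that $\langle P_i,P_j\rangle_G$ is independent of $i,j$ reads
\[
\sum_{\chi}a_\chi^2=\sum_{\chi}a_\chi b_\chi=\sum_{\chi}b_\chi^2.
\]
Hence $\sum_\chi(a_\chi-b_\chi)^2=\sum_\chi a_\chi^2-2\sum_\chi a_\chi b_\chi+\sum_\chi b_\chi^2=0$, which forces $a_\chi=b_\chi$ for every $\chi\in\Irr(G)$, i.e.\ $P_1\cong P_2$, the asserted weak conjugacy.

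No step presents a genuine obstacle; the only point requiring care is the translation of the spectral datum (multiplicity of zero in a twisted spectrum) into the representation-theoretic one (dimension of the space of invariants of the twist), which is the standard identification of harmonic sections of a flat unitary bundle over a closed manifold with holonomy-invariant vectors, already recorded in the preliminaries. It is worth noting that this proposition is precisely the instance of the mechanism underlying Theorem~\ref{main} in which the distinguished (``solitary'') character is the trivial one, and that in fact only three of the four stated equalities are needed, since $\langle P_1,P_2\rangle_G=\langle P_2,P_1\rangle_G$ holds for free.
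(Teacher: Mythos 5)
Your proposal is correct and follows essentially the same route as the paper: the paper's Lemma \ref{mult} is exactly your identification of $\ker\Delta_\rho$ with the $H_i$-invariants, the identity $\langle\Res_{H_i}^G\Ind_{H_j}^G\one,\one\rangle_{H_i}=\langle\Ind_{H_j}^G\one,\Ind_{H_i}^G\one\rangle_G$ is equation \eqref{aijsym}, and your $\sum_\chi(a_\chi-b_\chi)^2=0$ computation is the paper's $\langle\psi,\psi\rangle=0$ argument in the proof of Proposition \ref{solo} (of which this proposition is the specialisation $\chi_1=\chi_2=\one$, as you correctly observe).
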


This result, reformulated in Proposition \ref{mainpropdet}, is proven by an adaptation of a number theoretical argument of Nagata \cite{Nagata}. The crucial differential geometric ingredient is the spectral characterisation of the multiplicity of the trivial representation in any given representation (Lemma \ref{mult}). As a corollary, we get a spectral characterisation of \emph{strong isospectrality} (cf.\ Definition \ref{strongiso} and Corollary \ref{strong}). 

The most pressing question that remains open concerns the case where there is not necessarily a common finite orbifold quotient: are \emph{twisted isospectral} manifolds $M_1$ and $M_2$ (meaning that there is a bijection of all unitary representations of their fundamental groups such that the spectra of the corresponding twisted Laplacians are equal) with \emph{large} fundamental groups (i.e., containing a finite index subgroup with a non-abelian free group as quotient) isometric?  Another natural question that occurs in relation to homological wideness is the following: given a hyperbolic manifold $ \Gamma \backslash \mathbb{H}^n$ for a cocompact discrete group $\Gamma$ of isometries of hyperbolic $n$-space $\mathbb{H}^n$ ($n \geq 3$), determine the structure of its first homology $\Gamma^{\mathrm{ab}}$ as a $\Z[\mathrm{Out}(\Gamma)]$-module (compare \S \ref{sechyp}).  

Since our methods combine concepts and tools from different fields, we have been slightly more discursive on (folklore) details, background, recaps and examples. 

\section*{``Leitfaden''} 

The dependencies between sections is indicated in the graph below. In boldface is the minimal path to reach the main results (Theorems \ref{main} = \ref{maindetail} and Theorem \ref{intromainL} = \ref{mainL}), in grey a geometric instead of purely group-theoretical construction, and in regular font material on homological wideness and examples. 

$$\xymatrix{ 
\color{gray}{{\ref{setup}}} \ar@[gray][dd] \ar@{-->}@[gray]@/_2.3pc/[drr]%_{\color{gray}{{\tiny 1.4.1}}} 
& \textbf{\ref{prelim}}  \ar[d] \ar@<0.2mm>[d] &  \textbf{\ref{sec:uniqmonstruc}} \ar[d] \ar@<0.2mm>[d]  & &  \\
& \textbf{\ref{detecting}} \ar[r] \ar@<0.2mm>[r]  &  \textbf{\ref{sec:covers}} \ar@[gray]@/^0.4pc/[dll] \ar[d] \ar[r] \ar@<0.2mm>[r]   \ar@{-->}[drr] & \textbf{\ref{length}}  &  \\ 
\color{gray}{{\ref{geocon}}} \ar@[gray][r] &  \color{gray}{{\ref{homwideNT}}} & \ref{homwide} \ar@[gray][l] \ar[r] & \ref{exhomwide} \ar[r]  & \ref{obstructions} 
} 
$$

\section{Manifold and orbifold constructions} \label{setup} 

In this section, we collect some information on various constructions of manifolds, orbifolds, and their covers. Most of it is straightforward, but since we could not always find convenient references, we include some of the proofs. 

\subsection{Setup} We fix the following notation for the remaining of the paper: we assume that  $M_1$ and $M_2$ are connected smooth oriented closed (i.e., compact with empty boundary) Riemannian manifolds that are \emph{finite} Riemannian coverings of a developable Riemannian orbifold $M_0$, i.e., there is a diagram of the form (\ref{m0}). 
Recall that the connected space $M_0$ being a \emph{developable orbifold} means that it is the quotient of a connected and simply connected Riemannian manifold $\tilde M_0$ by a cocompact discrete group of isometries $\Gamma_0$, that is the (orbifold) fundamental group of $M_0$. For the theory of (good/developable) orbifolds, see, e.g., \cite[Ch.\ 13]{Thurston} \cite{Choi} \cite[\S 1]{Orbi}. Given this setup, we can write $M_i = \Gamma_i \backslash \tilde M_0$ for finite index subgroups $\Gamma_i$ of $\Gamma_0$ acting
without  fixed points on $\tilde M_0$.

\begin{remark} In dimension $2$, there is a relation with the theory of branched coverings of surfaces \cite[\S 3]{PePe} (compare \cite[Thm.\ 13.3.6]{Thurston}). Consider an orbifold cover $M_1 \rightarrow M_0$ of degree $n$, where $M_0$ is a closed developable $2$-orbifold with $r$ elliptic (cone) points $\{x_i\}$: the local orbifold structure is that of rotation over an angle $2 \pi/e_i$. Assume that $M_1$ is a closed $2$-manifold.
To such an orbifold cover corresponds a branched degree $n$ cover of the corresponding underlying closed surfaces $\Sigma_1 \rightarrow \Sigma_0$ branched over the $r$ given points such that $x_i$ has exactly $n/e_i$ points above it with ramification index $e_i$ (in particular, $e_i$ divides $n$).
A diagram (\ref{m0}) corresponds to a diagram of branched coverings of the corresponding surfaces
\begin{equation*} 
  \xymatrix@C=2mm@R=4mm{ \Sigma_1\ar[dr]_{p_1} & & \Sigma_2\ar[dl]^{p_2} \\ & \Sigma_0 & }  
\end{equation*} 
where, if $p_j$ has degree $d_j$ ($j=1,2$), we have the added data of $r$ branch points $\{x_i\}$ on $\Sigma_0$ and $r$ integers $e_i$ dividing $\mathrm{gcd}(d_1,d_2)$ such that there are exactly $d_j/e_i$ points above $x_i$ in $\Sigma_j$ for $j=1,2$ and $i=1,\dots,r$. 
\end{remark}

\begin{lemma} \label{over} Suppose we have a diagram of manifolds/orbifolds of the form \eqref{sunadasetup} and $\Gamma_0, \Gamma_1, \Gamma_2$ are as above. Suppose $\Gamma$ is such that $M = \Gamma \backslash \tilde M_0$. Then the following are equivalent: 
\begin{enumerate}
\item \label{over1} $H_1$ and $H_2$ are conjugate in $G$;
\item \label{over2} $\Gamma_1$ and $\Gamma_2$ are conjugate in $\Gamma_0$; 
\item \label{over3} $M_1$ and $M_2$ are equivalent Riemannian covers of $M_0$, i.e., there is an isometry $\varphi$ such that following diagram commutes
\begin{equation}
  \xymatrix@C=2mm@R=4mm{ M_1\ar[dr]_{p_1} \ar[rr]^\varphi & & M_2\ar[dl]^{p_2} \\ & M_0 & }  
\end{equation} 
\end{enumerate}
\end{lemma}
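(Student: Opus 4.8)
The plan is to prove the cycle of implications $(\ref{over1}) \Rightarrow (\ref{over2}) \Rightarrow (\ref{over3}) \Rightarrow (\ref{over1})$, translating everything into the language of the common universal cover $\tilde M_0$ and the deck groups $\Gamma_0 \supset \Gamma, \Gamma_1, \Gamma_2$ acting on it. The key dictionary is that $M = \Gamma\backslash \tilde M_0$, $M_i = \Gamma_i \backslash \tilde M_0$, $M_0 = \Gamma_0 \backslash \tilde M_0$, and that the Galois group $G$ of $M\to M_0$ is $\Gamma_0/\Gamma$ (once we know $\Gamma$ is normal in $\Gamma_0$, which is part of the setup in \eqref{sunadasetup}), with $H_i = \Gamma_i/\Gamma \le G$. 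The equivalence $(\ref{over1}) \Leftrightarrow (\ref{over2})$ is then just the standard fact that conjugacy of subgroups is detected in a quotient by a normal subgroup contained in both: if $\Gamma_1 = \gamma \Gamma_2 \gamma^{-1}$ in $\Gamma_0$ then reducing mod $\Gamma$ gives $H_1 = \bar\gamma H_2 \bar\gamma^{-1}$, and conversely a conjugating element $\bar\gamma \in G$ lifts to some $\gamma \in \Gamma_0$ with $\gamma \Gamma_2\gamma^{-1}$ and $\Gamma_1$ having the same image in $G$, hence (since both contain $\Gamma = \ker$) equal. So this step is essentially bookkeeping.

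For $(\ref{over2}) \Rightarrow (\ref{over3})$: given $\gamma \in \Gamma_0$ with $\Gamma_1 = \gamma\Gamma_2\gamma^{-1}$, right translation $R_\gamma \colon \tilde M_0 \to \tilde M_0$, $x \mapsto \gamma^{-1} x$ (an isometry, since $\gamma \in \Gamma_0 \le \mathrm{Isom}(\tilde M_0)$), descends to a well-defined map $\varphi \colon \Gamma_1\backslash\tilde M_0 \to \Gamma_2\backslash\tilde M_0$ because $\Gamma_1 x = \Gamma_1 y$ forces $\Gamma_2\gamma^{-1}x = \Gamma_2\gamma^{-1}y$. This $\varphi$ is an isometry of Riemannian manifolds (local isometry + bijection, as its inverse is induced by $R_{\gamma^{-1}}$), and it commutes with the projections to $M_0 = \Gamma_0\backslash\tilde M_0$ since $\gamma^{-1}$ acts trivially on $\Gamma_0$-orbits. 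One should also note $\varphi$ is induced by conjugation inside $\Gamma_0$, which is exactly the notion of ``equivalent Riemannian cover'' as defined in the introduction (the isometry group of $\tilde M_0$ fixing $M_0$ pointwise is precisely $\Gamma_0$ modulo the kernel of the action).

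For the remaining implication $(\ref{over3}) \Rightarrow (\ref{over2})$, which I expect to be the main obstacle: we are handed an isometry $\varphi\colon M_1 \to M_2$ with $p_2 \circ \varphi = p_1$, and we must produce the conjugating element in $\Gamma_0$. The natural approach is to lift $\varphi$ to the universal cover: since $\tilde M_0$ is the universal cover of both $M_1$ and $M_2$ (a manifold, by developability), $\varphi$ lifts to an isometry $\tilde\varphi \colon \tilde M_0 \to \tilde M_0$, i.e.\ an element of $\mathrm{Isom}(\tilde M_0)$; the compatibility $p_2\circ\varphi = p_1$ forces $\tilde\varphi$ to normalise the $\Gamma_0$-action — more precisely $\tilde\varphi$ must descend to the identity (or at worst a deck transformation) on $M_0$, hence $\tilde\varphi \in \Gamma_0$ after composing with an element of $\Gamma_0$ if needed. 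Then $\tilde\varphi$ conjugates the deck group $\Gamma_1$ of $M_1$ to the deck group $\Gamma_2$ of $M_2$, giving $(\ref{over2})$. The subtle points to get right here are: (a) the lift $\tilde\varphi$ exists as a genuine isometry and not merely a homeomorphism — this uses that $\varphi$ is an isometry and covering maps are local isometries; (b) that $\tilde\varphi$ can be chosen in $\Gamma_0$ rather than in the full, possibly larger, isometry group of $\tilde M_0$ — this is where the hypothesis that $\varphi$ lies over $M_0$ (i.e.\ $p_2\circ\varphi=p_1$) is essential, since without it $M_1$ and $M_2$ could be abstractly isometric without being equivalent \emph{covers}; and (c) handling the orbifold points of $M_0$, though since $M, M_1, M_2$ are honest manifolds and $\tilde M_0$ is their common manifold universal cover, the orbifold structure only enters through $\Gamma_0$ possibly having torsion, which causes no trouble for the conjugation argument. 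I would cite the standard lifting criterion for covering spaces (or its Riemannian refinement) for (a) and (b) rather than reprove it.
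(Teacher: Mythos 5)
Your proposal is correct and follows essentially the same route as the paper: (\ref{over1})$\Leftrightarrow$(\ref{over2}) by reducing modulo the normal subgroup $\Gamma$, (\ref{over2})$\Rightarrow$(\ref{over3}) by descending the isometry $x\mapsto\gamma^{-1}x$ of $\tilde M_0$, and (\ref{over3})$\Rightarrow$(\ref{over2}) by lifting $\varphi$ to a deck transformation in $\Gamma_0$ conjugating $\Gamma_1$ to $\Gamma_2$. In fact your treatment of the last implication is more detailed than the paper's one-line assertion, and the subtleties you flag (the lift being an isometry, landing in $\Gamma_0$ rather than the full isometry group, and torsion in $\Gamma_0$) are exactly the right ones.
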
 

\begin{proof} With $H_i =\Gamma_i / \Gamma$ and $\Gamma$ normal in $\Gamma_0$, we have that 
there exists a $g \in G$ satisfying $gH_1g^{-1} = H_2$ if and only if for any lift $\gamma_0 \in \Gamma_0$ of $g$, $\gamma_0 \Gamma_1 \gamma_0^{-1} = \Gamma_2$. This shows the equivalence of \eqref{over1} and \eqref{over2}.  To show the equivalence of  \eqref{over2} and \eqref{over3}, we argue as follows. If $gH_1g^{-1} = H_2$ for $g \in G$, a (finite) group of isometries of $M$, then $\varphi \colon M_1 \rightarrow M_2$ given by $\varphi(H_1 x) = H_2 g x$ (for $x \in \tilde M_0$) satisfies the requirements of \eqref{over3}. Conversely, if $M_1$ and $M_2$ are equivalent Riemannian covers of $M_0$, then the corresponding subgroups $\Gamma_1$ and $\Gamma_2$ of the orbifold fundamental group $\Gamma_0$ are conjugate. 
\end{proof} 

\subsection{Compositum} We set $M_{00}:=
\Gamma_{00} \backslash \tilde M_0$ for $\Gamma_{00}:=\Gamma_1 \cap \Gamma_2$. This is a finite connected common Riemannian cover of $M_1$ and $M_2$ (finiteness follows since $[\Gamma_0:\Gamma_1 \cap \Gamma_2] \leq [\Gamma_0:\Gamma_1] \cdot [\Gamma_0:\Gamma_2]$), which we call the \emph{compositum} $M_1 \bullet_{M_0} M_2$ of $p_1 \colon M_1 \rightarrow M_0$ and $p_2 \colon M_2 \rightarrow M_0$. Note that $M_{00}$ is a manifold since $\Gamma_i$ acts properly discontinuously without fixed points on $\tilde M_0$.  It is important to notice that the construction of $M_{00}$ and the covering maps $M_{00} \to M_i$, $i=1,2$, depend on the actual maps $p_1,p_2$, not just the spaces $M_0,M_1,M_2$, but it is customary to leave out the maps from the notation if they are clear. If necessary, we will write 
$M_1\tensor[_{p_1}]{\bullet}{_{M_0,p_2}} M_2$
 (or $M_1 \bullet_{M_0,p_2} M_2$ if $p_1$ is clear). 

\subsection{Fiber product} Given a diagram (\ref{m0}), we can also form the (orbifold) \emph{fiber product} $M_1 \times_{M_0} M_2$ (or, more precisely, $M_1 \tensor[_{p_1}]{\times}{_{M_0,p_2}} M_2$, where we use the same convention as before concerning including or leaving out the maps from the notation). We follow the construction of Thurston \cite[13.2.4]{Thurston} as explained in \cite[4.6.1]{Choi} (but where we are using left actions instead of right actions): 
$$M_1 \times_{M_0} M_2 =  \Gamma_0 \backslash \left( \tilde M_0 \times \Gamma_1 \backslash \Gamma_0 \times \Gamma_2 \backslash \Gamma_0 \right), $$ where the left action of $\gamma_0 \in \Gamma_0$ is given by $$ \gamma_0 (x,\Gamma_1 \gamma_0^{(1)}, \Gamma_2 \gamma_0^{(2)}) = (\gamma_0(x),\Gamma_1 \gamma_0^{(1)} 
\gamma_0^{-1}, \Gamma_2 \gamma_0^{(2)}\gamma_0^{-1} ) $$
for $x \in \tilde M_0, \gamma_0^{(i)} \in \Gamma_0 \ (i=1,2)$.  This has the universality property required for fiber products. Using the bijection $$\left( \Gamma_1 \backslash \Gamma_0 \times \Gamma_2 \backslash \Gamma_0 \right) / \Gamma_0 \rightarrow \Gamma_1 \backslash \Gamma_0 / \Gamma_2 \colon [(\alpha,\beta)] \rightarrow [\alpha \beta^{-1}],$$ we find an isometry 
$$ M_1 \times_{M_0} M_2 = \bigsqcup_{ \gamma \in  \Gamma_1 \backslash \Gamma_0/ \Gamma_2}  (\Gamma_1 \cap \gamma \Gamma_2 \gamma^{-1}) \backslash \tilde M_0. $$ 
In our situation, where $M_1$ and $M_2$ are manifolds, it follows that $ M_1 \times_{M_0} M_2 $ is also a  (possibly disconnected) manifold, since elements of $\Gamma_1 \cap \gamma \Gamma_2 \gamma^{-1}$ act without fixed points on $\tilde M_0$. The fiber product is a Riemannian manifold for the metric inherited from the universal covering (manifold) $\tilde M_0$. 

The fiber product contains the compositum as a connected component. However, it is not necessarily connected (similar to the tensor product of two number fields not always being isomorphic to their compositum); in fact, we see from the above that $ M_1 \times_{M_0} M_2$ has $|\Gamma_1 \backslash \Gamma_0/ \Gamma_2|$ connected components. The components need not all be isometric to the compositum, but if $M_i \rightarrow M_0$ are Galois covers, then they are (since then, $\gamma \Gamma_i \gamma^{-1} = \Gamma_i$ for all $\gamma \in \Gamma_0$, $i=1,2$). 

 There are two projections $$M_1 \times_{M_0} M_2  \twoheadrightarrow \Gamma_0 \backslash \left( \tilde M_0 \times \Gamma_i \backslash \Gamma_0 \right) \cong M_i,$$ where the latter isometry is given by $$(\{\gamma_0(x),\Gamma_i \gamma_0^{(i)} \gamma_0^{-1}\}_{\gamma_0 \in \Gamma_0}) \mapsto \Gamma_i \gamma_0^{(i)} x,$$ which is bijective, since $\Gamma_i$ has no fixed points on $\tilde M_0$. 

\begin{remark} There is a surjective map from the orbifold fiber product to the \emph{set-theoretic fiber product} \begin{align} \label{stfp} & M_1 \times_{M_0} M_2 \rightarrow \{(x_1,x_2) \in M_1 \times M_2 \colon p_1(x_1) = p_2(x_2) \}, \\ & (\{\gamma_0(x),\Gamma_1 \gamma_0^{(1)} \gamma_0^{-1},\Gamma_2 \gamma_0^{(2)} \gamma_0^{-1}\}_{\gamma_0 \in \Gamma_0}) \mapsto (\Gamma_1 \gamma_0^{(1)} x, \Gamma_2 \gamma_0^{(2)} x). \nonumber \end{align} If the action of $\Gamma_0$ on $\tilde M_0$ has fixed points, then this map is not necessarily injective, so the set-theoretic description of fiber product cannot be used in the orbifold setting, even if $M_1,M_2$ are manifolds. However, if $M_0$ is itself a manifold (so $\Gamma_0$ acts without fixed points), then the map in (\ref{stfp}) is an isometry of Riemannian manifolds (in particular, bijective), where the right hand side is a manifold since the projection maps are Riemannian submersions. 
\end{remark} 

We can complete the diagram (\ref{m0}) into diagrams of Riemannian coverings
\begin{equation*} 
  \xymatrix@C=2mm@R=4mm{ 
  & M_1 \times_{M_0} M_2 \ar[dl] \ar[dr] \ar[dd] & &  & M_1 \bullet_{M_0} M_2 \ar[dl] \ar[dr] \ar[dd] &\\ 
  M_1\ar[dr]_{p_1} & & M_2\ar[dl]^{p_2}  & \quad \quad M_1\ar[dr]_{p_1} & & M_2\ar[dl]^{p_2}  \\ 
  & M_0 & & & M_0 & } 
\end{equation*} 

\begin{lemma} \label{coprime}  
If $M_i \rightarrow M_0$ are Galois covers,  
then $M_1 \times_{M_0} M_2$ is connected and hence isometric to the compositum $M_1 \bullet_{M_0} M_2$ if and only if $\Gamma_0 = \langle \Gamma_1,\Gamma_2 \rangle$, the subgroup of $\Gamma_0$ generated by $\Gamma_1$ and $\Gamma_2$. This holds if the degrees of $M_i \rightarrow M_0$ are coprime. 
\end{lemma}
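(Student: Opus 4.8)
The plan is to run everything through the double–coset decomposition of the fiber product established just above, namely
$$ M_1 \times_{M_0} M_2 = \bigsqcup_{\gamma \in \Gamma_1 \backslash \Gamma_0 / \Gamma_2} (\Gamma_1 \cap \gamma \Gamma_2 \gamma^{-1}) \backslash \tilde M_0, $$
so that the number of connected components of $M_1 \times_{M_0} M_2$ equals $|\Gamma_1 \backslash \Gamma_0 / \Gamma_2|$. Hence $M_1 \times_{M_0} M_2$ is connected if and only if there is a single double coset, i.e. if and only if $\Gamma_1 \Gamma_2 = \Gamma_0$ as a subset of $\Gamma_0$ (the double coset of the identity being $\Gamma_1 \Gamma_2$).

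First I would bring in the Galois hypothesis: $M_i \to M_0$ Galois means $\Gamma_i \trianglelefteq \Gamma_0$, so $\Gamma_1 \Gamma_2$ is in fact a subgroup of $\Gamma_0$, and it coincides with $\langle \Gamma_1, \Gamma_2 \rangle$. Therefore the connectivity criterion of the previous paragraph reads $\langle \Gamma_1, \Gamma_2 \rangle = \Gamma_0$, which is the first assertion. For the clause ``hence isometric to the compositum'', I would simply recall the remark preceding the lemma: when the $M_i \to M_0$ are Galois, \emph{every} connected component of $M_1 \times_{M_0} M_2$ is isometric to $M_1 \bullet_{M_0} M_2$, so a connected fiber product is isometric to the compositum.

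Finally, for the coprimality statement, I would use the chain $\Gamma_1 \leq \langle \Gamma_1, \Gamma_2 \rangle \leq \Gamma_0$ of finite-index subgroups to get $[\Gamma_0 : \langle \Gamma_1, \Gamma_2 \rangle] \mid [\Gamma_0 : \Gamma_1]$, and symmetrically $[\Gamma_0 : \langle \Gamma_1, \Gamma_2 \rangle] \mid [\Gamma_0 : \Gamma_2]$; hence $[\Gamma_0 : \langle \Gamma_1, \Gamma_2 \rangle]$ divides $\gcd([\Gamma_0:\Gamma_1], [\Gamma_0:\Gamma_2])$. Since $[\Gamma_0 : \Gamma_i]$ is the degree of $M_i \to M_0$, coprimality of the two degrees forces $[\Gamma_0 : \langle \Gamma_1, \Gamma_2 \rangle] = 1$, i.e. $\Gamma_0 = \langle \Gamma_1, \Gamma_2 \rangle$, and we conclude by the first part.

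I do not expect a real obstacle here: the argument is bookkeeping layered on top of the orbifold fiber–product decomposition and the structural remarks already in place. The only place where the Galois hypothesis is genuinely used (beyond ``all components isometric to the compositum'') is in identifying the set $\Gamma_1 \Gamma_2$ with the subgroup $\langle \Gamma_1, \Gamma_2 \rangle$; without normality one would still get ``connected $\iff \Gamma_1 \Gamma_2 = \Gamma_0$ as a set'', but this would no longer be a statement about generation, and the clean index-divisibility step in the coprime case would break.
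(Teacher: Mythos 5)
Your proof is correct and follows essentially the same route as the paper's: the double--coset count of components, normality turning $\Gamma_1\Gamma_2$ into the subgroup $\langle \Gamma_1,\Gamma_2\rangle$, and the index-divisibility argument for the coprime case. (For the ``isometric to the compositum'' clause it is even enough to note that the fiber product always contains the compositum as a connected component, so connectedness alone forces equality.)
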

\begin{proof} The number of components of the compositum is the cardinality of the double coset space $\Gamma_1 \backslash \Gamma_0 /\Gamma_2$, and this is $1$ if and only if $\Gamma_0 = \Gamma_1 \Gamma_2$. Since $\Gamma_i$ are normal subgroups of $\Gamma_0$, the product $\Gamma_1 \Gamma_2$ is a subgroup; in fact, $\Gamma_1 \Gamma_2 =  \langle \Gamma_1,\Gamma_2 \rangle $. 
Hence the first statement holds. 
To see the second, note that we have a sequence of finite index group inclusions $$\Gamma_i \hookrightarrow \langle \Gamma_1,\Gamma_2 \rangle \hookrightarrow \Gamma_0,$$ so we find that $[\Gamma_0:\langle \Gamma_1,\Gamma_2 \rangle]$ divides $\mathrm{gcd}([\Gamma_0,\Gamma_1], [\Gamma_0,\Gamma_2])$. 
\end{proof} 

\begin{lemma} \label{lindis} 
If $M_i \rightarrow M_0$ are $G_i$-Galois of coprime degree, then \begin{enumerate} \item $M_1 \bullet_{M_0} M_2 \rightarrow M_0$ is $(G_1 \times G_2)$-Galois; \item 
$M_1 \bullet_{M_0} M_2 \rightarrow M_1$ is $G_2$-Galois and  $M_1 \bullet_{M_0} M_2 \rightarrow M_2$ is $G_1$-Galois. \end{enumerate} \end{lemma}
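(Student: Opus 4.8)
The plan is to use the double-coset decomposition of the fiber product established just above, together with Lemma \ref{coprime}, and then track the Galois groups of each covering map explicitly.

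First I would handle (1). By Lemma \ref{coprime}, coprimality of the degrees forces $\Gamma_0 = \Gamma_1\Gamma_2$, so the fiber product $M_1\times_{M_0}M_2$ is connected and equals the compositum $M_1\bullet_{M_0}M_2 = \Gamma_{00}\backslash\tilde M_0$ with $\Gamma_{00}=\Gamma_1\cap\Gamma_2$. Since $\Gamma_i$ is normal in $\Gamma_0$ (the covers are Galois), $\Gamma_{00}$ is normal in $\Gamma_0$ as an intersection of two normal subgroups, so $M_1\bullet_{M_0}M_2\to M_0$ is Galois with group $\Gamma_0/\Gamma_{00}$. To identify this quotient with $G_1\times G_2 = \Gamma_0/\Gamma_1 \times \Gamma_0/\Gamma_2$, I would use the natural injection $\Gamma_0/\Gamma_{00}\hookrightarrow \Gamma_0/\Gamma_1\times\Gamma_0/\Gamma_2$ and compare orders: the left side has order $[\Gamma_0:\Gamma_{00}] = [\Gamma_0:\Gamma_1][\Gamma_0:\Gamma_2]$ precisely because $\Gamma_0=\Gamma_1\Gamma_2$ gives $[\Gamma_0:\Gamma_1\cap\Gamma_2]=[\Gamma_0:\Gamma_1][\Gamma_0:\Gamma_2]$, which matches the order of the right side, so the injection is an isomorphism. (This is the standard Goursat/coprime-order argument; coprimality guarantees $\Gamma_1\Gamma_2=\Gamma_0$.)

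Next, for (2), I would observe that $M_1\bullet_{M_0}M_2\to M_1$ corresponds to the subgroup inclusion $\Gamma_{00}=\Gamma_1\cap\Gamma_2 \trianglelefteq \Gamma_1$, which is normal in $\Gamma_1$ since $\Gamma_2$ is normal in $\Gamma_0\supseteq\Gamma_1$; hence this cover is Galois with group $\Gamma_1/(\Gamma_1\cap\Gamma_2)$. By the second isomorphism theorem, $\Gamma_1/(\Gamma_1\cap\Gamma_2)\cong \Gamma_1\Gamma_2/\Gamma_2 = \Gamma_0/\Gamma_2 = G_2$, using $\Gamma_1\Gamma_2=\Gamma_0$ again. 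Symmetrically $M_1\bullet_{M_0}M_2\to M_2$ is $G_1$-Galois. I would also note this is consistent with (1): under $\Gamma_0/\Gamma_{00}\cong G_1\times G_2$, the subgroup $\Gamma_1/\Gamma_{00}$ corresponds to $\{1\}\times G_2$ and $\Gamma_2/\Gamma_{00}$ to $G_1\times\{1\}$.

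I do not anticipate a serious obstacle here — the whole statement is a routine consequence of the double-coset description of the orbifold fiber product, the normality of the groups $\Gamma_i$ coming from the Galois hypothesis, and the elementary group theory that coprime indices force $\Gamma_1\Gamma_2=\Gamma_0$ (so that intersections and products behave as in the coprime-order case). The only point requiring a little care is to make sure all the relevant actions on $\tilde M_0$ are fixed-point free so that every quotient appearing is genuinely a Riemannian manifold and every covering map is an honest Riemannian covering rather than merely an orbifold map; but this was already arranged in the setup, since $\Gamma_1,\Gamma_2$ (and hence $\Gamma_{00}$ and the various conjugates) act without fixed points on $\tilde M_0$.
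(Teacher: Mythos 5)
Your proposal is correct and follows essentially the same route as the paper: both identify the compositum with $\Gamma_{00}\backslash\tilde M_0$ for $\Gamma_{00}=\Gamma_1\cap\Gamma_2$, prove (1) by showing the natural map $\Gamma_0\to\Gamma_0/\Gamma_1\times\Gamma_0/\Gamma_2$ is onto with kernel $\Gamma_{00}$ via a coprimality/index count, and deduce (2) from the isomorphism $\Gamma_1/(\Gamma_1\cap\Gamma_2)\cong\Gamma_0/\Gamma_2$. The only cosmetic difference is that you get the index equality from $\Gamma_0=\Gamma_1\Gamma_2$ (Lemma \ref{coprime}) while the paper argues directly that $[\Gamma_0:\Gamma_1\cap\Gamma_2]$ is divisible by the two coprime indices; these are interchangeable.
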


\begin{proof} 
We always have an exact sequence $$1 \rightarrow \Gamma_1 \cap \Gamma_2 \rightarrow \Gamma_0 \xrightarrow{\varphi} \Gamma_0/\Gamma_1 \times \Gamma_0/\Gamma_2.$$ Now since $[\Gamma_0:\Gamma_1 \cap \Gamma_2]$ is divisible by the coprime integers $[\Gamma_0:\Gamma_1]$ and  $[\Gamma_0:\Gamma_2]$, the map $\varphi$ is surjective. This proves the first statement. The second statement follows from $$\Gamma_1/(\Gamma_1 \cap \Gamma_2) \cong (\Gamma_0/(\Gamma_1 \cap \Gamma_2)) / (\Gamma_0/\Gamma_1) \cong \Gamma_0/\Gamma_2,$$ and similarly with the indices $1$ and $2$ interchanged. 
\end{proof}

\subsection{Normal closure} If $M_{00} \twoheadrightarrow M_0$ is a
general finite covering of connected spaces, then there exists
a connected space $M$ and a sequence of coverings
$M \twoheadrightarrow M_{00} \twoheadrightarrow M_0$ such that
$M \twoheadrightarrow M_0$ and $M \twoheadrightarrow M_{00}$ are
finite and Galois (i.e., the corresponding subgroup of the fundamental
group is normal), cf.\ \cite[Thm.\ 1]{Wolf}. We call the minimal such
$M$ the \emph{normal closure} of $M_{00} \twoheadrightarrow M_0$. In
terms of fundamental groups, if $M_{00}$ corresponds to the subgroup
$\Gamma_{00}$ of $\Gamma_0$, then $M$ corresponds to the subgroup $\Gamma$ of
$\Gamma_0$ given as the intersection of all $\Gamma_0$-conjugates of
$\Gamma_{00}$, the so-called \emph{(normal) core} of $\Gamma_{00}$ in
$\Gamma_0$.  Alternatively, the normal core $\Gamma$ is the kernel of the
action of $\Gamma_0$ by permutation on the cosets of $\Gamma_{00}$ in
$\Gamma_0$. In particular, since the index of $\Gamma_{00}$ in
$\Gamma_0$ is finite, so is the index of the normal core, and
$M \rightarrow M_0$ is indeed finite. In fact, by the above alternative description, the degree of $M \rightarrow M_0$, the index of the normal core in $\Gamma_0$, is bounded by the order of the group of permutations of the set $\Gamma_{00} \backslash \Gamma_0$. The number of elements of this set is the degree of the map $M_{00} \twoheadrightarrow M_0$, and hence we find a bound 
\begin{equation} \label{factorial} [\Gamma_0:\Gamma] \leq \deg(M_{00} \twoheadrightarrow M_0)! = [\Gamma_0:\Gamma_{00}]!. \end{equation} 

\begin{proposition} \label{fromdiagramtodiagram} Given manifolds $M_1$ and $M_2$ fitting into a diagram \textup{(\ref{m0})}, there exists a diagram of the form \textup{(\ref{sunadasetup})}. 
\end{proposition}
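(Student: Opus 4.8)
The plan is to construct $M$ in two steps: first pass to the compositum of $M_1$ and $M_2$ over $M_0$, and then take its normal closure over $M_0$.

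First I would set $M_{00} := M_1 \bullet_{M_0} M_2$, the compositum introduced above, which corresponds to the finite-index subgroup $\Gamma_{00} := \Gamma_1 \cap \Gamma_2$ of $\Gamma_0$; by construction it is a connected Riemannian manifold admitting finite covering maps to both $M_1$ and $M_2$, compatibly with the maps $p_i$ to $M_0$. Next I would let $\Gamma$ be the normal core of $\Gamma_{00}$ in $\Gamma_0$ — the intersection of all $\Gamma_0$-conjugates of $\Gamma_{00}$, equivalently the kernel of the permutation action of $\Gamma_0$ on the coset set $\Gamma_{00}\backslash\Gamma_0$ — and put $M := \Gamma\backslash\tilde M_0$. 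As recalled above (using \cite[Thm.\ 1]{Wolf}), $\Gamma$ has finite index in $\Gamma_0$, bounded as in \eqref{factorial}, so $M \to M_0$ is a finite covering, and it is Galois because $\Gamma$ is normal in $\Gamma_0$.

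The remaining point is that $M \to M_1$ and $M \to M_2$ are Galois as well. Since $\Gamma \subseteq \Gamma_{00} \subseteq \Gamma_i$ and $\Gamma$ is normal in $\Gamma_0$, it is a fortiori normal in each $\Gamma_i$; hence $M \to M_i$ is Galois with deck group $H_i := \Gamma_i/\Gamma$, while $M \to M_0$ has deck group $G := \Gamma_0/\Gamma$, and the inclusions $H_i \hookrightarrow G$ are those induced by $\Gamma_i \subseteq \Gamma_0$. Together with the maps $p_i$ this is precisely a diagram of the shape \eqref{sunadasetup}. I would then record the manifold and metric assertions: because $\Gamma \subseteq \Gamma_1$ and $\Gamma_1$ acts on $\tilde M_0$ properly discontinuously and without fixed points, so does $\Gamma$, so $M = \Gamma\backslash\tilde M_0$ is a smooth manifold carrying the Riemannian metric descending from $\tilde M_0$, with respect to which all the covering maps in the diagram are Riemannian coverings; $M$ is connected since $\tilde M_0$ is, and compact since $M \to M_0$ is finite and $M_0$ is compact.

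There is no genuine obstacle here; the one thing to keep straight is that the normal closure must be taken over $M_0$, so that $\Gamma$ is normal in the full group $\Gamma_0$ — this is exactly what forces not only $M \to M_0$ but also the two upper maps $M \to M_1$ and $M \to M_2$ to be Galois, which a normal closure taken only over $M_{00}$ would not guarantee.
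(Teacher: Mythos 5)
Your proposal is correct and is essentially the paper's own argument: form the compositum $M_1 \bullet_{M_0} M_2$ and pass to the normal core $\Gamma = \bigcap_{\gamma \in \Gamma_0} \gamma(\Gamma_1\cap\Gamma_2)\gamma^{-1}$, setting $M = \Gamma\backslash\tilde M_0$. You simply make explicit the finiteness, normality, and manifold/metric checks that the paper's one-line proof leaves implicit.
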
 

\begin{proof} We start with a diagram (\ref{m0}) and add the normal closure of the compositum, to find a diagram of the form (\ref{sunadasetup}), where $M$ corresponds to the normal core of $\Gamma_1 \cap \Gamma_2$: 
 \[ M:=\Gamma \backslash \tilde M_0 \mbox{ for } \Gamma:= \bigcap\limits_{\gamma \in \Gamma_0 }\gamma (\Gamma_1 \cap \Gamma_2) \gamma^{-1}. \qedhere  \]
\end{proof}

\subsection{Commensurability} Two manifolds $M_1$ and $M_2$ are called \emph{commensurable} if they admit a common finite covering. Proposition \ref{fromdiagramtodiagram} implies that if $M_1$ and $M_2$ have a common finite (developable orbifold) quotient as in diagram (\ref{m0}), then they are commensurable. 

We briefly look at the converse statement. Assume that $M_1$ and $M_2$ are commensurable with common finite covering $M$, let $\tilde M$ denote the universal cover of $M$ with isometry group $\mathcal{I}$, and let $\Gamma$ denote the subgroup of $\mathcal I$ corresponding to $M$. Then $M_1$ and $M_2$ correspond to subgroups $\Gamma_1$ and $\Gamma_2$ of $\mathcal I$, and a diagram such as (\ref{m0}) exists if and only if $\Gamma_1$ and $\Gamma_2$ are of finite index in the subgroup of $\Gamma$ generated by $\Gamma_1$ and $\Gamma_2$. Indeed, if $M_0$ exists and corresponds to a subgroup $\Gamma_0$ of $\mathcal I$, then $\tilde M_0 = \tilde M$ and $\langle \Gamma_1, \Gamma_2 \rangle$ is a finite index subgroup of $\Gamma_0$, and by assumption $\Gamma_1$ and $\Gamma_2$ have finite index in $\Gamma_0$. Conversely, if $\langle \Gamma_1, \Gamma_2 \rangle$ is of finite index in $\Gamma$, we can set $M_0$ to be the corresponding orbifold $\langle \Gamma_1, \Gamma_2 \rangle \backslash \tilde M$.    

\begin{proposition} \label{2to1}
  If $\tilde M$ is a homogeneous space for a connected semisimple non-compact real Lie group with trivial center and no compact factors and $M_1$ and $M_2$ are quotients of $\tilde M$ corresponding to commensurable irreducible uniform lattices $\Gamma_1$ and $\Gamma_2$, then there exists a diagram of the form \textup{(\ref{m0})} if at least one of $\Gamma_1$ and $\Gamma_2$ is non-arithmetic. In this case
  both $\Gamma_1$ and $\Gamma_2$ are non-arithmetic.
\end{proposition}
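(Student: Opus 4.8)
The plan is to use the rigidity characterisation of arithmeticity for irreducible uniform lattices in semisimple Lie groups of the type assumed here — namely Margulis's theorem that such a lattice $\Gamma$ is arithmetic if and only if its commensurator $\mathrm{Comm}_{\mathrm{Isom}(\tilde M)}(\Gamma)$ is dense in the ambient Lie group (equivalently, $[\mathrm{Comm}(\Gamma):\Gamma]=\infty$), and non-arithmetic if and only if the commensurator contains $\Gamma$ with finite index. The key observation is that commensurability is an invariant of the commensurator: if $\Gamma_1$ and $\Gamma_2$ are commensurable, then $\mathrm{Comm}(\Gamma_1)=\mathrm{Comm}(\Gamma_2)$ as subgroups of $\mathrm{Isom}(\tilde M)$. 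Hence if one of the two lattices is non-arithmetic, the common commensurator $\Lambda := \mathrm{Comm}(\Gamma_1)=\mathrm{Comm}(\Gamma_2)$ is itself a lattice (a discrete group) containing both $\Gamma_1$ and $\Gamma_2$ with finite index; and then the other lattice is automatically non-arithmetic as well, since it too sits with finite index in a discrete commensurator. This already gives the last sentence of the statement.

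For the existence of the diagram (\ref{m0}), I would then invoke the criterion established just before the proposition: a diagram of the form (\ref{m0}) exists precisely when $\Gamma_1$ and $\Gamma_2$ have finite index in $\langle \Gamma_1,\Gamma_2\rangle$. So it suffices to exhibit \emph{some} discrete group $\Gamma_0$ acting on $\tilde M = \tilde M_0$ containing both $\Gamma_i$ with finite index — for then $\langle \Gamma_1,\Gamma_2\rangle \subseteq \Gamma_0$ is also discrete, hence a lattice, and contains each $\Gamma_i$ with finite index (finiteness of the index following from $[\Gamma_0:\Gamma_i]<\infty$). The group $\Gamma_0 := \Lambda = \mathrm{Comm}(\Gamma_1)$ does the job by the previous paragraph. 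One must also check that the resulting quotient $M_0 = \langle\Gamma_1,\Gamma_2\rangle\backslash\tilde M$ is a \emph{developable} orbifold, which is immediate since by construction $\tilde M$ is simply connected (it is the universal cover of $M$) and $\langle\Gamma_1,\Gamma_2\rangle$ is a cocompact discrete group of isometries of it — this is exactly the definition of developable recalled in the Setup. Cocompactness holds because $\langle\Gamma_1,\Gamma_2\rangle$ contains the uniform lattice $\Gamma_1$ with finite index.

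The one genuine subtlety — and the step I expect to be the main obstacle — is the passage from "abstractly commensurable lattices" to "lattices with the same commensurator inside a fixed $\mathrm{Isom}(\tilde M)$". The hypotheses (semisimple, trivial centre, no compact factors, irreducible) are precisely what is needed to apply Mostow rigidity / the Borel–Wang strong rigidity results so that an abstract commensuration between $\Gamma_1$ and $\Gamma_2$ is realised by conjugation in $\mathrm{Isom}(\tilde M)$; and they are what makes Margulis's arithmeticity criterion in terms of the commensurator available. Here "commensurable" in the statement should be read as: $M_1$ and $M_2$ have a common finite Riemannian cover, i.e. $\Gamma_1$ and $\Gamma_2$ are commensurable as subgroups of $\mathrm{Isom}(\tilde M)$ after suitable identification — so the rigidity input is used exactly to make this identification, after which everything is an elementary manipulation of finite indices. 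I would spell out the Margulis/Mostow citations carefully and note that trivial centre is what lets us regard $\Gamma_0$ as acting faithfully, so that $M_0$ is an honest orbifold quotient and Lemma \ref{over} applies downstream.
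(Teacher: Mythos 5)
Your proposal is correct and follows essentially the same route as the paper: Margulis's commensurator criterion for arithmeticity, the observation that commensurable lattices share a common commensurator $\mathcal{C}$ in $\mathrm{Isom}(\tilde M)$, and the finite-index chain $\Gamma_i \hookrightarrow \langle \Gamma_1,\Gamma_2\rangle \hookrightarrow \mathcal{C}$ yielding $M_0 = \langle\Gamma_1,\Gamma_2\rangle\backslash\tilde M$. The paper reads the hypothesis as commensurability already realised inside $\mathrm{Isom}(\tilde M)$ (the $\Gamma_i$ are given as quotients of the same $\tilde M$), so the Mostow-rigidity step you flag as a potential obstacle is not needed; otherwise your argument matches the paper's, with some extra (correct) verification of developability and cocompactness.
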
 

\begin{proof} Let $\mathcal I$ denote the isometry group of $\tilde M$. Since $\Gamma_1$ and $\Gamma_2$ are commensurable, their commensurator in $\mathcal I$, $$\mathcal C := \mathrm{Comm}_{\mathcal I}(\Gamma_i) = \{ g \in \mathcal I \colon [g\Gamma_i g^{-1}: \Gamma_i \cap g \Gamma_i g^{-1}] \cdot [\Gamma_i : \Gamma_i \cap g \Gamma_i g^{-1}] < \infty \}$$ is the same (indeed,
  if $\Gamma_1$ and $\Gamma_2$ are commensurable and $\Gamma_1$ is commensurable
  to $g \Gamma_1 g^{-1}$, then also $g \Gamma_1 g^{-1}$ and $g \Gamma_2 g^{-1}$ are commensurable and, therefore $\Gamma_2$ and $g \Gamma_2 g^{-1}$ are commensurable since commensurability is an equivalence relation).
  Margulis theorem \cite[Thm.\ (1), p.\ 2]{Margulis} (see also \cite[Props. 6.2.4, 6.2.5 and Thm. 6.2.6]{Zimmer}) states that either $\Gamma_i$ is not arithmetic and of finite index in $\mathcal C$; or $\Gamma_i$ is arithmetic and $\mathcal C$ is dense in $\mathcal I$ (to connect to the more general formulation in \cite{Margulis}: we consider a single semisimple group over the reals, and the ``anisotropy condition'' in loc.\ cit.\ is satisfied since we assume the group is not compact). This directly implies that either both lattices $\Gamma_1$ and $\Gamma_2$
are arithmetic or they are both not arithmetic. 

Consider the sequence of inclusions
\begin{equation} \label{seqcomm} \Gamma_i \hookrightarrow \langle \Gamma_1, \Gamma_2 \rangle \hookrightarrow \mathcal C \end{equation}  for $i=1,2$.

If neither of $\Gamma_i$ is arithmetic, the composed inclusion is of finite index. Hence the same holds for the first inclusion, and we can set $M_0:= \langle \Gamma_1, \Gamma_2 \rangle  \backslash \tilde M$. 
\end{proof} 
The proposition applies in particular to compact hyperbolic manifolds $M_i = \Gamma_i \backslash \mathbb{H}^n$, where $\mathbb{H}^n$ is hyperbolic $n$-space. 

\begin{proposition} \label{prop:commdiag}
There exist commensurable compact hyperbolic Riemann manifolds $M_1$ and $M_2$ of dimensions $2$ and $3$, for which a diagram of the form \textup{(\ref{m0})} does not exist. 
\end{proposition}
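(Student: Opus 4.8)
Since $M_1$ and $M_2$ are commensurable compact hyperbolic $n$-manifolds, they share the universal cover $\mathbb{H}^n$, and, once it is fixed, correspond to commensurable torsion-free cocompact lattices $\Gamma_1,\Gamma_2$ in $\mathcal{I}:=\mathrm{Isom}(\mathbb{H}^n)$; write $\mathcal{K}$ for their commensurability class. A diagram \eqref{m0} is the same datum as a developable orbifold $\Gamma_0\backslash\mathbb{H}^n$ with finite coverings $M_i\to\Gamma_0\backslash\mathbb{H}^n$, i.e.\ a cocompact lattice $\Gamma_0\in\mathcal{K}$ such that $\psi_i\Gamma_i\psi_i^{-1}\leq\Gamma_0$ with finite index for suitable $\psi_i\in\mathcal{I}$ (this unwinds the discussion preceding the proposition). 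Covolumes in $\mathcal{K}$ are bounded below (Kazhdan--Margulis) and form a discrete set, so every lattice in $\mathcal{K}$ lies in a maximal one, and there are finitely many conjugacy classes $\Gamma^{(1)}_{\max},\dots,\Gamma^{(s)}_{\max}$ of maximal lattices (Borel). It follows that a diagram \eqref{m0} exists \emph{if and only if} some $\Gamma^{(j)}_{\max}$ contains an $\mathcal{I}$-conjugate of $\Gamma_1$ together with an $\mathcal{I}$-conjugate of $\Gamma_2$ --- so if $s=1$ one always exists. Thus the task is to produce, for $n=2$ and $n=3$, a class $\mathcal{K}$ with $s\geq 2$ and torsion-free $\Gamma_1,\Gamma_2\in\mathcal{K}$ whose sets of admissible maximal types $J(\Gamma_i):=\{\,j:\Gamma_i\text{ is conjugate into }\Gamma^{(j)}_{\max}\,\}$ are disjoint.

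The class $\mathcal{K}$ must be arithmetic: for a non-arithmetic class the commensurator is itself a lattice (Margulis, as recalled in the proof of Proposition \ref{2to1}) and is the unique maximal element, so $s=1$. I would therefore take a quaternion division algebra $D$ over a real quadratic field split at exactly one real place (its norm-one unit groups give cocompact Fuchsian lattices, $n=2$), respectively over an imaginary quadratic field (cocompact Kleinian lattices, $n=3$), chosen so that its type number is at least $2$; such $D$ exist in both signatures. Then there are non-conjugate maximal lattices $\Gamma^{(1)}_{\max}\neq\Gamma^{(2)}_{\max}$, and by maximality neither is conjugate into the other, so $J(\Gamma^{(j)}_{\max})=\{j\}$. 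I would then take $\Gamma_i\leq\Gamma^{(i)}_{\max}$ torsion-free of finite index (Selberg's lemma), so $M_i:=\Gamma_i\backslash\mathbb{H}^n$ are genuine commensurable compact hyperbolic manifolds, chosen carefully enough that $J(\Gamma_1)$ and $J(\Gamma_2)$ remain disjoint; then no maximal lattice contains conjugates of both $\Gamma_1$ and $\Gamma_2$, hence $\langle\Gamma_1,\psi\Gamma_2\psi^{-1}\rangle$ is non-discrete for every $\psi\in\mathcal{I}$ and no diagram \eqref{m0} exists.

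The delicate point --- and the main obstacle --- is this last clause: passing to a torsion-free finite-index subgroup can only enlarge $J$, and the two maximal orbifolds $\Gamma^{(j)}_{\max}\backslash\mathbb{H}^n$, being commensurable, admit a common finite cover, which is already conjugate into both maximal types; so one cannot use arbitrary torsion-free subgroups. The way around it is to define $\Gamma_i$ by suitable congruence conditions and to prove, via strong approximation in $D^\times$ and the classification of Eichler orders up to conjugacy, that the local structure of $\Gamma_i$ forces every maximal lattice receiving a conjugate of $\Gamma_i$ to be of type $i$, whence $J(\Gamma_i)=\{i\}$. Everything else --- the reduction above, the finiteness of the number of maximal types, and the availability of quaternion algebras of type number $\geq 2$ in both signatures --- is classical (Eichler, Borel; for dimension three see e.g.\ Maclachlan--Reid).
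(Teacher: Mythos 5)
Your reduction is correct and your choice of examples is, in substance, the same as the paper's: the paper simply cites Vign\'eras' arithmetic lattices \cite{Vigneras} (arising from non-conjugate maximal orders in a quaternion algebra of type number $\geq 2$, over a suitable totally real, resp.\ imaginary quadratic, field) together with \cite[Prop.~3]{Chen} for the non-existence of a diagram \eqref{m0}. Your observations that every lattice in the class sits inside a maximal one, that a diagram \eqref{m0} exists iff some maximal lattice contains conjugates of both $\Gamma_1$ and $\Gamma_2$, and that the class must be arithmetic (otherwise Margulis forces $s=1$, exactly as in the proof of Proposition~\ref{2to1}) are all sound.

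However, there is a genuine gap, and it sits exactly where you place it. The entire content of the proposition is the existence of \emph{torsion-free} commensurable $\Gamma_1,\Gamma_2$ with $J(\Gamma_1)\cap J(\Gamma_2)=\emptyset$, and your proposal only gestures at this: "define $\Gamma_i$ by suitable congruence conditions and prove, via strong approximation and the classification of Eichler orders, that $J(\Gamma_i)=\{i\}$" is a research program, not an argument. As you note yourself, $J$ can only grow under passage to finite-index subgroups, and the intersection $\Gamma^{(1)}_{\max}\cap\psi\Gamma^{(2)}_{\max}\psi^{-1}$ already has both types, so the disjointness is a delicate property of the specific subgroups chosen, not something that can be arranged by Selberg's lemma plus hand-waving. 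The standard resolution is to take $\Gamma_i=\mathcal{O}_i^1/\{\pm1\}$ for non-conjugate maximal orders $\mathcal{O}_1,\mathcal{O}_2$ (with the algebra chosen so these are torsion-free, which is part of Vign\'eras' construction) and then to show that any lattice containing both $\Gamma_1$ and a conjugate of $\Gamma_2$ would force the order generated by $\mathcal{O}_1$ and a conjugate of $\mathcal{O}_2$ to be contained in a single order, contradicting maximality and non-conjugacy; this local--global computation (Borel's description of maximal arithmetic lattices as normalisers of Eichler orders enters here) is precisely what \cite[Prop.~3]{Chen} supplies. Without that step carried out, or at least cited, the proof is incomplete; also note that the groups $\mathcal{O}_i^1$ are not themselves maximal lattices (their normalisers properly contain them), so your intermediate claim that one can start from the maximal lattices $\Gamma^{(j)}_{\max}$ and descend needs more care than the plan suggests.
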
  

\begin{proof} 
The uniform arithmetic isospectral, non-isometric lattices constructed by Vign\'eras satisfy this property \cite{Vigneras}; for commensurability, see 
\cite[Ch.\ IV]{Vignerasbook}. More information can be found in \cite[Prop. 3]{Chen}.\end{proof}

\begin{remark} More generally, Alan Reid has shown that isospectral manifolds corresponding to arithmetic lattices are commensurable (\cite{Reid}, compare \cite{Lola} for a quantitative statement). It is an open problem whether isospectral Riemann surfaces are always commensurable. Through work of Lubotzky, Samuels and Vishne, it is known that isospectrality does not  imply commensurability \cite{Lubotzky} in general.  
\end{remark} 

\section{Spectra, group representations and twisted Laplacians} \label{prelim} 

%\begin{flushright} {\small 
%\emph{We're gonna do the twist and it goes like this} \\ 
%--- ``Let's twist again'', written by  Kal Mann and Dave Appell} 
%\end{flushright} 

In this section, we review basic notions about spectra, group representations, and twisted Laplace operators (see, e.g., \cite{Isaacs}, \cite{Rosenberg}), \cite{SerreRep}).  

\subsection{Spectrum and spectral zeta function} \label{specz} Let $M=(M,g)$ denote a connected closed oriented smooth Riemannian manifold with Riemannian metric $g$.  Let $E$ denote a Hermitian line bundle on $M$ and $A$ a symmetric second order elliptic differential operator acting on smooth sections $C^\infty(M,E)$ of $E$ with non-negative eigenvalues. The operator extends to the corresponding space $L^2(M,E)$ of $L^2$-sections where it has a dense domain. The \emph{spectrum $\Sp_M(A)$ of $A$} (or $\Sp(A)$ is $M$ is fixed) is the multiset of eigenvalues of $A$, where the multiplicities of the elements in the set are given by the multiplicities of the eigenvalues. 

We make the following convenient notational conventions: if $S_1$ and $S_2$ are multisets, we let $S_1 \cup S_2$ denote the multiset consisting of elements of $S_1$ or $S_2$, where the multiplicity of an element is the sum of the multiplicities of that element in $S_1$ and $S_2$, and for a multiset $S$ and an integer $n$, we mean by $nS$ the multiset of elements of $S$ where all multiplicities are multiplied by $n$. 

The \emph{spectral zeta function} of $A$ as above  is defined as $$\zeta_{M,A}(s) = \zeta_A(s):= \sum\limits_{0 \neq \lambda \in \Sp(A)} \lambda^{-s}$$ with the sum not involving the zero eigenvalues. The function can be meromorphically continued to the entire complex plane \cite[Thm.\ 5.2]{Rosenberg}. Since $\zeta_A(s)$ is a Dirichlet series, it is determined by its values on a countable set with an accumulation point, e.g., by its values at all sufficiently large integers. We will formulate all results using the spectrum, rather than the spectral zeta function. For odd-dimensional manifolds, these give exactly the same information, as the following proposition shows.

\begin{proposition} \label{oddeq}  If $M$ is an odd-dimensional manifold, the
  multiset $\Sp_M(A)$ and the function $\zeta_{M,A}(s)$ mutually
  determine each other.
\end{proposition}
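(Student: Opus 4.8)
The plan is to prove the two implications separately. One is free: since by definition $\zeta_{M,A}(s) = \sum_{0 \neq \lambda \in \Sp_M(A)} \lambda^{-s}$ is assembled directly from the multiset $\Sp_M(A)$, the spectrum determines the zeta function in any dimension. The real work is the converse, and I would organise it into two steps — first recovering the positive eigenvalues with multiplicities, then recovering the multiplicity of $0$ — with odd-dimensionality needed only for the second.

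For the first step I would invoke uniqueness of (generalised) Dirichlet series, exactly as already flagged in the text. Write $0 < \lambda_1 < \lambda_2 < \cdots$ for the distinct positive eigenvalues of $A$, with multiplicities $m_i$, so that $\zeta_{M,A}(\sigma) = \sum_i m_i \lambda_i^{-\sigma}$ converges for real $\sigma \gg 0$. Then $\lambda_1 = \exp\!\big(-\lim_{\sigma\to+\infty}\sigma^{-1}\log\zeta_{M,A}(\sigma)\big)$, next $m_1 = \lim_{\sigma\to+\infty}\lambda_1^{\sigma}\zeta_{M,A}(\sigma)$, and replacing $\zeta_{M,A}(s)$ by $\zeta_{M,A}(s) - m_1\lambda_1^{-s}$ and iterating peels off every pair $(\lambda_i, m_i)$. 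Hence $\zeta_{M,A}$ determines $\Sp_M(A)$ away from $0$, in any dimension.

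For the second step I would pass through the heat trace. Set $n = \dim M$ and $\Theta(t) := \sum_{\lambda\in\Sp_M(A)} e^{-\lambda t} = m_0 + \sum_{\lambda>0} m_\lambda e^{-\lambda t}$, where $m_0 := \dim\ker A$ is exactly the datum to be recovered; $\Theta(t)$ admits a small-time asymptotic expansion $\Theta(t) \sim \sum_{k\ge 0} a_k t^{(k-n)/2}$ as $t\to 0^+$ (Seeley/Gilkey; cf.\ \cite{Rosenberg}). From the Mellin identity $\Gamma(s)\zeta_{M,A}(s) = \int_0^\infty t^{s-1}(\Theta(t)-m_0)\,dt$ — valid for $\Re s \gg 0$ and continued by splitting the integral at $t=1$ and substituting the expansion — one reads off that $\Gamma(s)\zeta_{M,A}(s)$ has at worst a simple pole at $s=0$ with residue $a_n - m_0$; since $1/\Gamma$ vanishes at $0$, the function $\zeta_{M,A}$ is holomorphic there and $\zeta_{M,A}(0) = a_n - m_0$. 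The decisive point is then that for odd $n$ the coefficient $a_n$ is the integral over $M$ of a local heat invariant of odd weight, hence $a_n = 0$, so $m_0 = -\zeta_{M,A}(0)$ is determined by $\zeta_{M,A}$. Combined with the first step, this yields the equivalence.

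The main obstacle I anticipate is precisely this last point — the vanishing of $a_n$ for odd $n$. For Laplace-type operators (the only case used later in the paper) it is Gilkey's description of the heat coefficients as integrals of universal curvature polynomials of the appropriate weight, the odd-weight ones being identically zero by parity; for a general symmetric second-order elliptic $A$ one cites the same parity property of the Seeley coefficients in odd dimensions. Everything else is routine manipulation of Dirichlet series and the Mellin transform.
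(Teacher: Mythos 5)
Your proposal is correct and follows the same route as the paper: the nonzero spectrum is recovered from the Dirichlet series, and the multiplicity of zero from $\dim\ker A=-\zeta_{M,A}(0)$ in odd dimension. The paper simply cites \cite[Thm.\ 5.2]{Rosenberg} for this last identity, whereas you unwind its proof (Mellin transform of the heat trace plus vanishing of the constant heat coefficient in odd dimension); the content is identical.
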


\begin{proof} It is clear that the function $\zeta_A(s)$ determines
  $\Sp(A)-\{0\}$, so we only need to show that if $M$ is of odd dimension, the multiplicity of zero in
  the spectrum is also determined by $\zeta_A$; this multiplicity is
  $\dim \ker A$, which equals $-\zeta_A(0)$ if $M$ has odd
  dimension (see \cite[Thm. 5.2]{Rosenberg}).
\end{proof} 

\begin{remark} \label{diracex}  The result in Proposition \ref{oddeq} does not hold in general if $M$ is of even dimension $n$, as the following example shows. Suppose $M$ is an even dimensional spin manifold with Dirac operator  $D = \left(\begin{smallmatrix} 0 & D^+ \\ D^- & 0 \end{smallmatrix} \right)$, where the spinor bundle is decomposed into eigenspaces for the chirality operator as $S^+ \oplus S^-$, with  $D^{+} \colon S^{+} \rightarrow S^{-}$ and $D^- \colon  S^{-} \rightarrow S^{+},$ $D^-=(D^+)^*$ adjoint to $D^+$. Then the second order operators $\Delta^{\pm} := D^{\mp} D^{\pm}$ have the same non-zero spectrum (this is true in general for the non-zero spectrum of the products $AB$ and $BA$ of two operators $A$ and $B$). 
On the other hand, $\ker \Delta^+ = \ker (D^+)^* D^+ = \ker D^+ $ since if $\Delta ^+\varphi = (D^+)^* D^+ \varphi =0$, then 
$|| D^+ \varphi||^2 = \langle D^+ \varphi, D^+ \varphi \rangle = \langle \varphi, (D^+)^* D^+ \varphi \rangle = 0$. Hence with  $m^{\pm}$ the multiplicity of $0$ in the spectrum of $\Delta^{\pm}$, we find that 
\begin{align*} m^+ - m^- & =
\dim \ker \Delta^+ - \dim \ker \Delta^- = 
 \dim \ker D^+ - \dim \ker D^- \\ &= \mathrm{index}\ D = \int_M \hat A(M)
 \end{align*} 
is, by the Atiyah--Singer index theorem, the $\hat A$-genus of $M$, which may be non-zero (compare \cite[3.4, 4.1]{BGV}). For example, if $M$ is a complex quartic surface (of real dimension $4$) in $\mathbf{CP}^3$, then $\mathrm{index}\ D = \int \hat A(M)= 2$ (see, e.g., \cite[p.\ 727]{Gilkey}). 
\end{remark}

Interestingly, our main results, such as Theorem \ref{main} and Proposition \ref{mainprop}, are formulated in their strongest possible form using \emph{precisely} the multiplicity of zero in the spectrum of certain twisted Laplacians. For these operators, it turns out that also in even dimension this multiplicity (and hence the zeta function) is fixed by the non-zero spectrum of the usual and the twisted Laplacian, cf.\ Proposition \ref{fixm0} below.

\subsection{Group representations} If $G$ is a finite group, let
$\check{G}=\Hom(G,\C^*)$ denote the group of linear characters of $G$,
and let $\Irr(G)$ denote the set of inequivalent irreducible unitary
representations of $G$. We consider complex representations as $\C[G]$-modules
$\mathcal{M}$ or group homomorphisms
$\rho \colon G \rightarrow \Aut(V) \cong \mathrm{GL}_N(\C)$ with $V = \C^N$ and freely mix these
concepts, writing expressions such as
``$\mathcal{M} \cong \rho$''. By further slight abuse of
notation, if $\rho_1$ and $\rho_2$ are representations of $G$, we
write
$$\langle \rho_1, \rho_2 \rangle = \langle \tr(\rho_1(-)),
\tr(\rho_2(-) \rangle = \frac{1}{|G|} \sum_{g \in G}
\tr(\rho_1(g))\overline{\tr(\rho_2(g))}$$ for the inner product of the
corresponding characters in the space of class functions. The
multiplicity of an irreducible representation $\rho' \in \Irr(G)$ in
the decomposition into irreducibles of a general representation $\rho$
of $G$ is then $\langle \rho, \rho' \rangle$.

The \emph{regular representation} $\rho_{G,\mathrm{reg}}$ corresponds to the $\C[G]$\hyp{}module $\C[G]$. It decomposes as $$\rho_{G,\mathrm{reg}} = \bigoplus\limits_{\rho_i \in \Irr(G)} \dim(\rho_i)  \rho_i.$$ 

If $H$ is a subgroup of $G$, then $\Ind_H^G \rho $ denotes the representation induced by $\rho$ from $H$ to $G$: if $\rho$ corresponds to the $\C[H]$-module $V$, then $\Ind_H^G \rho$ corresponds to the $\C[G]$ module $W:=\C[G] \otimes_{\C[H]} V$. 
In coordinates, this means the following: since $G$ permutes the cosets of $H$ in $G$, if we choose coset representatives $G/H = \{ g_1H = H, \dots g_nH \},$ then for any $g \in G$ we have $gg_i = g_{g(i)} h_{g,i}$ for some $h_{g,i} \in H$ and some permutation $i \mapsto g(i)$ of $\{1,\dots,n\}$. If we write $W = V^{G/H} = \bigoplus_{i=1}^n g_i V$ using $g_i$ as placeholder, then with $v_i \in V$, we have 
\begin{equation} \label{eq:indHG}
\Ind_H^G \rho(g) \left( \sum_i g_i v_i \right) = \sum_i g_{g(i)} \rho(h_{g,i})(v_i). 
\end{equation}
Let $\Res_H^G \rho$ denotes the restriction of $\rho$ from $G$ to $H$. If $\rho$ is a representation of $G$, $\bar \rho$ denote the complex conjugate representation. 
Recall the standard calculation rules $$\langle \rho_1 \otimes \rho_2, \rho_3 \rangle = \langle \rho_1, \rho_3 \otimes \overline{\rho_2}  \rangle \mbox{ and } \langle \Ind_H^G \rho_1, \rho_2 \rangle = \langle \rho_1, \Res_H^G \rho_2 \rangle,$$
(the latter is known as ``Frobenius reciprocity''). 

More generally, the above theory applies \emph{mutatis mutandis}, replacing $\C$ by an algebraically closed field of characteristic coprime to the order $|G|$ of the group $G$. For a non-algebraically closed field $K$ of characteristic coprime to $|G|$, an irreducible representation might decompose over the algebraic closure into a sum of irreducible (Galois-conjugate) representations, but the above theory remains valid, with the caveat that a rational character is not always the character of a rational representation, but a multiple is. If the characteristic of $K$ divides $|G|$, the category of $K[G]$-modules is not semisimple, so complementary modules for submodules do not always exist. For us, it will be important that the regular representation is defined over $\Q$, and induction and restriction turn $K$-representations into $K$-representations. 

\subsection{$G$-sets} If $G$ is a group, a \emph{$G$-set} is a set that admits a left $G$-action. An example is the left cosets $G/H$ of a subgroup $H$ with the action of left multiplication by $G$. A \emph{morphism} of $G$-sets is a $G$-equivariant map of the sets. We say a $G$-set is \emph{transitive} if $G$ acts transitively on it. If $X$ is a transitive $G$-set and $H$ the stabiliser of any point in $X$, then $X$ is isomorphism to $G/H$ as $G$-set. 

\subsection{(Weak) conjugacy} We let ``$\one=\one_G$'' denote the trivial representation of a group $G$. If, as before, $\{g_1,\dots,g_n\}$ is a set of representatives for the (left) $H$-cosets in $G$, then $\Ind_H^G \one$ is the permutation representation (i.e., the action of each $g \in G$ is given by a permutation matrix, a matrix having exactly one non-zero entry $1$ in each row and column) given by the action of $G$ on the vector space $$\C[G/H]:=\bigoplus\limits_{i=1}^n \C g_i H$$ spanned by the cosets of $H$ in $G$. We have the following. 
\begin{proposition} \label{propweakstrong} \mbox{ } Let $H_1$ and $H_2$ denote two subgroups of a finite group $G$. \begin{enumerate} \item \label{weak-conj-prop} 
The following properties are equivalent: 
\begin{enumerate} 
\item The representations $\Ind_{H_1}^G \one \cong \Ind_{H_2}^G \one$ are isomorphic.
\item Each conjugacy class $c$ of $G$ intersects $H_1$ and $H_2$ in the same number of elements.
\item There exists a set-theoretic bijection $\psi \colon H_1 \rightarrow H_2$ such that $h_1$ and $\psi(h_1)$ are conjugate in $G$ for any $h_1 \in H_1$. 
\end{enumerate}
If any of these holds, we say $H_1$ and $H_2$ are \emph{weakly conjugate} in $G$.
\item \label{conj-prop} 
The stronger property that the groups $H_1$ and $H_2$ are \emph{conjugate} in $G$ is equivalent to the cosets $G/H_1$ and $G/H_2$ being isomorphic as $G$-sets.   
\end{enumerate}
\end{proposition}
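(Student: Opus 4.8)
The plan is to reduce part (i) to a short computation with characters and part (ii) to the standard correspondence between transitive $G$-sets and subgroups taken up to conjugacy. For the equivalence of (a) and (b) I would first record the character of a permutation representation: $\chi_{\Ind_{H}^G\one}(g)$ is the number of cosets $xH\in G/H$ fixed by $g$, that is $\#\{xH : x^{-1}gx\in H\}$. Counting the set $\{x\in G : x^{-1}gx\in H\}$ in two ways --- fibering over the cosets $xH$ (each fixed coset contributes $|H|$ elements) and fibering over the elements $h\in H$ that are $G$-conjugate to $g$ (each contributes $|C_G(g)|$ elements, the size of a transporter) --- yields
\[
\chi_{\Ind_{H}^G\one}(g)\;=\;\frac{|C_G(g)|}{|H|}\,\bigl|H\cap c_g\bigr|,
\]
where $c_g$ is the $G$-conjugacy class of $g$. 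Now (a) asserts $\chi_{\Ind_{H_1}^G\one}=\chi_{\Ind_{H_2}^G\one}$; evaluation at $g=e$ forces $[G:H_1]=[G:H_2]$, hence $|H_1|=|H_2|$, and then the displayed identity gives $|H_1\cap c_g|=|H_2\cap c_g|$ for all $g$, i.e.\ (b). Conversely, summing the equalities in (b) over a set of conjugacy-class representatives already yields $|H_1|=|H_2|$ --- so this equality of orders is a consequence, not an extra hypothesis --- and feeding it back into the formula recovers (a).

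The equivalence of (b) and (c) is purely combinatorial: both statements merely compare, block by block, the partition of $H_1$ into its intersections with the $G$-conjugacy classes and the analogous partition of $H_2$. Given (b), choose for each class $c$ a bijection $H_1\cap c\to H_2\cap c$ and take the disjoint union over all $c$; this produces a bijection $\psi\colon H_1\to H_2$ with $h$ and $\psi(h)$ $G$-conjugate by construction. Conversely, a $\psi$ as in (c) sends $H_1\cap c$ into $H_2\cap c$ for every $c$, and since $\psi$ is a bijection between sets carrying these partitions it must restrict to a bijection on each block, which is (b). For part (ii), if $gH_1g^{-1}=H_2$ then $xH_1\mapsto xg^{-1}H_2$ is well defined (here one uses $gH_1g^{-1}=H_2$), $G$-equivariant, and invertible with inverse $yH_2\mapsto ygH_1$, so $G/H_1\cong G/H_2$ as $G$-sets; conversely, a $G$-set isomorphism $\varphi\colon G/H_1\to G/H_2$ with $\varphi(H_1)=gH_2$ satisfies, by equivariance applied to the stabiliser of the coset $H_1$, $h\cdot gH_2=gH_2$ for all $h\in H_1$, i.e.\ $g^{-1}H_1g\subseteq H_2$, and comparing cardinalities ($\varphi$ forces $[G:H_1]=[G:H_2]$) upgrades this to $g^{-1}H_1g=H_2$.

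The argument is essentially formal, so there is no serious obstacle; the only point requiring genuine care is the two-way count that produces the character formula, together with the recurring observation that equality of indices $[G:H_1]=[G:H_2]$ is always forced by the stated hypotheses rather than needing to be assumed.
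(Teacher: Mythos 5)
Your proof is correct and follows essentially the same route as the paper: the character formula $\chi_{\Ind_H^G\one}(g)=|C_G(g)|\,|H\cap c_g|/|H|$ for part (i) and the stabiliser/coset-map argument for part (ii). The only difference is that you supply the details (the double count yielding the character formula and the partition argument for (b)$\Leftrightarrow$(c)) that the paper delegates to references, and your derivation of these is sound.
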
 

Weak conjugacy is also known as ``Ga{\ss}mann equivalence'' in number theory, cf.\ \cite{Perlis}. 

\begin{proof} (\ref{weak-conj-prop}) Representation isomorphism is the
  same as isomorphism of characters, and the character of the
  representation $\Ind_{H}^G \one$ is
  $$\psi(g)=|[g]\cap H| \cdot |{C_G(g)}|/{|H|},$$ where $[g]$ is the conjugacy class of $g$ and $C_G(g)$
  is the centraliser of $g$ in $G$ (compare, e.g., \cite[\S
  1]{BrooksSurvey}). The final equivalent statement is proven in
  \cite[Lemma 2]{Chen}.  (\ref{conj-prop}) The existence of a
  $G$-isomorphism $\phi: G/H_1 \to G/H_2$ implies that the
  $G$-stabiliser $H_1$ of the coset $eH_1$ equals the
  $G$-stabiliser $gH_2g^{-1}$ of some coset
  $gH_2=\phi(eH_1)$, and hence $H_1 = gH_2
  g^{-1}$. Conversely, if $H_2 = g_0^{-1}H_1g_0$, the map $\phi: G/H_1 \to G/H_2$,
      $\phi(gH_1) = g g_0 H_2$ is a well defined isomorphism of $G$-sets. 
\end{proof} 

\begin{remark} 
If we have a diagram (\ref{m0}) and $M_1$ and $M_2$ have the same Laplace spectrum (viz., the same spectral zeta function) and the same dimension $n$, then they have the same volume (from Weyl's law, or, equivalently, from the residue of their zeta functions at $s=n/2$). Since the covering degree $\deg(p_i)$ of $p_i$ is $\mathrm{vol}(M_i)/\mathrm{vol}(M_0)$, these are also equal, and hence in diagram (\ref{sunadasetup}), we find from $\#H_i = |G|/\deg(p_i)$ that $\#H_1 = \#H_2$. So in this case, there is \emph{always} a set-theoretic bijection $\psi \colon H_1 \rightarrow H_2$. 
\end{remark}

\subsection{Twisted Laplacian} \label{tl} Suppose $\rho \colon \pi_1(M) \rightarrow \Un_{{N}}(\C)$ is a unitary representation of the fundamental group $\pi_1(M)$ of $M$. Let $\Pi \colon \tilde M \rightarrow M$ denote the universal covering of $M$, and set $E_\rho:= \tilde M \times_\rho \C^{{N}}$, where the subscript $\rho$ indicates equivalence classes for the relation $(z,v) \sim (\gamma z,\rho(\gamma)^{-1} v)$ for any $z \in \tilde M, v\in \C^{{N}}$ and $\gamma \in \pi_1(M)$. Now $E_\rho$ is a flat 
vector bundle {{of rank $N$}} over $M$, whose global sections $ f \in C^\infty(M,E_\rho)$ correspond bijectively to smooth $\rho$-equivariant vector-valued functions $\vv{f} \colon \tilde M \rightarrow \C^{{N}}$, i.e., functions with $\vv{f}(\gamma z) = \rho(\gamma)\vv{f}(z)$. The \emph{twisted Laplacian} $$ \Delta_{M,\rho}=\Delta_\rho \colon C^\infty(M,E_\rho) \rightarrow C^\infty(M,E_\rho)$$ is defined as $$\vv{\Delta_\rho(f)}:=\Delta_{\tilde M} \vv{f}(z).$$ 
{{Henceforth, we will also denote the spectrum $\Sp_M(\Delta_\rho)$ simply by $\Sp_M(\rho)$
or by $\Sp(\rho)$ if the underlying manifold $M$ is fixed.}}

Notice that when $\rho = \rho_1 \oplus \rho_2$, then $\Delta_\rho$ admits a block decomposition $\Delta_{\rho_1} \oplus \Delta_{\rho_2}$ on $E_\rho \cong E_{\rho_1} \oplus E_{\rho_2}$, and thus, the spectrum satisfies (as multisets) $$\Sp_M(\rho_1 \oplus \rho_2) = \Sp_M(\rho_1) \cup \Sp_M(\rho_2).$$

\subsection{Twisted Laplacians on finite covers} \label{tlf}  In case $\rho$ factors through a finite group $G$, there is no need to use the universal covering. Let $M' \to M$ denote a (fixed-point free) $G$-cover and $\rho: G \to U_{{N}}(\C)$ a unitary representation. The vector space $C^\infty(M,E_\rho)$ is canonically isomorphic to the vector space of smooth  $\rho$-equivariant vector-valued functions on $M'$ given by
$$ C^\infty_\rho(M',\C^{{N}}) := \{ \vv{f} \in C^\infty(M',\C^{{N}}) \mid \vv{f}(\gamma x) = \rho(\gamma) \vv{f}(x), \forall\, x \in M', \gamma \in G \}. $$
In this case, \begin{equation} \label{defdeltarho} \vv{\Delta_\rho f} = \Delta_{M'} \vv{f}, \end{equation}  where $\vv{f}$ is the $\rho$-equivariant function in $C^\infty_\rho(M',\C^{{N}})$ corresponding
to $f$. Note that $\Delta_{M'} \vv{f}$ is again a $\rho$-equivariant function in $C^\infty_\rho(M',\C^{{N}})$ and therefore represents an element in  $C^\infty(M,E_\rho)$.

The following lemma is stated in  \cite[Lemma 1]{SunadaNT}; we write a proof using our notation. 

\begin{lemma}  \label{SNT} If $M \rightarrow M_1 \rightarrow M_0$ is a tower of finite Riemannian coverings and $M \rightarrow M_0$ is Galois with group $G$, $M \rightarrow M_1$ with group $H$, and $\rho \colon H \rightarrow \Un_{{N}}(\C)$ a representation, then $$ \Sp_{M_0}(\Ind_{H}^G \rho) = \Sp_{M_1}(\rho). $$ 
\end{lemma}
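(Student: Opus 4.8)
The plan is to exhibit an explicit isometry between the flat bundle $E_{\Ind_H^G \rho}$ over $M_0$ (with its twisted Laplacian) and the flat bundle $E_\rho$ over $M_1$ (with its twisted Laplacian), using the description of sections as equivariant vector-valued functions on $M$. Recall that $C^\infty(M_0, E_{\Ind_H^G\rho})$ is identified with $C^\infty_{\Ind_H^G\rho}(M,\C^{Nn})$, the space of functions $\vv{F}\colon M \to \C^{Nn} = V^{G/H}$ satisfying $\vv{F}(gx) = \Ind_H^G\rho(g)\vv{F}(x)$, while $C^\infty(M_1,E_\rho)$ is identified with $C^\infty_\rho(M,\C^N)$, the space of $\vv{f}\colon M \to \C^N$ with $\vv{f}(hx) = \rho(h)\vv{f}(x)$ for $h \in H$. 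Here we use that $M \to M_0$ is $G$-Galois and $M \to M_1$ is $H$-Galois, so both bundles can be described on the common cover $M$ rather than on the universal cover.

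The key step is the construction of the comparison map. Writing the coset decomposition $G/H = \{g_1 H = H, \dots, g_n H\}$ and $V^{G/H} = \bigoplus_i g_i V$ as in \eqref{eq:indHG}, I would send a section $\vv{F} = \sum_i g_i \vv{F}_i$ (with $\vv{F}_i \colon M \to V$) to its first component $\vv{f} := \vv{F}_1$, i.e.\ the $V$-valued function recording the ``$g_1 = e$'' coordinate of $\vv{F}$. First I would check this lands in $C^\infty_\rho(M,\C^N)$: for $h \in H$ we have $hg_1 = g_1 h$ with the permutation fixing the index $1$, so by \eqref{eq:indHG} the first component of $\Ind_H^G\rho(h)\vv{F}(x)$ is $\rho(h)\vv{F}_1(x)$, giving $\vv{f}(hx) = \rho(h)\vv{f}(x)$. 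Next I would show the map is a bijection: the equivariance $\vv{F}(gx) = \Ind_H^G\rho(g)\vv{F}(x)$ forces, for each $i$, the component $\vv{F}_i(x)$ to be determined by $\vv{F}_1(g_i^{-1}x)$ (choosing $g = g_i$ and reading off the $i$-th slot), so $\vv{F}$ is reconstructed from $\vv{f}$ by $\vv{F}_i(x) = \rho(h_{g_i^{-1},i'})\,\vv{f}(g_i^{-1}x)$ for the appropriate bookkeeping of $h$'s; conversely any $\vv{f}\in C^\infty_\rho(M,\C^N)$ yields a well-defined $\vv{F}$ by this formula, the well-definedness being exactly the $\rho$-equivariance of $\vv{f}$ under $H$. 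Finally, since $\Delta_M$ acts componentwise on $\C^{Nn}$-valued functions and commutes with the (constant, unitary) linear reshuffling by $\Ind_H^G\rho(g)$, the map intertwines $\Delta_{M_0,\Ind_H^G\rho}$ with $\Delta_{M_1,\rho}$: both are just $\Delta_M$ applied to the respective equivariant functions, and our identification matches the $V$-component on the nose. Hence the operators are unitarily equivalent (the map is easily seen to be an $L^2$-isometry up to the fixed scalar $[G:H]$, which does not affect the spectrum), so $\Sp_{M_0}(\Ind_H^G\rho) = \Sp_{M_1}(\rho)$.

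The main obstacle I anticipate is purely bookkeeping: getting the cocycle relations $gg_i = g_{g(i)}h_{g,i}$ from \eqref{eq:indHG} to line up correctly when inverting the map, so that the formula reconstructing $\vv{F}$ from $\vv{f}$ is manifestly well-defined and independent of the choice of coset representatives. A clean way to sidestep the index-chasing is to give the abstract version of the argument: by Frobenius reciprocity at the level of $C[G]$-modules, $C^\infty_{\Ind_H^G\rho}(M,\C^{Nn}) = \Hom_{C[G]}\bigl(\,\overline{\Ind_H^G\rho},\, C^\infty(M,\C)\,\bigr)$ and $C^\infty_\rho(M,\C^N) = \Hom_{C[H]}\bigl(\,\bar\rho,\, C^\infty(M,\C)\,\bigr)$ (where $G$, resp.\ $H$, acts on $C^\infty(M,\C)$ by deck transformations), and these two Hom-spaces are canonically isomorphic by the adjunction between $\Ind_H^G$ and $\Res_H^G$; this isomorphism is automatically $\Delta_M$-equivariant because $\Delta_M$ commutes with the deck action. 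I would likely present the concrete computation for transparency but remark that it is the adjunction in disguise.
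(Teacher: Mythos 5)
Your proposal is correct and is essentially the paper's own proof: the paper defines exactly the same pair of mutually inverse maps (projection onto the first coset component, and reconstruction via $\vv{F}_i(x)=\vv{f}(g_i^{-1}x)$, where your extra $\rho(h_{\cdot,\cdot})$ factor is in fact trivial because $g_ig_1=g_i\cdot e$), and concludes by the same observation that $\Delta_M$ commutes with the isometries $g_i^{-1}$. The Frobenius-reciprocity reformulation is a nice gloss but does not change the argument.
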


\begin{proof} Write $\rho^*:=\Ind_H^G \rho \colon G \rightarrow \Un_{{Nn}}(\C)$ and let $$G/H=\{g_1H=H,\dots,g_{{n}}H\}$$ denote representatives for the distinct cosets of $H$ in $G$. 
Define two maps 
$$ 
  \xymatrix{ C_\rho^\infty(M,\C^{{N}})  \ar@/^1.5pc/[r]^\Phi  &    C_{\rho^*}^\infty(M,\C^{{Nn}}) \ar@/^1.5pc/[l] ^\Psi   } 
 $$
by $$\Phi(\vv{f})(x) =(\vv{f}(g_1^{-1}(x)),\dots, \vv{f}(g_{{n}}^{-1}(x)))$$ and $$\Psi(\vv{F})=\Psi((\vv{f_1},\dots,\vv{f_{{n}}})): = \vv{f_1}.$$ 
Recall that by definition $$\rho^*(g)((\vv{f_i}(x))_{i=1}^{{n}}) =(\rho(g_{g(i)}^{-1} g g_i) \vv{f}_{g(i)}(x) )_{i=1}^{{n}}$$ where $g(i)$ is given by $$g g_i H = g_{g(i)} H.$$ This allows one to check that $\Phi$ and $\Psi$ are well-defined and mutually inverse bijections. 
Recall that $\vv{\Delta_\rho(f)}=\Delta_M(\vv{f})$ and $\vv{\Delta_{\rho^*}(F)}=\Delta_M(\vv{F})$ with $\Delta_M$ applied componentwise. 
Since the $g_i$ are isometries, $\Phi$ is a unitary operator in $L^2$ and $\Delta_M \Phi = \Phi \Delta_M$, so that  we have the intertwining 
$$ \Phi \circ \Delta_\rho = \Delta_{\rho^*} \circ \Phi, $$
and the equality of spectra follows. 
\end{proof}

Decomposing a general representation $\rho: G \to U_{{N}}(\C)$ into irreducibles as $\rho = \bigoplus \langle \rho_i,\rho \rangle \rho_i$, we have
\begin{equation} \label{cup1} \Sp_M(\Delta_{\rho}) = \bigcup \langle \rho_i,\rho \rangle \Sp_M(\Delta_{\rho_i}). \end{equation}
Applied to the regular representation, we find a relation between the spectra of the usual Laplacian on the cover $M'$ and of the twisted Laplacians on the original manifold $M$, as follows: 
\begin{equation} \label{cup2} \Sp_{M'}(\Delta_{M'}) =\Sp_{M}(\Delta_{\rho_{G,\mathrm{reg}}}) = \bigcup \dim(\rho_i) \Sp_M(\Delta_{\rho_i}). \end{equation} 
(The first equality follows from Lemma \ref{SNT} since $\Ind_{\{1\}}^G \one = \C[G] =  \rho_{G,\mathrm{reg}}$). We see in particular that the eigenvalues of any twisted Laplacian are also eigenvalues of the usual Laplace operator of the corresponding cover. 

\begin{remark} \label{overlap} It appears to be an interesting problem how \emph{disjoint} the spectra in (\ref{cup2}) are, similar to the question how disjoint zeros of number theoretic $L$-series are (cf.\ \cite{RubinsteinSarnak}): the so-called \emph{grand simplicity hypothesis} says that the imaginary parts of the zeros of all Dirichlet $L$-series for primitive characters are linearly independent over $\Q$. From the above decomposition results, it is clear that if $\rho'$ is a subrepresentation of $\rho$, then $\Sp(\rho') \subseteq \Sp(\rho)$ (this even holds for infinite amenable groups if $\rho'$ is weakly contained in $\rho$, cf.\ \cite{Sunadaweak}); here, we are asking for a kind of converse result. 
\end{remark}

\begin{lemma} \label{mult} Let $G$ be a finite group acting by fixed-point free isometries on a closed connected Riemannian manifold $M'$ with quotient $M=G \backslash M'$.
If $\rho$ is any unitary representation of $G$, then the multiplicity $\langle \rho, \one \rangle$ of the trivial representation in the decomposition of $\rho$ into irreducibles equals  $\dim \ker \Delta_\rho$, the multiplicity of the zero eigenvalue in {{$\Sp_M(\rho)$}}. 
\end{lemma}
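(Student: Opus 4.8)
Lemma~\ref{mult} asserts that for a finite group $G$ acting by fixed-point free isometries on a closed connected Riemannian manifold $M'$ with quotient $M = G\backslash M'$, and any unitary representation $\rho$ of $G$, one has $\langle \rho, \one\rangle = \dim\ker\Delta_\rho$.

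\medskip

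\noindent\textbf{Plan of proof.} The strategy is to reduce to the case of the trivial and regular representations via the decomposition formulas already established, and in fact to compute both sides directly from the harmonic functions on $M'$. The key observation is that $\dim\ker\Delta_\rho$ equals the dimension of the space of $\rho$-equivariant \emph{harmonic} functions $\vv f\colon M'\to\C^N$, i.e.\ those satisfying $\Delta_{M'}\vv f = 0$ and $\vv f(\gamma x)=\rho(\gamma)\vv f(x)$ for all $\gamma\in G$. Since $M'$ is closed and connected, the maximum principle (or the identity $\int_{M'}\langle\Delta_{M'}\vv f,\vv f\rangle = \int_{M'}\|\nabla\vv f\|^2$) forces every harmonic function on $M'$ to be constant; hence such an $\vv f$ is a constant vector $v\in\C^N$, and the equivariance condition becomes $v = \rho(\gamma)v$ for all $\gamma\in G$, i.e.\ $v$ lies in the subspace $(\C^N)^G$ of $G$-invariant vectors. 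Therefore $\dim\ker\Delta_\rho = \dim(\C^N)^G$.

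\medskip

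\noindent The remaining step is the purely representation-theoretic identity $\dim(\C^N)^G = \langle\rho,\one\rangle$. This is standard: the operator $P = \frac{1}{|G|}\sum_{g\in G}\rho(g)$ is the projection of $\C^N$ onto $(\C^N)^G$, so $\dim(\C^N)^G = \tr P = \frac{1}{|G|}\sum_{g\in G}\tr\rho(g) = \langle \rho,\one\rangle$, the multiplicity of the trivial representation in $\rho$. Combining the two steps gives the claim. One may equivalently phrase the argument via the decomposition $\rho = \bigoplus_i \langle\rho_i,\rho\rangle\rho_i$ and \eqref{cup1}, reducing to showing $\dim\ker\Delta_{\rho_i} = 0$ for a nontrivial irreducible $\rho_i$ and $\dim\ker\Delta_{\one} = 1$; the latter is exactly the statement that the harmonic functions on $M = G\backslash M'$ (equivalently, the $G$-invariant harmonic functions on $M'$) form a $1$-dimensional space because $M$ is closed and connected, and the former follows because a nonzero $\rho_i$-equivariant constant would produce a nonzero invariant vector in an irreducible nontrivial representation, which is impossible.

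\medskip

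\noindent\textbf{Anticipated main difficulty.} There is no serious obstacle here; the only point requiring a little care is the identification of $\ker\Delta_\rho$ (a space of sections of the flat bundle $E_\rho$ over $M$) with the space of $\rho$-equivariant harmonic functions on $M'$ — but this is precisely the content of the definition in \eqref{defdeltarho} together with the canonical isomorphism $C^\infty(M,E_\rho)\cong C^\infty_\rho(M',\C^N)$ recalled in Subsection~\ref{tlf}, and the fact that $\Delta_{M'}$ commutes with the $G$-action so that $\Delta_{M'}\vv f$ is again $\rho$-equivariant. After that, everything is either the maximum principle on a closed manifold or elementary character theory.
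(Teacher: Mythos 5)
Your proposal is correct and coincides with the paper's own ``second proof'' of Lemma \ref{mult}: identify $\ker\Delta_\rho$ with the $\rho$-equivariant harmonic functions on $M'$, note that these are constant vectors since $M'$ is closed and connected, and read off the $G$-invariant subspace, whose dimension is $\langle\rho,\one\rangle$. The alternative you sketch at the end via \eqref{cup1} and \eqref{cup2} is exactly the paper's first proof, so both routes are accounted for.
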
 

\begin{proof}[First proof of Lemma \ref{mult}]
  Since $M$ and $M'$ are connected, the multiplicity of zero in $\Sp_{M'}(\Delta_{M'})$ and $\Sp_M(\Delta_M)$ is one. It follows from the decomposition of multisets \eqref{cup2} that for any irreducible representation $\rho_i \neq \one$ of $G$, $\Sp_M(\Delta_{\rho_i})$ does not contain zero. If we now decompose $
  \rho$ as a sum of irreducibles, the decomposition of multisets \eqref{cup1} implies that the multiplicity of zero in $\Sp_M(\Delta_\rho)$ is indeed the multiplicity with which $\one$ occurs in $\rho$. 
   \end{proof} 

We can also give a ``direct'' proof, as follows. 

\begin{proof}[Second proof of Lemma \ref{mult}]
  {{Let $\rho: G \to U_N(\C)$.}} A function
  $f \in \ker \Delta_\rho \in C^\infty(M,E_\rho)$ corresponds to a
  function $\vv{f}$ on $M'$ with $\Delta_{M'} \vv{f} = 0$ and
  $\vv{f}(\gamma z) = \rho(\gamma) \vv{f}(z)$ {{for all
      $\gamma \in \Gamma$}}. Since $M'$ is closed {{and}}
  connected, this implies that $\vv{f}=\vv{f}_0$
  is a constant vector {{in $\C^N$}}, and the equivariance condition translates into
  $$
  (\rho(\gamma)-1)\vv{f}_0 = 0 {{\quad \text{for all $\gamma \in G$}}}.
  $$
  Hence each such linearly independent vector
  $\vv{f_0} \in \C^N$ can be used to split off a
  one-dimensional invariant subspace in $\rho$, and we find the
  result.
\end{proof} 

In Remark \ref{diracex} we showed that, in general, on an even-dimensional manifold, knowledge of the spectrum is stronger than that of the spectral zeta function, i.e., of the non-zero spectrum. For our twisted Laplace operators, the situation is better, as the following proposition shows. 

\begin{proposition} \label{fixm0} 
Let $G$ be a finite group acting by fixed-point free isometries on a closed connected $n$-dimensional Riemannian manifold $M'$ with quotient $M=G \backslash M'$.
If $\rho$ is any unitary representation of $G$, then on the one hand the pair of multisets $\Sp_M(\Delta_\rho)$ and $\Sp_M(\Delta)$, and on the other hand the pair of zeta functions $\zeta_{M,\Delta_\rho}(s)$ and $\zeta_{M,\Delta}(s)$ mutually
  determine each other.
\end{proposition}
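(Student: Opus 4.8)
The plan is to reduce everything to recovering one number: the multiplicity of the zero eigenvalue in $\Sp_M(\Delta_\rho)$. A multiset determines the Dirichlet series built from its non-zero part, so the pair of multisets $(\Sp_M(\Delta_\rho),\Sp_M(\Delta))$ determines the pair of zeta functions $(\zeta_{M,\Delta_\rho},\zeta_{M,\Delta})$. Conversely, since $\zeta_{M,A}(s)$ is a (generalised) Dirichlet series which by definition ignores the zero eigenvalues (Subsection \ref{specz}), the function $\zeta_{M,A}$ determines $\Sp_M(A)\setminus\{0\}$; hence for the converse implication it suffices to recover the multiplicity of $0$ in $\Sp_M(\Delta_\rho)$ and in $\Sp_M(\Delta)$ from the two zeta functions. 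For $\Sp_M(\Delta)$ this multiplicity is $1$, since $M$ is connected. For $\Sp_M(\Delta_\rho)$ it equals $\dim\ker\Delta_\rho$ (which by Lemma \ref{mult} also equals $\langle\rho,\one\rangle$, though I shall not use this), and the point is to express this quantity in terms of $\zeta_{M,\Delta_\rho}$ and $\zeta_{M,\Delta}$. If $n=\dim M$ is odd this is immediate from the proof of Proposition \ref{oddeq}: then $\dim\ker\Delta_\rho=-\zeta_{M,\Delta_\rho}(0)$, and $\zeta_{M,\Delta}$ is not even needed.

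For even $n$ I would invoke the small-time heat trace expansion $\operatorname{Tr}(e^{-tA})\sim\sum_{k\ge 0}c_k(A)\,t^{(k-n)/2}$ as $t\downarrow 0$ of a Laplace-type operator $A$ on a closed manifold, together with the standard relation between the spectral zeta function, the constant heat coefficient and the dimension of the kernel (the Mellin-transform computation behind \cite[Thm.\ 5.2]{Rosenberg}): $\dim\ker A=c_n(A)-\zeta_{M,A}(0)$, where $c_n(A)$ is the constant term of the expansion and $\zeta_{M,A}$ is regular at $s=0$. The geometric input is that $E_\rho$ carries a \emph{flat} Hermitian connection $\nabla$, so that $\Delta_\rho=\nabla^*\nabla$ is of Laplace type with vanishing bundle curvature and vanishing potential term; its local heat invariants are therefore, pointwise, $(\dim\rho)$ times those of the scalar Laplacian (they are the universal scalar polynomials in the Riemann curvature of $M$, times the identity endomorphism of $E_\rho$), so that $c_k(\Delta_\rho)=(\dim\rho)\,c_k(\Delta)$ for every $k$; see \cite[3.4, 4.1]{BGV} or \cite{Gilkey}. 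Since $\dim\ker\Delta=1$, the displayed identity applied to $\Delta$ gives $c_n(\Delta)=1+\zeta_{M,\Delta}(0)$, and applied to $\Delta_\rho$ it then yields
$$\dim\ker\Delta_\rho=(\dim\rho)\bigl(1+\zeta_{M,\Delta}(0)\bigr)-\zeta_{M,\Delta_\rho}(0).$$
Finally $\dim\rho$ is itself read off from the data: by Weyl's law the eigenvalue counting functions $N_{\Delta_\rho}(T):=\#\{\lambda\in\Sp_M(\Delta_\rho):\lambda\le T\}$ and $N_{\Delta}(T):=\#\{\lambda\in\Sp_M(\Delta):\lambda\le T\}$ satisfy $N_{\Delta_\rho}(T)\sim(\dim\rho)\,N_\Delta(T)$ as $T\to\infty$ (equivalently, $\dim\rho$ is the ratio of the residues of $\zeta_{M,\Delta_\rho}$ and $\zeta_{M,\Delta}$ at $s=n/2$), and this limit ignores the finitely many zero eigenvalues. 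Substituting gives $\dim\ker\Delta_\rho$ in terms of $\zeta_{M,\Delta_\rho}$ and $\zeta_{M,\Delta}$ alone, which completes the reduction.

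The only ingredient going beyond what is already in the paper is the heat-invariant identity $c_k(\Delta_\rho)=(\dim\rho)\,c_k(\Delta)$, and stating it cleanly is the step I expect to require the most care; granting the standard structure theory of heat coefficients for Laplace-type operators, everything else is bookkeeping with the Mellin transform and Weyl's law. It is worth emphasising that the even-dimensional case is the only substantive one, and it is precisely there---because the constant heat coefficient $c_n$ is genuinely present, unlike in Proposition \ref{oddeq}---that the spectrum of the \emph{untwisted} Laplacian is forced to enter the statement.
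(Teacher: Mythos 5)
Your proposal is correct and follows essentially the same route as the paper's proof: reduce to recovering $\dim\ker\Delta_\rho$ from the two zeta functions, dispose of the odd-dimensional case via Proposition \ref{oddeq}, and in even dimension use the flatness of $E_\rho$ to show the heat coefficients of $\Delta_\rho$ are $N=\dim\rho$ times those of $\Delta$, with $N$ itself recovered from the ratio of the (residues of the) zeta functions at $s=n/2$. The resulting formula $\dim\ker\Delta_\rho=N(\zeta_{\Delta}(0)+1)-\zeta_{\Delta_\rho}(0)$ is exactly the one the paper derives.
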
 

\begin{proof} It suffices to prove that $\zeta_\Delta$ and $\zeta_{\Delta_\rho}$ determine the multiplicity of zero in the spectrum (i.e, the dimension of the kernel) of $\Delta_\rho$. 
If the dimension $n$ of $M$ is odd, this follows from the stronger Proposition \ref{oddeq}. For even $n$ and a general operator $A$ as in Subsection \ref{specz}, \begin{equation} \label{evenA} \zeta_A(0) = - \dim \ker A + (4 \pi)^{-n/2} \int_M u_{n/2}(A), \end{equation} where $ \tr(e^{-tA}) \sim (4 \pi t)^{-n/2} \sum_{k=0}^\infty \int_M u_k(A)  t^k$ is the asymptotic expansion of the heat kernel of $A$ as $t{\downarrow}0$ \cite[Thm. 5.2]{Rosenberg}. 

We apply this in our situation, with $A=\Delta_\rho = \bigoplus\limits_{i=1}^N \Delta_{M'}$ acting on $C^\infty(M,E_\rho) = C_\rho^\infty(M',\C^N)$. Denote by $1_N$ the identity matrix of size $N \times N$. Recall that the principal symbol $p(\Delta_M)$ of a Laplace operator $\Delta_M$ on a Riemannian manifold $(M,g)$ is determined by the metric tensor $g$ (more accurately, it is the quadratic form on the cotangent bundle dual to $g$). 
Since $M' \rightarrow M$ is a Riemannian covering, the metric tensor of $M$ pulls back to that of $M'$, and hence the principal symbol of $\Delta_{M'}$ is the same as that of $\Delta_M$. Therefore, $\Delta_\rho$ is a ``Laplace-style'' operator in the sense of \cite[1.2]{GilkeyAF}: it has (matrix) principal symbol the diagonal matrix $p(\Delta_\rho)=p(\Delta_M)\cdot 1_N$. Such operators have an invariant representation depending on a connection on the bundle and an endomorphism of the bundle as in \cite[Lemma 1.2.1]{Gilkey}, and in our situation, for  $E_\rho$, the bundle connection is flat (curvature $\Omega \equiv 0$) and the endomorphism $e$ is trivial.  

The coefficients $u_k(\Delta_\rho)(x)$ (as a function of $x \in M$) are of the form $\tr_{E_{\rho,x}}(e_k(\Delta_\rho)(x))$, where  $\tr_{E_{\rho,x}}$ denotes the fiberwise trace in the fibers $E_{\rho,x} \cong \C^N$, and where $e_k(\Delta_\rho)$ is a linear combination with universal coefficients (independent of the dimension $n$ of the manifold and the rank $N$ of the bundle) of covariant derivatives of $\underline{R}\cdot 1_N$ (where $\underline{R}$ is the covariant Riemann curvature tensor of $M$), the bundle curvature $\Omega$ and the endomorphism $e$ \cite[\S 3.1.8-3.1.9]{Gilkey}. Since the latter two are identically zero in our situation, we can write $e_k(\Delta_\rho) = P_k \cdot 1_N$ with $P_k$ only depending on the covariant derivatives of $\underline{R}$, in particular, not depending on $\rho$. We conclude that 
$$ (4 \pi)^{-n/2} \int_M u_{n/2} (\Delta_\rho)(x) = (4 \pi)^{-n/2} \int_M \tr_{E_{\rho,x}}(P_{n/2}(x) \cdot 1_N) = N U, $$
where $U = (4 \pi)^{-n/2} \int_M P_{n/2} $ is independent of $\rho$. 

Therefore, applying \eqref{evenA} to $\Delta$ and $\Delta_\rho$, we find 
$$ \dim \ker \Delta_\rho = N U -  \zeta_{\Delta_\rho} (0) =  N(\zeta_{\Delta} (0)+1) -  \zeta_{\Delta_\rho} (0). $$
We can compute the rank $N$ in terms of the first coefficients in the asymptotic expansions: using $e_0(\Delta_\rho) = 
1_N$, we find  $N = \int_M u_{0}(\Delta_\rho)/\int_M u_{0}(\Delta)$. On the other hand, $\zeta_{\Delta_\rho}(s)$ has a simple pole at $s=n/2$ with residue $\Gamma(n/2)^{-1} \int_M u_0(\Delta_\rho)$ (see, e.g., the proof of  \cite[Thm. 5.2]{Rosenberg}),  so that the function $\zeta_{\Delta_\rho}(s)/\zeta_{\Delta}(s)$ is holomorphic at $s=n/2$ and takes value $N$ there:
\begin{equation} \label{preweyl} 
N 
=  \left. \frac{\zeta_{\Delta_\rho}(s)}{\zeta_{\Delta}(s)}\right \rvert_{s=\frac{n}{2}}. 
\end{equation} 
 Hence 
$$ \dim \ker \Delta_\rho = (\zeta_{\Delta} (0)+1) \left. \frac{\zeta_{\Delta_\rho}(s)}{\zeta_{\Delta}(s)}\right \rvert_{s=\frac{n}{2}} -  \zeta_{\Delta_\rho} (0). $$
 This is the desired expression for the multiplicity of zero in the spectrum in terms of spectral zeta functions only. 
\end{proof}

\begin{remark} \mbox{ } 
\begin{enumerate}
\item The above argument also shows that for a twisted Laplacian $\Delta_\rho$ corresponding to a unitary representation on a fixed manifold $M$, the value $\dim \ker \Delta_\rho + \zeta_{\Delta_\rho}(0)$ only depends on the dimension of the representation $\rho$. 
\item Weyl's law for $\Delta_\rho$ says that if $\mathrm N(\Delta_\rho,X)$ denotes its number of eigenvalues $\leq  X$, then $$\lim_{X \rightarrow +\infty} \frac{\mathrm N(\Delta_\rho,X)}{X^{n/2}}  = N \cdot 
\frac{\mathrm{vol}(M)}{(4 \pi)^{n/2} \Gamma\left(\frac{n}{2}+1\right)},$$ 
so that on a fixed manifold, the dimension $N$ of the representation $\rho$ can be read off from the asymptotics of the spectra of $\Delta_\rho$ and $\Delta_M$: 
\begin{equation} \label{weyl} N= \lim_{X \rightarrow +\infty} \frac{\mathrm N(\Delta_\rho,X)}{\mathrm N(\Delta_M,X)};\end{equation} formulas (\ref{preweyl}) and (\ref{weyl}) are equivalent through Karamata's version of the Tauberian theorem (compare \cite[pp.~91--92]{BGV}). 
\end{enumerate}
\end{remark}

\section{Detecting representation isomorphism through twisted spectra} \label{detecting} 

The main result of this section is Proposition \ref{solo}, which is presently used to give a spectral characterisation of weak conjugacy, and will be used again later in the proof of the main theorem. 

\subsection{Spectral detection of isomorphism of induced representations} 
In this subsection, we assume that we have a diagram \eqref{sunadasetup} of finite coverings. We start with a proposition that allows us to detect isomorphism of representations induced from linear characters purely from spectral data. 

\begin{proposition} \label{solo} 
For two linear characters $\chi_1 \in \check{H}_1$ and $\chi_2 \in \check{H}_2$, the following are equivalent: 
\begin{enumerate}
\item \label{one} $\Ind_{H_1}^G \chi_1 \cong \Ind_{H_2}^G \chi_2$;
\item \label{two} The spectrum $\Sp_{M_i}(\bar \chi_i \otimes \Res_{H_i}^G \Ind_{H_j}^G \chi_j)$ is independent of $i,j=1,2$. 
\item[\textup{(\ref{two}')}] \label{twores} Condition \textup{(\ref{two})} holds for the pairs $(i,j)$ given by $(1,1),(2,1)$ and $(1,2),(2,2)$.
\item \label{three} The multiplicity of the zero eigenvalue in $\Sp_{M_i}(\bar \chi_i \otimes \Res_{H_i}^G \Ind_{H_j}^G \chi_j)$ is independent of $i,j=1,2$. 
\item[\textup{(\ref{three}')}] \label{threeres} Condition \textup{(\ref{three})} holds for the pairs $(i,j)$ given by $(1,1),(2,1)$ and $(1,2),(2,2)$.
\end{enumerate}
\end{proposition}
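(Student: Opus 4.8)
The plan is to transport all the spectra occurring on $M_1$ and $M_2$ down to the common quotient $M_0$, turning the proposition into a statement in the character theory of $G$. Write $\rho_i := \Ind_{H_i}^G \chi_i$ for $i=1,2$; these are unitary representations of $G$ (each $\chi_i$ takes values in roots of unity and induction preserves unitarity), and \eqref{one} says exactly $\rho_1 \cong \rho_2$. For any $i,j \in \{1,2\}$, Lemma \ref{SNT} applied to the tower $M \to M_i \to M_0$ gives
\[
\Sp_{M_i}\bigl(\bar\chi_i \otimes \Res_{H_i}^G \Ind_{H_j}^G \chi_j\bigr)
 = \Sp_{M_0}\bigl(\Ind_{H_i}^G(\bar\chi_i \otimes \Res_{H_i}^G \rho_j)\bigr),
\]
and the projection (push--pull) formula $\Ind_{H_i}^G(\bar\chi_i \otimes \Res_{H_i}^G \rho_j) \cong (\Ind_{H_i}^G \bar\chi_i) \otimes \rho_j$ together with $\Ind_{H_i}^G \bar\chi_i = \overline{\Ind_{H_i}^G \chi_i} = \bar{\rho_i}$ (induction commutes with complex conjugation) rewrites the right-hand side as $\Sp_{M_0}(\bar{\rho_i} \otimes \rho_j)$. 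Thus \eqref{two} is equivalent to ``$\Sp_{M_0}(\bar{\rho_i} \otimes \rho_j)$ is independent of $i,j$'', \eqref{three} to ``the multiplicity of $0$ in $\Sp_{M_0}(\bar{\rho_i} \otimes \rho_j)$ is independent of $i,j$'', and the primed conditions (\ref{two}$'$) and (\ref{three}$'$) to the corresponding statements where equality is only required within the pairs $\{(1,1),(2,1)\}$ and within $\{(1,2),(2,2)\}$; here the pair $(i,j)$ lands on $\bar{\rho_i}\otimes\rho_j$, with $i$ governing the base space $M_i$ and $j$ the inner induction.

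Next I would identify that multiplicity of $0$. By Lemma \ref{mult} applied to the $G$-cover $M \to M_0$, the multiplicity of $0$ in $\Sp_{M_0}(\bar{\rho_i}\otimes\rho_j)$ equals $\langle \bar{\rho_i}\otimes\rho_j, \one\rangle$, and by the standard calculation rule for inner products of characters this is $\langle \rho_i, \rho_j\rangle$. So \eqref{three} becomes ``$\langle\rho_i,\rho_j\rangle$ takes the same value for all $i,j \in \{1,2\}$'', and (\ref{three}$'$) becomes ``$\langle\rho_1,\rho_1\rangle = \langle\rho_2,\rho_1\rangle$ and $\langle\rho_1,\rho_2\rangle = \langle\rho_2,\rho_2\rangle$''.

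It then remains to close the cycle of implications. The implication \eqref{one} $\Rightarrow$ \eqref{two} is immediate, since $\rho_1 \cong \rho_2$ makes all four $\bar{\rho_i}\otimes\rho_j$ isomorphic and hence isospectral on $M_0$; the implications \eqref{two} $\Rightarrow$ (\ref{two}$'$) $\Rightarrow$ (\ref{three}$'$) and \eqref{two} $\Rightarrow$ \eqref{three} $\Rightarrow$ (\ref{three}$'$) are trivial weakenings of the hypotheses. Finally, (\ref{three}$'$) $\Rightarrow$ \eqref{one}: combining the two equalities of (\ref{three}$'$) with the symmetry $\langle\rho_1,\rho_2\rangle = \langle\rho_2,\rho_1\rangle$ of character inner products forces $\langle\rho_1,\rho_1\rangle = \langle\rho_1,\rho_2\rangle = \langle\rho_2,\rho_2\rangle$, whence $\langle \rho_1-\rho_2,\rho_1-\rho_2\rangle = 0$; decomposing $\rho_k = \sum_m a_{k,m}\sigma_m$ into irreducibles $\sigma_m \in \Irr(G)$, this reads $\sum_m (a_{1,m}-a_{2,m})^2 = 0$, so $a_{1,m} = a_{2,m}$ for all $m$ and $\rho_1 \cong \rho_2$.

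I do not expect a genuine obstacle. The points needing care are purely bookkeeping: checking the projection formula and the identity $\overline{\Ind(\cdot)} = \Ind\,\overline{(\cdot)}$, correctly matching the running indices $(i,j)$ of conditions \eqref{two}--\eqref{three} with the tensor factors of $\bar{\rho_i}\otimes\rho_j$, and noticing that the four equalities bundled into \eqref{three} follow from the mere two equalities of (\ref{three}$'$) precisely because of the built-in symmetry $\langle\rho_1,\rho_2\rangle = \langle\rho_2,\rho_1\rangle$.
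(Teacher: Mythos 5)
Your proof is correct and follows the same overall strategy as the paper's: transport everything to representation theory of $G$ via Lemma \ref{SNT}, prove (\ref{one}) $\Rightarrow$ (\ref{two}) by identifying the relevant induced representations over $M_0$, and prove (\ref{three}') $\Rightarrow$ (\ref{one}) by showing $\langle\psi,\psi\rangle=0$ for $\psi=\rho_1-\rho_2$. Your use of the projection formula $\Ind_{H_i}^G(\bar\chi_i\otimes\Res_{H_i}^G\rho_j)\cong\bar\rho_i\otimes\rho_j$ is a clean repackaging of the chain of Frobenius-reciprocity and tensor adjunctions that the paper carries out only at the level of inner products; it buys you the slightly stronger statement that all four twisted Laplacians in condition (\ref{two}) live over $M_0$ as $\Delta_{\bar\rho_i\otimes\rho_j}$. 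One small point to watch: you invoke Lemma \ref{mult} for the $G$-cover $M\to M_0$, but $M_0$ is only a developable orbifold, so the $G$-action on $M$ need not be fixed-point free, which is a stated hypothesis of that lemma. This is harmless here — the paper's second proof of Lemma \ref{mult} uses only that $M$ is closed and connected, and in any case Frobenius reciprocity gives $\langle\bar\rho_i\otimes\rho_j,\one_G\rangle = \langle\bar\chi_i\otimes\Res_{H_i}^G\rho_j,\one_{H_i}\rangle$, so the multiplicity of zero can equivalently be read off on $M_i$, where the $H_i$-action on $M$ is free; this is exactly what the paper does with its quantities $a_{i,j}$.
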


\begin{remark} \label{twoeq} Condition (\ref{two}')  is 
$$ \def\arraystretch{1.5} \left\{ \begin{array}{l} \Sp_{M_1}(\bar \chi_1 \otimes \Res_{H_1}^G \Ind_{H_1}^G \chi_1) = \Sp_{M_2}(\bar \chi_2 \otimes \Res_{H_2}^G \Ind_{H_1}^G \chi_1); \\
\Sp_{M_1}(\bar \chi_1 \otimes \Res_{H_1}^G \Ind_{H_2}^G \chi_2) = \Sp_{M_2}(\bar \chi_2 \otimes \Res_{H_2}^G \Ind_{H_2}^G \chi_2). \end{array} \right. $$
In this form, the statement of the proposition is similar to a number-theoretical result  of Solomatin \cite{Solomatin} that inspired our proof below.
\end{remark}

\begin{proof}[Proof of Proposition \ref{solo}] 

We start by proving that (\ref{one}) implies (\ref{two}). Let $\rho$ denote any irreducible representation of $G$; then for any $i=1,2$, we have 
\begin{align*} \langle \Ind_{H_i}^G \left(\bar \chi_i \otimes \Res_{H_i}^G \Ind_{H_i}^G \chi_i\right), \rho \rangle &= \langle \bar \chi_i \otimes \Res_{H_i}^G \Ind_{H_i}^G \chi_i, \Res_{H_i}^G \rho \rangle 
\\ 
& = \langle \bar \chi_i , \Res_{H_i}^G \rho \otimes  \overline{\Res_{H_i}^G\Ind_{H_i}^G \chi_i} \rangle \\ 
& = \langle \bar \chi_i , \Res_{H_i}^G \left (\rho \otimes \overline{\Ind_{H_i}^G \chi_i}\right) \rangle \\ 
& = \langle \overline{{\Ind_{H_i}^G \chi_i}}, \rho \otimes \overline{\Ind_{H_i}^G \chi_i} \rangle. 
\end{align*}
By assumption (\ref{one}), this final expression is independent of $i=1,2$, and hence the same holds for the initial expression. Since this holds for any $\rho$, we find that 
$$  \Ind_{H_1}^G \left(\bar \chi_1 \otimes \Res_{H_1}^G \Ind_{H_1}^G \chi_1\right) = \Ind_{H_2}^G \left(\bar \chi_2 \otimes \Res_{H_2}^G \Ind_{H_2}^G \chi_2\right). $$
By Lemma \ref{SNT}, we find 
\begin{align}  \Sp_{M_1}(\bar \chi_1 \otimes \Res_{H_1}^G \Ind_{H_1}^G \chi_1)  & = \Sp_{M_2} (\bar \chi_2 \otimes \Res_{H_2}^G \Ind_{H_2}^G \chi_2) \nonumber \\  
& =  \Sp_{M_2} (\bar \chi_2 \otimes \Res_{H_2}^G \Ind_{H_1}^G \chi_1), \label{effe} 
\end{align}
the last line again by assumption (\ref{one}). This is condition (\ref{two}) for $(i,j)=(1,1)$ and $(i,j)=(2,1)$.   Using assumption (\ref{one}), one may replace $\Ind_{H_1}^G \chi_1$ by $\Ind_{H_2}^G \chi_2$ in formula (\ref{effe}) on one or both sides, and this shows condition (\ref{two}) for all other choices of $i,j$.  

Passing from stronger to weaker statements, (\ref{two}) implies (\ref{two}') and (\ref{three}), and (\ref{three}), as well as (\ref{two}'), imply (\ref{three}'). 
Hence we only need to prove that (\ref{three}') implies (\ref{one}). Consider, for different $i,j$, 
\begin{equation} \label{aijsym} a_{i,j}:= \langle \bar \chi_i \otimes \Res_{H_i}^G \Ind_{H_j}^G \chi_j, \one \rangle =  \langle \Ind_{H_j}^G \chi_j,  \Ind_{H_i}^G \chi_i \rangle,  \end{equation} 
where the last equality follows by the usual calculation rules. 
Setting $\psi$ to be the class function $\psi:=  \Ind_{H_1}^G \chi_1 - \Ind_{H_2}^G \chi_2$, this allows us to compute that 
$$ \langle \psi, \psi \rangle = a_{1,1}+a_{2,2} - a_{1,2} - a_{2,1},$$ 
and since in (\ref{three}') we are assuming $a_{1,1} = a_{2,1}$ and $a_{1,2}=a_{2,2}$, it follows that $\langle \psi, \psi \rangle = 0$, so $\psi=0$, which is condition (\ref{one}). 
\end{proof} 

\subsection{Strong isospectrality and spectral detection of weak conjugacy} 

Isospectrality of manifolds $M_1$ and $M_2$ in a diagram of the form (\ref{sunadasetup}) does not in general imply that $H_1$ and $H_2$ are weakly conjugate. 
\begin{example} \label{lens} Consider the situation where $M=S^5$, and $M_1=L(11;1,2,3)$ and $M_2=L(11;1,2,4)$ are lens spaces  $L(q;s_1,s_2,s_3)$ defined as the quotient of $S^5$ by the block diagonal $6 \times 6$ matrix given by three $2 \times 2$ blocks representing planar rotations over respective angles $2\pi s_i/q$. In this case, the two groups $H_i \cong {\Z}/11{\Z}$ are not equal as subgroups of the isometry group of $S^5$; they commute, and we can set $M_0 = (H_1 \times H_2) \backslash M$. Ikeda has shown that  $M_1$ and $M_2$ are isospectral for the Laplace operator on functions (see \cite[p. 313]{Ikeda1980}, observing that since $8 = -3 \mbox{ mod } 11$, by \cite[Thm.\ 2.1]{Ikeda1980} $M_1$ is isometric to $L(11;1,2,8)$, where the latter parameters are the ones used by Ikeda). However, $H_1$ and $H_2$ are not weakly conjugate: since $G$ is abelian, conjugacy classes are singletons and $|\{g\} \cap H_i|$ is $0$ or $1$ depending on whether $g$ belongs to $H_i$ or not. 
\end{example} 

Sunada \cite[Lemma 1]{Sunada} proved that weak conjugacy of $H_1$ and $H_2$ implies \emph{strong isospectrality}, defined as follows in the sense of Pesce \cite[\S II]{PesceJFA}.  

\begin{definition} \label{strongiso} Two Riemannian manifolds $M_1$ and $M_2$ admitting a common cover $M$ are called \emph{strongly isospectral} if
the spectra of $q_{i\ast} A$ acting on $L^2(M_i, q_{i\ast} E) \cong L^2(M,E)^{H_i}$ are equal for any natural operator $A$ on $M$. Here, $q_i \colon M \twoheadrightarrow M_i$ are the corresponding covering maps, and an operator $A$ as above on $M$  is \emph{natural} if $G$ acts isometrically on the fibers of the bundle $E$ and $A$ commutes with the action of $G$. 
\end{definition} 

The Laplace operators acting on $p$-forms are natural for all $p$, so if $H_1$ and $H_2$ are weakly conjugate, then the spectra of all of these are equal (sometimes, ``strong isospectrality'' is used to mean that precisely these operators are isospectral, but we will follow Pesce's definition as above). 

\begin{example} The lens spaces in Example \ref{lens} are isospectral for the Laplacian on functions, but not on all $p$-forms, cf.\ \cite[Remark 3.8]{LMR}. 
\end{example}

The next Proposition \ref{mainpropdet} provides a spectral criterion that is equivalent to weak conjugacy, and is an immediate corollary of Proposition \ref{solo}. It is analogous to a  number theoretical result of Nagata \cite{Nagata}.  

\begin{proposition} \label{mainpropdet} 
Suppose $M$ is a connected smooth closed Riemannian manifold, $G$ a finite group of isometries of $M$ and $H_1$ and $H_2$ are two subgroups of fixed-point free isometries in $G$ with associated quotient manifolds $M_1:=H_1 \backslash M$ and $M_2:=H_2 \backslash M$. Then $H_1$ and $H_2$ are weakly conjugate if and only if the multiplicity of the zero eigenvalue in $\Sp_{M_i}(\Res_{H_i}^G \Ind_{H_j}^G \one)$ is independent of $i,j=1,2$. 
 \end{proposition}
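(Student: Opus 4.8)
The plan is to obtain this as a direct specialisation of Proposition \ref{solo}. First I would record that the data in the statement already produce a diagram of the shape \eqref{sunadasetup}: set $M_0 := G\backslash M$, which is a developable orbifold since its universal cover coincides with that of the manifold $M$; the quotient map $M \twoheadrightarrow M_0$ is $G$-Galois by construction, and $M \twoheadrightarrow M_i := H_i\backslash M$ is an honest fixed-point-free $H_i$-Galois cover by hypothesis. Thus the hypotheses of Proposition \ref{solo} are in force for $H_1$ and $H_2$ inside $G$.

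Next I would invoke Proposition \ref{solo} with the trivial linear characters $\chi_1 = \one_{H_1} \in \check{H}_1$ and $\chi_2 = \one_{H_2} \in \check{H}_2$. Since $\bar\chi_i = \one_{H_i}$, the twist $\bar\chi_i \otimes \Res_{H_i}^G \Ind_{H_j}^G \chi_j$ is simply $\Res_{H_i}^G \Ind_{H_j}^G \one$, so condition (\ref{three}) of Proposition \ref{solo} becomes exactly the assertion that the multiplicity of the zero eigenvalue in $\Sp_{M_i}(\Res_{H_i}^G \Ind_{H_j}^G \one)$ is independent of $i,j = 1,2$, i.e.\ the spectral condition in the present proposition. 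On the other hand, condition (\ref{one}) of Proposition \ref{solo} reads $\Ind_{H_1}^G \one \cong \Ind_{H_2}^G \one$, which is precisely the definition of weak conjugacy of $H_1$ and $H_2$ (Proposition \ref{propweakstrong}). Hence the equivalence of conditions (\ref{one}) and (\ref{three}) in Proposition \ref{solo} is exactly the statement to be proved.

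I expect no genuine obstacle here beyond carefully matching notation; the substance is entirely contained in Proposition \ref{solo}. For a reader who prefers a self-contained derivation, one could instead argue directly: by Lemma \ref{mult}, the multiplicity of zero in $\Sp_{M_i}(\Res_{H_i}^G \Ind_{H_j}^G \one)$ equals $\langle \Res_{H_i}^G \Ind_{H_j}^G \one, \one_{H_i} \rangle = \langle \Ind_{H_j}^G \one, \Ind_{H_i}^G \one \rangle =: a_{i,j}$ by Frobenius reciprocity, and $a_{i,j} = a_{j,i}$; then for the class function $\psi := \Ind_{H_1}^G \one - \Ind_{H_2}^G \one$ one computes $\langle \psi, \psi \rangle = a_{1,1} + a_{2,2} - a_{1,2} - a_{2,1}$, which vanishes if and only if all $a_{i,j}$ agree, i.e.\ if and only if $\Ind_{H_1}^G \one = \Ind_{H_2}^G \one$. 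This is just the trivial-character case of the argument proving Proposition \ref{solo}, so in the write-up I would simply cite that proposition.
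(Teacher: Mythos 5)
Your proposal is correct and follows exactly the paper's route: the paper proves this proposition by noting that weak conjugacy means $\Ind_{H_1}^G \one \cong \Ind_{H_2}^G \one$ and then citing Proposition \ref{solo} with the trivial characters, which is precisely your specialisation. Your optional self-contained computation of the $a_{i,j}$ and $\langle \psi,\psi\rangle$ is likewise just the relevant part of the proof of Proposition \ref{solo} restricted to the trivial character, so there is nothing genuinely different here.
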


\begin{proof}
The groups $H_1$ and $H_2$ are weakly conjugate precisely when there is an isomorphism of permutation representations $\Ind_{H_1}^G \one \cong \Ind_{H_2}^G \one$. 
By Proposition \ref{solo}, this is equivalent to the claim. 
\end{proof} 

One may vary the condition of twisted isospectrality using the equivalent conditions in Proposition \ref{solo}. For example, using Remark \ref{twoeq}, we deduce the following (adding some redundant information). 

\begin{corollary} \label{strong} 
If a diagram as in \textup{(\ref{sunadasetup})} is given, and the following twisted spectra agree: 
$$ \def\arraystretch{1.5} \left\{ \begin{array}{l} \Sp_{M_1}(\Res_{H_1}^G \Ind_{H_1}^G \one) = \Sp_{M_2}(\Res_{H_2}^G \Ind_{H_1}^G \one); \\
\Sp_{M_1}(\Res_{H_1}^G \Ind_{H_2}^G \one) = \Sp_{M_2}(\Res_{H_2}^G \Ind_{H_2}^G \one), \end{array} \right. $$
then the manifolds $M_1$ and $M_2$ are strongly isospectral. \qed
\end{corollary}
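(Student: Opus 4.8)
The plan is to recognize the hypothesis as the special case of Proposition \ref{solo} obtained by taking trivial characters, and then to combine the equivalence proved there with Sunada's implication that weak conjugacy forces strong isospectrality. So the corollary will follow from three steps already available in the excerpt.

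First I would specialize Proposition \ref{solo}, in the form of condition (\ref{two}') written out in Remark \ref{twoeq}, to the trivial characters $\chi_1 = \one \in \check H_1$ and $\chi_2 = \one \in \check H_2$. Since $\bar\one = \one$, the tensor factor $\bar\chi_i$ disappears, so $\bar\chi_i \otimes \Res_{H_i}^G \Ind_{H_j}^G \chi_j$ collapses to $\Res_{H_i}^G \Ind_{H_j}^G \one$, and the two equalities displayed in Remark \ref{twoeq} become exactly the two equalities of spectra assumed in the Corollary (the first one corresponding to the instances $(i,j) = (1,1),(2,1)$, the second to $(i,j) = (1,2),(2,2)$). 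Hence the hypothesis of the Corollary is precisely condition (\ref{two}') of Proposition \ref{solo} for these characters.

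Next I would apply the equivalence (\ref{two}') $\Longleftrightarrow$ (\ref{one}) of Proposition \ref{solo}, which yields an isomorphism of permutation representations $\Ind_{H_1}^G \one \cong \Ind_{H_2}^G \one$; by Proposition \ref{propweakstrong}(\ref{weak-conj-prop}) this says exactly that $H_1$ and $H_2$ are weakly conjugate in $G$. Finally I would invoke Sunada's Lemma \cite[Lemma 1]{Sunada}: weak conjugacy of $H_1$ and $H_2$ implies that $M_1$ and $M_2$ are strongly isospectral in the sense of Definition \ref{strongiso}. Chaining the three steps gives the Corollary.

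I do not expect any genuine obstacle; this is essentially a repackaging of Proposition \ref{mainpropdet}. The only point needing a little care is the bookkeeping in Remark \ref{twoeq} — verifying that, after setting $\chi_i = \one$, the two equalities stated in the Corollary match the correct instances of $(i,j)$ in condition (\ref{two}') — and this is immediate. One could even start from the weaker condition (\ref{three}') (equality of multiplicities of zero only), which already implies (\ref{one}); the full spectral equalities in the statement are therefore more than strictly needed, which is what the parenthetical ``adding some redundant information'' before the statement refers to.
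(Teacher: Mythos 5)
Your proposal is correct and is exactly the argument the paper intends: the hypothesis is condition (\ref{two}') of Proposition \ref{solo} specialised to $\chi_1=\chi_2=\one$ (via Remark \ref{twoeq}), which gives $\Ind_{H_1}^G\one\cong\Ind_{H_2}^G\one$, hence weak conjugacy, hence strong isospectrality by Sunada's lemma. Your closing observation about the redundancy (condition (\ref{three}') would already suffice) matches the paper's parenthetical remark.
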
 

\begin{remark} \label{mackey} 
Mackey's theorem describes how, for two subgroups $K_1$ and $K_2$ of a group $G$, a representation of the form $\Res_{K_2}^G \Ind_{K_1}^G \rho$ splits into irreducibles  (see, e.g., \cite[Prop.\ 22]{SerreRep}). In the situation of Proposition \ref{mainpropdet}, with $K_i \in \{H_1,H_2\}$, we find that $\Res_{K_2}^G \Ind_{K_1}^G \one$ splits as the direct sum of the permutation representations corresponding to the action of $K_2$ on the cosets of $s K_1 s^{-1} \cap K_2$ for $s \in K_2 \backslash G/ K_1$, and the occurring spectra are the (multiset-)union of the spectra corresponding to these representations; for example, 
$$ \Sp_{M_i} (\Res_{H_i}^G \Ind_{H_i}^G \one) = \bigcup_{s \in H_i \backslash G / H_i} \Sp_{M_i}( \Ind_{sH_is^{-1}\cap H_i}^{H_i} \one)$$
contains the usual Laplace spectra $\Sp_{M_i}(\Delta_{M_i})$ (setting $s$ to be the trivial double coset).  
\end{remark} 

\section{Representations with a unique monomial structure}
\label{sec:uniqmonstruc}

In this section, we recall a purely representation theoretical result from \cite{Lseries}. Since in that reference, the result was not cleanly separated from number theoretical results, we provide some details. 

\subsection{Monomial structures} 

\begin{definition} 
Suppose $\rho \colon G \rightarrow \Aut(V)$ is a representation, and $$V=\bigoplus\limits_{x \in \Omega} \mathcal L_x$$ is a decomposition of $V$ into one-dimensional spaces (``lines'') $\mathcal L_x$ for some index set $\Omega$. If the action of $G$ on $V$ permutes the lines $\mathcal L_x$, we say that the $G$-set $$L=\{ \mathcal L_x \colon x \in \Omega \}$$ is a  \emph{monomial structure} on $\rho$. 

Equivalently, in a basis having precisely one element from each line $\mathcal L_x$, the action of any $g \in G$ is given by a matrix having exactly one non-zero entry in each row and column. Note that, contrary to the case of permutation matrices, the non-zero entry in the matrix need not be $1$. 

An \emph{isomorphism of monomial structures} $L$ and $L'$ on two representation of the same group $G$ is an isomorphism of $L$ and $L'$ as $G$-sets. 
\end{definition} 

\begin{example} \label{exgset} An induced representation $\Ind_H^G \chi$ of a linear character $\chi \in \check{H}$ admits (by definition) a monomial structure where $\Omega=\{g_1,\dots,g_n\}$ is such that $g_i H$ are the different cosets of $H$ in $G$, and $\mathcal L_x = \C \cdot xH$. The corresponding matrices have as non-zero entries $n$-th roots of unity if $\chi$ is a character of order $n$. We call this monomial structure the \emph{standard monomial structure} on $\Ind_H^G \chi$. This standard monomial structure is isomorphic to $G/H$ as $G$-set. 
\end{example} 

\begin{definition} A linear character $\Xi$ on a subgroup $H$ of a group $G$ is called \emph{$G$-solitary} if $\Ind_H^G \Xi$ has a unique monomial structure up to isomorphism. 
\end{definition} 

\begin{lemma} \label{lem:solitarycharconjgroups}
Let $G$ denote a group with two subgroups $H_1$ and $H_2$, and suppose $\Xi \in \check{H_1}$ is a $G$-solitary linear character. There exists a linear character $\chi \in \check{H_2}$ for which there is an isomorphism of representations $\Ind_{H_1}^G \Xi \cong \Ind_{H_2}^G \chi$ if and only if $H_1$ and $H_2$ are conjugate subgroups of $G$.
\end{lemma}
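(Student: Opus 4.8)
\textbf{Proof plan for Lemma \ref{lem:solitarycharconjgroups}.}

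The plan is to prove the two implications separately, with the easy direction first. Suppose $H_1$ and $H_2$ are conjugate in $G$, say $H_2 = g_0^{-1} H_1 g_0$. Then conjugation by $g_0$ sets up a bijection between linear characters of $H_1$ and of $H_2$: given $\Xi \in \check{H_1}$, define $\chi \in \check{H_2}$ by $\chi(h) = \Xi(g_0 h g_0^{-1})$. A direct check (using the coordinate description \eqref{eq:indHG}, or functoriality of induction along the inner automorphism of $G$ given by conjugation by $g_0$) shows $\Ind_{H_2}^G \chi \cong \Ind_{H_1}^G \Xi$. Note this half does not even use solitariness of $\Xi$.

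For the converse, suppose there is a linear character $\chi \in \check{H_2}$ with an isomorphism of $G$-representations $\varphi \colon \Ind_{H_1}^G \Xi \xrightarrow{\ \sim\ } \Ind_{H_2}^G \chi$. The target carries its standard monomial structure $L_2$ (Example \ref{exgset}), which is isomorphic as a $G$-set to $G/H_2$. Transporting $L_2$ back along $\varphi$ gives a monomial structure $\varphi^{-1}(L_2)$ on $\Ind_{H_1}^G \Xi$, which is again isomorphic to $G/H_2$ as a $G$-set. On the other hand $\Ind_{H_1}^G \Xi$ carries its own standard monomial structure $L_1 \cong G/H_1$. Since $\Xi$ is $G$-solitary, $\Ind_{H_1}^G \Xi$ has a \emph{unique} monomial structure up to isomorphism of $G$-sets, so $L_1 \cong \varphi^{-1}(L_2)$ as $G$-sets, hence $G/H_1 \cong G/H_2$ as $G$-sets. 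By Proposition \ref{propweakstrong}\eqref{conj-prop}, this $G$-set isomorphism is equivalent to $H_1$ and $H_2$ being conjugate in $G$, which is the desired conclusion.

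The only genuine point requiring care — the main (small) obstacle — is checking that pulling back a monomial structure along a $G$-equivariant linear isomorphism $\varphi$ is again a monomial structure, i.e. that $\varphi^{-1}(L_2) = \{\varphi^{-1}(\mathcal L) : \mathcal L \in L_2\}$ is a decomposition of $\Ind_{H_1}^G \Xi$ into lines that $G$ permutes, and that the assignment $\mathcal L \mapsto \varphi^{-1}(\mathcal L)$ is $G$-equivariant; this is immediate from $\varphi$ being a linear isomorphism intertwining the two $G$-actions, so each $\varphi^{-1}(\mathcal L)$ is one-dimensional and $g \cdot \varphi^{-1}(\mathcal L) = \varphi^{-1}(g \cdot \mathcal L)$. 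One should also note explicitly that ``$\Ind_{H_1}^G\Xi$ has a unique monomial structure up to isomorphism'' is exactly the statement that any two monomial structures on it are isomorphic \emph{as $G$-sets}, which is how solitariness was defined, so the comparison $L_1 \cong \varphi^{-1}(L_2)$ is legitimate. Everything else is bookkeeping with induced representations and the dictionary between $G$-set isomorphisms $G/H_1 \cong G/H_2$ and conjugacy already recorded in Proposition \ref{propweakstrong}.
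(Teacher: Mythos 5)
Your proof is correct and follows essentially the same route as the paper: transport the standard monomial structure across the isomorphism of representations, invoke solitariness to conclude that the two resulting monomial structures are isomorphic as $G$-sets (namely $G/H_1 \cong G/H_2$), and apply Proposition \ref{propweakstrong}(\ref{conj-prop}). You are in fact slightly more complete than the paper, which leaves the easy direction (conjugate subgroups yield isomorphic induced representations) implicit.
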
 

\begin{proof} 
In this situation, $\Ind_{H_2}^G \chi$ carries two monomial structures: the standard one and the one induced from the standard one on $\Ind_{H_1}^G \Xi$ through the isomorphism of representations. Hence these monomial structures have to be isomorphic. But as $G$-sets, they are $G/H_1$ and $G/H_2$, respectively (see Example \ref{exgset}). By Proposition \ref{propweakstrong}(\ref{conj-prop}), this means precisely that $H_1$ and $H_2$ are conjugate in $G$. 
\end{proof} 

\subsection{Wreath product construction} 

\begin{definition} \label{wreath} Let $G$ denote a finite group and $H$ a subgroup of index $n:=[G:H]$ with cosets  $$\{g_1H=H,g_2H,\dots,g_nH\}$$ of cardinality $n$. For a prime number $\ell$, let $C={\Z}/{\ell}{\Z}$ denote the cyclic group with $\ell$ elements, and let $$\tilde G := C^n \rtimes G$$ denote the \emph{wreath product}; this is by definition the semidirect product where $G$ acts on the $n$ copies of $C$ by permuting the coordinates in the same way as $G$ permutes the cosets $g_iH$. In coordinates, this means that if we let $e_1,\dots,e_n$ denote the standard basis vectors of $C^n$, and, as before, define the permutation $i \mapsto g(i)$ of $\{1,\dots,n\}$ by $g g_i H = g_{g(i)}H$, then the semidirect product is
defined by the action
\begin{equation} \label{eq:Phidef}
G \stackrel{\Phi}{\rightarrow} \Aut(C^n) \colon g \mapsto \Phi(g) = \left[
\sum_{j=1}^n k_j e_j \mapsto \sum_{j=1}^n k_j e_{g(j)} \right]
\end{equation}
where $k_j \in {\Z}/{\ell}{\Z}$. This is the (left) action of $g \in G$ on $C^n$ given by $$C^n \ni (k_1,\dots,k_n) \mapsto (k_{g^{-1}(1)},\dots,k_{g^{-1}(n)}) \in C^n.$$ 
Define $$\tilde H:= C^n \rtimes H$$ to be the subgroup of $\tilde G$ corresponding to $H$.
The cosets of $\tilde H$ in $\tilde G$ are of the form
$$ \{ \tilde g_1 \tilde H = \tilde H, \tilde g_2 \tilde H, \dots, \tilde g_n \tilde H \}, $$
where for $g_i \in G$, we have a corresponding element $\tilde g_i:=(0,g_i) \in \tilde G$.
\end{definition} 

\begin{remark} \label{indfl} 
Recall that $\Ind_{H}^G \one$ is the $\Z[G]$-module corresponding to the permutation representation of $G$ acting on the $G$-cosets of $H$. Thus, if we identify $C$ with the additive group of the finite field $\F_\ell$, the action of $G$ on $C^n \cong \F_\ell^n$ corresponds to the $\F_\ell[G]$-module $(\Ind_G^H \one) \otimes_{\Z} \F_\ell$. 
\end{remark}

\begin{proposition}[Bart de Smit, {\cite[\S 10]{Lseries}}] \label{solitary} 
For all $\ell \geq 3$, there exists a $\tilde G$-solitary character of order $\ell$ on $\tilde H$. 
\end{proposition}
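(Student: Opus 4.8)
The plan is to construct the $\tilde G$-solitary character of order $\ell$ on $\tilde H$ explicitly and then verify the uniqueness of the monomial structure on the induced representation directly. First I would exhibit the character. Identifying $C = \Z/\ell\Z$ with $\F_\ell$ as in Remark \ref{indfl}, recall that $\tilde H = C^n \rtimes H$, so a linear character of $\tilde H$ is determined by a pair: a linear character of $H$ (which I will take to be trivial, since only the $C^n$-part will carry information) and an $H$-invariant linear character of $C^n$, i.e.\ an element of $\widehat{C^n} \cong \F_\ell^n$ fixed by the $H$-action. Since $H$ fixes the coset $g_1 H = H$, the dual basis vector $e_1^\vee$ is $H$-invariant, and I would set $\Xi := (e_1^\vee, \one_H)$, the character sending $((k_1,\dots,k_n), h) \mapsto \zeta^{k_1}$ for a fixed primitive $\ell$-th root of unity $\zeta$; this has order $\ell$ since $\ell \geq 3$ guarantees a nontrivial such character exists (this is where the hypothesis $\ell \geq 3$ enters, ruling out $\ell = 2$ where the only character would be of order dividing $2$ and the monomial entries would all be $\pm 1$, destroying the rigidity).

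Next I would analyze the induced representation $\rho := \Ind_{\tilde H}^{\tilde G} \Xi$, which has dimension $n = [\tilde G : \tilde H] = [G:H]$, carrying its standard monomial structure with lines $\mathcal L_i = \C \cdot \tilde g_i \tilde H$, $i = 1,\dots,n$, permuted by $\tilde G$ according to the $G$-action on $G/H$ (the $C^n$-part acts diagonally by roots of unity). The key computation is to understand the restriction of $\rho$ to the normal subgroup $C^n$: it decomposes into one-dimensional eigenspaces indexed by characters of $C^n$, and one computes that the characters appearing are exactly the $G$-orbit (equivalently $\tilde G$-orbit, as $C^n$ acts trivially on $\widehat{C^n}$) of $e_1^\vee$ under the permutation action $\Phi$ dual to the one in \eqref{eq:Phidef}. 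Because $G$ permutes $\{e_1^\vee, \dots, e_n^\vee\}$ transitively in the same way it permutes the cosets $g_i H$, these $n$ characters of $C^n$ are \emph{distinct} — crucially, distinctness uses $\ell \geq 3$ only insofar as we need $e_i^\vee \neq 0$, but the genuine point is that they are pairwise distinct nonzero characters. Hence $\Res_{C^n} \rho$ has $n$ distinct isotypic components, each one-dimensional, so the decomposition of $V$ into $C^n$-eigenlines is \emph{canonical}: it does not depend on any choice.

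Now I would argue uniqueness of the monomial structure. Suppose $L' = \{\mathcal L'_1, \dots, \mathcal L'_n\}$ is any monomial structure on $\rho$. Each line $\mathcal L'_j$ is in particular stable under the abelian normal subgroup $C^n$ (since $C^n$ permutes the lines of $L'$, but $C^n$ acts on $V$ with all eigenvalues being roots of unity and — here I need a small lemma — a single element of $\tilde G$ lying over a nontrivial element of $G$ cannot be used; rather, I use that $C^n$ is normal so the $C^n$-orbits of lines are blocks, and since $\Res_{C^n}\rho$ is multiplicity-free the only $C^n$-stable lines are the canonical eigenlines). Therefore $L'$ must coincide, as a set of lines, with the canonical eigenline decomposition, which is also the set of lines underlying the standard monomial structure $L$. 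Since $L$ and $L'$ have the same underlying lines and the $\tilde G$-action on $V$ is fixed, the permutation $\tilde G$-actions on $L$ and $L'$ are identical, so $L \cong L'$ as $\tilde G$-sets. This shows $\Xi$ is $\tilde G$-solitary.

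\textbf{Main obstacle.} The delicate step is the claim that \emph{every} monomial structure on $\rho$ must have its lines among the canonical $C^n$-eigenlines; a priori a monomial structure only asserts that lines are permuted by all of $\tilde G$, and one must rule out "exotic" line decompositions not respecting the $C^n$-eigenspace structure. The resolution is that if $L'$ is a monomial structure, then for the normal abelian subgroup $C^n \trianglelefteq \tilde G$ the span of each $C^n$-orbit of lines in $L'$ is a $C^n$-subrepresentation, and since $\Res_{C^n}\rho$ is a sum of $n$ \emph{distinct} characters each with multiplicity one, any line stable under $C^n$ (which each $\mathcal L'_j$ is, being fixed or moved within its orbit — one checks a line permuted by the abelian group $C^n$ with distinct eigencharacters is actually fixed by $C^n$) is forced to be one of the $n$ eigenlines. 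I expect verifying this rigidity — in particular the step "a line permuted by $C^n$ is fixed by $C^n$", which relies on multiplicity-freeness of $\Res_{C^n}\rho$ — to be the technical heart, and I would present it as a short separate lemma before the main argument.
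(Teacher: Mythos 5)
Your construction of $\Xi$ and the reduction to showing that the lines of an arbitrary monomial structure $L'$ coincide with the canonical $C^n$-eigenlines match the paper's strategy. But the step you yourself flag as the technical heart --- that each line of $L'$, being permuted by the normal abelian subgroup $C^n$, must actually be \emph{fixed} by $C^n$ --- is not a consequence of multiplicity-freeness of $\Res_{C^n}\rho$, and the lemma as you state it is false. Counterexample: the regular representation of $\Z/\ell\Z$ on $\C^\ell$ is multiplicity-free (a sum of $\ell$ distinct characters), yet its standard basis is a monomial structure whose lines form a single $\ell$-cycle; none of these lines is $C$-stable and none is an eigenline. In general, a $C^n$-orbit of size $k>1$ in $L'$ spans $\Ind_D^{C^n}\chi$ for $D$ the stabiliser of a line, and this is itself a sum of $k$ \emph{distinct} characters of $C^n$ --- perfectly compatible with multiplicity-freeness --- while its constituent lines are not eigenlines. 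So nothing in your argument rules out such an orbit. A confirming warning sign: as written, your proof never genuinely uses $\ell\geq 3$ (the characters $e_i^\vee$ are distinct and nonzero for $\ell=2$ as well), yet the statement fails for $\ell=2$ by Pintonello's result quoted in Remark \ref{pint}.

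The paper closes exactly this gap with a trace estimate. Computing $\tr\rho((1,0,\dots,0))$ in a basis adapted to the standard structure gives $e^{2\pi i/\ell}+(n-1)$, whose absolute value is strictly greater than $n-2$ precisely because $\ell\geq 3$. Computing the same trace in a basis adapted to $L'$ gives a sum of $m$ roots of unity, where $m$ is the number of lines of $L'$ mapped to themselves by $(1,0,\dots,0)$; since a moved line forces at least two moved lines, either $m=n$ or $m\leq n-2$, and the estimate excludes the latter. Hence every line of $L'$ is fixed by all $G$-conjugates of $(1,0,\dots,0)$, which generate $C^n$, and only at that point does your multiplicity-freeness observation correctly force $L'=L$. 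To repair your proof you must supply this estimate or an equivalent argument (for instance: a $C^n$-orbit of size $>1$ has size a power of $\ell$, hence at least $3$, and would require three of the linearly independent characters $e_i^\vee$ to have pairwise proportional differences, which is impossible --- note that this substitute also breaks down exactly at $\ell=2$).
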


\begin{proof} 
Define $\Xi$ by 
\begin{equation} \label{defxi} \Xi \colon \tilde H \rightarrow \C^*  \colon (k_1,\dots,k_n,g) \mapsto e^{2 \pi i k_1/\ell}. \end{equation} 
Let $L =\{ \mathcal L_x\}$ and $L' = \{ \mathcal L'_x\}$ denote two monomial structures on $\rho:=\Ind_{\tilde H}^{\tilde G} \Xi$, where $L$ is the standard one (see Example \ref{exgset}). The action of $G \leq \tilde G$ on $L$ is that of $G$ on $G/H$ and (after rearranging) the action of $C^n \leq \tilde G$ is given by $$(k_1,\dots,k_n)\cdot \mathcal L_j = e^{2 \pi i k_j/\ell} \cdot \mathcal L_j,$$
where we used the simplified notation $\mathcal L_j := \mathcal L_{g_j \tilde H}$.
The character $\psi$ of $\rho$ can be computed using as basis any set of vectors from the lines in $L$ or $L'$. From the above, $$|\psi((1,0,\dots,0))|=|e^{2 \pi i/\ell}+\underbrace{1+\dots+1}_{n-1}| > n-2, $$
where the last inequality is strict since $\ell \geq 3$. On the other hand, computing the same trace using a basis from $L'$, we get a sum of some number, say, $m$, of $\ell$-th roots of unity, where $m$ is the number of lines in $L'$ that are mapped to itself by $(1,0,\dots,0)$. If there is a line not mapped to itself (a zero diagonal entry in the corresponding matrix), then there are at least two (since every row/column has precisely two non-zero entries), so $m=n$ or $m \leq n-2$. In the latter case, 
$|\psi((1,0,\dots,0))|\leq n-2,$
which is impossible. Since $C^n$ is generated by $G$-conjugates of $(1,0,\dots,0)$, we find that $C^n$ fixes all lines in $L'$. Hence $L' \subseteq L$, but since $|L|=|L'|=[\tilde G:\tilde H]$, we have $L=L'$. 
\end{proof} 

\subsection{Application to manifolds} 

We deduce the following intermediate result. 

\begin{corollary} \label{art} 
Suppose we have a diagram \textup{(\ref{sunadasetup})}. Let $C:={\Z}/\ell{\Z}$ denote a cyclic group of prime order $\ell \geq 3$. Let $\tilde G$ and $\tilde H_1$ denote the wreath products as in Definition \textup{\ref{wreath}} \textup{(}with $H = H_1$\textup{)} and $\tilde H_2 := C^n \rtimes H_2$ \textup{(}with the same action defined via the $H_1$-cosets\textup{)}, and assume that there exists a diagram of Riemannian coverings
\begin{equation} \label{leftpartextend}
  \xymatrix@C=2mm@R=4mm{ & M' \ar[dd]^{C^n} \ar@/_1pc/[dddl]_{\tilde H_1} \ar@/^2.5pc/[dddd]^{\tilde G} \\ &  \\ & M \ar[dl]_<<{H_1} \ar[dd]^-G  \\ M_1\ar[dr]_{p_1} &   \\ & M_0  } 
\end{equation} 
Then $M_1$ and $M_2$ are equivalent Riemannian covers of $M_0$ if and only if for a $\tilde G$-solitary character $\Xi$ on $\tilde H_1$ and for some linear character $\chi$ on $\tilde H_2$, the multiplicity of zero is equal in the two spectra $$ \Sp_{M_1}(\bar \Xi \otimes \Res_{\tilde H_1}^{\tilde G} \Ind_{\tilde H_1}^{\tilde G} \Xi) \mbox{ and } \Sp_{M_2}(\bar \chi \otimes \Res_{\tilde H_2}^{\tilde G} \Ind_{\tilde H_1}^{\tilde G} \Xi)$$ and in the two spectra
$$\Sp_{M_1}(\bar \Xi \otimes \Res_{\tilde H_1}^{\tilde G} \Ind_{\tilde H_2}^{\tilde G} \chi) \mbox{ and } 
\Sp_{M_2}(\bar \chi \otimes \Res_{\tilde H_2}^{\tilde G} \Ind_{\tilde H_2}^{\tilde G} \chi).$$
\end{corollary}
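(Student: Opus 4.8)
The plan is to reduce Corollary~\ref{art} to Proposition~\ref{solo} applied to the wreath-product data $(\tilde G, \tilde H_1, \tilde H_2)$ together with Lemma~\ref{lem:solitarycharconjgroups} and the dictionary between conjugacy and Riemannian equivalence supplied by Lemma~\ref{over}. First I would note that the diagram \eqref{leftpartextend}, composed with the bottom half of \eqref{sunadasetup}, realises $\tilde G$ as a group of isometries of $M'$ with $M' \to M_0$ Galois of group $\tilde G$, and with $\tilde H_1$, $\tilde H_2$ the (fixed-point free) subgroups giving $M_1 = \tilde H_1 \backslash M'$ and $M_2 = \tilde H_2 \backslash M'$; indeed $\tilde H_1$ corresponds to $H_1$ and one sets $M_2' := \tilde H_2 \backslash M'$, which is a Riemannian cover of $M_2$ with group $C^n$. (I would remark that although \eqref{leftpartextend} only displays the left branch over $M_1$, the subgroup $\tilde H_2 = C^n \rtimes H_2 \le \tilde G$ is intrinsically defined, so no extra hypothesis on the right branch is needed — this parallels the asymmetry already present in Proposition~\ref{solo}.)

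Next I would apply Proposition~\ref{solo} verbatim to $G \rightsquigarrow \tilde G$, $H_i \rightsquigarrow \tilde H_i$, $\chi_1 \rightsquigarrow \Xi$, $\chi_2 \rightsquigarrow \chi$: the equality of multiplicities of zero in the four displayed spectra is exactly condition~(\ref{three}') of that proposition (with $M_i$ there replaced by our $M_1$ and $M_2' = \tilde H_2\backslash M'$, and using Lemma~\ref{mult} or Lemma~\ref{SNT} to pass between the spectrum on the quotient and the kernel dimension). Hence that spectral condition is equivalent to $\Ind_{\tilde H_1}^{\tilde G}\Xi \cong \Ind_{\tilde H_2}^{\tilde G}\chi$ as $\tilde G$-representations. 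Since $\Xi$ is $\tilde G$-solitary by construction (Proposition~\ref{solitary}, applicable because $\ell \ge 3$), Lemma~\ref{lem:solitarycharconjgroups} converts the existence of such a $\chi$ into the statement that $\tilde H_1$ and $\tilde H_2$ are conjugate in $\tilde G$.

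Finally I would translate conjugacy of $\tilde H_1$ and $\tilde H_2$ in $\tilde G$ into conjugacy of $H_1$ and $H_2$ in $G$, and then invoke Lemma~\ref{over} (with $M'$ in the role of the common cover) to conclude equivalence of $M_1$ and $M_2$ as Riemannian covers of $M_0$. For the group-theoretic step one uses that $C^n$ is normal in $\tilde G$ with $\tilde G / C^n \cong G$ and $\tilde H_i / C^n \cong H_i$: a conjugation $\tilde g \tilde H_1 \tilde g^{-1} = \tilde H_2$ descends to $\bar g H_1 \bar g^{-1} = H_2$, and conversely a conjugator in $G$ lifts to one in $\tilde G$ (here one checks the lift respects the $C^n$-factor, which it does because the $G$-action on $C^n$ is by the permutation of the $H_1$-cosets and conjugation by $g$ intertwines the $H_1$-action with the $g H_1 g^{-1}$-action). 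This last compatibility check is the one place where a small computation is genuinely needed, and I expect it — rather than anything analytic — to be the main (mild) obstacle; everything else is a direct citation of Proposition~\ref{solo}, Proposition~\ref{solitary}, Lemma~\ref{lem:solitarycharconjgroups} and Lemma~\ref{over}.
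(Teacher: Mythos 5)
Your proposal follows the paper's proof almost verbatim: Proposition \ref{solitary} gives the solitary $\Xi$ (using $\ell\geq 3$), Proposition \ref{solo} converts the four equalities of multiplicities of zero into $\Ind_{\tilde H_1}^{\tilde G}\Xi \cong \Ind_{\tilde H_2}^{\tilde G}\chi$, and Lemma \ref{lem:solitarycharconjgroups} converts that into conjugacy of $\tilde H_1$ and $\tilde H_2$ in $\tilde G$. One assertion in your setup is wrong, though it does not ultimately derail the argument: $\tilde H_2\backslash M'$ is \emph{not} ``a Riemannian cover of $M_2$ with group $C^n$''. Since $C^n$ is normal in $\tilde H_2$ with quotient $H_2$, one has $\tilde H_2\backslash M' = H_2\backslash(C^n\backslash M') = H_2\backslash M = M_2$ exactly, and this identification is precisely what is needed so that Proposition \ref{solo} produces spectra on $M_2$ itself, matching the statement of the corollary; if $\tilde H_2\backslash M'$ really were a proper cover of $M_2$, the spectra would live on the wrong space. (You in fact also write $M_2=\tilde H_2\backslash M'$ in the same sentence, so your two claims contradict each other; only the equality is correct.) Your final translation step is also more roundabout than necessary: the paper applies Lemma \ref{over} directly to the cover $M'\to M_0$ with groups $\tilde G\supseteq \tilde H_1,\tilde H_2$, so no descent to $G$ and no lifting of conjugators is required; and the ``compatibility check'' you single out as the main obstacle is vacuous, because $C^n$ is normal in $\tilde G$ and $\tilde H_i$ is the full preimage of $H_i$ under $\tilde G\to G$, whence $\tilde g\tilde H_1\tilde g^{-1}=C^n\rtimes gH_1g^{-1}$ automatically.
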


\begin{proof} First of all, since $\ell \geq 3$, a $\tilde G$-solitary character $\Xi$ on $H_1$ exists, by Proposition \ref{solitary}. 
By Proposition \ref{solo}, the equalities of multiplicities of zero is equivalent to $\Ind_{\tilde H_1}^{\tilde G} \Xi \cong  \Ind_{\tilde H_2}^{\tilde G} \chi$. Since $\Xi$ is $\tilde G$-solitary, we conclude by Lemma \ref{lem:solitarycharconjgroups} that $\tilde H_1$ and $\tilde H_2$ are conjugate in $\tilde G$. As $C^n$ is normal in $\tilde H_2$ with quotient $H_2$, we find that $\tilde H_2 \backslash M' = H_2 \backslash M = M_2$ and hence this conjugacy defines an isometry from $M_1$ to $M_2$ that is the identity on $M_0$. 
\end{proof} 

Since $\chi$ runs over linear characters of $\tilde H_2$, the ``less abelian'' the extension is, the less spectra need to be compared. A more precise statement is the following. 

\begin{proposition} \label{numberofchecks} In the setup of Corollary \textup{\ref{art}}, the dimension of the representations of which the spectra are being compared is the index $[G:H_2]$. Furthermore, the number of spectral equalities to be checked in Corollary \textup{\ref{art}} by using all possible linear characters on $\tilde H_2$ is bounded above by $2 \ell \cdot |H^{\mathrm{ab}}_2|$. 
\end{proposition}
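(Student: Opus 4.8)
The plan is to unwind the construction of the representations occurring in Corollary \ref{art} and track their dimensions and the number of available linear characters. First I would observe that the representations compared in Corollary \ref{art} are all of the form $\bar\xi \otimes \Res_{\tilde H_i}^{\tilde G} \Ind_{\tilde H_j}^{\tilde G} \zeta$ for linear characters $\xi,\zeta$; since tensoring with a linear character does not change dimension, and $\Res$ does not change dimension, the dimension equals $\dim \Ind_{\tilde H_j}^{\tilde G}\zeta = [\tilde G : \tilde H_j]$. By the definition of the wreath products (Definition \ref{wreath}), $\tilde G = C^n \rtimes G$ and $\tilde H_j = C^n \rtimes H_j$, so $[\tilde G:\tilde H_j] = [G:H_j]$; in the relevant cases $j=1$ gives $[G:H_1]$ and $j=2$ gives $[G:H_2]$, and by the remark following Proposition \ref{propweakstrong} (equality of covering degrees, forced once one checks the easy first equality involving the ordinary Laplacian) one has $[G:H_1] = [G:H_2]$. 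So the common dimension is the index $[G:H_2]$, which is the first claim.

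Next I would count the linear characters. In Corollary \ref{art} the character $\Xi$ on $\tilde H_1$ is fixed ($\tilde G$-solitary, provided by Proposition \ref{solitary}), and the only object ranging over a set is the linear character $\chi$ on $\tilde H_2$. Each choice of $\chi$ contributes the pair of spectral equalities displayed in Corollary \ref{art}, i.e.\ (at most) two equalities of multiplicities of zero. Hence the total number of checks is at most $2 \cdot |\check{\tilde H_2}| = 2\,|\Hom(\tilde H_2,\C^*)| = 2\,|\tilde H_2^{\mathrm{ab}}|$. It therefore remains to bound $|\tilde H_2^{\mathrm{ab}}|$ by $\ell\cdot|H_2^{\mathrm{ab}}|$.

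To do this, I would abelianise the short exact sequence $1 \to C^n \to \tilde H_2 \to H_2 \to 1$. Abelianisation is right exact, so there is an exact sequence $(C^n)_{H_2} \to \tilde H_2^{\mathrm{ab}} \to H_2^{\mathrm{ab}} \to 1$, where $(C^n)_{H_2}$ denotes the coinvariants of the $H_2$-action on $C^n \cong \F_\ell^n$ (the $H_2$-action being the restriction of the permutation action of $G$ on the $H_1$-cosets, by Remark \ref{indfl}). Thus $|\tilde H_2^{\mathrm{ab}}| \leq |(C^n)_{H_2}| \cdot |H_2^{\mathrm{ab}}|$, and it suffices to show $|(C^n)_{H_2}| \leq \ell$. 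Now $(C^n)_{H_2} = \F_\ell^n / \langle e_i - \sigma(e_i)\rangle$ is the coinvariant space of a permutation module over $\F_\ell$; for a permutation action on a set of size $n$ with, say, $t$ orbits, the coinvariants have $\F_\ell$-dimension exactly $t$. The key input is that $H_1$ itself is one orbit: $H_1$ is the stabiliser of the trivial coset $eH_1$ in the $G$-action on $G/H_1$, hence acts on that coset with a fixed point, and — more to the point — since the $G$-set $G/H_1$ is transitive, already $G$ has one orbit; but I need $H_2$ to have one orbit, which is genuinely a condition. I expect this to be the main obstacle: in general $H_2$ acting on $G/H_1$ need not be transitive, so $|(C^n)_{H_2}|$ can exceed $\ell$. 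Resolving this will require either an additional hypothesis that I've overlooked in Corollary \ref{art} (perhaps $G = H_1 H_2$, forcing transitivity of $H_2$ on $G/H_1$ by the orbit–stabiliser/double-coset count $|H_2\backslash G/H_1| = 1$), or a more careful reading showing that the relevant $\chi$ need only range over a quotient of $\tilde H_2^{\mathrm{ab}}$ of size $\leq \ell|H_2^{\mathrm{ab}}|$ — for instance only over those $\chi$ whose restriction to $C^n$ is $G$-conjugate to $\Xi|_{C^n}$, of which there are at most $\ell$ times $|H_2^{\mathrm{ab}}|$ because the $C^n$-part is then pinned down up to the orbit of a single coordinate functional. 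Once transitivity (or the appropriate substitute) is in hand, the inequality $|(C^n)_{H_2}| \leq \ell$ is immediate and the bound $2\ell\cdot|H_2^{\mathrm{ab}}|$ follows.
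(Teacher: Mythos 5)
Your dimension count and your reduction of the character count to $2\,|\tilde H_2^{\mathrm{ab}}|$ are exactly the paper's argument. The step you could not complete --- showing $|\tilde H_2^{\mathrm{ab}}|\leq \ell\,|H_2^{\mathrm{ab}}|$ --- is the one step the paper settles by citation rather than computation: it quotes Meldrum's description of the commutator subgroup of a wreath product \cite[Cor.\ 4.9]{Meldrum} to get, with $\Omega=\{g_1,\dots,g_n\}$,
$$|[\tilde H_2,\tilde H_2]|=|[H_2,H_2]|\cdot\bigl|\{f\colon\Omega\to C \ :\ \textstyle\sum_{y\in\Omega}f(y)=0\}\bigr|=|[H_2,H_2]|\cdot \ell^{\,n-1},$$
whence $|\tilde H_2^{\mathrm{ab}}|=\ell\,|H_2^{\mathrm{ab}}|$ exactly. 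So from the paper's point of view the missing idea is just this reference, and your proposal is otherwise the same proof.

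That said, your diagnosis of why this step is delicate is mathematically accurate and worth spelling out. Since $C^n$ is abelian, $\tilde H_2^{\mathrm{ab}}\cong (C^n)_{H_2}\times H_2^{\mathrm{ab}}$, and the coinvariants of the permutation module $C^n\cong(\Ind_{H_1}^G\one)\otimes\F_\ell$ restricted to $H_2$ have order $\ell^{\,t}$ with $t=|H_2\backslash G/H_1|$ the number of $H_2$-orbits on $G/H_1$; the displayed formula is the case $t=1$, i.e.\ $H_2$ transitive on $G/H_1$ (equivalently $G=H_2H_1$), which is not among the hypotheses of Corollary \ref{art}. Neither of your two proposed repairs is what the paper does --- it applies the one-orbit formula without further comment --- though your second suggestion (only those $\chi$ whose restriction to $C^n$ is compatible with $\Xi|_{C^n}$ can yield $\Ind_{\tilde H_2}^{\tilde G}\chi\cong\Ind_{\tilde H_1}^{\tilde G}\Xi$, which pins down the $C^n$-component up to at most $\ell$ choices) is the natural way to recover the stated bound when $t>1$. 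In short: your write-up has a genuine gap relative to a complete proof, but the obstacle you flagged is a real subtlety in the step the paper outsources to Meldrum, not a misreading on your part.
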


In Corollary \ref{art} and Proposition \ref{numberofchecks}, one may interchange the roles of $H_1$ and $H_2$, which could lead to tighter results. 

\begin{proof} The dimension of the representations we are considering, as induced representations, is the index $[\tilde G: \tilde H_2]=[G:H_2]$. 

The spectral criterion in the proposition requires testing of 2 equalities of spectra for each linear character on $\tilde H_2$, so there are at most $2 | \tilde{H}^{\mathrm{ab}}_2 | $ equalities to be checked. 

The commutator subgroup of a wreath product $\tilde H_2 = C^n \rtimes H_2$ is computed in \cite[Cor.\ 4.9]{Meldrum}, and we find that in our case, with $\Omega =\{g_1,\dots,g_n\}$ a set of representatives for the cosets, 
$$ |[\tilde H_2, \tilde H_2]| = |[H_2,H_2]| \cdot |\{ f \colon \Omega \rightarrow C \colon \sum_{y \in \Omega} f(y) = 0 \}|; $$
where, with $|C|=\ell$, the second factor is $\ell^{|\Omega|-1}$. Hence we find 
$ |\tilde H_2^{\mathrm{ab}}| = |H_2^{\mathrm{ab}}| \cdot \ell,$
and the result follows. 
\end{proof} 

\begin{remark} \label{pint}  Pintonello \cite[Theorem 3.2.2]{Pintonello} has shown that for $\ell=2$, there does not always exist a solitary character as in Proposition \ref{solitary}. He also proved that in this case, there exists such a quadratic character if one additionally assumes weak conjugacy (i.e., $\Ind^{\tilde G}_{\tilde H_1} \one \cong \Ind^{\tilde G}_{\tilde H_2} \one$) \cite[Theorem 2.3.1]{Pintonello}. Again using Proposition \ref{solo} to reformulate this extra assumption spectrally, we find that in this case, the number of equalities to check is at most $2+4|H_2^{\mathrm{ab}}|$.
\end{remark}

\begin{remark} By Lemma \ref{mult}, the multiplicity of zero in the spectrum $\Sp_M(\rho)$ can be computed purely representation theoretically as the multiplicity of the trivial representation in $\rho$, which is in principle possible by Mackey theory (cf.\ Remark \ref{mackey}), but this would be going in reverse (from spectra to group theory instead of the other way around). 
Knowing the group $G$ and its subgroups $H_1$ and $H_2$, Riemannian equivalence of $M_1$ and $M_2$ over $M_0$ can be checked by a finite computation, verifying that $H_1$ and $H_2$ are conjugate in $G$. Corollary \ref{art} translates this into a spectral statement (in the special setup where the group $\tilde G$ is realised as indicated there).
\end{remark} 

In the next sections, we study under which circumstances we have a cover as in Corollary \ref{art}, i.e., we deal with the realisation problem for the wreath product as isometry group of a cover, given an isometric free action of $G$ on a closed manifold $M$. This is analogous to the inverse problem of Galois theory, realising the wreath product as Galois group of a number field. In manifolds, some condition is necessary on $M$ for such an extension to be possible at all.

\section{Construction of suitable covers and proof of the main theorem} 
\label{sec:covers}
\label{sec:prelimhom} 

\subsection{Fundamental group and first homology} We first fix some notations and constructions. Let $M$ denote a connected closed oriented smooth Riemannian manifold. 
Fixing a point $x \in M$, the universal covering $\tilde M$ is described as the set of homotopy classes $[w]$ of paths $w: [0,1] \to M$ with $w(0) = x$. This provides a projection map $$\Pi: (\tilde M,\tilde x) \to (M,x),\ \Pi([w]) = w(1),$$ where $\tilde x \in \tilde M$ represents the homotopy class of the constant path at $x$. If we equip $\tilde M$ with the pull-back of the Riemannian metric on $M$ then the group $\Gamma$ acts isometrically by deck transformations on $\tilde M$, and $M$ is identified with the quotient $\Gamma \backslash \tilde M$. 

Let $*$ denotes path-concatenation read from left to right, that is, $[a] * [b]$ is the homotopy class of the path obtained by first traversing $a$ and then $b$. Letting $\tilde x$ denote the constant path at $x$, we have an identification 
$$\Phi_{\tilde x}: \Gamma \to \pi_1(M,x)$$ via the map $\Phi_{\tilde x}(\gamma) = \gamma(\tilde x)$, with $\gamma(\tilde x)$ representing a homotopy class of a closed loop starting and ending at $x$.  More generally, any homotopy class of a path $[w]$ in $\tilde M$ induces a map $\Phi_{[w]}: \Gamma \to \pi_1(M,w(1))$ via
\begin{equation} \label{wphi} \Phi_{[w]}(\gamma) = [w^{-1}] * \Phi_{\tilde x}(\gamma) * [w]. \end{equation} 

We denote the first homology group of $M$ (with integer coefficients) by $\Ho_1(M) = \Ho_1(M,\Z)$. The universal coefficient theorem for homology \cite[\S 3.A]{Hatcher} implies that for any field $K$, we have an isomorphism $$\Ho_1(M,K) = \Ho_1(M) \otimes_{\Z} K.$$ Also, since $M$ is a connected manifold, it is path-connected, so that we have a Hurewicz homomorphism inducing an identification
$$\Ho_1(M) = \pi_1(M,x)^{\ab} \cong \Gamma^{\ab}.$$ 
The map is given by considering a (homotopy class of a) loop as a concatenation of oriented $1$-cells and mapping it to the (homology class of the) signed sum of those cells. 

Composing the maps, we have a homomorphism $\Psi_0: \Gamma \to \Ho_1(M,\F_\ell)$ given as  the composition of $\Phi_{[w]}$ with the abelianisation map and the Hurewicz isomorphism, followed by reduction modulo $\ell$. 
\begin{equation} \label{eq:Phicomp} 
\xymatrix{ \Gamma \ar[r]^(.36){\Phi_{[w]}} \ar@/_1.5pc/[rrr]_{\Psi_0} & \pi_1(M,w(1)) \ar[r]^{\cdot^{\ab}} &  \Ho_1(M,\Z) \ar[r]^(.48){\otimes \F_\ell}  & \Ho_1(M,\F_\ell),}
\end{equation}
Since by \eqref{wphi} the homotopy classes of loops $\Phi_{[w]}$ for different $w$ are freely homotopic, the composed map $\Psi_0$ is independent of the choice of $w$, as notation indicates. The standard choice is $w=\tilde x$, but we will naturally encounter others.   

\subsection{First homology and Galois covers} \label{fhgc} Suppose now that $G$ is a finite group of isometries acting on $M$, and let $q \colon M \rightarrow M_0$ denote the orbifold quotient, with $\Gamma_0$ the covering group of $ \Pi_0 \colon \tilde M \rightarrow M_0$. Let $\Gamma$ be the normal subgroup of $\Gamma_0$ corresponding to $\Pi \colon \tilde M \rightarrow M$, so that  there is a short exact sequence of groups \begin{equation} \label{sesg} 1 \rightarrow \Gamma \rightarrow \Gamma_0 \xrightarrow{F} G \rightarrow 1 \end{equation} giving an isomorphism $G \cong \Gamma_0 / \Gamma$. 
The setup is summarised in diagram \eqref{qq}. 
\begin{equation} \label{qq}
  \xymatrix@C=6mm@R=6mm
  { \tilde M \ar@/_2pc/[dd]_{\Gamma_0}^{\Pi_0} \ar[d]^--{\Gamma}_{\Pi} \\  M \ar[d]^--G_q  \\  M_0  } 
\end{equation} 

\begin{definition} 
For $\gamma_0 \in \Gamma_0$, let $\conj_{\gamma_0}: \Gamma \to \Gamma$ be conjugation by $\gamma_0$, that is
$$ \conj_{\gamma_0}(\gamma) = \gamma_0 \gamma \gamma_0^{-1}. $$
\end{definition} 

\begin{remark} \label{outer} If $g \in G \cong \Gamma_0/\Gamma$ satisfies $g = \gamma_0 \Gamma$, this represents the usual map $$G \rightarrow \mathrm{Out}(\Gamma) \colon g \mapsto \conj_{\gamma_0}$$ induced by the exact sequence \eqref{sesg}, where $\mathrm{Out}(\Gamma) = \mathrm{Aut}(\Gamma) / \mathrm{Inn}(\Gamma)$ is the quotient of the group of automorphisms of $\Gamma$ by the group  $\mathrm{Inn}(\Gamma) = \{ \conj_\gamma \colon \gamma \in \Gamma\}$ of inner automorphisms. 

In this setup, the action of the isometry $g$ on $M$ corresponds to the action of $\conj_{\gamma_0^{-1}}$ on $\Gamma$, so that there is a commuting diagram 
\begin{equation} \label{isomconj} 
  \xymatrix@C=6mm@R=6mm
  { & \tilde M \ar[dl]_{\Gamma} \ar[dr]^{\gamma_0^{-1}\Gamma \gamma_0} & \\  M \ar[rr]^g & &   M} 
\end{equation} 
\end{remark}

The action of $G$ on $M$ by isometries induces a linear action of $G$ on $\Ho_1(M,\F_\ell)$, providing an $\F_\ell[G]$-module structure on this homology group. The following lemma describes the relation between this action and the above outer conjugation on $\Gamma$: they commute by the Hurewicz map. The argument is similar to the one used in the proof of Hopf's formula \cite[(5.3)]{BrownK}. 

\begin{lemma} \label{lem:homology_and_fundgroup}
Let $\gamma_0 \in \Gamma_0$ and $g \in G \cong \Gamma_0/\Gamma$ such that $g = \gamma_0 \Gamma$. Then the following diagram commutes:
\begin{equation} \label{homology_and_fundgroup}
\xymatrix@C=6mm@R=6mm{\Gamma \ar[d]_--{\Psi_0}\ar[r]^-{\conj_{\gamma_0}} & \Gamma \ar[d]^--{\Psi_0} \\
\Ho_1(M,\F_\ell) \ar[r]^-{g \cdot} & \Ho_1(M,\F_\ell)
}
\end{equation}
where $\Psi_0$ is as in \textup{\eqref{eq:Phicomp}} and the bottom line indicates the action of $g \in G$ on the first homology group. 
\end{lemma}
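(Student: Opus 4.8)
The plan is to unwind both composites in diagram \eqref{homology_and_fundgroup} at the level of the universal cover $\tilde M$ and its paths, using the concrete description of $\Psi_0$ given in \eqref{eq:Phicomp} together with the identification \eqref{isomconj} of the isometry $g$ with the conjugation action of $\gamma_0^{-1}$. First I would fix a lift: choose $\tilde x \in \tilde M$ over the basepoint $x \in M$, and let $\tilde y := \gamma_0^{-1}(\tilde x)$, a point over the point $y := g^{-1}(x)$ (or, depending on orientation conventions, over $g(x)$ — the argument is symmetric and I would pick whichever matches \eqref{isomconj}). The key observation is that $\Psi_0$ can be computed with \emph{any} choice of path $w$, by the remark after \eqref{eq:Phicomp}, so I may compute the right-hand $\Psi_0$ in the diagram using a path ending at $y$; concretely, $\Phi_{[w]}$ with $w$ a path from $x$ to $y$ lifting to a path from $\tilde x$ to $\tilde y$ in $\tilde M$.

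The core of the argument is then the following identity of maps $\Gamma \to \pi_1(M,y)$: for $\gamma \in \Gamma$,
\begin{equation*}
\Phi_{[w]}(\conj_{\gamma_0}(\gamma)) = g_*\bigl(\Phi_{\tilde x}(\gamma)\bigr),
\end{equation*}
where $g_* \colon \pi_1(M,x) \to \pi_1(M,y)$ is the iso induced by the isometry $g$ (sending $x \mapsto y$). This is purely a statement about deck transformations: $\Phi_{\tilde x}(\gamma)$ is represented by the loop $\Pi \circ \widetilde{w}_\gamma$ where $\widetilde{w}_\gamma$ is a path from $\tilde x$ to $\gamma \tilde x$, and applying the isometry $g$ (which upstairs is $\gamma_0^{-1}(\cdot)\gamma_0$ by \eqref{isomconj}, i.e.\ the deck transformation $\conj_{\gamma_0^{-1}}$ conjugating $\Gamma$ to itself and moving $\tilde x$ to $\tilde y$) carries this to a path from $\tilde y$ to $\conj_{\gamma_0^{-1}}(\gamma)\tilde y$ — and one has to check carefully whether the outcome is $\conj_{\gamma_0}$ or $\conj_{\gamma_0^{-1}}$, which pins down the direction in \eqref{homology_and_fundgroup}; I would organize this as the single commuting square of spaces
\begin{equation*}
\xymatrix@C=10mm@R=8mm{
\tilde M \ar[r]^{\gamma_0^{-1}} \ar[d]_{\Pi} & \tilde M \ar[d]^{\Pi} \\
M \ar[r]^{g} & M
}
\end{equation*}
and read off the effect on based loops by tracking basepoints. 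After composing with the abelianization $\pi_1 \to \Ho_1$ and reduction mod $\ell$, the induced maps on $\Ho_1(M,\F_\ell)$ are independent of basepoint (all identifications $\pi_1(M,x)^{\ab} \cong \Ho_1(M,\F_\ell)$ agree), and $g_*$ on $\pi_1$ descends to precisely the functorially-induced map $g\cdot$ on $\Ho_1(M,\F_\ell)$, which is the bottom arrow of \eqref{homology_and_fundgroup}. Passing the displayed $\pi_1$-identity through these functors gives exactly the claimed commutativity.

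The main obstacle — and the only subtle point — is bookkeeping the directions and the basepoint shifts: one must be scrupulous about whether $g$ corresponds to $\conj_{\gamma_0}$ or $\conj_{\gamma_0^{-1}}$ on $\Gamma$ (the excerpt's Remark \ref{outer} already commits to $\conj_{\gamma_0^{-1}}$ for the action on $M$), about left-versus-right path concatenation in \eqref{wphi}, and about which of $g(x)$, $g^{-1}(x)$ is the endpoint of the auxiliary path $w$. Once a consistent convention is fixed, everything is formal. As the excerpt notes, this is structurally the computation behind Hopf's formula \cite[(5.3)]{BrownK}: conjugation on the normal subgroup $\Gamma \trianglelefteq \Gamma_0$ induces, after abelianizing, the $\Gamma_0/\Gamma = G$-action on $\Gamma^{\ab}$, and one just has to check that the Hurewicz identification $\Gamma^{\ab} \otimes \F_\ell \cong \Ho_1(M,\F_\ell)$ intertwines this algebraic $G$-action with the geometric one coming from isometries. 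I would present the proof as: (i) set up the square of spaces above; (ii) chase a based loop around it to get the $\pi_1$-level identity; (iii) abelianize, tensor with $\F_\ell$, and invoke basepoint-independence of $\Psi_0$ to conclude.
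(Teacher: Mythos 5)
Your plan is correct and matches the paper's argument: the paper's displayed proof is a slightly slicker homology-level version (decompose the loop for $\conj_{\gamma_0}(\gamma)$ into the $1$-cells $\gamma_0 \ast (g\cdot\gamma) \ast \gamma_0^{-1}$ and let the $[\gamma_0]$ terms cancel in $\Ho_1$), while the remark immediately following it carries out exactly your $\pi_1$-level basepoint chase in the universal cover, establishing $\Phi_{\gamma_0\tilde x}(\conj_{\gamma_0}(\gamma)) = g\cdot\Phi_{\tilde x}(\gamma)$ and descending via naturality of Hurewicz. The one convention you leave open is resolved in the paper by taking the auxiliary path to end at $g(x)$ (i.e.\ $w=\gamma_0\tilde x$), which is forced if the identity is to live in $\pi_1(M,gx)$.
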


\begin{proof} 
The vertical maps are given by picking a base point $x \in M$, considering $\gamma \in \Gamma$ as a homotopy class of a closed loop in $M$ based at $x$ via $\Phi_{\tilde x}$, rewriting $\gamma$ as a concatenation of oriented $1$-cells, and mapping these to the corresponding sum of $1$-cells in homology. The crucial observation that makes the proof work is that we can decompose into $1$-cells, not just $1$-cycles, and the image is independent of the choice of base point. In this way, the closed loop corresponding to $\conj_{\gamma_0}(\gamma)$ decomposes as a (left-to-right) concatenation of the $1$-cells $$(\gamma_0: x \rightarrow gx) \ast ( g  \cdot \gamma: gx \rightarrow gx) \ast (\gamma_0^{-1}: gx \rightarrow x),$$ that is mapped by $\Psi_0$ to the sum of homology classes $[\gamma_0] + g\cdot [\gamma] - [\gamma_0] = g \cdot [\gamma],$ proving the commutativity of the diagram.
\end{proof} 

\begin{remark} In fact, there is a larger commutative diagram:
\begin{equation} \label{homology_and_fundgroup_larger}
\xymatrix@C=6mm@R=6mm{\Gamma \ar[d]_{\Phi_{\tilde x}}\ar[r]^-{\conj_{\gamma_0}}  \ar@/_3pc/[dd]_{\Psi_0} & \Gamma \ar[d]^{\Phi_{\gamma_0 \tilde x}}  \ar@/^3pc/[dd]^{\Psi_0} \\
\pi_1(M,x) \ar[d]\ar[r]^-{g \cdot} & \pi_1(M,gx) \ar[d] \\
\Ho_1(M,\F_\ell) \ar[r]^-{g \cdot} & \Ho_1(M,\F_\ell)
}
\end{equation} 
The detailed proof goes as follows: we have argued before that both vertical maps (from top to bottom) on the left and right hand side of \eqref{homology_and_fundgroup_larger} agree and are equal to
$\Psi_0$. 
Naturality of the Hurewicz isomorphism guarantees commutativity of the lower square in \eqref{homology_and_fundgroup_larger} and it therefore suffices  to prove commutativity of the upper square in \eqref{homology_and_fundgroup_larger}. 

Elements $g \in G$ are isometries $g: M \to M$ and 
the groups $\Gamma_0$ and $\Gamma$ acting by isometries on $\tilde M$ can be described via deck transformations as follows 
\begin{align}
& \Gamma := \{ I: \tilde M \to \tilde M\, \text{isometry} \mid
F(I([w])) = F([w])\, \forall\, [w] \in \tilde M \}, \nonumber \\
& \Gamma_0 := \{ I: \tilde M \to \tilde M\, \text{isometry} \mid
\exists\, g \in G: F(I([w]) = g F([w])\, \forall\, [w] \in \tilde M \}. \label{eq:G0}
\end{align}
In this description, the map $F : \Gamma \to G$ from \eqref{sesg} is given by 
mapping $I \in \Gamma_0$ to the
(uniquely determined) corresponding element $g \in G$ in \eqref{eq:G0}. 

Let $\pi_1(M,x,y)$ denote the set of homotopy classes of paths in $M$ from $x$ to $y$. We identify the elements in $\Gamma_0$ with homotopy classes of paths starting at $x$ via the following bijective map:
\begin{eqnarray}
\pi_1(M,x,gx) &\to& F^{-1}(g) \subset \Gamma_0, \label{eq:top-useful} \\
{[a]} &\mapsto&
 I_a
 \colon [w] \mapsto [a] * [gw]
. \nonumber
\end{eqnarray}
It can be easily checked that 
$ I_a^{-1}([w]) = [g^{-1}a^{-1}] * [g^{-1}w]. $
To prove commutativity of the upper square of diagram \eqref{homology_and_fundgroup_larger}, we go around the square both ways.
\begin{itemize}[leftmargin=1.4em]
\item \emph{Computing  $\Phi_{\gamma_0 \tilde x}(\conj_{\gamma_0}(\gamma))$.} We first describe the conjugation action in terms of concatenation. Using \eqref{eq:top-useful},
we identify $\gamma_0$ with a map $I_a: \tilde M \to \tilde M$ for a path $a$ satisfying $a(0)=x$ and $a(1)=gx$. Similarly, we identify $\gamma$ with a map $I_c: \tilde M \to \tilde M$ for a path $c$ satisfying
$c(0)=c(1)=x$. For any $\tilde y =[w] \in \tilde M$ with a path $w$ in $M$ starting at $w(0)=x$, we have
$$
\conj_{\gamma_0}(\gamma)(\tilde y) = I_a(I_c(I_a^{-1}([w]))) =
[a] * [gc] * [a^{-1}] * [w]
$$
and, in particular,  $$\conj_{\gamma_0}(\gamma)(\tilde x) = [a] * [gc] * [a^{-1}].$$
Now we evaluate $\Phi_{\gamma_0 \tilde x}(\conj_{\gamma_0}(\gamma))$. We first note that
$$ \gamma_0 \tilde x = I_a(\tilde x) = [a] * (g \tilde x) = [a], $$
since $g \tilde x$ is the homotopy class of the constant path at $gx$ in $M$. This implies
\begin{align*} \Phi_{\gamma_0 \tilde x}(\conj_{\gamma_0}(\gamma)) & = [a^{-1}] * \Phi_{\tilde x}(\conj_{\gamma_0}(\gamma)) * [a] \\ &= [a^{-1}] * \conj_{\gamma_0}(\gamma)(\tilde x) * [a] \\ &= [gc]. \end{align*}
\item \emph{Computing $g \cdot \Phi_{\tilde x}(\gamma)$.} 
We have
$ g \cdot \Phi_{\tilde x}(\gamma) = g \cdot \gamma(\tilde x) = g \cdot I_c(\tilde x) = g \cdot ([c] * \tilde x) = [g c]. $
\end{itemize} 
Since the results of the two computations agree, the proof is finished. \qed
\end{remark} 

\subsection{Realisability of the wreath product} \label{constrmp} The following result gives an exact topological criterion for realisation of the wreath product, in terms of the $\F_\ell[G]$-module structure of $\Ho_1(M,\F_\ell)$.

\begin{proposition} \label{maininterprop0}
Suppose that we have a diagram of Riemannian coverings 
\begin{equation} \label{leftparthalfprop}
  \xymatrix@C=2mm@R=4mm{ & M \ar[dl]_{H_1}^{q_1} \ar[dd]^G  \\ M_1\ar[dr]_{p_1} &   \\ & M_0  } 
\end{equation} 
with $M_0$ a developable orbifold and $M,M_1$ manifolds. Fix a prime $\ell$, and set $C={\Z}/{\ell}{\Z}$ and consider the wreath products $\tilde G$ and $\tilde H_1$ as in Definition \textup{\ref{wreath}} \textup{(}with $H = H_1$\textup{)}.  Let $\{g_1 H_1 = H_1, g_2 H_1, \dots, g_n H_1\}$ denote the cosets in 
$G/H_1$. For any $g \in G$, define the permutation $i \mapsto g(i)$ on $\{1,\dots,n\}$  and the element $h_{g,i} \in H_1$ via $g g_i = g_{g(i)} h_{g,i}$. 

Define the $\F_\ell[G]$-module $\mathcal N$ as $\mathcal N:=(\Ind_{H_1}^G \one) \otimes_{\Z} \F_\ell$ \textup{(}cf.\ Remark \textup{\ref{indfl}}\textup{)}. 
 Then the 
diagram \eqref{leftparthalfprop} can be extended to a diagram of the form \eqref{leftpartextend} if and only if $\mathcal N$ is a $\F_\ell[G]$-quotient module of $\Ho_1(M,\F_\ell)$.
 \end{proposition}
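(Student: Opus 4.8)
The plan is to translate the existence of a diagram of the form \eqref{leftpartextend} into a purely group-theoretic statement about covers of $M$, and then to recognise that statement as a surjection of $\F_\ell[G]$-modules onto $\mathcal N$. Recall that $M = \Gamma\backslash\tilde M_0$ and $M_0 = \Gamma_0\backslash\tilde M_0$, so we have the short exact sequence $1 \to \Gamma \to \Gamma_0 \xrightarrow{F} G \to 1$ as in \eqref{sesg}. A cover $M' \to M$ with an extension of the $G$-action so that $M' \to M_0$ is Galois with group $\tilde G = C^n \rtimes G$ corresponds to a normal subgroup $\tilde\Gamma \trianglelefteq \Gamma_0$ with $\Gamma_0/\tilde\Gamma \cong \tilde G$ compatibly with $\Gamma_0/\Gamma \cong G$; concretely, $\tilde\Gamma \subseteq \Gamma$ is normal in $\Gamma_0$ and the induced extension $1 \to \Gamma/\tilde\Gamma \to \Gamma_0/\tilde\Gamma \to G \to 1$ is isomorphic to $1 \to C^n \to \tilde G \to G \to 1$, where $C^n$ carries the prescribed permutation action $\mathcal N$ of $G$. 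So the first step is: extending \eqref{leftparthalfprop} to \eqref{leftpartextend} is equivalent to producing a normal subgroup $\tilde\Gamma$ of $\Gamma_0$ contained in $\Gamma$, with $\Gamma/\tilde\Gamma \cong \mathcal N$ as $\F_\ell[G]$-modules (the $G$-action on $\Gamma/\tilde\Gamma$ being the outer conjugation action of $\Gamma_0/\Gamma = G$ of Remark \ref{outer}), and with the extension split — equivalently, the subgroup $\tilde H_1 = C^n \rtimes H_1$ is realised as $\Gamma_1/\tilde\Gamma$ where $\Gamma_1 \supseteq \Gamma$ corresponds to $M_1$.

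For the second step I would produce such a $\tilde\Gamma$ from a surjection $\Ho_1(M,\F_\ell) \twoheadrightarrow \mathcal N$ of $\F_\ell[G]$-modules. Composing with the map $\Psi_0 \colon \Gamma \to \Ho_1(M,\F_\ell)$ of \eqref{eq:Phicomp} (which is surjective, being abelianisation followed by mod-$\ell$ reduction, and which is $G$-equivariant for the outer conjugation action by Lemma \ref{lem:homology_and_fundgroup}), we obtain a surjective $G$-equivariant homomorphism $\Gamma \twoheadrightarrow \mathcal N$; let $\tilde\Gamma$ be its kernel. Since $\Psi_0$ kills $[\Gamma,\Gamma]$ and $\ell\Gamma$, and the map $\Gamma \to \mathcal N$ intertwines $\conj_{\gamma_0}$ with the $G$-action, $\tilde\Gamma$ is normal in $\Gamma_0$, and $\Gamma/\tilde\Gamma \cong \mathcal N$ as $\F_\ell[G]$-modules. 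It remains to check the extension $1 \to \mathcal N \to \Gamma_0/\tilde\Gamma \to G \to 1$ is isomorphic to the wreath-product extension and in particular splits. Here the natural splitting comes from lifting $G$ via coset representatives $g_1,\dots,g_n$ inside $\Gamma_0$; but a cleaner route is to observe that the wreath product extension $1\to C^n \to \tilde G \to G \to 1$ is itself split, hence classified by the zero class in $\mathrm{H}^2(G,\mathcal N)$, and that any extension of $G$ by $\mathcal N = (\Ind_{H_1}^G\one)\otimes\F_\ell$ is automatically split because, by Shapiro's lemma, $\mathrm{H}^2(G,\mathcal N) \cong \mathrm{H}^2(H_1,\F_\ell)$, and $\ell \nmid |G|$ forces $\ell \nmid |H_1|$, so this group vanishes. (This is also where one sees $\tilde H_1 = C^n\rtimes H_1$ appearing correctly: restricting the extension to $H_1 = \Gamma_1/\Gamma$ and using $\Res^G_{H_1}\mathcal N \cong \F_\ell \oplus (\text{rest})$, the $H_1$-fixed splitting is the one giving $\Gamma_1/\tilde\Gamma \cong \tilde H_1$.) One then takes $M' := \tilde\Gamma\backslash\tilde M_0$ and reads off the diagram \eqref{leftpartextend}.

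For the converse, start from a diagram \eqref{leftpartextend}: let $\tilde\Gamma \trianglelefteq \Gamma_0$ be the subgroup with $M' = \tilde\Gamma\backslash\tilde M_0$. Then $\tilde\Gamma \subseteq \Gamma$, $\Gamma/\tilde\Gamma \cong C^n$ as groups, and the $\Gamma_0/\Gamma = G$-action on $\Gamma/\tilde\Gamma$ by outer conjugation is, by hypothesis on the wreath product structure, exactly the permutation module $\mathcal N$. Since $\Gamma/\tilde\Gamma$ is an elementary abelian $\ell$-group, the surjection $\Gamma \twoheadrightarrow \Gamma/\tilde\Gamma$ factors through $\Gamma^{\mathrm{ab}}\otimes\F_\ell = \Ho_1(M,\F_\ell)$, and by Lemma \ref{lem:homology_and_fundgroup} this factored map $\Ho_1(M,\F_\ell) \twoheadrightarrow \Gamma/\tilde\Gamma \cong \mathcal N$ is $\F_\ell[G]$-linear; this exhibits $\mathcal N$ as a quotient module, as desired.

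The main obstacle I anticipate is not the homology bookkeeping — Lemma \ref{lem:homology_and_fundgroup} does the equivariance work — but rather the careful verification in the forward direction that the normal subgroup $\tilde\Gamma$ constructed from the abstract $\F_\ell[G]$-surjection really yields a \emph{Riemannian} cover fitting diagram \eqref{leftpartextend} with the subgroup $\tilde H_1$ (and not merely an abstract group extension): i.e.\ that the extension splits over all of $G$ in a way compatible with the prescribed $\tilde H_1 = C^n\rtimes H_1$, so that $M' \to M_0$ is genuinely $\tilde G$-Galois with the wreath action and $M' \to M_1$ is $\tilde H_1$-Galois. The cohomological vanishing $\mathrm{H}^2(G,\mathcal N) = 0$ via Shapiro plus $\ell \nmid |G|$ is what makes this clean, and I would want to state that splitting argument explicitly rather than wave at coset representatives.
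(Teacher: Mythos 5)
Your proposal is correct and follows essentially the same route as the paper's proof: compose the $\F_\ell[G]$-module surjection with $\Psi_0$, take its kernel as $\tilde\Gamma$, use Lemma \ref{lem:homology_and_fundgroup} both for normality of $\tilde\Gamma$ in $\Gamma_0$ and for the equivariance in the converse direction, and identify $\Gamma_0/\tilde\Gamma$ with the wreath product by splitting the extension via a cohomological vanishing (the paper invokes $\Ho^1(G,C^n)=\Ho^2(G,C^n)=0$ from coprimality of $|G|$ and $\ell^n$ where you route through Shapiro's lemma, but the input is the same, and the paper is merely more explicit about coset representatives and the elements $c_i$). The one caveat — shared with the paper's own proof — is that this splitting step silently uses that $\ell$ is coprime to $|G|$ (or at least to $|H_1|$), which is not among the stated hypotheses of the proposition, so it is worth flagging that assumption explicitly.
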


\begin{proof}
We fix the following for the duration of the proof. 
\begin{itemize}
\item Set $\mathcal M:=\Ho_1(M,\F_\ell)$.
\item Let $e_i = (0,\dots,0,1,0,\dots,0) \in C^n \cong \F_\ell^n$ denote the standard ``basis vectors''. 
\item As in Remark \ref{indfl}, we identify $\mathcal N \cong C^n \cong \F_\ell^n$ as $\F_\ell[G]$-modules, where the action of $G$ on $C^n$ given by the permutation representation of the cosets implemented by the map \eqref{eq:Phidef} used to define the wreath product, i.e.,  $\Phi \colon G \rightarrow \Aut(C^n) \colon g \mapsto (e_i \mapsto e_{g(i)})$. 
\item $\tilde M$ is the universal covering of $M$, $M_0 = \Gamma_0 \backslash \tilde M$, $M_1 = \Gamma_1 \backslash \tilde M$ and $M = \Gamma \backslash \tilde M$. 
\item $F$ is the map from \eqref{sesg}.
\end{itemize} 

In one direction, assume the extended cover exists, corresponding to a quotient of $\Gamma$. Now $M' \rightarrow M$ is a Galois cover with group the elementary abelian $\ell$-group $C^n$. Using the universal property of abelianisation, the corresponding map $\Gamma \rightarrow C^n$, whose kernel we denote by $\Gamma'$, factors through to a map of $\F_\ell$-vector spaces
$$ \varphi \colon \mathcal M \twoheadrightarrow C^n \cong \mathcal N. $$
We claim that this is a map of $\F_\ell[G]$-modules. For each $g \in G$ consider the corresponding element $\tilde g=(0,g) \in \tilde G$. Let $\gamma_0 \in \Gamma_0$ denote any lift of $\tilde g$ (i.e., $\tilde g = \gamma_0 \Gamma' \in \tilde G=\Gamma_0/\Gamma'$). By Lemma \ref{lem:homology_and_fundgroup} applied to both $M'$ and $M$, $\tilde g$ acts on $\Gamma'$ and $g$ acts on $\Gamma$ as $\conj_{\gamma_0}$. Hence $\tilde g$ acts on $\Gamma/\Gamma' = C^n$ as conjugation by $\gamma_0 \Gamma' = \tilde g$. Now in the semidirect product $\tilde G$, the action of $\tilde g$ on $c \in C^n$ is given by conjugation $c \mapsto \tilde g c \tilde g^{-1}$, which corresponds to the action of $G$ on $C^n$ via $\Phi$, i.e., gives the $\F_\ell[G]$-module structure $\mathcal N$ to $C^n$. Therefore, $\varphi(g m) = g \varphi(m)$, as desired. 

\medskip

Conversely, suppose there exists a map $ \varphi \colon \mathcal M \twoheadrightarrow \mathcal N $ of $\F_\ell[G]$-modules. 
Precomposing with $\Psi_0$ as in \eqref{eq:Phicomp}, we get a map $\Psi \colon \Gamma \rightarrow C^n$, and we set \begin{equation} \label{herepsi} \Gamma' := \ker \Psi \vartriangleleft \Gamma\mbox{ and } M':=\Gamma' \backslash M.\end{equation}  
With this map we can also extend the commutative diagram
\eqref{homology_and_fundgroup} as follows:
\begin{equation} \label{extended_homology_and_fundgroup}
\xymatrix@C=6mm@R=6mm{\Gamma \ar@/_3pc/[dd]_{\Psi}\ar[d]_-{\Psi_0} \ar[r]^-{\conj_{\gamma_0}}
 & \Gamma \ar@/^3pc/[dd]^{\Psi}\ar[d]^-{\Psi_0} \\
\Ho_1(M,\F_\ell) \ar[d]_{\varphi}\ar[r]^-{g \cdot} & \Ho_1(M,\F_\ell) \ar[d]^{\varphi} \\
C^n \ar[r]^-{\Phi(g)}& C^n
}
\end{equation}
where the commutativity of the bottom square is guaranteed precisely by our assumption that $\varphi$ is a map of $\F_\ell[G]$-modules. 

It remains to verify that the manifold $M'$ fits into a diagram \eqref{leftpartextend}. For this, we need to prove that  $\Gamma'$ is normal in $\Gamma_0$ (hence also normal in $\Gamma$ and $\Gamma_1$) such that there are induced group isomorphisms $\Gamma_0 / \Gamma' \cong \tilde G$, $\Gamma_1 / \Gamma' \cong \tilde H_1$ and $\Gamma / \Gamma' \cong C^n$, where $\tilde G = C^n \rtimes G$ is the wreath product introduced in Definition \ref{wreath} (with $H = H_1$).  For this, we show that the group $\Gamma'$ has the following properties (i)--(v). 
\begin{enumerate}[leftmargin=1.4em]
\item \emph{$\Gamma'$ is normal in $\Gamma_0$;} indeed, commutativity of diagram \eqref{extended_homology_and_fundgroup} implies that if $\gamma_0 \in \Gamma_0$ with $F(\gamma_0)=g$ and $\Psi(
\gamma')=0$ (i.e., $\gamma' \in \Gamma'$), then $\Psi(\conj_{\gamma_0}(\gamma'))=\Phi(g)\Psi(\gamma')=0$ (i.e., $\gamma_0 \gamma' \gamma_0^{-1} \in \Gamma')$.
\end{enumerate} 
Fix a set-theoretic section $G \rightarrow \Gamma_0 \colon g \mapsto \bar g$ of the map $F$, i.e., for any $g \in G$ fix any $\bar g \in \Gamma_0$ such that $F(\bar g)=g$.  
Fix an element $c \in \Gamma$ with $\Psi(c) = e_1$ (which exists since $\Psi$ is surjective), and for $i=1,\dots,n$, let $c_i := \bar g_i c \bar g_i^{-1} \in \Gamma_0.$ Notice that 
\begin{equation} \label{eq:Psici}
\Psi(c_i) = \Phi(g_i)e_1 = e_i.
\end{equation} 
Then we have the following properties: 
\begin{enumerate}[resume, leftmargin=1.7em]
\item \emph{$c_i c_j \Gamma' = c_j c_i \Gamma'$ for all $1 \leq i,j \leq n$.} This follows since $\Gamma/\Gamma' \cong C^n$ is commutative. 
\item \emph{The cosets of $\Gamma'$ in $\Gamma_0$ are represented by $\bar g c_1^{k_1} \cdots c_n^{k_n} \Gamma'$ with $g \in G$ and $0 \leq k_1,\dots,k_n < \ell$.}
From \eqref{eq:Psici}
we see that $c_1^{k_1} \cdots c_n^{k_n} \Gamma'$ are the cosets of $\Gamma/\Gamma'$, and this combines with   
$G \cong \Gamma_0/\Gamma$.
\item \emph{$\bar g c_i \Gamma' = c_{g(i)}\bar g \Gamma'$ for all $g \in G$.} 
Indeed, $$
\Psi(\bar g c_i \bar g^{-1}) \stackrel{\eqref{extended_homology_and_fundgroup}}{=} \Phi(g)\Psi(c_i) \stackrel{\eqref{eq:Psici}}{=} \Phi(g)(e_i) 
\stackrel{\eqref{eq:Phidef}}{=} e_{g(i)} \stackrel{\eqref{eq:Psici}}{=} \Psi(c_{g(i)}). 
$$
It follows that $\bar g c_i \bar g^{-1} \Gamma' = c_{g(i)} \Gamma'$ and, since $\Gamma'$ is normal in $\Gamma_0$, we can interchange left and right cosets and multiply on the right with $\bar g$ to find the result. 
\end{enumerate}

\noindent By (iii) and (iv), the cosets of $\Gamma_0/\Gamma'$ are given by
$ \bar g c_1^{k_1} \cdots c_n^{k_n} \Gamma' = 
c_{g(1)}^{k_1} \cdots c_{g(n)}^{k_n} \bar g \Gamma' $
with $g \in G$ and  $0 \leq k_1,\dots,k_n < \ell$.
Define $\hat \Psi: \Gamma_0 \to \tilde G = C^n \rtimes G$ by 
$$ \hat \Psi(c_1^{k_1} \cdots c_n^{k_n} \bar g \Gamma') = 
(k_1,\dots,k_n,g). $$

\begin{enumerate}[resume, leftmargin=1.5em]
\item Using the commutativity property in (ii), we have for $k_i,k_j' \in \{0,1,\dots,\ell-1\}$,
and $g,g' \in G$, 
 $$ \left( c_1^{k_1} \cdots c_n^{k_n} \bar g \Gamma' \right) \left( 
  c_1^{k_1'} \cdots c_n^{k_n'} \bar g' \Gamma' \right) = \left( c_1^{k_1} \cdots c_n^{k_n}
  c_{g(1)}^{k_1'} \cdots c_{g(n)}^{k_n'} \right) (\bar g \cdot \bar g') \Gamma', $$
which follows immediately from (i) and (iv).
\end{enumerate}

\begin{enumerate}[resume, leftmargin=1.5em]
\item \emph{Via a modification of the section $G \to \Gamma_0$, $g \mapsto \bar g$, the map $\hat \Psi$ becomes a surjective group homomorphism with kernel $\Gamma'$.} This follows immediately from (v) if we can choose the section in such a way that $(\bar g \cdot \bar g') \Gamma' = \overline{g\cdot g'} \Gamma'$. Consider the short exact sequence
\begin{equation} \label{eq:ses} 
\xymatrix@C=6mm@R=3mm{0 \ar[r]  & \Gamma / \Gamma' \ar[r] \ar@{}[d]|*=0[@]{\cong} &  \Gamma_0 / \Gamma' \ar[r]^{\alpha} &  \Gamma_0 / \Gamma \ar@{}[d]|*=0[@]{\cong}\ar[r] &  1, \\
& C^n & & G &
}
\end{equation}
with the canonical maps. Note that $\Gamma / \Gamma' \cong C^n$ is a $G$-module via the action
$$ g \cdot \left( c_1^{k_1} \cdot c_n^{k_n} \Gamma' \right) = c_{g(1)}^{k_1} \cdot c_{g(n)}^{k_n} \Gamma'. $$
Since the order of $G$ is coprime to that of $C^n$, we have  $\Ho^1(G,\Gamma/\Gamma') = \Ho^2(G,\Gamma/\Gamma') = 0$ \cite[IV 2.3, 3.12, 3.13]{BrownK} and hence \eqref{eq:ses} splits; let $\jmath$ denote a group homomorphism $\jmath: G \to \Gamma_0 / \Gamma'$ such that
$\alpha \circ \jmath = {\rm{id}}_G$. Redefine the section
$G \to \Gamma_0$ in such a way that $\bar g \Gamma' = \jmath(g)$. Then we have 
$$ (\bar g \cdot \bar g') \Gamma' = \bar g \Gamma' \cdot \bar g' \Gamma' = \jmath(g) \cdot \jmath(g') = \jmath(gg') = \overline{g\cdot g'} \Gamma' $$
and $\hat \Psi: \Gamma_0 \to \tilde G$ is a surjective group homomorphism with kernel $\Gamma'$. 
\end{enumerate} 
  
\medskip

\noindent It follows that $\hat \Psi$ induces an isomorphism $\Gamma_0/\Gamma' \cong \tilde G$, and we have already 
seen the isomorphism $\Gamma/\Gamma' \cong C^n$ in the proof of (ii). To show that $\Gamma_1/\Gamma' \cong \tilde H_1$ and finish the proof, note that $c_1^{k_1} \cdots c_n^{k_n} \bar h \Gamma'$ with $k_i \in \{0,\dots,\ell-1\}$ and $h \in H_1$ are the cosets of $\Gamma_1/\Gamma'$ and that the quotient $\Gamma_1 / \Gamma'$ is isomorphic to the subgroup $\tilde H_1$ of $\tilde G$ under the restriction of the  isomorphism induced by $\hat \Psi$.
\end{proof}

\subsection{Main result} We can now prove the main result of this paper.  

\begin{theorem} \label{maindetail} 
Suppose $M_1$ and $M_2$ are two connected closed oriented smooth Riemannian manifolds such that there is a diagram  
\eqref{m0} 
of finite covers of a developable Riemannian orbifold $M_0$. 
Then
\begin{enumerate}
\item The diagram \eqref{m0}  may be extended to a diagram of finite coverings
\eqref{sunadasetup} 
where $M$ is a connected closed smooth Riemannian manifold $M$ with $q_1 \colon M \twoheadrightarrow M_1:=H_1 \backslash M$, $q_2 \colon M \twoheadrightarrow  M_2:=H_2 \backslash M$ and $q \colon M  \twoheadrightarrow M_0:=G\backslash M$ Galois covers.
\end{enumerate} 
Suppose furthermore that there exists a prime number $\ell \geq 3$ such that 
\begin{quote} \textup{($\ast$)} $(\Ind_{H_1}^G \one) \otimes_{\Z} \F_\ell$ is an $\F_\ell[G]$-quotient module of $\Ho_1(M,\F_\ell)$. \end{quote} Then  
\begin{enumerate}[resume] 
\item There exists a diagram of Riemannian coverings 
\eqref{leftpartextend},  
where $C = \Z / \ell \Z$, $\tilde G = C^n \rtimes G$ is the wreath product where $G$ permutes the copies of $C$ in the same way as it permutes the cosets of $H_1$ in $G$, and $\tilde H_i = C^n \rtimes H_i$ are subgroups of $\tilde G$ corresponding to the groups $H_i$, $i=1,2$. 
\item Consider the linear character $$\Xi \colon \tilde H_1 \rightarrow \C^* \colon (k_1,\dots,k_n,h_1) \rightarrow e^{2 \pi i k_1}/\ell$$ with $(k_1,\dots,k_n) \in C^n$ and  $h_1 \in H_1$. Then the manifolds $M_1$ and $M_2$ are equivalent Riemannian covers of $M_0$
if and only if there exists a linear character $\chi \colon \tilde H_2 \rightarrow \Un_1(\C)$ such that the multiplicity of zero in 
the following two pairs of spectra of twisted Laplacians on $M_1$ and $M_2$ coincide: 
$$ \Sp_{M_1}(\bar \Xi \otimes \Res_{\tilde H_1}^{\tilde G} \Ind_{\tilde H_1}^{\tilde G} \Xi) \mbox{ and } \Sp_{M_2}(\bar \chi \otimes \Res_{\tilde H_2}^{\tilde G} \Ind_{\tilde H_1}^{\tilde G} \Xi) $$ 
and
$$ \Sp_{M_1}(\bar \Xi \otimes \Res_{\tilde H_1}^{\tilde G} \Ind_{\tilde H_2}^{\tilde G} \chi) \mbox{ and } 
\Sp_{M_2}(\bar \chi \otimes \Res_{\tilde H_2}^{\tilde G} \Ind_{\tilde H_2}^{\tilde G} \chi). $$
There are $\ell |H_2^{\ab}|$ linear characters $\chi$ on $\tilde H_2$, and the dimension of the representations involved is the index $[G:H_2]$.
\end{enumerate}
\end{theorem}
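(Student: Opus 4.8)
The plan is to assemble the three assertions from the structural and representation-theoretic results proved above; essentially no new argument is needed, only a careful matching of hypotheses.

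First, part (1) is exactly Proposition \ref{fromdiagramtodiagram}: starting from a diagram \eqref{m0}, one adjoins the normal closure of the compositum $M_1 \bullet_{M_0} M_2$, i.e.\ sets $M = \Gamma \backslash \tilde M_0$ with $\Gamma$ the normal core of $\Gamma_1 \cap \Gamma_2$ in $\Gamma_0$, and obtains a diagram \eqref{sunadasetup} in which all of $M \to M_1$, $M \to M_2$ and $M \to M_0$ are Galois, with groups $H_1$, $H_2$, $G$.

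Next, for part (2) I would extract from \eqref{sunadasetup} the left-hand sub-diagram \eqref{leftparthalfprop} (keeping only $M \to M_1 \to M_0$ together with $M \to M_0$) and observe that condition $(\ast)$ is literally the hypothesis of Proposition \ref{maininterprop0}: using the identification of Remark \ref{indfl}, $(\Ind_{H_1}^G \one) \otimes_{\Z} \F_\ell$ is the module $\mathcal N$, and $(\ast)$ asks exactly that it be an $\F_\ell[G]$-quotient of $\Ho_1(M,\F_\ell)$. Applying that proposition produces the manifold $M'$ and the diagram \eqref{leftpartextend}, realising $\tilde G = C^n \rtimes G$ (resp.\ $\tilde H_1 = C^n\rtimes H_1$) as the deck group of $M' \to M_0$ (resp.\ $M' \to M_1$); the subgroup $\tilde H_2 = C^n \rtimes H_2$ of $\tilde G$ corresponds to $H_2$ and satisfies $\tilde H_2 \backslash M' = H_2 \backslash M = M_2$.

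Finally, for part (3) the diagram \eqref{leftpartextend} is precisely the input of Corollary \ref{art}. Since $\ell \geq 3$, Proposition \ref{solitary} gives that the explicit character $\Xi$ of \eqref{defxi} on $\tilde H_1$ is $\tilde G$-solitary, so Corollary \ref{art} applies verbatim: $M_1$ and $M_2$ are equivalent Riemannian covers of $M_0$ if and only if some linear character $\chi$ on $\tilde H_2$ makes the multiplicities of zero agree in the two displayed pairs of twisted spectra. (The internal mechanism is: Proposition \ref{solo} converts those multiplicity equalities into $\Ind_{\tilde H_1}^{\tilde G}\Xi \cong \Ind_{\tilde H_2}^{\tilde G}\chi$; Lemma \ref{lem:solitarycharconjgroups} turns this isomorphism, together with solitariness of $\Xi$, into conjugacy of $\tilde H_1$ and $\tilde H_2$ in $\tilde G$; and then, via Lemma \ref{over}, this conjugacy produces an isometry $M_1 \to M_2$ over $M_0$.) For the numerology, the linear characters of $\tilde H_2$ are the characters of $\tilde H_2^{\ab}$, and the wreath-product commutator computation recalled in the proof of Proposition \ref{numberofchecks} (via Meldrum) gives $|\tilde H_2^{\ab}| = |[\tilde H_2,\tilde H_2]|^{-1}|\tilde H_2| = \ell\,|H_2^{\ab}|$; likewise the representations being compared are induced from linear characters of $\tilde H_j$, hence of dimension $[\tilde G:\tilde H_j] = [G:H_j]$, and the subsequent restriction and tensoring with a linear character leave the dimension unchanged, so the dimension count is as stated (cf.\ Proposition \ref{numberofchecks}).

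I expect no genuine obstacle, as the proof is organisational. The two points requiring care are: (a) recognising that $(\ast)$ is exactly the quotient-module criterion of Proposition \ref{maininterprop0}, so that the realisation of the wreath product is available; and (b) the restriction $\ell \geq 3$, which is essential for $\Xi$ to be $\tilde G$-solitary — Remark \ref{pint} shows the case $\ell = 2$ can fail unless one adds weak conjugacy as an extra hypothesis.
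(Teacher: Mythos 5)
Your proposal is correct and follows exactly the paper's own proof: part (1) via Proposition \ref{fromdiagramtodiagram}, part (2) via Proposition \ref{maininterprop0} after recognising $(\ast)$ as its quotient-module hypothesis, and part (3) via Corollary \ref{art} together with Propositions \ref{solitary} and \ref{numberofchecks}. The numerological remarks on $|\tilde H_2^{\ab}|$ and the dimension $[G:H_2]$ also match the paper's treatment.
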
  

\begin{proof} 
Part (i) holds by Proposition \ref{fromdiagramtodiagram}. 
Part (ii) is shown in Proposition \ref{maininterprop0}. Then (iii) holds by Corollary \ref{art} and Proposition \ref{numberofchecks}, using that the character $\Xi$ given in (iii), is the one constructed in the proof of Proposition \ref{solitary} (cf.\ formula (\ref{defxi})), and is $\tilde G$-solitary on $\tilde H_1$. 
\end{proof} 

\begin{remark} 
Using Proposition \ref{fixm0}, the condition on the multiplicity of zero in the spectrum of the indicated twisted Laplacians in Theorem \ref{maindetail} may be replaced by an equality of their spectral zeta functions, if we assume in addition that $M_1$ and $M_2$ are isospectral, i.e., $\zeta_{M_1, \Delta_{M_1}} = \zeta_{M_2, \Delta_{M_2}}$.  
\end{remark} 

Condition \textnormal{\textup{($\ast$)}} in the main theorem can be varied, as we will see in the next two sections. This will also produce a geometric realisation of $M'$ and a set of examples where the condition holds.

\section{Geometric construction of the covering manifold} \label{geocon} 

In this section, we provide a geometric construction of $M'$, using composita and fiber products, for which we need to assume that $\ell$ is a prime number \emph{coprime to $|G|$}.

\subsection{From quotient to submodule} \label{astastsection} If $\ell$ is a prime number coprime to $|G|$, by Maschke's theorem, any short exact sequence of $\F_\ell[G]$-modules splits, and condition \textup{($\ast$)} from Theorem \ref{maindetail} is equivalent to 
\begin{quote} \textup{($\ast \ast$)} \emph{$(\Ind_{H_1}^G \one) \otimes_{\Z} \F_\ell$ is an $\F_\ell[G]$-submodule of $\Ho_1(M,\F_\ell)$.} \end{quote} 

\subsection{Homology of a quotient as coinvariants} We recall the following tool from invariant theory, see, e.g.\ \cite[II \S 2]{BrownK}. If $R$ is a commutative ring (for us, $R$ is $\Z, \Q$ or $\F_\ell$), $H$ a finite group, and $\mathcal M$ is a (left) $R[H]$-module, its \emph{coinvariants} are defined as 
the $R$-module $\mathcal M_H:= \mathcal M / I \mathcal M$ where $I$ is the kernel of the augmentation map $R[H] \rightarrow R \colon \sum k_h h \mapsto \sum k_h$. An explicit description is given by $$I \mathcal M = \langle h(x)-x \colon h \in H, x \in \mathcal M \rangle$$
(by linearly, it suffices to let $x$ run over a set of generators of $\mathcal M$).  
Denote the projection map by \begin{equation} \label{transmap} \trans_R \colon \mathcal M \rightarrow \mathcal M_H = \mathcal M / I \mathcal M. \end{equation} When $R$ is clear from the context, we will leave it out of the notation and simply write $\trans$ for this map. 

This map is particularly easy if 
$ \mathcal M = \bigoplus R[H] x_i$ is \emph{free} as an $R[H]$-module with generators $x_i$; then $\mathcal M_H = \bigoplus R x_i$ with the obvious map, i.e., 
\begin{equation} \label{transmapfree} \trans_R \colon \bigoplus R[H] x_i \rightarrow \bigoplus Rx_i \colon \sum\limits_i \sum\limits_h k_h h x_i  \mapsto \sum\limits_i \left(\sum\limits_h k_h\right) x_i,\end{equation} 
 cf.\ \cite[(2.3)]{BrownK}. 
 
 One may use ``transfer'' to prove the following (the case of a free action is also in \cite[II.(2.4)]{BrownK}). 

\begin{lemma}[{\cite[III.2.4]{Bredon}}] \label{lem:coinvariants} If $H$ is a finite group of isometries of a closed smooth manifold $M$ with quotient map $$q \colon M \rightarrow H \backslash M,$$ and the order of $H$ is coprime to the characteristic of the field $K$, then the first $K$-homology of the quotient, $\Ho_1(H \backslash M,K)$, is isomorphic to the coinvariants $\Ho_1(M,K)_H$ of the first $K$-homology of $M$, and under this identification, the map $q_*$ that $q$ induces on the first homology groups is the map $\trans_K$ from \textup{(\ref{transmap})},  i.e., we have a diagram
\[ \pushQED{\qed}  \raisebox{\dimexpr\depth-\fboxsep}{\xymatrix@C=9mm@R=2mm{ & \Ho_1(H\backslash M,K) \ar[dd]^{\cong}  \\ \Ho_1(M,K) \ar[ru]^{q_*} \ar[rd]^{\trans_K} & \\ & \Ho_1(M,K)_H}} \qedhere \popQED \]

\end{lemma}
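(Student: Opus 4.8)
The plan is to reduce the statement to the classical transfer argument for finite group actions: in the free case this is \cite[II.(2.4)]{BrownK}, and in general it is \cite[III.2.4]{Bredon}. The only extra point to make is that the isomorphism produced by transfer is literally induced by $q$, and that on first homology $q_*$ is the projection $\trans_K$.

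First I would pass to singular chains with coefficients in $K$. Since every $h\in H$ is a homeomorphism of $M$ and $q\circ h = q$, the chain map $q_\#\colon C_*(M;K)\to C_*(H\backslash M;K)$ is $H$-invariant, hence factors through the coinvariants:
\[ C_*(M;K) \xrightarrow{\ \pi\ } C_*(M;K)_H \xrightarrow{\ \bar q_\#\ } C_*(H\backslash M;K). \]
Because $|H|$ is invertible in $K$, Maschke's theorem makes the coinvariants functor $(-)_H$ exact on $K[H]$-modules, so it commutes degreewise with homology: $H_*(C_*(M;K)_H)\cong \Ho_*(M,K)_H$, and under this identification the map induced by $\pi$ is precisely $\trans_K$ from \eqref{transmap}. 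Thus the lemma reduces to showing that $\bar q_\#$ is a quasi-isomorphism; once that is known, the fact that the resulting isomorphism is $q_*$ is immediate from the factorisation $q_* = (\text{induced by }\bar q_\#)\circ \trans_K$.

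To see that $\bar q_\#$ is a quasi-isomorphism I would use the transfer. When the $H$-action is free, $q$ is an $|H|$-sheeted covering and the classical transfer $\tau_\#\colon C_*(H\backslash M;K)\to C_*(M;K)$ (sum over the $|H|$ lifts of a simplex) satisfies $q_\#\circ\tau_\# = |H|\cdot\mathrm{id}$ and $\tau_\#\circ q_\# = \sum_{h\in H} h_\#$. Passing to homology and then to coinvariants, the second relation becomes multiplication by $|H|$ (each $h$ acts trivially on $\Ho_*(M,K)_H$), so $\tfrac1{|H|}$ times the map induced by $\tau_\#$ is a two-sided inverse of the map induced by $\bar q_\#$. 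For a non-free action the covering-space transfer is not available, but one still has an averaged transfer for the orbit map at the chain level; this is exactly the content of \cite[III.2.4]{Bredon}, and the same two identities hold after inverting $|H|$, so the argument concludes in the same way.

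The step I expect to be the real obstacle is the non-free case: near the fixed-point set the map $q$ is not a fibration, so a naive ``sum over lifts'' does not land in $C_*(M;K)$ in a chain-homotopy-controlled way. The resolution is to carry out the entire comparison over a ring in which $|H|$ is a unit, where $C_*(M;K)$ is a complex of projective $K[H]$-modules and the higher obstructions $\Ho^{\geq 1}(H,-)$ to splitting off the coinvariant part vanish — this is precisely why the coprimality hypothesis enters, and it is what lets us invoke Bredon's statement rather than re-derive it. With that in hand, identifying the transfer isomorphism with $q_* = \trans_K$ requires no further work.
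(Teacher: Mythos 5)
Your proposal is correct and matches the paper's approach: the paper gives no independent proof but simply invokes the transfer argument and cites \cite[III.2.4]{Bredon} (with \cite[II.(2.4)]{BrownK} for the free case), exactly as you do for the non-free situation. Your elaboration of the chain-level factorisation through coinvariants and the two transfer identities is a sound account of why that citation yields the stated triangle.
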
 

\subsection{Geometric construction} We refer back to the situation of diagram \eqref{leftparthalfprop}, and keep our assumption that $\F_\ell$ is a field of order coprime to $|G|$. 
By condition \textup{($\ast \ast$)}, we have a decomposition of $\F_\ell[G]$-modules $$\Ho_1(M,\F_\ell) = \mathcal N \oplus V \cong \bigoplus \F_\ell \omega_i \oplus V$$ (for some $\F_\ell[G]$-submodule $V$), where the $G$-action on $\mathcal N$ is given in terms of the permutation of cosets as $g\omega_i=\omega_{g(i)}$, with the convention that $i=1$ corresponds to the trivial $H_1$-coset in $G$.  We also let $$V':=\bigoplus_{i \geq 2} \F_\ell \omega_i$$ denote the vector space complement of $\F_\ell \omega_1$ in $\mathcal N$. 

The quotient map $q_1 \colon M \rightarrow M_1 = H_1 \backslash M$ induces a surjective map $$q_{1*} \colon \Ho_1(M,\F_\ell) \rightarrow \Ho_1(M_1,\F_\ell), $$
and we define $\omega_1':=q_{1*}(\omega_1)$. 

Let $\Gamma_1$ denote the subgroup $\Gamma_1 \leq \Gamma_0$ for which $M_1 = \Gamma_1 \backslash \tilde M$. 

\begin{lemma}  Suppose $\ell$ is coprime to $|G|$ and condition \textup{($\ast$) (equivalently, ($\ast \ast$))} holds.  Then we have a well-defined and commutative diagram:
\begin{equation} \label{eq:Psichi0}
\xymatrix@C=6mm@R=8mm{\Gamma \ar@/^1.5pc/[rr]^{\Psi}
\ar@{->}[d]^-{\iota}\ar@{->>}[r]
& \Ho_1(M,\F_\ell) \ar[d]^{q_{1*}}\ar@{->>}[r]_-{\varphi} & C^n \ar[d]^-{r_1} \\
\Gamma_1  \ar@/_1.5pc/[rr]_{\chi_0}
\ar@{->>}[r] & \Ho_1(M_1,\F_\ell) \ar@{->>}[r]^-{\varphi_0} &
C
}
\end{equation}
where 
\begin{itemize} 
\item $\iota$ is the embedding of $\Gamma$ in $\Gamma_1$;
\item  $r_1 \colon C^n \to C$, $(k_1,\dots,k_n) \mapsto k_1$ is projection onto the first coordinate; 
\item $\varphi_0$ is defined by 
\begin{align*} \varphi_0 \colon \Ho_1(M_1,\F_\ell) \xrightarrow{\cong} 
\F_\ell \omega_1' \oplus W &\rightarrow \F_\ell \cong C \\
k_1 \omega'_1 + w &\mapsto k_1 \qquad (k_1 \in \F_\ell, w \in W).
\end{align*} 
with $W:=q_{1*}(V \oplus V')$ a complementary vector space to $\F_\ell \omega_1'$ in  $\Ho_1(M_1,\F_\ell)$. 
\end{itemize} 
\end{lemma}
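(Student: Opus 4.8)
The only genuinely new point in the statement is the well-definedness of $\varphi_0$, i.e.\ that $W:=q_{1*}(V\oplus V')$ really is a vector-space complement of $\F_\ell\omega_1'$ in $\Ho_1(M_1,\F_\ell)$; granting that, the assertion amounts to the commutativity of the two squares (the outer rectangle then commutes formally), and that $\chi_0$ and $\Psi$ are the composites of the rows. The plan is to handle well-definedness via coinvariants. By Lemma~\ref{lem:coinvariants}, applicable because $|H_1|$ divides $|G|$ and is therefore coprime to $\ell$, the map $q_{1*}$ is identified with the projection $\trans_{\F_\ell}\colon \mathcal M:=\Ho_1(M,\F_\ell)\to\mathcal M_{H_1}$ onto $H_1$-coinvariants. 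Since the $H_1$-orbit of the trivial coset $g_1H_1\in G/H_1$ is the singleton $\{g_1H_1\}$, the subgroup $H_1$ fixes $\omega_1$ and permutes the remaining $\omega_i$ ($i\geq 2$) among themselves; hence $V'=\bigoplus_{i\geq 2}\F_\ell\omega_i$ is an $\F_\ell[H_1]$-submodule of $\mathcal N$, and $\mathcal M=\F_\ell\omega_1\oplus V'\oplus V$ is a decomposition of $\F_\ell[H_1]$-modules with $H_1$ acting trivially on $\F_\ell\omega_1$. Coinvariants commute with finite direct sums, so $\Ho_1(M_1,\F_\ell)\cong \mathcal M_{H_1}=(\F_\ell\omega_1)_{H_1}\oplus(V')_{H_1}\oplus V_{H_1}=\F_\ell\omega_1'\oplus q_{1*}(V')\oplus q_{1*}(V)$; as $(\F_\ell\omega_1)_{H_1}$ is one-dimensional, spanned by $\omega_1'=q_{1*}(\omega_1)$, in particular $\omega_1'\neq 0$. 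Since $W=q_{1*}(V\oplus V')=q_{1*}(V')\oplus q_{1*}(V)$, this is exactly the asserted complement, $\varphi_0$ is well-defined, and it is surjective.

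Commutativity of the right-hand square is then a short computation. Writing $x\in\mathcal M$ uniquely as $x=k_1\omega_1+\sum_{i\geq 2}k_i\omega_i+v$ with $v\in V$, and identifying $\mathcal N\cong C^n$ by $\omega_i\leftrightarrow e_i$ as in Remark~\ref{indfl}, one has $\varphi(x)=(k_1,\dots,k_n)$, hence $r_1(\varphi(x))=k_1$. On the other hand $q_{1*}(x)=k_1\omega_1'+q_{1*}\bigl(\sum_{i\geq 2}k_i\omega_i\bigr)+q_{1*}(v)$, whose last two terms lie in $W$ by the previous step, so that by the definition of $\varphi_0$ one gets $\varphi_0(q_{1*}(x))=k_1$ as well.

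For the left-hand square I would invoke naturality of the Hurewicz homomorphism together with the identification of $\iota\colon\Gamma\hookrightarrow\Gamma_1$ with the map on fundamental groups induced by the covering $q_1\colon M\to M_1$ — both $\Gamma$ and $\Gamma_1$ being deck groups acting on the common universal cover $\tilde M$, with $q_1\circ\Pi$ the universal covering of $M_1$. Unwinding the definition of $\Psi_0$ from \eqref{eq:Phicomp} (represent $\gamma$ by a loop in $M$, decompose it into oriented $1$-cells, take the signed sum and reduce mod $\ell$), the loop in $M$ representing $\gamma\in\Gamma$ is carried by $q_1$ to the loop in $M_1$ representing $\iota(\gamma)\in\Gamma_1$, compatibly with the choice of path germ, which is immaterial by the remark following \eqref{eq:Phicomp}; hence $q_{1*}\circ\Psi_0=\Psi_0\circ\iota$, where the second $\Psi_0$ is the analogous projection for $M_1$. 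Composing this with the right-hand square yields $r_1\circ\Psi=\chi_0\circ\iota$, so the whole diagram \eqref{eq:Psichi0} commutes.

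I expect the bookkeeping of the first paragraph — checking that $q_{1*}(V\oplus V')$ both misses the line $\F_\ell\omega_1'$ and exhausts a complement of it — to be the only real obstacle, and it is precisely there that the hypothesis $\ell\nmid|G|$ is indispensable: it is what lets $\mathcal N$ split as an $\F_\ell[H_1]$-module and what makes Lemma~\ref{lem:coinvariants} applicable. Once that decomposition is available, both commutativity statements are routine.
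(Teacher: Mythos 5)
Your proof is correct and follows essentially the same route as the paper's: both rest on Lemma \ref{lem:coinvariants} together with the observations that $H_1$ fixes $\omega_1$ (the index $1$ being the trivial coset) and preserves $V\oplus V'$. The only cosmetic difference is that you obtain the complement $W$ by noting that coinvariants respect the $\F_\ell[H_1]$-module splitting $\F_\ell\omega_1\oplus V'\oplus V$, whereas the paper reaches the same conclusion by explicitly computing that $\ker(q_{1*})=\ker(\trans_{\F_\ell})$ is spanned by elements $h(\omega_i)-\omega_i$ and $h(v)-v$ all lying in $V\oplus V'$; these are two phrasings of the same argument.
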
 

\begin{proof} To see that this is well-defined and the right square commutes, we need that $\omega_1'$ is linear independent of $W=q_{1*}(V \oplus V')$; so suppose that there are $a_1, a_2 \in \F_\ell$ such that $a_1 \omega_1' + a_2 q_{1*}(v) = 0$ for some $v \in V \oplus V'$. This means that \begin{equation} \label{a1a2}  a_1 \omega_1 + a_2 v \in \ker(q_{1*}). \end{equation} By Lemma \ref{lem:coinvariants}, the kernel of $q_{1*}$ is equal to the kernel of $\trans_{\F_\ell}$, and by definition this kernel is spanned by elements $h_1(\omega_i)-\omega_i$ ($i=1,\dots,n$) and $h_1(v)-v$ for $v \in V$ and $h_1 \in H_1$. Now 
\begin{itemize} 
\item for any $h_1 \in H_1 \leq G$, $ h_1(\omega_i)-\omega_i=\omega_{h_1(i)}-\omega_i$; if $i=1$, this element is zero, since that index corresponds to the trivial conjugacy class of $H_1$ in $G$, whereas if $i \neq 1$, this element belongs to $V'$, since then also $h_1(i)>1$; \item since $\mathcal N \oplus V$ is a decomposition as $\F_\ell[G]$-modules, $h_1(v)-v \in V$ for all $v \in V$ and all $h_1 \in H_1$.  
\end{itemize} 
 It follows that $\ker(q_{1*}) \subseteq V \oplus V'$, and by (\ref{a1a2}), $a_1 \omega_1 \in V \oplus V'$. Since $\omega_1$ is linearly independent from $V \oplus V'$, we conclude that $a_1=0$, as desired. This guarantees that if 
 $ \omega = \sum k_i \omega_i + v \in  \Ho_1(M,\F_\ell)$ with $v \in V$ (so $\varphi(\omega) = (k_1,\dots,k_n)$), then 
$ q_{1*}(\omega) = k_1 \omega'_1 + w \in \Ho_1(M_1,\F_\ell)$ with $w \in W$, so 
\[ \varphi_0(q_{1*}(\omega)) = k_1 = r_1(\varphi(\omega)).\qedhere \]
\end{proof}

Just like we defined $\Gamma=\ker \Psi$ in \eqref{herepsi}, we now set 
\begin{equation} \label{herechi} \Gamma_1' := \ker \chi_0 \vartriangleleft \Gamma_1 \mbox{ \ and \ } M_1':= \Gamma_1' \backslash \tilde M \mbox{\ with covering map \ } q_1' \colon M_1' \rightarrow M_1. \end{equation} 
The following lemma describes the relationship between the group $\Gamma=\ker \Psi$ used in Section \ref{sec:covers}, and $\Gamma'_1:=\ker \chi_0$, the group used in this section. 

\begin{lemma} \label{dagger}  Suppose $\ell$ is coprime to $|G|$ and condition \textup{($\ast$) (equivalently, ($\ast \ast$))} holds. The group $\Gamma'=\ker \Psi$ can be expressed in terms of the group $\Gamma_1'=\ker \chi_0$ and a set $\{\bar g_1,\dots,\bar g_n\}$ of lifts of $\{g_1,\dots,g_n\}$ to $\Gamma_0$, as $\Gamma' = \Gamma'_{\mathrm{new}}$, where
\begin{equation} \label{newgamma} \Gamma'_{\mathrm{new}} :=  \bigcap_{i=1}^n \bar g_i \Gamma_1' \bar g_i^{-1} \cap \Gamma= \bigcap_{i=1}^n (\Gamma \cap \bar g_i \Gamma_1' \bar g_i^{-1}) = \bigcap_{i=1}^n \bar g_i (\Gamma \cap \Gamma_1') \bar g_i^{-1}.\end{equation} 
\end{lemma}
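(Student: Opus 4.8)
The plan is to verify the three displayed equalities in \eqref{newgamma} one at a time, and then to prove $\Gamma' = \Gamma'_{\mathrm{new}}$ by a direct double-inclusion argument using the commutative diagram \eqref{eq:Psichi0}. The three equalities defining $\Gamma'_{\mathrm{new}}$ are essentially formal: the first equals the second because each factor $\bar g_i \Gamma'_1 \bar g_i^{-1}$ is being intersected with $\Gamma$ anyway, so we may distribute the intersection; the second equals the third because $\Gamma$ is normal in $\Gamma_0$ (recall $M \to M_0$ is Galois), so $\bar g_i \Gamma \bar g_i^{-1} = \Gamma$ and hence $\Gamma \cap \bar g_i \Gamma'_1 \bar g_i^{-1} = \bar g_i(\bar g_i^{-1}\Gamma \bar g_i \cap \Gamma'_1)\bar g_i^{-1} = \bar g_i(\Gamma \cap \Gamma'_1)\bar g_i^{-1}$, using that $\Gamma'_1 \leq \Gamma_1 \leq \Gamma_0$. (Here one should note $\Gamma'_1 \vartriangleleft \Gamma_1$ but $\Gamma'_1$ need not be normal in $\Gamma_0$, which is exactly why the intersection over all lifts $\bar g_i$ is needed.)

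The substantive content is the identity $\ker \Psi = \Gamma'_{\mathrm{new}}$. Recall from \eqref{eq:Psichi0} that $\Psi = \varphi \circ \Psi_0$ and $\chi_0 = \varphi_0 \circ (q_{1*})\!\mid$, and that $\varphi(\Psi_0(\gamma)) = (k_1,\dots,k_n) \in C^n$ while $r_1 \circ \varphi \circ \Psi_0 = \varphi_0 \circ q_{1*} \circ \Psi_0|_\Gamma = \chi_0|_\Gamma$ gives the first coordinate $k_1$. The key observation is that the other coordinates $k_i$ are obtained by ``twisting by $\bar g_i$'': using the commutativity of \eqref{extended_homology_and_fundgroup} (equivalently Lemma \ref{lem:homology_and_fundgroup} composed with $\varphi$), for $\gamma \in \Gamma$ we have $\Phi(g_i)\bigl(\Psi(\gamma)\bigr) = \Psi(\conj_{\bar g_i}(\gamma)) = \Psi(\bar g_i \gamma \bar g_i^{-1})$, and applying $r_1$ to both sides and using $r_1 \circ \Phi(g_i) = $ (projection onto the $g_i^{-1}(1)$-th coordinate) $=$ (the coordinate functional picking out $k_{g_i^{-1}(1)}$)... more cleanly: $r_1(\Phi(g_i)(k_1,\dots,k_n)) = k_{g_i^{-1}(1)}$, but since $g_i$ sends the trivial coset $g_1 H_1 = H_1$ to $g_i H_1$, the relevant bookkeeping gives that $\chi_0(\bar g_i \gamma \bar g_i^{-1}) = r_1(\Psi(\bar g_i\gamma\bar g_i^{-1})) = r_1(\Phi(g_i)\Psi(\gamma))$ equals the $i$-th coordinate of $\Psi(\gamma)$ (after matching the labelling convention that $g_i \omega_1 = \omega_i$, cf.\ \eqref{eq:Psici}). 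Therefore $\gamma \in \ker\Psi$ iff all $n$ coordinates of $\Psi(\gamma)$ vanish iff $\chi_0(\bar g_i \gamma \bar g_i^{-1}) = 0$ for all $i=1,\dots,n$ iff $\bar g_i \gamma \bar g_i^{-1} \in \ker \chi_0 = \Gamma'_1$ for all $i$ iff $\gamma \in \bigcap_i \bar g_i^{-1}\Gamma'_1 \bar g_i$. One then checks this matches $\Gamma'_{\mathrm{new}}$ as written (noting $\gamma$ already lies in $\Gamma$, and that replacing $\{\bar g_i^{-1}\}$ by $\{\bar g_i\}$ only permutes the intersection since $\{g_1,\dots,g_n\}$ and $\{g_1^{-1},\dots,g_n^{-1}\}$ index the same family of cosets up to relabelling — or, more carefully, one fixes the convention so that the lifts are exactly those appearing in \eqref{newgamma}).

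The main obstacle I expect is bookkeeping with the coset labelling: one must be careful that the permutation $i \mapsto g_i(j)$ together with the convention "$i=1 \leftrightarrow$ trivial $H_1$-coset" makes the $i$-th coordinate of $\varphi(\Psi_0(\gamma))$ literally equal to $\varphi_0(q_{1*}(\Psi_0(\bar g_i \gamma \bar g_i^{-1})))$, and not to some permuted version. This is where \eqref{eq:Psici} ($\Psi(c_i) = \Phi(g_i)e_1 = e_i$) and the explicit form of $r_1$ and $\varphi_0$ must be combined with care; once the indices are aligned, the equivalence "$\Psi(\gamma)=0 \iff \gamma \in \bigcap \bar g_i \Gamma'_1 \bar g_i^{-1}$" is immediate. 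A secondary (purely cosmetic) point is that one should state explicitly that the resulting $\Gamma'_{\mathrm{new}}$ is independent of the choice of lifts $\bar g_i$ — which follows because changing $\bar g_i$ to $\bar g_i \gamma$ with $\gamma \in \Gamma$ replaces $\bar g_i \Gamma'_1 \bar g_i^{-1} \cap \Gamma$ by $\bar g_i \gamma \Gamma'_1 \gamma^{-1}\bar g_i^{-1}\cap\Gamma$, and $\gamma \Gamma'_1 \gamma^{-1}$ differs from $\Gamma'_1$ only in its "$\Gamma$-part", which is killed by intersecting with... actually this needs the same diagram-chase, so I would simply remark that independence follows a posteriori from the identification with $\ker\Psi$, which manifestly does not involve the $\bar g_i$.
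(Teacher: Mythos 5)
Your overall strategy is the same as the paper's: reduce everything to the identity $\Gamma_1'\cap\Gamma=\Psi^{-1}(\{0\}\times C^{n-1})$ coming from diagram \eqref{eq:Psichi0}, and then move the distinguished coordinate around using the equivariance diagram \eqref{extended_homology_and_fundgroup}. The formal part (the three equalities in \eqref{newgamma} via normality of $\Gamma$ in $\Gamma_0$) is fine. But your key index computation has an inversion error, and the patch you propose for it is false. You compute $\chi_0(\bar g_i\gamma\bar g_i^{-1})=r_1(\Phi(g_i)\Psi(\gamma))$; since $\Phi(g_i)(\sum k_je_j)=\sum k_je_{g_i(j)}$, the coefficient of $e_1$ in $\Phi(g_i)\Psi(\gamma)$ is $k_{g_i^{-1}(1)}$, i.e.\ the coordinate indexed by the coset $g_i^{-1}H_1$ --- \emph{not} the $i$-th coordinate. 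No relabelling convention makes $g_i^{-1}(1)=i$ in general (that would force $g_i^2\in H_1$). Consequently your chain of equivalences lands on $\bigcap_i\bar g_i^{-1}\Gamma_1'\bar g_i\cap\Gamma$ rather than on $\Gamma'_{\mathrm{new}}=\bigcap_i\bar g_i\Gamma_1'\bar g_i^{-1}\cap\Gamma$, and your claim that ``$\{g_1^{-1},\dots,g_n^{-1}\}$ indexes the same family of cosets up to relabelling'' is wrong: as the paper itself notes, $\{g_i^{-1}\}$ represents the \emph{right} cosets $H_1\backslash G$, and since $\bar g_i^{-1}(\Gamma\cap\Gamma_1')\bar g_i$ depends only on the right coset $\Gamma_1\bar g_i$ (which need not be distinct as $i$ varies, even when the left cosets $g_iH_1$ are), the two intersections can genuinely differ --- already for $G=S_3$, $H_1$ of order $2$, one can choose left-coset representatives two of which lie in the same right coset.

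The fix is a one-line correction rather than a new idea: conjugate the other way. By \eqref{extended_homology_and_fundgroup} with $\gamma_0=\bar g_i^{-1}$ one has $\Psi(\bar g_i^{-1}\gamma\bar g_i)=\Phi(g_i^{-1})\Psi(\gamma)$, and $r_1(\Phi(g_i^{-1})(k_1,\dots,k_n))=k_{g_i(1)}=k_i$ since $g_i(1)=i$ (the index $1$ being the trivial coset). Hence $\Psi(\gamma)_i=\chi_0(\bar g_i^{-1}\gamma\bar g_i)$, and $\gamma\in\ker\Psi$ iff $\bar g_i^{-1}\gamma\bar g_i\in\Gamma_1'\cap\Gamma$ for all $i$, iff $\gamma\in\bigcap_i\bar g_i(\Gamma_1'\cap\Gamma)\bar g_i^{-1}=\Gamma'_{\mathrm{new}}$, exactly as stated --- no reindexing needed. (The paper packages the same computation as $\Psi(\bar g_i(\Gamma_1'\cap\Gamma)\bar g_i^{-1})=\Phi(g_i)(\{0\}\times C^{n-1})=C^{i-1}\times\{0\}\times C^{n-i}$ for the inclusion $\Gamma'_{\mathrm{new}}\subseteq\ker\Psi$, and proves the reverse inclusion by conjugating a $\gamma\in\ker\Psi$ by arbitrary $\gamma_0\in\Gamma_0$, which sidesteps the labelling issue entirely.) Your closing remark on independence of the lifts is correct but can be seen directly: a lift of $g_i$ differs by an element of $\Gamma\le\Gamma_1$, which normalises $\Gamma_1'=\ker\chi_0\vartriangleleft\Gamma_1$.
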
 

\begin{proof} 
The equalities in (\ref{newgamma}) follow since $\Gamma$ is normal in $\Gamma_0$. It remains to prove $\Gamma'_{\mathrm{new}} = \ker \Psi$. Notice that 
it follows from diagram \eqref{eq:Psichi0} that
\begin{equation} \label{eq:G1'capG}
\Gamma_1' \cap \Gamma =
 \ker \chi_0 \cap \Gamma = \{\gamma \in \Gamma \mid r_1 \circ \Psi(\gamma) = 0 \} = 
\Psi^{-1}( \{0\} \times C^{n-1} ).
\end{equation}
Since $\Psi$ is surjective, this implies
$ \Psi(\Gamma_1' \cap \Gamma) = \{0\} \times C^{n-1}. $
Since by definition 
$$ \Phi(g_i)(\{0\} \times C^{n-1}) = C^{i-1} \times \{0\} \times C^{n-i}, $$
from diagram \eqref{extended_homology_and_fundgroup}, we conclude that
\begin{equation} \label{subways} \Psi(\bar g_i (\Gamma_1' \cap \Gamma) \bar g_i^{-1}) = \Phi(g_i) \Psi(\Gamma_1' \cap \Gamma) = C^{i-1} \times \{0\} \times C^{n-i},  \end{equation} 
and therefore $$\Psi(\Gamma'_{\mathrm{new}}) \subseteq \bigcap\limits_i C^{i-1} \times \{0\} \times C^{n-i} = \{0\}, $$ so $\Gamma'_{\mathrm{new}} \subseteq \ker \Psi$. 

To prove the reverse inclusion, assume that $\Psi(\gamma)=0$ for some $\gamma \in \Gamma$. Then by diagram \eqref{extended_homology_and_fundgroup} we also have $\Psi(\gamma_0^{-1} \gamma \gamma_0) = 0$ for any $\gamma_0 \in \Gamma_0$, so $$\gamma_0^{-1} \gamma \gamma_0 \in \Psi^{-1}(0) \subseteq \Psi^{-1}( \{0\} \times C^{n-1} ) \stackrel{\eqref{eq:G1'capG}}{=} \Gamma_1' \cap \Gamma.$$ 
Therefore 
$\gamma \in \gamma_0 (\Gamma_1' \cap \Gamma) \gamma_0^{-1}$ for all $\gamma_0$, showing that $\gamma \in \Gamma'_{\mathrm{new}}$, so  $\ker \Psi 
\subseteq \Gamma'_{\mathrm{new}}$. 
\end{proof}

\begin{remark}
Standard expressions for the kernel of the restriction and induction of representations (see, e.g., \cite[Lemma 5.11]{Isaacs}) allow one to give a representation-theoretic description of $\Gamma'_{\mathrm{new}}$. Namely, let $\tilde \chi_0$ denote the linear character 
on $\Gamma_1$ given by $\tilde \chi_0(\gamma)= e^{2\pi i \chi_0(\gamma)/\ell}$ where $\chi_0$ is as in diagram \eqref{eq:Psichi0}.   Then, with $\ker \tilde \chi_0 = \ker \chi_0 =  \Gamma_1'$, we have 
$$ \ker \Res_\Gamma^{\Gamma_0} \Ind_{\Gamma_1}^{\Gamma_0} \tilde \chi_0  = \Gamma \cap \ker \Ind_{\Gamma_1}^{\Gamma_0} \tilde \chi_0  = \Gamma \cap \bigcap_{\gamma_0 \in \Gamma_0} \gamma_0 \ker(\tilde \chi_0) \gamma_0^{-1} = \Gamma'_{\mathrm{new}}. $$
\end{remark}

\medskip

\noindent We now perform the following 2-step geometric construction: 
\begin{enumerate}[leftmargin=1.7em, label=(\alph*)]
\item For $g \in G$,  ``twist'' the cover $q_1 \colon M \rightarrow M_1$ by defining $q_1^g \colon M \rightarrow M_1$ by $x \mapsto q_1(g^{-1}x)$, and set 
$$M_g'':= M_1' \times_{M_1,q_1^g} M; $$
corresponding to the following diagram:
\begin{equation} \label{diag:Mg''}
  \xymatrix@C=5mm@R=4mm{ & M''_g \ar[dl] \ar[dr] & &  \\ M_1' \ar[dr]_{C}^{q_1'} & & M \ar[dl]_-{q_1^g}\ar[r]^{g^{-1} \cdot}  & M \ar[dll]^{q_1} \\ & M_1 & }
\end{equation} 
By slight abuse, the two different $M$ in the diagram are in fact identical, but the maps are different. 
\item  Iteratively construct the fiber product  
\begin{equation} \label{fpmp} M'_{\mathrm{new}}  := M''_{g_1} \times_M M''_{g_2} \times_M \dots \times_M M''_{g_n}, \end{equation} 
where $\{g_1,\dots,g_n\}$ is the chosen set of representatives for $G/H_1$; this is presented in the following diagram:
\begin{equation} \label{subcoverdiag} 
  \xymatrix@C=5mm@R=5mm{ & & M'_{\mathrm{new}}  \ar[dll] \ar[dl] \ar[drr] & & \\ M''_{g_1} \ar[drr]_C & M''_{g_2} \ar[dr]^(.55)C & & \cdots & M''_{g_n} \ar[dll]^C \\ & & M & & }
\end{equation} 
\end{enumerate} 
We will prove that this manifold $M'_{\mathrm{new}} $ is the same as $M'$, the one constructed in the previous section. 

\begin{proposition} \label{propgeomact} Suppose $\ell$ is coprime to $|G|$ and condition \textup{($\ast$) (equivalently, ($\ast \ast$))} holds. 
\begin{enumerate} 
\item The fiber product $M'_{\mathrm{new}}$ in \eqref{fpmp} is represented as
\begin{equation} \label{eq:M'}
M'_{\mathrm{new}}  = \left\{ (x_1,\dots,x_n,x) \in M_1' \times \cdots M_1' \times M \mid
q_1'(x_i) = q_1(g_i^{-1}x), \ i =1,\dots,n \right\}, 
\end{equation}
and in these coordinates, the projection $M'_{\mathrm{new}}  \to M''_{g_i}$ is given
by
$$ M'_{\mathrm{new}}  \ni (x_1,x_2,\dots,x_n,x) \mapsto (x_i,x) \in M''_{g_i}. $$
$M'_{\mathrm{new}}$ is a connected manifold and corresponds to the subgroup $\Gamma'_{\mathrm{new}}$, so that in fact $M'_{\mathrm{new}} = M'$.
\item Geometrically, the action of $\tilde G$ on $M'_{\mathrm{new}} $ is expressed as follows in the coordinates used in \eqref{eq:M'}: there exists an isometry $\iota  \colon M'_{\mathrm{new}} \rightarrow M'_{\mathrm{new}} $ that conjugates the action of $\tilde G$ into
\begin{itemize}
\item $\underline{c}=(c_i) \in C^n \le \tilde G$ acts  componentwise on each factor $M''_{g_i}$, i.e., 
\begin{equation} \label{coaxc} \iota^{-1} \underline{c} \iota \cdot (x_1,x_2,\dots,x_n,x) = (c_1 x_1,\dots,c_n x_n,x); \end{equation}  
\item $g \in G \le \tilde G$ acts on $M'_{\mathrm{new}} $ by 
\begin{equation} \label{coax} \iota^{-1} g \iota \cdot (x_1,x_2,\dots,x_n,x) = (x_{g^{-1}(1)},x_{g^{-1}(2)},\dots,x_{g^{-1}(n)},gx), \end{equation} 
where $g^{-1}(i)$ is defined, as before, via $g^{-1} g_i \in g_{g^{-1}(i)}H_1$. Colloquially, this means that, up to an isometry, in diagram \eqref{subcoverdiag}, $g$ act naturally on the ``base'' manifold $M$, while the points in the various $M''_{g_j}$ above a given point in $M$ are permuted across these different manifolds in the same way as $g^{-1}$ permutes the cosets $G/H_1$. 
\end{itemize}
\end{enumerate}
\end{proposition}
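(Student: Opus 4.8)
The plan is to treat the two parts in turn: part (i) is a covering‑space bookkeeping exercise built on \S\ref{setup} and Lemma \ref{dagger}, while part (ii) amounts to writing the $\tilde G$‑action on the explicit model \eqref{eq:M'} and then matching it with the intrinsic deck action of $\Gamma_0/\Gamma'$.

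\emph{Part (i).} Since $M$ and $M_1$ are manifolds, the orbifold fiber products in \eqref{diag:Mg''} and \eqref{fpmp} agree with the set‑theoretic ones (the remark following \eqref{stfp}); iterating the description of $M''_{g_i} = M_1' \times_{M_1,q_1^{g_i}} M$ over the common manifold $M$ then yields \eqref{eq:M'} and the asserted projection formulas. To identify the corresponding subgroup of $\Gamma$, I would note that the map $M''_{g_i}\to M$ (projection to the last factor in \eqref{diag:Mg''}) is the pullback of the $C$‑Galois cover $q_1'\colon M_1'\to M_1$ along $q_1^{g_i} = q_1\circ(g_i^{-1})$, so by covering‑space theory together with the fiber‑product computations of \S\ref{setup} it is the connected cover corresponding to $B_i := \Gamma \cap \bar g_i \Gamma_1' \bar g_i^{-1}$; connectedness here uses that $\Gamma_1' = \ker\chi_0$ is normal in $\Gamma_1$ and $\ell = [\Gamma_1:\Gamma_1']$ is coprime to $[\Gamma_1:\Gamma]=|H_1|$, forcing $\Gamma_1'\Gamma=\Gamma_1$. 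From diagram \eqref{extended_homology_and_fundgroup} one checks that $B_i = \ker\bigl(\Gamma\xrightarrow{\Psi}C^n\to C\bigr)$, the last arrow being the $i$‑th coordinate projection, so $[\Gamma:B_i]=\ell$, $B_i\vartriangleleft\Gamma$ and $\bigcap_i B_i = \ker\Psi = \Gamma'$. Finally, the fiber product over $M$ of the $M''_{g_i}$ is connected because the $\Gamma$‑equivariant map of fibers $\Gamma\to\prod_i \Gamma/B_i$ has kernel $\bigcap_i B_i = \ker\Psi$ and hence is surjective for cardinality reasons ($[\Gamma:\ker\Psi]=\ell^n=\prod_i[\Gamma:B_i]$); so $M'_{\mathrm{new}}$ is the connected cover of $M$ with group $\bigcap_i B_i$, which by Lemma \ref{dagger} equals $\Gamma'_{\mathrm{new}} = \Gamma' = \ker\Psi$. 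Hence $M'_{\mathrm{new}} = M'$.

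\emph{Part (ii).} On the model \eqref{eq:M'} I would define the candidate action $\tilde\rho\colon \tilde G\to\mathrm{Isom}(M'_{\mathrm{new}})$ by letting $\alpha(\underline c)$ be the right‑hand side of \eqref{coaxc} (using the deck action of $C$ on $q_1'\colon M_1'\to M_1$) and $\beta(g)$ the right‑hand side of \eqref{coax}, and then run the routine checks: both $\alpha(\underline c)$ and $\beta(g)$ preserve the defining equations $q_1'(x_i)=q_1(g_i^{-1}x)$ of \eqref{eq:M'} — for $\beta(g)$ this is exactly the relation $g_i^{-1}g\,g_{g^{-1}(i)}\in H_1$ built into the definition of $i\mapsto g^{-1}(i)$ — they are isometries, they are homomorphisms on $C^n$ and on $G$ respectively, and they satisfy $\beta(g)\alpha(\underline c)\beta(g)^{-1}=\alpha(\Phi(g)\underline c)$ with $\Phi$ as in \eqref{eq:Phidef}; so $\tilde\rho$ is a genuine isometric action of $\tilde G = C^n\rtimes G$, and it is faithful since $\tilde\rho(\underline c,g)=\mathrm{id}$ forces $gx=x$ for all $x\in M$, hence $g=e$ ($G$ acts effectively on $M$), and then $c_i x_i=x_i$ for all $x_i\in M_1'$, hence $\underline c=0$ ($C$ acts freely on $M_1'$). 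The last‑coordinate projection $M'_{\mathrm{new}}\to M$ is invariant under the free action of $\tilde\rho(C^n)$ and, by a degree count ($\deg = \ell^n = |\tilde\rho(C^n)|$), realises $M = \tilde\rho(C^n)\backslash M'_{\mathrm{new}}$; since $\beta(g)$ induces the original isometry $g$ on this quotient, $M'_{\mathrm{new}}/\tilde\rho(\tilde G) = M/G = M_0$.

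\emph{Matching and main obstacle.} Because $\tilde\rho$ is faithful, $\tilde\rho(\tilde G)$ is a group of isometries of the connected manifold $M'_{\mathrm{new}}$ of order $|\tilde G| = \ell^n|G| = [\Gamma_0:\Gamma']$ with quotient orbifold $M_0$; lifting to the universal cover $\tilde M$ shows $\tilde\rho(\tilde G)$ coincides with the covering group $\Gamma_0/\Gamma'$ of $M'_{\mathrm{new}}\to M_0$ produced in part (i), and likewise $\tilde\rho(C^n) = \Gamma/\Gamma' = \mathrm{Deck}(M'_{\mathrm{new}}/M)$. The identification $M'_{\mathrm{new}}\cong\Gamma'\backslash\tilde M$ of part (i) is pinned down only up to an element of $\mathrm{Deck}(M'_{\mathrm{new}}/M)=\tilde\rho(C^n)$, and transporting the intrinsic $\Gamma_0/\Gamma'$‑action along different such identifications changes it by conjugation by such an element; choosing the identification compatibly with the homomorphic section $G\to\Gamma_0/\Gamma'$ from step (vi) of the proof of Proposition \ref{maininterprop0} makes the transported intrinsic action equal to $\tilde\rho$, and the required isometry $\iota$ is the corresponding deck transformation. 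I expect the main difficulty to be precisely this bookkeeping: keeping the twist $q_1^{g_i}$, the permutation $i\mapsto g^{-1}(i)$, and the choice of coset lifts $\bar g_i$ mutually consistent so that the abstract deck action of $\Gamma_0/\Gamma'$ matches the explicit formulas \eqref{coaxc}--\eqref{coax} after conjugating by the single isometry $\iota$.
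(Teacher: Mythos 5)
Your part (i) is essentially the paper's argument: each $M''_{g_i}$ is identified with the connected cover of $M$ belonging to $B_i=\Gamma\cap\bar g_i\Gamma_1'\bar g_i^{-1}$ via coprimality of $\ell$ and $|H_1|$ (Lemma \ref{coprime}), and the iterated fiber product is connected with group $\bigcap_i B_i=\ker\Psi=\Gamma'$ by Lemma \ref{dagger}; your surjectivity-by-cardinality argument for $\Gamma\to\prod_i\Gamma/B_i$ is just a compact repackaging of the paper's induction on the number of factors.

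For part (ii) you take a genuinely different route. The paper starts from the intrinsic deck action, writes $g\cdot\underline{x}=(c_i(g,\underline{x})\,x_{g^{-1}(i)},gx)$, proves by a properly-discontinuous/metric estimate that $\underline{c}(g,\underline{x})$ is locally constant (hence constant) in $\underline{x}$, verifies the cocycle identity, and kills the cocycle using $\Ho^1(G,C^n)=0$. You instead posit the formulas \eqref{coaxc}--\eqref{coax} as a candidate action $\tilde\rho$ of $\tilde G$, check directly that it is a faithful isometric action with $\tilde\rho(C^n)\backslash M'_{\mathrm{new}}=M$ and $\tilde\rho(\tilde G)\backslash M'_{\mathrm{new}}=M_0$, conclude by counting that $\tilde\rho(\tilde G)$ is the full deck group $\Gamma_0/\Gamma'$, and then note that $\tilde\rho|_G$ and the intrinsic section $\jmath(G)$ are two splittings of $1\to C^n\to\Gamma_0/\Gamma'\to G\to1$, hence conjugate by some $\iota\in C^n$ since $\Ho^1(G,C^n)=0$; as $C^n$ is abelian, conjugating by $\iota$ leaves \eqref{coaxc} undisturbed. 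This buys you a cleaner argument: the paper's ``locally constant'' step, whose only purpose is to show that the discrepancy is a well-defined cocycle, disappears, because the difference of two splittings is automatically a group-theoretic object. The price is the direct verification that $\tilde\rho$ is an action — the relation $\beta(g)\alpha(\underline{c})\beta(g)^{-1}=\alpha(\Phi(g)\underline{c})$ from \eqref{eq:Phidef}, and the preservation of the defining equations of \eqref{eq:M'}, your reduction of which to $g_i^{-1}g\,g_{g^{-1}(i)}\in H_1$ is exactly the paper's computation (I)--(III). One point you pass over quickly is that the intrinsic action of $c_i\Gamma'$ is the standard generator of $C$ on the $i$-th coordinate (so that the two copies of $C^n$ in the deck group agree as parametrised subgroups, not merely as subgroups); the paper's proof is equally brief there and the verification appears only in the remark following the proposition, so this is harmless but worth a line.
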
 

\begin{proof} 
We follow the steps of the construction. Since the group homomorphism $\chi_0 \colon \Gamma_1 \rightarrow C$ in diagram \eqref{eq:Psichi0} is surjective, $\Gamma_1' := \ker \chi_0 \vartriangleleft \Gamma_1$
is of index $\ell$ in $\Gamma_1$, and $ q_1' \colon M_1' \rightarrow M_1$ is a $C$-Galois cover. 
\begin{enumerate}[leftmargin=1.7em, label=(\alph*)] 
\item 
Since $M_1$ is a manifold, the compositum is described as 
$$ M_g'' = \{ (x_1,x) \in M'_1 \times M \colon q_1'(x_1) = q_1(g^{-1} x) \}. $$ 
Since the degrees of the covers $q_1' \colon M_1' \rightarrow M_1$ and $q_1 \colon M \rightarrow M_1$ are coprime, Lemma \ref{coprime} implies that $M''_g$ is connected and equal to the compositum. 
As in \eqref{isomconj}, the action of $g^{-1}$ on $M_0$ and $M_1$ corresponds to the action on $\Gamma_0$ and the subgroup $\Gamma_1'$ by conjugation with $\bar g^{-1}$, where $\bar g$ is a lift of $g$ to $\Gamma_0$.  Hence the corresponding group is the intersection $\Gamma_g'':=\Gamma \cap \bar g \Gamma_1' \bar g^{-1}$, i.e., $M_g''=\Gamma_g'' \backslash \tilde M$.   By Lemma \ref{lindis} and coprimality of the degree, the covering $M_g'' \rightarrow M$ is $C$-Galois.

\item Since $M$ is a manifold, the underlying set of the fiber product is indeed the set theoretic fiber product in \eqref{eq:M'}. 
We next argue that 
$M'_{\mathrm{new}} $ is connected, agrees with the compositum, and indeed corresponds to the group  
$\Gamma'_{\mathrm{new}}$ (and hence $\Gamma'$) in \eqref{newgamma}, i.e., $M'_{\mathrm{new}}=M'$. This will finish the proof of (i). 
To see the connectedness,  
we use induction with respect to the number of factors. So suppose we have already proven that $M_{g_1}'' \times_M \dots M_{g_{N-1}}'' \rightarrow M$ is a connected $C^{N-1}$-cover corresponding to the group $\bigcap\limits_{i=1}^{N-1} \bar g_i (\Gamma \cap \Gamma_1') \bar g_i^{-1}$. By Lemma \ref{coprime}, the product with the next factor $M_{g_N}''$ is connected if and only if 
\begin{equation} \label{subways2}  \Gamma = \langle  \bigcap_{i=1}^{N-1} \bar g_i (\Gamma \cap \Gamma_1') \bar g_i^{-1}, \bar g_N (\Gamma \cap \Gamma_1') \bar g_N^{-1} \rangle. \end{equation} 
To prove this, we notice that is is true after applying $\Psi$, using \eqref{subways}: the image of left hand side is $C^n$, and the image of the right hand side is the subgroup of $C^n$ spanned by $\{0\}^{N-1} \times C^{n-N}$ and  $C^{N-1} \times \{0\} \times C^{n-N}$, which equals the whole of $C^n$. Hence equation \eqref{subways2} is true up to $\ker \Psi$, and from Lemma \ref{dagger}, it follows that $\ker \Psi$ is contained in both the left hand side and the right hand side of the equality, proving that \eqref{subways2} holds on the nose. 
 \end{enumerate}
 To prove (ii), note that $M'_{\mathrm{new}} \to M$ is a $C^n$-Galois cover by Lemma \ref{lindis}, with one copy of $C$ acting componentwise on each factor $M''_{g_i}$, and this is the same as the action of $C^n$ on $M'$. The claim about the action of $g \in G \leq \tilde G$ can be proven as follows: the action of $G$ on $M'$ is given by considering $G$ as a subgroup of $\tilde G$, and as such it acts by isometries on $M_{\mathrm{new}} = M'$. We know that, in the geometric representation \eqref{eq:M'} for $M'_{\mathrm{new}}$, 
 $$g \underline{x} = \underline{y} \mbox{ with } \underline{x}=(x_1,\dots,x_n,x), \underline{y}= (y_1,\dots,y_n,y)$$ for some \emph{unique} $y_i \in M_1'$ and $y \in M$ with $q_1'(y_i) \stackrel{\mathrm{(I)}}{=} q_1(g_i^{-1}y).$ We only need to determine what $y_i$ and $y$ are. Since the action of $G$ on $M$ is as given, $y\stackrel{\mathrm{(II)}}{=}gx$. Recall also that $g^{-1} g_i  = g_{g^{-1}(i)}h_{g,i}$ for some $h_{g,i} \in H_1$. In particular, with $q_1 \colon M \rightarrow M_1$ the covering with group $H_1$, for $x \in M$ we have $q_1((g^{-1}g_i)^{-1} x)  \stackrel{\mathrm{(III)}}{=}q_1 (g_{g^{-1}(i)}^{-1} x).$ We collect this information to compute 
 \[ q_1'(y_i)  \stackrel{\mathrm{(I)}}{=} q_1(g_i^{-1}y) \stackrel{\mathrm{(II)}}{=}q_1(g_i^{-1} g x) = q_1((g^{-1}g_i)^{-1} x) = q_1 (g_{g^{-1}(i)}^{-1} x) \stackrel{\mathrm{(III)}}{=}q_1'(x_{g^{-1}(i)}). \]
Since $q_1' \colon M_1' \rightarrow M_1$ is a $C$-cover, this shows that $y_i = c_i x_{g^{-1}(i)}$ for some $\underline{c} = (c_i) \in C^n$, that a priori depends on $g$ and $\underline{x}$, i.e., it is a map $$\underline{c} \colon G \times M'  \rightarrow C^n.$$
Let us now prove that it does not depend on any of these parameters. Note that we write the group operation on $C^n$ multiplicatively. 

First of all, denote the dependency on $\underline{x}$ by $\underline c(\underline{x})$. Let $d(\cdot,\cdot)$ denote the distance on a manifold induced from the Riemannian metric. Since $C$ acts properly discontinuously on $M_1'$ there is a $\delta>0$ such that, for any two elements $c,c' \in C$ and $x \in M_1'$, if $d(cx,c'x)<\delta$, then $c'=c$. 
If $\underline{x}'$ is at distance $\varepsilon$ from $\underline{x}$ in $M'$, then so is $g \underline{x}$ from $g \underline{x}'$, and hence so is $c_i(\underline{x}) x_{g^{-1}(i)}$ from $c_i(\underline{x}') x'_{g^{-1}(i)}$ for all $i$.  Hence 
\begin{align*} d(c_i(\underline{x})&x_{g^{-1}(i)}, c_i(\underline{x}')x_{g^{-1}(i)}) \\  &\leq d(c_i(\underline{x})x_{g^{-1}(i)}, c_i(\underline{x}')x'_{g^{-1}(i)}) + d(c_i(\underline{x}')x'_{g^{-1}(i)}, c_i(\underline{x}')x_{g^{-1}(i)}) \\ &= d(c_i(\underline{x})x_{g^{-1}(i)}, c_i(\underline{x}')x'_{g^{-1}(i)}) + d(x'_{g^{-1}(i)}, x_{g^{-1}(i)}) \leq 2 \varepsilon, \end{align*} (the equality in the above formula holds since $c_i(\underline{x}')$ is an isometry) and thus 
$c_i(\underline{x}) = c_i(\underline{x}')$ as soon as $\underline{x}$ and $\underline{x}'$ are at distance $<\delta/2$. We conclude that $\underline c(\underline{x})$ is locally constant in $\underline{x}$, and since $M'$ is connected, $\underline{c}$ is actually independent of $\underline{x}$. so that we have a map  \begin{equation} \label{coc} \underline{c} \colon G \rightarrow C^n. \end{equation}   
Now denote the dependence on $g$ by $\underline{c}(g)$; we observe that for two elements $g,h \in G$, 
\begin{align*}  (c_i(gh) x_{(gh)^{-1}(i)}, gh x) & = gh \underline{x} = g(c_i(h) x_{h^{-1}(i)}, hx) \\ &= (c_i(g) c_{g^{-1}(i)}(h) x_{h^{-1} g^{-1}(i)}, gh x), \end{align*}
so $\underline{c}(gh) = \underline{c}(g) \underline{c}(h)^g$, where the action of $g$ on $\underline{c}=(c_i)$ is given by $\underline{c}^g:=(c_{g^{-1}(i)})$.  This shows that the map $\underline{c}$ in equation (\ref{coc}) is a cocycle from $G$ to $C^n$, and the corresponding first group cohomology class lies in $\Ho^1(G,C^n)$. Since $|G|$ and $|C^n|=\ell^n$ are coprime, the latter cohomology group is zero \cite[III.(10.1)]{BrownK}, proving that $\underline{c}$ is a coboundary, i.e., there exists $\underline{v} \in C^n$ (independent of $g$) such that $\underline{c}(g) = \underline{v}^{-1} \underline{v}^g = (v_i^{-1} v_{g^{-1}(i)})$. 
Consider the isometry $$\iota \colon M'_{\mathrm{new}} \rightarrow M'_{\mathrm{new}}  \colon \underline{x} = (x_i,x) \mapsto (v_i^{-1} x_i, x). $$
Now $$\iota^{-1} g \iota(\underline{x}) = \iota^{-1} (c_i(g) v_{g^{-1}(i)}^{-1} x_{g^{-1}(i)}, gx) = \iota^{-1} (v_i^{-1} x_{g^{-1}(i)}, gx) = (x_{g^{-1}(i)},gx),$$ 
as was claimed. Note also that conjugating by $\iota$ commutes with the action of $C^n$, so it does not change that action.  
  \end{proof} 
  
  \begin{remark} 
 Assuming condition $(\ast \ast)$, one can now give the following alternative construction of diagram \eqref{leftpartextend} used in the main theorem \ref{maindetail}: perform the above two step construction of $M'_{\mathrm{new}}$ and \emph{define} an action of $\tilde G$ on $M'_{\mathrm{new}}$ using the right hand side of equations \eqref{coaxc} and \eqref{coax}.  
  \end{remark} 

\begin{remark} The action of $\tilde G$ on $M_{\rm{new}}'$ ties up with the group theoretical construction from the previous section, as follows.
The group $\tilde G \cong \Gamma_0 / \Gamma'$ acts naturally on $M' = \Gamma' \backslash \tilde M$ via 
\begin{equation} \label{eq:action}
(\gamma_0 \Gamma') \cdot (\Gamma'\tilde x) = \Gamma' \cdot (\gamma_0 \tilde x).
\end{equation}
The explicit identification between $M'$ and $M_{\rm{new}}'$ is given by the map
\begin{equation} \label{eq:iso} 
M' = \Gamma' \backslash \tilde M \ni \Gamma' \tilde x \mapsto (\Gamma_1' \bar g_1^{-1}\tilde x,\dots,\Gamma_1'\bar g_n^{-1} \tilde x,\Gamma \tilde x) =: (x_1,\dots,x_n,x) \in M_{\rm{new}}', 
\end{equation}
where $\{ g_i H_1 \}$ represent the cosets of $H_1$ in $G$ and
$G \to \Gamma_0: g \mapsto \bar g$ is a section such that we have $\bar e_G = e_{\Gamma_0}$, $\overline{g^{-1}} = \bar g^{-1}$ and $\bar g \Gamma' = \jmath(g)$ with the homomorphism $\jmath: G \to \Gamma_0 / \Gamma'$ representing the splitting of \eqref{eq:ses}. The action of $\tilde G \cong \Gamma_0/\Gamma'$, transferred from $M'$ to $M_{\rm{new}}'$ is then
\begin{equation} \label{eq:action2}
(\gamma_0 \Gamma') \cdot (\Gamma_1' \bar g_1^{-1}\tilde x,\dots,\Gamma_1'\bar g_n^{-1} \tilde x, \Gamma \tilde x) = (\Gamma_1'\bar g_1^{-1} \gamma_0 \tilde x,\dots,\Gamma_1'\bar g_n^{-1}\gamma_0 \tilde x, \Gamma \gamma_0 \tilde x). 
\end{equation}

Let $c \in \Gamma$ be an element satisfying $\Psi(c) = e_1$
and set $c_i := \bar g_i c \bar g_i^{-1} \in \Gamma$, as in Section \ref{constrmp}. Utilising the diagrams \eqref{extended_homology_and_fundgroup} and \eqref{eq:Psichi0}, we see that $c_i \in \bar g_j \Gamma_1' \bar g_j^{-1}$ for all $j \neq i$. Thus,  \eqref{eq:action2} implies 
\begin{eqnarray}
(c_i \Gamma') \cdot (x_1,\dots,x_n,x) &=& (\Gamma_1'\bar g_1^{-1} c_i \tilde x,\dots,\Gamma_1'\bar g_n^{-1} c_i \tilde x, \Gamma c_i \tilde x) \nonumber \\
&=& (\Gamma_1'\bar g_1^{-1} \tilde x, \dots, \underbrace{\Gamma_1' c \bar g_i^{-1} \tilde x}_{\text{$i$-th entry}}, \Gamma_1'\bar g_n^{-1} \tilde x, \Gamma \tilde x).\label{eq:c-act} 
\end{eqnarray}
Let $g \in G$; by definition, we have $\bar g^{-1} \bar g_i
\Gamma_1' = \bar g_{g^{-1}(i)} c^{- k_i(g)} \Gamma_1'$ for some $k_i(g)$ modulo $\ell$. This implies that
\begin{eqnarray*} 
(\bar g \Gamma') \cdot (x_1,\dots,x_n,x) &\stackrel{\eqref{eq:action2}}{=}& (\Gamma_1' \bar g_1^{-1} \bar g \tilde x, \dots, \Gamma_1' \bar g_n^{-1} \bar g \tilde x, \Gamma \bar g \tilde x) \\
&=& (\Gamma'_1 c^{k_1(g)} \bar g_{g^{-1}(1)} \tilde x,\dots,\Gamma'_1 c^{k_n(g)} \bar g_{g^{-1}(n)} \tilde x, \Gamma \bar g \tilde x) \\
&\stackrel{\eqref{eq:c-act}}{=}& (c_1^{k_1(g)} \cdots c_n^{k_n(g)}\Gamma') \cdot 
(x_{g^{-1}(1)},\dots,x_{g^{-1}(n)},g \cdot x).
\end{eqnarray*}
 Now $g \mapsto (c_i^{k_i(g)}\Gamma')_{i=1}^n$ is a cocycle from $G$ to $C^n = \Gamma/\Gamma'$, and since $\Ho^1(G,C^n)=0$, there exists $(m_1,\dots,m_n)$ with $k_i(g) = m_{g^{-1}(i)}-m_i$ (modulo $\ell$). 
Using the commutativity of the elements $c_i \Gamma'$, this implies that if we set 
$c_0 := \prod_{i=1}^n c_i^{m_i}$, then 
$$ (c_0 \jmath(g) c_0^{-1}) \cdot (x_1,\dots,x_n,x) = (x_{g^{-1}(1)},\dots,x_{g^{-1}(n)},g \cdot x) $$
for all $g \in G$ and all $(x_1,\dots,x_n,x) \in M_{\rm{new}}'$. 
This shows that a copy of $G$ in $\tilde G \cong \Gamma_0 / \Gamma'$, namely $c_0 \jmath(G) c_0^{-1}$, acts on $M_{\rm{new}}'$ via permutation of the first $n$ entries. In other words, it is possible to conjugate the subgroup $G$ in $\tilde G$ to realise the specific action \eqref{coax} on $M_{\rm{new}}'$.
\end{remark} 

\subsection{Universal property of the wreath product} \label{wreathuniversal} The appearance of the wreath product in our constructions becomes less of a surprise given the following universal property, showing that the minimal Galois cover that ``contains'' a $G$-cover and a $C$-cover as in our situation arises from this wreath product (the analogous result in the theory of field extensions is well-known, compare \cite[13.7]{FJ}). 

\begin{proposition} 
Let $G$ and $C$ denote finite groups with $C$ cyclic of prime order $\ell$ not dividing the order of $G$. Suppose that we are given Riemannian manifolds $M,M_1,M_1'$ and a developable Riemannian orbifold $M_0$ such that $M \rightarrow M_0$ is $G$-Galois with subcover $M_1 \rightarrow M_0$, and $M_1' \rightarrow M_1$ is $C$-Galois. If $N \rightarrow M_0$ is a Galois cover of minimal degree admitting Riemannian covers $N \rightarrow M$ and $N \rightarrow M_1'$, then the Galois group $G'$ of $N$ over $M_0$ is the wreath product $\tilde G:=C^n \rtimes G$, where $n$ is the degree of the cover $M_1 \rightarrow M_0$ \textup{(}see Figure \eqref{universalwreath}.\textup{)}
\begin{equation} \label{universalwreath}
  \xymatrix@C=3mm@R=6mm{ & N \ar@{-->}[dr] \ar@{-->}[dl] \ar@{-->}@/^4.5pc/[ddd]^{G'} & \\ M_1' \ar[dr]_C & & M \ar[dl] \ar@/^0.5pc/[ddl]^-G  \\ & M_1\ar[d]_n &   \\ & M_0 &  } 
\end{equation} 
\end{proposition}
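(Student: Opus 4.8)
The plan is to pass to the (orbifold) fundamental group $\Gamma_0$ of $M_0$ acting on the common universal cover $\tilde M$, writing $M = \Gamma\backslash\tilde M$, $M_1 = \Gamma_1\backslash\tilde M$ and $M_1' = \Gamma_1'\backslash\tilde M$, with $\Gamma\vartriangleleft\Gamma_0$ and $\Gamma_0/\Gamma\cong G$, with $\Gamma\le\Gamma_1\le\Gamma_0$ of index $n = [G:H_1]$, and with $\Gamma_1'\vartriangleleft\Gamma_1$ and $\Gamma_1/\Gamma_1'\cong C$. A Riemannian covering $N\to M_0$ corresponds to a subgroup $\Delta\le\Gamma_0$; it is Galois iff $\Delta\vartriangleleft\Gamma_0$, it dominates $M$ and $M_1'$ iff $\Delta\le\Gamma\cap\Gamma_1'$, and among such it has minimal degree iff $\Delta$ is the largest subgroup with these properties, i.e.\ the normal core $\Delta = \bigcap_{\gamma_0\in\Gamma_0}\gamma_0(\Gamma\cap\Gamma_1')\gamma_0^{-1}$ of $\Gamma\cap\Gamma_1'$ in $\Gamma_0$ (this is normal, contained in $\Gamma\cap\Gamma_1'$, contains every $\Gamma_0$-normal subgroup of $\Gamma\cap\Gamma_1'$, and is of finite index because $\Gamma\cap\Gamma_1'$ is). So the content of the proposition is the isomorphism $G' = \Gamma_0/\Delta\cong\tilde G = C^n\rtimes G$.

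Next I would simplify the core. Since $\Gamma\vartriangleleft\Gamma_0$ one has $\gamma_0(\Gamma\cap\Gamma_1')\gamma_0^{-1} = \Gamma\cap\gamma_0\Gamma_1'\gamma_0^{-1}$, and since $\Gamma_1'\vartriangleleft\Gamma_1$ the conjugate $\gamma_0\Gamma_1'\gamma_0^{-1}$ depends only on the coset $\gamma_0\Gamma_1$, hence only on the image $g_iH_1\in G/H_1$. Fixing representatives $g_1,\dots,g_n$ of $H_1$ in $G$ and lifts $\bar g_i\in\Gamma_0$, this gives $\Delta = \bigcap_{i=1}^n\bar g_i(\Gamma\cap\Gamma_1')\bar g_i^{-1}$, which is exactly the group $\Gamma'_{\mathrm{new}}$ of Lemma \ref{dagger}. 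In the situation operative in this section --- condition \textup{($\ast\ast$)}, with $M_1'$ the $C$-cover cut out by the character $\chi_0$ of diagram \eqref{eq:Psichi0} --- Lemma \ref{dagger} identifies $\Delta = \Gamma'_{\mathrm{new}} = \Gamma' = \ker\Psi$, and the proof of Proposition \ref{maininterprop0} then furnishes $\Gamma_0/\Gamma'\cong\tilde G$ with the coordinate-permutation (wreath) action; hence $\deg(N\to M_0) = |\tilde G| = \ell^n|G|$ and the proposition follows, minimality of $N$ being precisely the normal-core property used above.

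For a route independent of Section \ref{sec:covers} --- and to locate the real obstacle --- I would analyse $\Delta = \bigcap_i V_i$ with $V_i := \Gamma\cap\bar g_i\Gamma_1'\bar g_i^{-1}$ directly. Each $V_i$ is normal in $\Gamma$ of index dividing $\ell$, in fact of index exactly $\ell$: index $1$ would mean $\Gamma\le\bar g_i\Gamma_1'\bar g_i^{-1}$, i.e.\ $\Gamma\le\Gamma_1'$ (as $\Gamma\vartriangleleft\Gamma_0$), forcing $C = \Gamma_1/\Gamma_1'$ to be a quotient of $\Gamma_1/\Gamma\cong H_1$, impossible since $\ell\nmid|G|$ and so $\ell\nmid|H_1|$. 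Now $G$ permutes the $V_i$ just as it permutes $G/H_1$, and $\Gamma/\Delta$ embeds $G$-equivariantly into $\bigoplus_i\Gamma/V_i$, which as an $\F_\ell[G]$-module is the permutation module $\mathcal N = (\Ind_{H_1}^G\one)\otimes_{\Z}\F_\ell\cong C^n$. The crucial --- and only nontrivial --- point is that this embedding is \emph{onto}; equivalently, that the $n$ functionals $\gamma\mapsto\chi_0(\bar g_i^{-1}\gamma\bar g_i)$, $i = 1,\dots,n$, are $\F_\ell$-linearly independent in $\Hom(\Gamma,\F_\ell) = \Hom(\Ho_1(M,\F_\ell),\F_\ell)$. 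This is exactly a homological-wideness statement: it is what condition \textup{($\ast\ast$)} provides, and it can fail, e.g., if $\Ho_1(M,\F_\ell)$ is one-dimensional and $n>1$, in which case $\Gamma/\Delta$ collapses to the diagonal line inside $\mathcal N$. Granting surjectivity, $\Gamma/\Delta\cong\mathcal N\cong C^n$ as $G$-modules, so $1\to C^n\to\Gamma_0/\Delta\to G\to 1$ is an extension in which $G$ acts by the wreath action; since $\gcd(\ell^n,|G|) = 1$ we have $\Ho^2(G,C^n) = 0$, the extension splits, and $\Gamma_0/\Delta\cong C^n\rtimes G = \tilde G$. I expect this surjectivity step --- the passage from ``$\Gamma/\Delta$ is a submodule of $\mathcal N$'' to ``$\Gamma/\Delta = \mathcal N$'' --- to be the main obstacle, precisely because it is the place where the hypotheses governing this section, and not merely the coprimality $\ell\nmid|G|$, are genuinely used.
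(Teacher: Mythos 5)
Your proposal follows the paper's own proof almost step for step: the paper likewise identifies $\Gamma_N$ with the normal core of $\Gamma\cap\Gamma_1'$ in $\Gamma_0$, reduces the intersection over all of $\Gamma_0$ to the $n$ conjugates $\Gamma_{g_i}=\bar g_i(\Gamma\cap\Gamma_1')\bar g_i^{-1}$ using normality of $\Gamma$ in $\Gamma_0$ and of $\Gamma_1'$ in $\Gamma_1$, observes $\Gamma/\Gamma_{g_i}\cong C$, and studies the product map $\varphi\colon\Gamma\to\prod_i\Gamma/\Gamma_{g_i}$, whose kernel is $\Gamma_N$. Your justification that $[\Gamma:\Gamma_{g_i}]$ is exactly $\ell$ (via $\Gamma\Gamma_1'=\Gamma_1$ from coprimality) and your explicit appeal to $\Ho^2(G,C^n)=0$ to split the resulting extension are both points the paper leaves implicit; they are correct and worth having.

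The step you single out as the real obstacle --- surjectivity of $\varphi$, equivalently $\F_\ell$-linear independence of the $n$ characters $\gamma\mapsto \bar g_i^{-1}\gamma\bar g_i\,\Gamma_1'\in\Gamma_1/\Gamma_1'\cong C$ --- is precisely the step the paper disposes of by asserting that $(\bigcap_{j\neq i}\Gamma_{g_j})\setminus\Gamma_{g_i}$ is non-empty ``since the reasoning in the first paragraph of this proof shows that the latter set is non-empty''. That non-emptiness (for all $i$) is \emph{equivalent} to your linear-independence condition, because a nonzero functional vanishes on the common kernel of the others exactly when it lies in their span; and the first paragraph of the paper's proof establishes nothing of the sort. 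Your diagnosis is therefore accurate, and your collapse scenario is realizable: take $M=S^3\times S^1$ with $G=\Z/m$ acting freely on the first factor and trivially on the second, $H_1$ trivial (so $M_1=M$, $n=m$), and $M_1'$ the $\ell$-fold circle cover; then $N=M_1'$ and $|G'|=\ell m\neq \ell^m m$. So as a free-standing statement about an \emph{arbitrary} $C$-cover $M_1'\to M_1$ the proposition fails, and it must be read with the standing hypotheses of the section in force (condition $(\ast\ast)$ together with $M_1'$ being the specific cover cut out by $\chi_0$, which is what Lemma \ref{dagger} and Proposition \ref{maininterprop0} supply, and what your first route correctly invokes). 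In short: your proposal is not missing an idea --- the unproved step you isolate is a genuine gap in the paper's argument at this level of generality, not an oversight of yours.
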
 

\begin{proof} Writing the manifolds $M_0,M,M_1,M_1',N$ as quotients of he universal cover $\tilde M_0$ of $M_0$ by the respectively group $\Gamma_0,\Gamma, \Gamma_1, \Gamma_1', \Gamma_N$, the defining properties of $N$ imply that it is the normal closure of the compositum of $M_1'$ and $M$ over $M_0$, and hence $$\Gamma_N = \bigcap_{\gamma_0 \in \Gamma_0} \gamma_0 ( \Gamma \cap \Gamma_1') \gamma_0^{-1}.$$ 
First of all, for $g \in G$, choose one element $\bar g \in \Gamma_0$ that maps to $g \in \Gamma_0/\Gamma \cong G$. 
We claim that $$\Gamma_N = \bigcap_{i=1}^n \Gamma_{g_i} \mbox{ where } \Gamma_{g_i} := \bar g_i ( \Gamma \cap \Gamma_1') {\bar g_i}^{-1}, $$
for $\{g_i\}$ a set of coset representatives for $H_1$ in $G$. 
Indeed, for any $\gamma_0 \in \Gamma_0$ we can write
$\gamma_0 = \bar g_i \gamma_1$ for some $i \in \{1,\dots,n\}$ and some $\gamma_1 \in \Gamma_1$, since the cosets of
$H_1 \cong \Gamma_1/\Gamma$ in $G \cong \Gamma_0/\Gamma$ are $g_1 H_1,\dots, g_n H_1$ and the cosets of $\Gamma_1$ in $\Gamma_0$ are therefore $\bar g_1 \Gamma_1,\dots, \bar g_n \Gamma_1$. The statement now follows  from the fact that $\gamma_0 \in \Gamma_0$ must lie in one of these cosets $\bar g_i \Gamma_1$; since both $\Gamma$ and
$\Gamma_1'$ are normal in $\Gamma_1$, we have
\begin{align*}  \gamma_0 (\Gamma \cap \Gamma_1' ) \gamma_0^{-1} & =
(\bar g_i \gamma_1) (\Gamma \cap \Gamma_1') (\bar g_i \gamma_1)^{-1} = \bar g_i \gamma_1 (\Gamma \cap \Gamma_1') \gamma_1^{-1} \bar g_i^{-1} \\ &= \bar g_i (\Gamma \cap \Gamma_1') \gamma_1 \gamma_1^{-1} \bar g_i^{-1} =
\bar g_i (\Gamma \cap \Gamma_1') \bar g_i^{-1}. \end{align*}

Now since $\Gamma$ is normal in $\Gamma_0$, $\Gamma \geq \Gamma_N$, and we find an exact sequence $$1 \rightarrow \Gamma/\Gamma_N \rightarrow \Gamma_0 / \Gamma_N \rightarrow \Gamma_0 / \Gamma \cong G \rightarrow 1.$$ The natural map $\varphi \colon \Gamma \rightarrow \prod\limits_{i=1}^n \Gamma / \Gamma_{g_i}$ has  kernel  $\bigcap\limits_{i=1}^n \Gamma_{g_i} = \Gamma_N$. 
Next, $\Gamma / \Gamma_{g_i} \cong C$ since the index is the prime number $\ell$. Finally, we claim that $\varphi$ is surjective. For this, it suffices to find for every $i$ an element $\gamma_i \in \Gamma$ with $$\varphi(\gamma_i) = e_i=(0,\dots,0,1,0,\dots,0) \in C^n.$$  Since $C$ is cyclic of prime order, every non-zero element is a generator, and it suffices to choose $\gamma_i \in  ( \bigcap\limits_{j \neq i}\Gamma_{g_j}) \setminus \Gamma_{g_i}$. This is possible since the reasoning in the first paragraph of this proof shows that the latter set is non-empty. In the end, we find a sequence 
$$ 1 \rightarrow C^n \rightarrow \Gamma_0/\Gamma_N \rightarrow G \rightarrow 1 $$
where $G$ acts on $C^n$ by permuting the factors like it permutes the cosets of $H_1$, and this finishes the proof. 
\end{proof}

\begin{remark} 
In our setup, the universality property says the following: if we search for the ``easiest possible'' twisted Laplace operator on $M_1$, meaning associated to the Laplace operator on some prime order cyclic cover of $M_1$, we necessarily arrive at a diagram of the form \eqref{leftpartextend}. 
\end{remark}

\section{Homological wideness} \label{homwide} 
\subsection{The notion of homological wideness} We now introduce an easier topological notion than conditions  \textup{($\ast$)} from Theorem \ref{maindetail} and  \textup{($\ast \ast$)} from Subsection \ref{astastsection},  that involves only the group $G$ and not the subgroup $H_1$. 

\begin{definition} 
Suppose $G$ is a finite group $G$ acting (freely or not) on a closed connected (topological) manifold $M$. Let $K$ denote a field. We say the action of $G$ is \emph{$K$-homologically wide} if the \emph{$K$\hyp{}homology representation $h_K=h$} of $G$, given by the induced action on the first homology group  \begin{equation} \label{psi} h_K \colon G \rightarrow \Aut(\Ho_1(M,K)) \end{equation} contains the regular representation of $G$. 
\end{definition} 

The regular representation is cyclic (any element $g \in G$ is a cyclic vector, i.e., the vector space span of the orbit $G\cdot g$ spans the entire representation space), hence another way to formulate homological wideness is as follows:  \emph{the action of $G$ on $M$ is homologically wide if and only if there exists a class $\omega \in \Ho_1(M,K)$ such that the orbit $G \cdot \omega$ spans a vector space of dimension $|G|$ inside $\Ho_1(M,K)$.}
Indeed, if the regular representation is contained in the homological representation, just take a cyclic vector for that subrepresentation. Conversely, if such $\omega$ exists, then all $g \cdot \omega$ for $g \in G$ are linearly independent, and hence span a copy of the regular representation. 

\begin{lemma} \label{hwstars} 
If the action of $G$ on $M$ is $\F_\ell$-homologically wide, then condition \textup{($\ast \ast$)}, and hence condition  \textup{($\ast$)} for $\ell$ coprime to $|G|$, holds for any subgroup $H_1$ in $G$. 
\end{lemma}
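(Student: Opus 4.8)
The plan is to show that the permutation module $(\Ind_{H_1}^G \one) \otimes_{\Z} \F_\ell$ embeds as an $\F_\ell[G]$-submodule of $\Ho_1(M,\F_\ell)$ whenever the regular representation $\F_\ell[G]$ does, and then to invoke the splitting discussed in Subsection \ref{astastsection} to pass from ($\ast\ast$) to ($\ast$) in the coprime case. The first observation is purely representation-theoretic: for any subgroup $H_1 \le G$, the permutation module $\Ind_{H_1}^G \one$ (over $\Z$, hence over $\F_\ell$ after tensoring) is a quotient of the regular module $\F_\ell[G] = \Ind_{\{1\}}^G \one$, via the surjection $\F_\ell[G] \twoheadrightarrow \F_\ell[G/H_1]$ sending a group element to its coset. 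Dually (or, over $\F_\ell$ with no coprimality assumed, using that $\Ind_{H_1}^G \one$ is self-dual as a permutation module, since the $G$-set $G/H_1$ is isomorphic to its ``dual''), it is also a \emph{submodule} of $\F_\ell[G]$: concretely, the map $\F_\ell[G/H_1] \to \F_\ell[G]$, $g H_1 \mapsto \sum_{h \in H_1} g h$, is a well-defined injective $\F_\ell[G]$-homomorphism (injectivity uses that the ``coset sums'' $\sum_{h\in H_1} gh$ for distinct cosets $gH_1$ involve disjoint sets of group elements, so they are $\F_\ell$-linearly independent regardless of the characteristic). Hence $(\Ind_{H_1}^G \one)\otimes_\Z \F_\ell$ is isomorphic to an $\F_\ell[G]$-submodule of the regular module $\F_\ell[G]$.

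Next, by the definition of $\F_\ell$-homological wideness, the regular representation $\F_\ell[G]$ occurs in the homology representation $h_{\F_\ell}\colon G \to \Aut(\Ho_1(M,\F_\ell))$; unwinding ``occurs in'', this means there is an $\F_\ell[G]$-submodule $U \le \Ho_1(M,\F_\ell)$ with $U \cong \F_\ell[G]$. (If $\ell \nmid |G|$ this is the usual notion of a subrepresentation; in general ``contains the regular representation'' should be read, as the paper does, as the existence of such a submodule $U$ — equivalently, via the cyclic-vector reformulation given after the definition, the existence of a class whose $G$-orbit is $\F_\ell$-linearly independent, whose span is then exactly a copy of $\F_\ell[G]$.) Composing the embedding of the previous paragraph with the inclusion $U \hookrightarrow \Ho_1(M,\F_\ell)$ gives an $\F_\ell[G]$-module embedding
\[
(\Ind_{H_1}^G \one) \otimes_{\Z} \F_\ell \;\hookrightarrow\; \F_\ell[G] \;\cong\; U \;\hookrightarrow\; \Ho_1(M,\F_\ell),
\]
which is precisely the assertion of condition ($\ast\ast$). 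Finally, when $\ell$ is coprime to $|G|$, Maschke's theorem applies and, as recalled in Subsection \ref{astastsection}, ($\ast\ast$) is equivalent to ($\ast$); this yields the last clause of the lemma. Since the argument nowhere used any property of $H_1$ beyond being a subgroup, it holds for all $H_1 \le G$, as claimed.

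The only point requiring care — and the one I would single out as the main (minor) obstacle — is that when $\ell \mid |G|$ the module category is not semisimple, so ``the regular representation occurs in $\Ho_1(M,\F_\ell)$'' and ``$(\Ind_{H_1}^G\one)\otimes\F_\ell$ is a submodule'' must be interpreted as honest submodule statements rather than as summand statements, and the embedding $\F_\ell[G/H_1] \hookrightarrow \F_\ell[G]$ via coset sums must be checked to be injective without appealing to complete reducibility (which the disjoint-support argument above handles). Everything else is a routine diagram chase, and in the coprime regime relevant to Theorem \ref{maindetail} one may simply quote Maschke to identify sub- and quotient modules throughout.
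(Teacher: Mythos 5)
Your proof is correct, but it takes a genuinely different route from the paper's. The paper also reduces to showing that $\Ind_{H_1}^G \one$ sits inside the regular representation, but does so by character theory: by Frobenius reciprocity the multiplicity of any irreducible $\rho$ in $\Ind_{H_1}^G\one$ is $\frac{1}{|H_1|}\sum_{h\in H_1}\tr\rho(h)$, which is bounded by $\dim\rho$ (a sum of $\dim\rho$ roots of unity), i.e.\ by the multiplicity of $\rho$ in $\rho_{G,\mathrm{reg}}$. You instead exhibit an explicit $\F_\ell[G]$-embedding $\F_\ell[G/H_1]\hookrightarrow\F_\ell[G]$, $gH_1\mapsto\sum_{h\in H_1}gh$, whose injectivity follows from the disjoint supports of the coset sums. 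Your argument is more elementary and strictly more robust: the paper's multiplicity comparison is really an argument in the semisimple (ordinary character) setting and so only cleanly justifies the submodule statement when $\ell\nmid|G|$, whereas your coset-sum embedding is characteristic-free and establishes ($\ast\ast$) for every $\ell$, with Maschke then invoked only where the lemma actually needs it, namely to pass from ($\ast\ast$) to ($\ast$) in the coprime case. The trade-off is that the paper's computation also records the exact multiplicities involved, which is occasionally useful elsewhere, but for the lemma as stated your approach is at least as good.
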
 

\begin{proof} 
It suffices to show that for any $H_1 \leq G$, $\Ind_{H_1}^G \one$ is a subrepresentation of the regular representation. For that, it suffices to prove that the multiplicity of any irreducible $G$-representation $\rho$ in $\Ind_{H_1}^G \one$ is less than or equal to $\dim \rho$, the multiplicity of $\rho$ in the regular representation. By Frobenius reciprocity, we compute that the multiplicity of $\rho$ in $\Ind_{H_1}^G \one$ is
$$ \langle \rho, \Ind_{H_1}^G \one \rangle = \langle \Res_{H_1}^G \rho, \one \rangle = \frac{1}{|H_1|} \sum_{h_1 \in H_1} \tr(\rho(h_1)) \leq \frac{1}{|H_1|} |H_1| \dim \rho, $$
since the trace of $\rho(h_1)$ is a sum of $\dim \rho$ roots of unity (as $\rho(h_1)$ is of finite order).  
\end{proof}

\subsection{The notion of $\Q$-homological wideness} We first relate $\Q$-homological wideness (that has a transparent geometric meaning in terms of cycles) to $\F_\ell$-homological wideness (that is used in the proof of the main result). If the action of $G$ on $M$ is $\Q$\hyp{}homologically wide, there exists a non-torsion homology class $\omega \in \Ho_1(M,\Q) = \Ho_1(M,\Z)\otimes \Q$ such that $\{g \omega \}$ is linearly independent over $\Q$. Fix an integer $N$ such that $\omega':=N\omega \in \Ho_1(M,\Z)$; then $\{ g \omega'\}$ is a set of $\Z$-independent non-torsion homology classes for $M$. These classes will remain linearly independent modulo infinitely many $\ell$. In fact, we can use representation theory to say more. 

\begin{lemma} \label{fromQtoell}
If the action of $G$ on $M$ is $\Q$\hyp{}homologically wide, then it is $\F_\ell$\hyp{}homologically wide for all $\ell$ coprime to $|G|$. 
\end{lemma}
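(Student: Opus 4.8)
The plan is to pass from $\Q$-coefficients to $\F_\ell$-coefficients by a reduction-mod-$\ell$ argument, using that $\ell$ is coprime to $|G|$ so that $\F_\ell[G]$ is semisimple (Maschke) and the relevant representation-theoretic bookkeeping is available. First I would record the translation of $\Q$-homological wideness into a statement about multiplicities: since $\Ho_1(M,\Q)$ is a $\Q[G]$-module containing the regular representation $\Q[G]$, for every irreducible $\Q[G]$-module $S$ the multiplicity of $S$ in $\Ho_1(M,\Q)$ is at least its multiplicity in $\Q[G]$. Equivalently, after base change to $\bar{\Q}$ (or to $\C$, as in the character-theoretic setup of Section \ref{prelim}), every irreducible complex representation $\rho$ of $G$ occurs in $\Ho_1(M,\C)$ with multiplicity at least $\dim\rho$.

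Next I would use the standard fact that, for $\ell$ coprime to $|G|$, the decomposition matrix of $\F_\ell[G]$ is trivial: reduction modulo $\ell$ gives a bijection between irreducible $\bar{\Q}_\ell[G]$-modules (equivalently, via a choice of embedding, irreducible $\C[G]$-modules) and irreducible $\bar{\F}_\ell[G]$-modules, preserving dimensions, and multiplicities are preserved under reduction of a $\Z_{(\ell)}[G]$-lattice. Concretely, choose $N$ as in the paragraph preceding the lemma so that $\omega' = N\omega \in \Ho_1(M,\Z)$ has $\Z$-linearly independent orbit $\{g\omega'\}$; the $G$-submodule $L \subseteq \Ho_1(M,\Z)$ generated by $\omega'$ is then a $\Z[G]$-lattice which, after tensoring with $\Q$, contains $\Q[G]$ as a direct summand. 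Since $|G|$ is invertible mod $\ell$, the idempotent $\frac{1}{|G|}\sum_{g\in G} g$ and the complementary projections defined over $\Z[1/|G|]$ show that $L\otimes_{\Z}\F_\ell$ contains $\F_\ell[G]$ as a submodule — more carefully, one reduces the surjection of $\F_\ell[G]$-modules obtained from an appropriate $G$-equivariant projection $\Ho_1(M,\Z_{(\ell)}) \to \Z_{(\ell)}[G]$, using semisimplicity to split it both upstairs and downstairs. Then $\F_\ell[G] \hookrightarrow \Ho_1(M,\F_\ell)$, which is exactly $\F_\ell$-homological wideness.

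Alternatively, and perhaps more cleanly, I would argue purely with Brauer characters: the Brauer character of $\Ho_1(M,\F_\ell)$ agrees with the ordinary character of $\Ho_1(M,\C)$ on $\ell$-regular elements, and since $\ell \nmid |G|$ \emph{every} element of $G$ is $\ell$-regular, so the two characters literally coincide as class functions on $G$. Hence the multiplicity of any (Brauer = ordinary) irreducible in $\Ho_1(M,\F_\ell)$ equals its multiplicity in $\Ho_1(M,\C)$, which by $\Q$-homological wideness is at least its dimension; therefore $\F_\ell[G]$ embeds in $\Ho_1(M,\F_\ell)$. The main obstacle — really the only subtlety worth spelling out — is justifying that reduction mod $\ell$ does not lose the regular-representation summand: this is where coprimality of $\ell$ and $|G|$ is essential (it fails in general), and I would make sure to invoke Maschke's theorem for $\F_\ell[G]$ together with the fact that a free $\Z_{(\ell)}[G]$-summand of a lattice reduces to a free $\F_\ell[G]$-summand, rather than hand-waving past the torsion in $\Ho_1(M,\Z)$.
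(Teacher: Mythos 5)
Your proposal is correct and follows essentially the same route as the paper: the paper's proof also fixes a lattice in $\Ho_1(M,\Q)$ and invokes the fact that for $\ell\nmid|G|$ the decomposition map is an effective isomorphism (equivalently, Brauer characters coincide with ordinary characters), so the regular-representation summand survives reduction modulo $\ell$. Your closing remark about not losing the summand to torsion is exactly the right point to be careful about, and your Maschke/lattice argument handles it at least as carefully as the published proof does.
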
 

\begin{proof}This follows from the basic theory of modular representations in ``good'' characteristics. In this proof, we write ``$R\mathrm{-mod}$'' for the category of finitely generated modules over a ring $R$.
 
If $\mathcal M$ is a $\Q[G]$-module, then it is a $\Q_\ell[G]$-module (with $\Q_\ell$ the field of $\ell$-adic numbers). 
We let $K \supseteq \Q_\ell$ denote a splitting field for all irreducible representations of $G$; it suffices to assume that $K$ contains all $m$-th roots of unity where $m$ runs over all orders of elements of $G$. Let $R \supseteq \Z_\ell$ denote the ring of integers of $K$ and $\mathfrak m$ its maximal ideal with residue field $k:=R/\mathfrak m$. 

Fixing any $R[G]$-lattice $\mathcal M'$ in $\mathcal M$, we have a reduction map modulo $\mathfrak m$, producing a $k[G]$-module $\overline{\mathcal M'} = \mathcal M' \otimes k = \mathcal M'/\mathfrak m \mathcal M'$. The \emph{decomposition map} $$ d \colon K_0(K[G]\mathrm{-mod}) \rightarrow K_0(k[G]\mathrm{-mod} ) \colon [\mathcal M] \rightarrow [\overline{\mathcal M'}] $$ 
is an isomorphism  for $\ell$ coprime to $|G|$ and an effective map (i.e., positive integral combinations map to positive combinations) (see, e.g., \cite[\S 15.5]{SerreRep}; another formulation says that if $\ell$ is coprime to $|G|$, the Brauer character of the reduction of a $G$-representation modulo $\mathfrak m$ equals the character of the original representation, see, e.g., \cite[Thm.\ 15.8]{Isaacs}). 
By assumption, any irreducible ($K$-)representation of $G$ occurs as direct summand in $\Ho_1(M,\Z) \otimes K$, and hence also every $k$-irreducible representation occurs as direct summand in $\Ho_1(M,\Z) \otimes k$. Since the regular representation $\Q[G]$ is defined over $\Q$, we also find the regular representation $\F_\ell[G]$ as direct summand in $\Ho_1(M,\Z) \otimes \F_\ell$. 
\end{proof} 

\begin{example} For $G={\Z}/{2{\Z}} = \langle \left( \begin{smallmatrix} 1 & 0 \\ 3 & -1 \end{smallmatrix} \right) \rangle$ acting on $\mathcal M = \Z^2$, $(1,0)$ is a cyclic vector over $\Q$ but not over $\F_3$ (so $\ell=3$ is excluded by the reasoning before the lemma). The lemma does not state that an integral cyclic vector is a cyclic vector modulo $\ell$; just that if one exists, then one exists modulo $\ell$, as long as $\ell$ is coprime to $|G|$. In the example, $(1,1)$ is a cyclic vector over both $\Q$ and $\F_3$. 
\end{example} 

\section{Examples of homologically wide actions} \label{exhomwide} 

We now turn to study various examples of (non-)homologically wide actions. 

\subsection{Surfaces} In dimension $2$, the situation for a fixed-point free action is clear because the homology representation can be computed using the Lefschetz fixed point theorem \cite[Theorem 2C.3]{Hatcher}. 

\begin{proposition} \label{prop:homwidecritriemsurf}
A fixed-point free action of a non-trivial finite group $G$ on a closed orientable surface $M$ is $\Q$\hyp{}homologically wide if and only if $M$ is hyperbolic (i.e., has negative Euler characteristic). In particular, the property is independent of $G$.  
\end{proposition}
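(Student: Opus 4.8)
The plan is to compute the rational homology representation $h_\Q$ of $G$ on $\Ho_1(M,\Q)$ explicitly via the equivariant Lefschetz fixed-point formula, and then compare its character against that of the regular representation. Since the action is fixed-point free, for every nontrivial $g\in G$ the Lefschetz number $L(g)=\sum_i(-1)^i\tr(g\mid \Ho_i(M,\Q))$ equals zero. Using $\Ho_0(M,\Q)\cong\Ho_2(M,\Q)\cong\Q$ with trivial $G$-action (the action preserves orientation, as $G$ is fixed-point free on a surface and the quotient is again a surface), this gives $2-\tr(g\mid \Ho_1(M,\Q))=0$, i.e. $\tr(h_\Q(g))=2$ for all $g\neq e$. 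For $g=e$ we have $\tr(h_\Q(e))=\dim\Ho_1(M,\Q)=2g_M$ where $g_M$ is the genus of $M$, and by Riemann--Hurwitz $2g_M-2=|G|(2g_{M_0}-2)$, i.e. $2g_M=|G|\cdot(-\chi_{M_0})+2$, where $\chi_{M_0}$ is the Euler characteristic of the quotient surface $M_0=G\backslash M$.

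Next I would decompose $h_\Q$ into the trivial representation plus the rest. Write $h_\Q = \one \oplus h_\Q'$; note $\Ho_1(M,\Q)$ does contain the trivial representation with multiplicity $\langle h_\Q,\one\rangle = \dim\Ho_1(M_0,\Q) = 2g_{M_0}$ by the coinvariants description (Lemma \ref{lem:coinvariants}), but the cleaner route is to subtract the class function of the regular representation. Form $\psi := h_\Q - (-\chi_{M_0})\,\rho_{G,\mathrm{reg}}$. Its character is: at $e$, $2g_M - (-\chi_{M_0})|G| = 2$; at $g\neq e$, $\tr(h_\Q(g)) - 0 = 2$. So $\psi$ has constant character $2$, which is $2\cdot\one$. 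Hence
\begin{equation*}
h_\Q \;=\; (-\chi_{M_0})\,\rho_{G,\mathrm{reg}} \;\oplus\; \one^{\oplus 2}.
\end{equation*}
From this identity the claim is immediate: if $\chi_{M_0}<0$ (i.e. $M$, equivalently any of $M_0,M_1,M_2$, is hyperbolic — these Euler characteristics all have the same sign since they are positive multiples of $\chi_{M_0}$ by multiplicativity), then $-\chi_{M_0}\geq 1$ and $\rho_{G,\mathrm{reg}}$ occurs in $h_\Q$, so the action is $\Q$-homologically wide. Conversely, if $M$ is not hyperbolic, then since $G$ acts freely and nontrivially, $M$ cannot be $S^2$ (no free action of a nontrivial finite group on $S^2$, e.g. by Lefschetz, or because $\chi$ would have to divide $2$ with quotient a surface), so $M$ is a torus, $\chi_{M_0}=0$, $\dim\Ho_1(M,\Q)=2<|G|$ as soon as $|G|\geq 3$, and for $|G|=2$ one checks directly from $\tr(h_\Q(g))=2=\dim\Ho_1$ that $h_\Q$ is trivial, so the regular representation (dimension $2$) does not embed; either way the action is not $\Q$-homologically wide. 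Independence of $G$ is then visible from the statement, since hyperbolicity of $M$ is a condition on $M$ alone.

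The main obstacle I anticipate is purely bookkeeping rather than conceptual: justifying that $G$ acts trivially on $\Ho_0$ and $\Ho_2$ (orientation-preservation, which follows because a fixed-point-free finite group action on a closed orientable surface has orientable quotient, or directly because an orientation-reversing finite-order diffeomorphism of a surface must have fixed points by Lefschetz) and correctly handling the small/degenerate cases ($M=S^2$ ruled out, $M=T^2$ with $|G|=2$). Once the character of $h_\Q$ is pinned down to be $2g_M$ at the identity and $2$ elsewhere, the decomposition $h_\Q\cong(-\chi_{M_0})\rho_{G,\mathrm{reg}}\oplus\one^{2}$ and hence the proposition follow formally.
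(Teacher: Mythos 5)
Your argument is essentially the paper's own proof: both compute the character of $h_{\Q}$ via the Lefschetz fixed point theorem ($\chi_h(g)=2$ for $g\neq e$, $\chi_h(e)=2-\chi_M$) and read off the decomposition $h_{\Q}=2\cdot\one\oplus(-\chi_M/|G|)\,\rho_{G,\mathrm{reg}}$, which coincides with your formula because $\chi_M=|G|\,\chi_{M_0}$. The one real problem is the side issue you yourself flag as the ``main obstacle'': neither of your two justifications for triviality of the action on $\Ho_2$ is valid. The deck transformation of the orientation double cover $T^2\to K$ of the Klein bottle is a \emph{free}, orientation-reversing involution of $T^2$ (Lefschetz number $1-0-1=0$), so it is false that an orientation-reversing finite-order diffeomorphism must have fixed points, and false that a free finite action on an orientable surface has orientable quotient; likewise the antipodal map is a free $\Z/2\Z$-action on $S^2$. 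Worse, the torus example is a genuine counterexample to the statement as you (and the paper) have phrased it: that involution acts on $\Ho_1(T^2,\Q)$ by $\mathrm{diag}(1,-1)\cong\rho_{\Z/2\Z,\mathrm{reg}}$, so the action is $\Q$-homologically wide although $T^2$ is not hyperbolic. The proposition therefore needs the hypothesis that $G$ preserves orientation --- automatic in the paper's setting, where $G$ consists of covering transformations of oriented Riemannian covers, and the paper's proof glosses over the point just as quickly --- and with that hypothesis in place (which also rules out $S^2$, since an orientation-preserving periodic map of $S^2$ has Lefschetz number $2$) the rest of your argument, including the case analysis for the converse, is complete and correct.
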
 

\begin{proof} 
We will compute the character $\chi_h$ of the rational homology representation $$h=h_{\Q} \colon G \rightarrow \Ho_1(M,\Q).$$ First of all, since we assume any $g \neq e$ has no fixed points, the map $g \colon M \rightarrow M$ has Lefschetz number
$$ \tr(g_* | \Ho_0(M,\Q)) -  \tr(g_* | \Ho_1(M,\Q)) +  \tr(g_* | \Ho_2(M,\Q)) = 0.$$ Since the action of $g_*$ on $\Ho_0(M,\Q) \cong \Q$ and $\Ho_2(M,\Q) \cong \Q$ is trivial (induced by the action of $g$ on the space of connected components of $M$, respectively the $2$-cells, i.e., the one-element sets), both outer terms in this expression are $1$, and the 
middle term is $\chi_h(g)$ by definition, so we find that 
$ \chi_h(g) = 2 $ for $g \neq e$. For $g=e$, on the other hand, we get directly from the definition of the character that $ \chi_h(e) =  \tr(e_* | \Ho_1(M,\Q)) =  \dim \Ho_1(M,\Q)=b_1(M) = 2-\chi_M. $ We conclude that 
$$ \chi_h(g) = \left\{ \begin{array}{ll} 2 & \mbox{if } g \neq e, \\ 2 - \chi_M & \mbox{if } g=e. \end{array} \right. $$
On the other hand, the character of the regular representation is 
$$\chi_{{G,\mathrm{reg}}} = \left\{ \begin{array}{ll} 0 & \mbox{if } g \neq e, \\ |G| & \mbox{if } g=e. \end{array} \right. $$
We can match these expressions, and since representations are isomorphic if and only if their characters are equal, we find 
$$ h = 2 \cdot \one_G - \frac{\chi_M}{|G|}  \cdot \rho_{G,\mathrm{reg}}, $$ and 
hence the regular representation occurs inside $h$ if and only if $\chi_{M}<0$.  
\end{proof} 

The above proposition above has no (constant) curvature assumption. In constant curvature but with more general actions, we have the following. 

\begin{proposition} \label{prop:homwidecritriemorb}
Any action of a finite group $G$ by (not necessarily fixed-point free) conformal automorphisms on a closed  Riemann surface $M$ is $\Q$\hyp{}homologically wide if $\chi_{G \backslash M}<0$.  
\end{proposition}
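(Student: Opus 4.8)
The plan is to reduce to the fixed-point-free case via the Chevalley--Weil / Lefschetz approach, adapted to allow branch points. First I would write $M_0 := G \backslash M$, a closed Riemann surface (orbifold) with underlying surface $\Sigma_0$, and let $x_1, \dots, x_r \in \Sigma_0$ be the branch points, with $e_i$ the order of the (cyclic) stabiliser of a point above $x_i$. The key computational tool is the holomorphic Lefschetz fixed point formula (or, equivalently, the classical Chevalley--Weil formula) which expresses the character $\chi_h$ of the rational (equivalently complex) homology representation $h \colon G \to \Aut(\mathrm{H}_1(M,\mathbf{C}))$ entirely in terms of the genus $g_0$ of $\Sigma_0$, the branch data $(e_i)$, and the local rotation numbers of $G$ at the fixed points. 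For $g \neq e$ with fixed points, each fixed point contributes a bounded local term (a geometric-series expression in roots of unity coming from the action on the holomorphic cotangent space), and $g$ has at most $\sum 2 e_i/(\cdots)$ — in any case a number bounded independently of the genus of $M$ — fixed points. For $g=e$ one gets $\chi_h(e) = b_1(M) = 2g_M$, and the Riemann--Hurwitz formula gives $2g_M - 2 = |G|(2g_0 - 2) + \sum_i (|G|/e_i)(e_i - 1)$, so that $g_M$, hence $\chi_h(e)$, grows linearly in $|G|$ with the coefficient governed by $-\chi^{\mathrm{orb}}_{M_0} = -(2 - 2g_0 - \sum_i (1 - 1/e_i)) = -\chi_{G\backslash M}$ (using the underlying-surface Euler characteristic $\chi_{G\backslash M} = \chi_{\Sigma_0}$, or its orbifold refinement — one should state clearly which $\chi$ is meant; the hypothesis $\chi_{G\backslash M} < 0$ should be read as negativity of the orbifold Euler characteristic, or simply of $\chi_{\Sigma_0}$, which for a nontrivial $G$-action is the relevant one).

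The decisive step is then purely representation-theoretic: to show $\rho_{G,\mathrm{reg}}$ occurs in $h$ it suffices to check that $\langle h, \rho \rangle \geq \dim \rho$ for every irreducible $\rho \in \mathrm{Irr}(G)$, i.e.
$$
\frac{1}{|G|} \sum_{g \in G} \chi_h(g)\, \overline{\chi_\rho(g)} \;\geq\; \chi_\rho(e).
$$
I would split the sum into the $g = e$ term, which contributes $\frac{1}{|G|} \chi_h(e)\, \chi_\rho(e) = \frac{2 g_M}{|G|}\chi_\rho(e)$, and the remaining terms. The remaining terms are bounded in absolute value by $\frac{1}{|G|}\sum_{g \neq e} |\chi_h(g)|\,|\chi_\rho(g)| \leq \frac{C \dim\rho}{|G|} \cdot (\text{number of } g \text{ with fixed points, weighted})$, where $C$ bounds each $|\chi_h(g)|$ — and crucially this bound does \emph{not} grow with $g_M$, only with the (fixed) branch data. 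Meanwhile $2g_M/|G| \to +\infty$ as $\chi_{G\backslash M}$ is negative — more precisely, from Riemann--Hurwitz, $2g_M/|G| = -\chi^{\mathrm{orb}}_{M_0} + 2/|G| = |\chi^{\mathrm{orb}}_{M_0}| + 2/|G|$, which is a fixed positive number. So I need the sharper inequality: $\frac{2 g_M}{|G|}\chi_\rho(e) \geq \chi_\rho(e) + \big|\sum_{g\neq e} \chi_h(g)\overline{\chi_\rho(g)}\big|/|G|$. Using $\chi_\rho(e) = \dim\rho \geq 1$ and $|\chi_\rho(g)| \leq \dim\rho$, and bounding the right-hand error sum by $\dim\rho \cdot \big(\frac{1}{|G|}\sum_{g \neq e}|\chi_h(g)|\big)$, it suffices to show $2g_M/|G| \geq 1 + \frac{1}{|G|}\sum_{g\neq e}|\chi_h(g)|$. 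The quantity $\frac{1}{|G|}\sum_{g \in G}|\chi_h(g)|$ is, by a standard estimate, $\leq \sqrt{\langle h, h\rangle}$ is \emph{not} what I want; instead I would use the explicit Chevalley--Weil values: $\sum_{g \neq e} \chi_h(g)$ (with signs) is essentially $|G|\cdot(\text{something of size } O(1))$, and since from the Lefschetz computation $\sum_{g} \chi_h(g) = |G|\langle h, \one\rangle = |G|\dim \mathrm{H}_1(M,\mathbf{C})^G = |G| \cdot 2g_0$, one gets $\sum_{g\neq e}\chi_h(g) = 2|G|g_0 - 2g_M$, and similarly running the argument irreducible-by-irreducible with the precise Chevalley--Weil multiplicities.

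The cleanest route, and the one I would actually write up, is to invoke the \textbf{Chevalley--Weil formula} directly: it states that for $\rho \in \mathrm{Irr}(G)$ nontrivial,
$$
\langle h, \rho\rangle \;=\; 2(g_0 - 1)\dim\rho \;+\; \sum_{i=1}^r \sum_{\text{local data at } x_i} (\text{explicit nonnegative correction}),
$$
and for $\rho = \one$, $\langle h, \one\rangle = 2g_0$. Since each correction term lies in $[0, \dim\rho]$ and $g_0 \geq 0$, one sees $\langle h, \rho\rangle \geq 2(g_0-1)\dim\rho$, which already gives $\geq \dim\rho$ once $g_0 \geq 1$ — but the interesting case is $g_0 = 0$ (or $1$) with enough branching to force $\chi^{\mathrm{orb}}_{M_0} < 0$, and there the correction terms, summed, must be large precisely because $\sum_i (1 - 1/e_i) > 2$. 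So the final step is to check: when $2 - 2g_0 - \sum_i(1 - 1/e_i) < 0$, the Chevalley--Weil correction terms for each $\rho$ sum to at least $(3 - 2g_0)\dim\rho$, giving $\langle h,\rho\rangle \geq \dim\rho$. The main obstacle is precisely this last quantitative bookkeeping with the Chevalley--Weil correction terms at branch points — verifying that the local contributions are individually nonnegative and that their total is forced to be large enough by the negativity of the orbifold Euler characteristic. For $\rho = \one$ one argues separately and more easily (one needs $2g_0 \geq 1$, i.e. $g_0 \geq 1$; if $g_0 = 0$ the trivial representation appears in $h$ with multiplicity $0$, but then $\rho_{G,\mathrm{reg}}$ contains $\one$ with multiplicity $1$, so one must instead observe that homological wideness as defined only requires $\rho_{G,\mathrm{reg}} \subseteq h$ — I would double-check the edge case $g_0 = 0$, where $\langle h, \one\rangle = 0 < 1 = \langle \rho_{G,\mathrm{reg}}, \one\rangle$, meaning the literal containment \emph{fails}; this signals that the correct reading of the hypothesis and conclusion must use a slightly weaker notion, or that $g_0 = 0$ needs the branch divisor to be handled by passing to the associated orbifold fundamental group, and I would reconcile this with the precise statement of Proposition \ref{prop:homwidecritriemorb} in the source — most likely the intended conclusion is that $h$ contains $\rho_{G,\mathrm{reg}}$ \emph{after removing a bounded-multiplicity piece}, or the hypothesis is meant to also exclude the degenerate $g_0 = 0$, $r \leq 3$ spherical cases automatically via $\chi^{\mathrm{orb}} < 0$).
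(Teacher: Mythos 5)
Your overall approach (holomorphic Lefschetz / Chevalley--Weil) is the same one the paper uses, via Broughton's computation of the homology character combined with the Eichler trace formula. But as written the proposal has a genuine gap: you never actually establish the containment of $\rho_{G,\mathrm{reg}}$, and the place where you get stuck — the ``last quantitative bookkeeping'' and the $g_0=0$ edge case — comes from an unresolved (and ultimately incorrect) reading of the hypothesis. In the proposition, $\chi_{G\backslash M}$ is the ordinary Euler characteristic $2-2g_0$ of the underlying quotient surface, \emph{not} the orbifold Euler characteristic; the remark following the proposition in the paper (``we only have $\chi_{G\backslash M}<0 \Rightarrow \chi_M<0$ but not the other way around'') only makes sense under this reading. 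So the hypothesis forces $g_0\geq 2$, and the degenerate spherical cases you worry about simply do not arise. Your own edge-case analysis in fact shows that under the orbifold reading the statement would be false (for $g_0=0$ one has $\langle h,\one\rangle=2g_0=0<1=\langle\rho_{G,\mathrm{reg}},\one\rangle$), and likewise your proposed intermediate claim that the branch corrections sum to at least $(3-2g_0)\dim\rho$ fails for $\rho=\one$; this is a sign that the reading, not the proposition, is at fault.

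With the correct reading the proof closes immediately and none of your character-sum estimates are needed. The Chevalley--Weil/Broughton formula gives, with $C_i$ the cyclic stabilisers over the $t$ branch points,
$$ h \;=\; 2\cdot\one \;-\; \chi_{G\backslash M}\cdot\rho_{G,\mathrm{reg}} \;+\; \sum_{i=1}^{t}\bigl(\rho_{G,\mathrm{reg}}-\Ind_{C_i}^G\one\bigr)
\;=\; 2\cdot\one \;-\; \chi_{G\backslash M}\cdot\rho_{G,\mathrm{reg}} \;+\; \sum_{i=1}^{t}\Ind_{C_i}^G\bigl(\rho_{C_i,\mathrm{reg}}-\one\bigr),$$
and each summand $\Ind_{C_i}^G(\rho_{C_i,\mathrm{reg}}-\one)$ is a genuine (non-virtual) representation because $\one$ occurs in $\rho_{C_i,\mathrm{reg}}$. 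Hence the branch corrections can only add non-negative multiplicities, and $-\chi_{G\backslash M}\geq 1$ already forces $\rho_{G,\mathrm{reg}}$ to occur in $h$. No irreducible-by-irreducible inequality $\langle h,\rho\rangle\geq\dim\rho$ needs to be verified by hand.
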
 

\begin{proof} 
We rely on the computation of the character of $h$ in this more general ramified setting by Broughton in \cite[Prop.\ 2(iii)]{Broughton}, using in addition (taking into account the holomorphic structure) the Eichler trace formula. Suppose that the $G$-cover is branched above $t$ points. For each branch point, choose a lift to the cover and let $C_i \leq G$ denote the (cyclic) stabiliser of that lift (the stabilisers of any lift of a given point are conjugate in $G$).  Then   
\begin{align*} 
 h 
& = 2 \cdot \one - \chi_{G \backslash M} \cdot \rho_{G,\mathrm{reg}} + \sum_{i=1}^t  (\rho_{G,\mathrm{reg}}- \Ind_{C_i}^G \one ) \\ 
& 
= 2 \cdot \one - \chi_{G \backslash M} \cdot \rho_{G,\mathrm{reg}} + \sum_{i=1}^t   \Ind_{C_i}^G (\rho_{C_i,\mathrm{reg}}-\one ), 
\end{align*} 
where we have used that the induced representation of the regular representation of $C_i$ to $G$ is the regular representation of $G$. 
Since $\one$ occurs in $\rho_{C_i,\mathrm{reg}}$, the representations occurring in the sum are not virtual (i.e., every irreducible representation of $G$ occurs in it with non-negative multiplicity) and therefore $h$ contains $\rho_{G,\mathrm{reg}}$ as soon as $\chi_{G \backslash M}<0$. 
\end{proof} 

\begin{remark} 
In the ``non-orbifold quotient'' setting of Proposition \ref{prop:homwidecritriemsurf}, by the Riemann-Hurwitz formula, $\chi_M<0$ if and only if $\chi_{G\backslash M}<0$. This is no longer true in the setting of Proposition \ref{prop:homwidecritriemorb}, when we only have  $\chi_{G \backslash M}<0 \Rightarrow  \chi_{M}<0$ but not the other way around. 
\end{remark}

The following is a detailed version of Corollary \ref{maincor}. 

\begin{corollary} \label{maincordetail} Let $M_1,M_2$ be two
  commensurable non-arithmetic closed Riemann surfaces. Then they
  admit a diagram \textup{(\ref{m0})} and, assuming the corresponding
  orbifold $M_0$ satisfies $\chi_{M_0} < 0$, isometry of $M_1$ and
  $M_2$ can be checked by computing the multiplicity of zero in
at most $$4 (\chi_{M_1}\chi_{M_2}/(\chi^{\mathrm{orb}}_{M_0})^2)!)^2 $$ twisted Laplace spectra, where $\chi^{\mathrm{orb}}_{M_0}$ is the orbifold Euler characteristic given by \begin{equation} \label{deforbeuler} \chi^{\mathrm{orb}}_{M_0}:=\chi_{M_0} - \sum (1-1/n_i), \end{equation}  with $n_i$ the order of the stabiliser group at the orbifold points.  
\end{corollary}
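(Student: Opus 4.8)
The plan is to combine the abstract machinery of Theorem \ref{maindetail} (and its refinement in Corollary \ref{art}, Proposition \ref{numberofchecks}) with the surface-specific homological wideness statements of Section \ref{exhomwide} and the standard commensurability result Proposition \ref{2to1}. First I would invoke Proposition \ref{2to1} applied to the hyperbolic plane $\tilde M = \mathbb H^2$: since $M_1$ and $M_2$ are commensurable and non-arithmetic, their lattices $\Gamma_1,\Gamma_2$ are non-arithmetic, so $\langle \Gamma_1,\Gamma_2\rangle$ has finite index in the commensurator, and setting $M_0 := \langle \Gamma_1,\Gamma_2\rangle\backslash \mathbb H^2$ produces a diagram \eqref{m0} with $M_0$ a (developable) hyperbolic $2$-orbifold. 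By Proposition \ref{fromdiagramtodiagram} this extends to a diagram \eqref{sunadasetup}, where $M = \Gamma\backslash\mathbb H^2$ is the normal core of $\Gamma_1\cap\Gamma_2$, a closed Riemann surface, and $G = \Gamma_0/\Gamma$ acts on $M$ by conformal (holomorphic) automorphisms with quotient $M_0$.

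Next, the assumption $\chi_{M_0}<0$ — which I would interpret as $\chi_{M_0}^{\mathrm{orb}}<0$ so that hyperbolicity of $M_0$ holds; alternatively, for a non-arithmetic hyperbolic orbifold $\chi^{\mathrm{orb}}_{M_0}<0$ automatically — feeds into Proposition \ref{prop:homwidecritriemorb}: the holomorphic $G$-action on $M$ is $\Q$-homologically wide because $\chi_{G\backslash M} = \chi_{M_0}<0$. By Lemma \ref{fromQtoell} the action is then $\F_\ell$-homologically wide for every prime $\ell$ coprime to $|G|$; pick the smallest such $\ell\geq 3$ (which exists, and in fact $\ell\le$ the smallest prime not dividing $|G|$, so $\ell = 3$ unless $3\mid|G|$, etc.—but for the bound one only needs some $\ell$). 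By Lemma \ref{hwstars}, condition $(\ast)$ of Theorem \ref{maindetail} holds, so Theorem \ref{maindetail} applies: $M_1$ and $M_2$ are equivalent Riemannian covers of $M_0$ (equivalently, by Lemma \ref{over}, isometric via an isometry over $M_0$; and for commensurable Riemann surfaces isometry over $M_0$ is the relevant notion) if and only if finitely many multiplicities of zero in twisted Laplace spectra agree, the number of pairs being controlled by Proposition \ref{numberofchecks}: at most $2\ell\,|H_2^{\mathrm{ab}}|$, hence at most $2\ell\,|H_2|$, and by the interchange symmetry at most $2\ell\min(|H_1|,|H_2|)$ spectral equalities, each equality being a pair of spectra so really $2\cdot(2\ell|H_2^{\mathrm{ab}}|)$ individual checks — I will track the exact constant to match the claimed $4(\cdots)!^2$.

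The final and only genuinely arithmetic step is translating the group-theoretic bound into the stated topological bound in terms of Euler characteristics. Here I would argue: the degree of $p_i\colon M_i\to M_0$ is $d_i = |G|/|H_i| = \chi_{M_i}/\chi^{\mathrm{orb}}_{M_0}$ by multiplicativity of the orbifold Euler characteristic under coverings (Riemann–Hurwitz), so $|H_i| = |G| \chi^{\mathrm{orb}}_{M_0}/\chi_{M_i}$. The covering degree of $M\to M_0$ is at most $(\deg(M_{00}\to M_0))! \le (d_1 d_2)!$ by \eqref{factorial}, where $M_{00}=M_1\bullet_{M_0}M_2$ has degree $d_1 d_2/[\Gamma_0:\langle\Gamma_1,\Gamma_2\rangle]$ over $M_0$, at most $d_1d_2$; more carefully $\deg(M_{00}\to M_0)\le d_1 d_2 = \chi_{M_1}\chi_{M_2}/(\chi^{\mathrm{orb}}_{M_0})^2$. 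Then $|H_2^{\mathrm{ab}}|\le|H_2|\le|G|\le (d_1d_2)!$ — but this alone gives a bound with a factorial, and I need to be careful to land exactly on $4\big((\chi_{M_1}\chi_{M_2}/(\chi^{\mathrm{orb}}_{M_0})^2)!\big)^2$. The natural route: the number of checks from Proposition \ref{numberofchecks} is $\le 2\ell|H_2^{\mathrm{ab}}|$, and $\ell$ can be absorbed, while $|H_2^{\mathrm{ab}}|\le|G| = |H_2|\cdot d_2 \le d_2\cdot(d_1 d_2)!$; combined with the factor for "two pairs of spectra" and using $d_i = \chi_{M_i}/\chi^{\mathrm{orb}}_{M_0}$ one reorganizes to get the square of a factorial times $4$. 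I expect this bookkeeping — making the constants and the placement of the factorial precise enough to reproduce the displayed expression exactly — to be the main obstacle; the structural part (existence of $M_0$, homological wideness, applicability of the main theorem) is immediate from the quoted results. I would present it as: apply Theorem \ref{maindetail} with $\ell$ the least prime $\ge 3$ coprime to $|G|$, bound $|G|$ and $|H_2^{\mathrm{ab}}|$ via \eqref{factorial} and Riemann–Hurwitz, and simplify.
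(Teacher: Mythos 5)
Your structural argument is exactly the paper's: Proposition \ref{2to1} gives the diagram \eqref{m0}, Proposition \ref{prop:homwidecritriemorb} gives $\Q$-homological wideness from $\chi_{M_0}<0$, and then Theorem \ref{maindetail} together with Proposition \ref{numberofchecks} reduces everything to at most $2\ell\,|H_2^{\mathrm{ab}}|$ spectral equalities. Up to that point the proposal is correct and matches the paper.

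The gap is in the final quantitative step, which is the only nontrivial content of the displayed bound, and your sketch of it would not produce the stated expression. First, you choose $\ell$ to be the \emph{smallest} prime $\geq 3$ coprime to $|G|$ and then say ``$\ell$ can be absorbed''; but nothing in your argument bounds $2\ell\,|H_2^{\mathrm{ab}}|$ by $4\bigl((d_1d_2)!\bigr)^2$ from that choice without further work. The paper goes the opposite way: it takes $\ell$ \emph{large}, namely a prime with $|G|<\ell\leq 2|G|$ (Bertrand's postulate), so that coprimality to $|G|$ is automatic and $2\ell\,|H_2^{\mathrm{ab}}|\leq 2\cdot 2|G|\cdot|G|=4|G|^2$; combined with $|G|\leq (d_1d_2)!$ from \eqref{factorial} and $d_i=\chi_{M_i}/\chi^{\mathrm{orb}}_{M_0}$, this gives $4\bigl((d_1d_2)!\bigr)^2$ exactly. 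Second, your bookkeeping contains two errors: you insert an extra ``factor for two pairs of spectra'' on top of $2\ell|H_2^{\mathrm{ab}}|$, but that factor $2$ is already included in Proposition \ref{numberofchecks} (two equalities per character $\chi$); and your chain $|H_2^{\mathrm{ab}}|\leq |G|=|H_2|\cdot d_2\leq d_2\cdot (d_1d_2)!$ is both unnecessary and weaker than the direct bound $|H_2^{\mathrm{ab}}|\leq|H_2|\leq|G|\leq (d_1d_2)!$. To repair the proof, replace the choice of $\ell$ by the Bertrand choice $|G|<\ell\leq 2|G|$ and drop the extra factor of $2$; the rest of your argument then closes as in the paper.
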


\begin{proof}
By Proposition \ref{2to1}, hyperbolic non-arithmetic commensurable closed Riemann surfaces automatically admit a diagram of the form (\ref{m0}), and by Proposition \ref{prop:homwidecritriemorb}, every group action is $\Q$\hyp{}homologically wide since we assume $\chi_{M_0}<0$. Therefore, Theorem \ref{main} applies.
To find a prime number $\ell$ coprime to $|G|$, we can always choose $\ell>|G|$, and by Bertrand's postulate, we can find such a prime $\ell \leq 2|G|$. Hence we can make the bound  in Theorem \ref{main} weaker by 
$ 2 \ell |H_2^{\mathrm{ab}}| \leq 4 |G|^2$. 
To express this entirely in terms of the original diagram, we set $d_i$ to be the degree of $M_i \rightarrow M_0$. Notice that the compositum $M_1 \bullet_{M_0} M_2$ is of degree at most $d_1 d_2$ over $M_0$, and the degree of the normal closure of the compositum is of degree at most $(d_1d_2)!$ over $M_0$ (see (\ref{factorial})). Hence $|G| \leq (d_1 d_2)!$. Now $d_i = \chi_{M_i}/\chi^{\mathrm{orb}}_{M_0}$ where $\chi^{\mathrm{orb}}_{M_0}$ is the orbifold Euler characteristic given by $\chi_{M_0} - \sum (1-1/n_i)$ for $n_i$ the order of the stabiliser group at the orbifold points \cite[5.1.3]{Choi}.  We find an upper bound of at most 
$$ 4 ((d_1 d_2)!)^2 \leq 4 (\chi_{M_1}\chi_{M_2}/(\chi^{\mathrm{orb}}_{M_0})^2)!)^2 $$
for the number of equalities of multiplicities that needs to be checked. 
\end{proof}

\begin{remark} 
If $M_1$ and $M_2$ as in Corollary \ref{maincordetail} are isospectral, by Weyl's law, they have the same volume. Since $d_i = \mathrm{vol}(M_i)/\mathrm{vol}(M_0)$, we can then assume that $d_1=d_2$ and $\chi_{M_1} = \chi_{M_2}$. 
\end{remark} 

In Section \ref{obstructions}, one finds some detailed examples of surfaces with less crude bounds on the required number of equalities. 

\subsection{Using the virtual Lefschetz character} 

The above arguments in dimension $2$ are based on very precise information given by fixed point formulae. These admit a generalisation  to a setup as in diagram (\ref{sunadasetup}) with $M$ of arbitrary dimension, where they can sometimes be used to deduce some information about condition $(\ast \ast)$ from Subsection \ref{astastsection}, more specifically,  whether $\Ind_H^G \one$ is a subrepresentation of the homology representation (over $\Q$). For this, we use that our manifolds are closed, and thus admit a regular triangulation, which allows us to apply the work of Curtis \cite{Curtis}. Consider the virtual Lefschetz character of $G$ given as 
$ \Lambda(g)= -h_{\Q}(g) + \sum\limits_{i \neq 1} (-1)^i \tr(g_* | \Ho_i(M,\Q)).$
Then by \cite[Prop.\ 1.6]{Curtis}, we have
\begin{equation} \label{curtis} \langle \Ind_H^G \one, \Lambda \rangle = \chi(M_1), \end{equation} 
the Euler characteristic of $M_1$. The formula provides no information for $3$-dimensional manifolds, since then $\chi(M_1)=0$ and $\Lambda=0$. 
The next proposition provides an example of a result that can be deduced from such methods. 
\begin{proposition}  if $M$ is of dimension $4$ and $\chi(M_1)\leq 0$, then $ \Ind_H^G \one$ and $h_{\Q}$ have at least one irreducible representation in common.
\end{proposition}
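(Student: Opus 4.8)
The plan is to use the virtual Lefschetz character identity \eqref{curtis} from Curtis's work, combined with the vanishing of the Lefschetz fixed-point formula for fixed-point-free actions. First I would recall that for $g \neq e$ acting without fixed points on $M$, the Lefschetz number vanishes, so $\sum_{i} (-1)^i \tr(g_* \mid \Ho_i(M,\Q)) = 0$; rearranging, $\Lambda(g) = -\chi_M(g) + 2 \sum_{i\ge 1}(-1)^i\tr(g_*\mid\Ho_i) $ — wait, more carefully: by definition $\Lambda(g) = -h_\Q(g) + \sum_{i\ne 1}(-1)^i\tr(g_*\mid\Ho_i(M,\Q))$, and the full alternating sum being zero means $\sum_{i\ne 1}(-1)^i\tr(g_*\mid\Ho_i) = \tr(g_*\mid\Ho_1(M,\Q)) = h_\Q(g)$. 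Hence $\Lambda(g) = -h_\Q(g) + h_\Q(g) = 0$ for all $g\ne e$. For $g = e$, $\Lambda(e) = -b_1(M) + \sum_{i\ne 1}(-1)^i b_i(M) = \chi_M - 2b_1(M)$ in dimension $4$ (using Poincaré duality $b_0=b_4=1$, $b_1 = b_3$), though I only need that $\Lambda$ is supported on the identity, hence $\Lambda = c\cdot\rho_{G,\mathrm{reg}}$ for the scalar $c = \Lambda(e)/|G|$.

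Next I would feed this into \eqref{curtis}: $\langle \Ind_H^G\one, \Lambda\rangle = \chi(M_1)$. Since $\Lambda = c\,\rho_{G,\mathrm{reg}}$ and $\langle \Ind_H^G\one, \rho_{G,\mathrm{reg}}\rangle = \dim(\Ind_H^G\one) = [G:H]$, we get $c\cdot[G:H] = \chi(M_1)$, so $c = \chi(M_1)/[G:H]$. Under the hypothesis $\chi(M_1)\le 0$ we conclude $c\le 0$. On the other hand, unwinding the definition of $\Lambda$ and the fact that in dimension $4$ the only homology groups are $\Ho_0,\dots,\Ho_4$, we have $\Lambda = -h_\Q + (\one + \one_{\Ho_2} - h_{\Q,3} + \one)$ where I should be careful about which representations appear; the cleanest route is: $\Lambda(e) = \chi_M - 2b_1(M)$, and since $b_1(M)\ge 0$, if I can show $b_1(M) > 0$ or control signs, then $c = \Lambda(e)/|G|$. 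Actually the key point I want is simpler: from $\Lambda = c\,\rho_{G,\mathrm{reg}}$ with $c \le 0$, rewrite as $h_\Q = \sum_{i\ne 1}(-1)^i(\text{homology reps}) - \Lambda = (\text{effective combination involving }\Ho_0,\Ho_2,\Ho_4) + (-c)\rho_{G,\mathrm{reg}} + h_{\Q,3}$ — the odd-degree term $\Ho_3$ pairs with $\Ho_1$ by Poincaré duality. Let me instead argue directly: $-\Lambda = h_\Q - \tr(g_*\mid\Ho_0) - \tr(g_*\mid\Ho_2) - \tr(g_*\mid\Ho_4) + \tr(g_*\mid\Ho_3)$, i.e. $h_\Q = -\Lambda + \rho_{\Ho_0} + \rho_{\Ho_2} + \rho_{\Ho_4} - \rho_{\Ho_3}$. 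Since $\Ho_1\cong\Ho_3$ as $G$-modules (Poincaré duality is $G$-equivariant up to the orientation character, which is trivial as $G$ acts by orientation-preserving maps — this needs checking, or one uses that $G$ preserves orientation since it is a group of isometries of an oriented manifold, though the statement only says "manifold"; I would add the hypothesis or note it follows from the setup), we get $h_\Q = -\Lambda + \rho_{\Ho_0} + \rho_{\Ho_2} + \rho_{\Ho_4} - h_\Q$, hence $2h_\Q = -\Lambda + \rho_{\Ho_0} + \rho_{\Ho_2} + \rho_{\Ho_4} = (-c)\rho_{G,\mathrm{reg}} + \one + \rho_{\Ho_2} + \one$, an \emph{effective} (non-virtual) combination.

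Finally, to extract the conclusion that $\Ind_H^G\one$ and $h_\Q$ share an irreducible constituent: since $\Ind_H^G\one$ always contains $\one$, it suffices to show $\one$ occurs in $h_\Q$, i.e. $b_1(M)^G = \dim\Ho_1(M,\Q)^G > 0$, equivalently (by Lemma \ref{lem:coinvariants} or transfer, identifying invariants and coinvariants in the semisimple $\Q$ case) $b_1(G\backslash M) > 0$. But $b_1(G\backslash M) > 0$ is equivalent to $\chi_{G\backslash M} < 2 - b_2(G\backslash M) + \dots$; the cleanest path: $\langle h_\Q, \one\rangle = b_1(M_0)$ where $M_0 = G\backslash M$, and from $2h_\Q = (-c)\rho_{G,\mathrm{reg}} + 2\cdot\one + \rho_{\Ho_2}$ we get $2b_1(M_0) = 2\langle h_\Q,\one\rangle = (-c) + 2 + \langle\rho_{\Ho_2},\one\rangle \ge -c + 2 \ge 2 > 0$ since $-c = -\chi(M_1)/[G:H] \ge 0$. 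Hence $\one$ is a common constituent and the proposition follows. The main obstacle I anticipate is the bookkeeping with Poincaré duality as an isomorphism of $G$-representations — specifically justifying $\Ho_1 \cong \Ho_3$ as $\Q[G]$-modules, which requires $G$ to act by orientation-preserving homeomorphisms; I would either invoke that $M$ carries a $G$-invariant orientation in the relevant setup (true when $G$ acts by isometries of an oriented Riemannian manifold, which is the running assumption in the diagram \eqref{sunadasetup}), or cite Curtis \cite{Curtis} where the relevant equivariant duality is presumably already set up. A secondary subtlety is making sure the "effective combination" argument is not circular — it only uses that $\rho_{\Ho_2}$ and $\rho_{\Ho_0} = \rho_{\Ho_4} = \one$ are genuine (non-virtual) representations, which is automatic.
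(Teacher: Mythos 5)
There is a genuine gap at the very first step. You set $\Lambda(g)=0$ for all $g\neq e$ by invoking the Lefschetz fixed point theorem, and hence write $\Lambda = c\cdot\rho_{G,\mathrm{reg}}$. That requires every non-identity element of $G$ to act on $M$ without fixed points, which is not a hypothesis of the proposition: in the ambient setup of diagram \eqref{sunadasetup} the quotient $M_0=G\backslash M$ is only a developable orbifold, so elements of $G$ may well have fixed points (e.g.\ in the Brooks--Tse example $M_0$ is an orbifold sphere with three cone points, and $G=\mathrm{PSL}(2,7)$ does not act freely). Only $H$ is guaranteed to act freely (so that $M_1$ is a manifold), and that is all that \eqref{curtis} needs. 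Your subsequent deductions $c=\chi(M_1)/[G:H]\le 0$ and $2\langle h_\Q,\one\rangle\ge -c+2$ therefore only establish the proposition under the extra assumption that the whole of $G$ acts freely.

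The fix is short, because you have already derived the key identity: in dimension $4$, Poincar\'e duality gives $\Lambda = 2\cdot\one + \rho_{\Ho_2(M,\Q)} - 2h_\Q$ (exactly the paper's computation; your concern about $G$-equivariance of the duality $\Ho_3\cong\Ho_1$ is legitimate and applies to both arguments). Pair this identity directly with $\Ind_H^G\one$ and apply \eqref{curtis}: since $\langle \Ind_H^G\one,\one\rangle=1$ and $\langle\Ind_H^G\one,\rho_{\Ho_2}\rangle\ge 0$, one gets $2\langle\Ind_H^G\one,h_\Q\rangle = 2+\langle\Ind_H^G\one,\rho_{\Ho_2}\rangle-\chi(M_1)\ge 2-\chi(M_1)>0$, which is the desired common constituent with no freeness assumption on $G$ and no detour through the regular representation. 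Note also that your route aims at the stronger conclusion that $\one$ itself is a common constituent (equivalently $b_1(M_0)>0$); that stronger statement is exactly what fails to follow once $G$ is allowed to have fixed points, whereas the weaker statement actually claimed by the proposition survives via the direct pairing.
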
 

\begin{proof} By Poincar\'e duality, $\Ho_3(M,\Q) = \Hom(\Ho_1(M,\Q),\Q) \cong \Ho_1(M,\Q)$, hence $$\Lambda = 2 \cdot \one + \Ho_2(M,\Q) - 2 h_{\Q},$$ and we conclude from (\ref{curtis}) that 
\[ 2 \langle \Ind_H^G \one, h_{\Q} \rangle = 2 + \langle \Ind_H^G \one, \Ho_2(M,\Q) \rangle - \chi(M_1) \geq 2  - \chi(M_1) > 0. \qedhere \] 
\end{proof} 

Such results do not suffice to completely verify whether condition $(\ast\ast)$ holds under general topological conditions in higher dimension (i.e., only referring to the vector space structure of $\Ho_1(M,\Q)$, and not to its $\Q[G]$-module structure), and indeed, in the next few subsections we will see examples showing that this is not possible. 

\subsection{Manifolds of dimension $\geq 3$} 
In case of $3$-manifolds, the picture can vary widely: it is possible to construct a class of closed $3$-manifolds with $\Q$\hyp{}homologically wide group actions, but also hyperbolic $3$-manifolds with large isometry group for which only the trivial group action is $\Q$\hyp{}homologically wide.  This upgrades to similar results in higher dimensions. 
We use glueing and surgery constructions that are based on ideas from work of Cooper and Long \cite{CooperLong} for topological manifolds. 

\begin{proposition} \label{hwdim>3} 
Suppose $N$ is a smooth compact connected $3$-manifold with a free smooth action by a finite group $G$, and let $\gamma$ denote a smooth simple closed curve in $N$ such that the orbit $G\cdot \gamma$ consists of $|G|$ disjoint smooth simple closed curves. Let $$X=N-\mathcal{N}(G \cdot \gamma)$$ denote the open manifold $N$ with an open regular neighbourhood of the $G$-orbit of $\gamma$ removed, and let $M$ denote the double of the manifold $X$. Define a Riemannian metric on $M$ as the pullback of any Riemannian metric on the quotient manifold $G \backslash M$; then if $M'$ is any $(n-3)$-dimensional closed smooth connected Riemannian manifold with trivial $G$-action, $M \times M'$ is an $n$-dimensional closed smooth connected Riemannian manifold with a free isometric $G$-action that is $\Q$\hyp{}homologically wide. 
\end{proposition}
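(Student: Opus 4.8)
The plan is to reduce the statement to a homology computation for the doubled $3$-manifold $M$ and then propagate $\Q$-homological wideness across the product with $M'$. First I would record that $G$ acts freely and isometrically on $X$ (an open manifold invariant under the $G$-action because $G\cdot\gamma$ is $G$-invariant), hence freely and isometrically on its double $M$, and the pulled-back metric from $G\backslash M$ makes the $G$-action isometric; the product with a trivial $G$-action on $M'$ is then a free isometric $G$-action on the closed connected smooth manifold $M\times M'$. The real content is to exhibit, inside $\Ho_1(M,\Q)$, a class $\omega$ whose orbit $\{g\omega : g\in G\}$ consists of $|G|$ linearly independent classes; by the cyclic-vector reformulation of homological wideness given after the definition, this is exactly $\Q$-homological wideness for the $G$-action on $M$.

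To produce such a class I would use the meridian curves around the removed tubes. Removing an open regular neighbourhood of a simple closed curve $\gamma$ from a $3$-manifold creates a torus boundary component, and in the double $M = X\cup_{\partial X} X$ each such torus bounds (on each side) a solid-torus-complement; the meridian of the tube $\mathcal N(\gamma)$ becomes a loop $\mu_\gamma$ in $M$. The $|G|$ tubes around the disjoint curves $g\gamma$ give $|G|$ meridians $\mu_{g\gamma}$, and $G$ permutes them simply transitively by construction, so $G$ permutes their homology classes. If I set $\omega := [\mu_\gamma] \in \Ho_1(M,\Q)$, then $g\omega = [\mu_{g\gamma}]$, and it remains to check linear independence of $\{[\mu_{g\gamma}]\}_{g\in G}$ over $\Q$. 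For this I would use a Mayer--Vietoris / intersection-pairing argument: in the double $M$, each meridian $\mu_{g\gamma}$ is the boundary-connecting curve between the two copies of $X$, and the ``obvious'' class in $\Ho_2(M,\Q)$ dual to it is a copy of the core disk-neighbourhood doubled along $\partial$ — more precisely, the $2$-cycle obtained by gluing the two copies of a Seifert-type surface (or, cleanly, the $2$-sphere that is the double of the meridian disk of the $i$-th removed tube, which exists in $M$ since each boundary torus is filled on both sides). These dual $2$-spheres $S_{g\gamma}$ satisfy $\langle S_{g\gamma}, \mu_{g'\gamma}\rangle = \delta_{g,g'}$ by a standard transversality count, which forces the $[\mu_{g\gamma}]$ to be linearly independent in $\Ho_1(M,\Q)$ and indeed split off a free summand.

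Finally I would pass from $M$ to $M\times M'$ via the Künneth formula: $\Ho_1(M\times M',\Q) \cong \Ho_1(M,\Q)\oplus \Ho_1(M',\Q)$ as $\Q[G]$-modules, where $G$ acts trivially on the second summand; since the regular representation $\Q[G]$ already occurs in $\Ho_1(M,\Q)$ (being spanned by the permuted orbit of $\omega$, which as a $G$-set is $G$ acting on itself), it occurs in $\Ho_1(M\times M',\Q)$, so the $G$-action on $M\times M'$ is $\Q$-homologically wide. The main obstacle I anticipate is the linear-independence step: one must be careful that the meridians are genuinely independent in the \emph{doubled} manifold rather than merely in $X$ — doubling can kill classes — which is why I would lean on the explicit dual $2$-cycles (the doubled meridian disks, which are honest cycles in $M$ because each new torus boundary component gets capped on both sides in the double) and the perfectness of the intersection pairing on $\Ho_*(M,\Q)$ for the closed oriented manifold $M$; orientability of $M$ follows since it is the double of the orientable $X\subset N$, and $N$ is orientable because it carries a free action making $G\backslash N$ a manifold — if orientability of $N$ is not automatic one restricts to the orientation double cover at the outset, which does not affect the argument.
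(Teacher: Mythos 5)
Your reduction to the $3$-manifold $M$ (smoothness of the double, isometric free action, K\"unneth at the end) matches the paper, but the core step --- exhibiting the meridian class as a cyclic vector --- does not work, for two reasons. First, the dual $2$-cycles you invoke do not exist: the meridian disk of the tube $\mathcal{N}(g\gamma)$ lies inside the \emph{removed} solid torus, not in $X$, so it cannot be doubled inside $M=X\cup_{\partial X}X$. In the double the boundary tori are not ``filled on both sides''; each becomes a separating torus with a copy of $X$ (in general not a solid torus) on either side, so there is no doubled meridian disk $2$-sphere in $M$. The Seifert-surface variant fares no better: such a surface meets $\partial\mathcal{N}(\gamma)$ in a longitude, so its double would pair with the meridian rather than serve as the dual of the meridian you need, and it exists only when $\gamma$ is rationally null-homologous in $N$, which is not assumed. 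Second, and more seriously, the conclusion of that step is false in general: any closed surface $S$ in $N$ imposes the relation $\sum_{g}(S\cdot g\gamma)\,[\mu_{g\gamma}]=0$ already in $\Ho_1(X,\Q)$ (via $\Ho_2(N)\to\Ho_2(N,X)\to\Ho_1(X)$, where $\Ho_2(N,X)$ is generated by the meridian disks), hence also in $\Ho_1(M,\Q)$. Concretely, take $N=S^1\times S^2$ with $G=\Z/2\Z$ acting freely and orientation-preservingly by $(t,x)\mapsto(t+\pi,r(x))$, $r$ the rotation of $S^2$ by $\pi$ about the polar axis, and $\gamma=S^1\times\{q\}$ with $q$ on the equator: the sphere $\{t_0\}\times S^2$ meets $\gamma$ and $g\gamma$ once each, giving $[\mu_\gamma]+g[\mu_\gamma]=0$, so the orbit of your $\omega$ spans a one-dimensional (sign-isotypic) subspace rather than a copy of $\Q[G]$. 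The proposition is still true in that example, but the regular representation sits elsewhere in $\Ho_1(M,\Q)$, not on the meridian orbit.

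The paper's proof deliberately avoids choosing an explicit cyclic vector. It writes down the Mayer--Vietoris sequence of the double, $\Ho_1(\partial X,\Q)\to 2\cdot\Ho_1(X,\Q)\to\Ho_1(M,\Q)$, invokes Cooper--Long's duality lemma that the image of $\Ho_1(\partial X,\Q)$ in $\Ho_1(X,\Q)$ is isomorphic to $\rho_{G,\mathrm{reg}}$ as a $\Q[G]$-module (this is where the equivariant ``half lives, half dies'' argument lives --- note that it uses longitudes as well as meridians), and then argues representation-theoretically: every irreducible constituent of $2\cdot\Ho_1(X,\Q)$ occurs with even multiplicity, while $\rho_{G,\mathrm{reg}}$ (which contains every irreducible) is a submodule, so the cokernel of the first map --- which injects into $\Ho_1(M,\Q)$ --- still contains every irreducible with multiplicity at least its dimension, i.e.\ contains $\rho_{G,\mathrm{reg}}$. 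To repair your argument you would need to replace the meridian orbit by this full boundary image and the multiplicity-doubling effect of the doubling construction; the meridians alone cannot carry the regular representation.
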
 

\begin{proof} Since \cite{CooperLong} concerns topological manifolds, we start by observing that the double of a \emph{smooth} manifold has a smooth structure compatible with the embedding of the original manifold, but composed with a diffeomorphism on one of the copies, see, e.g., \cite[VI.5]{Kosinski}. Hence $M$ is a smooth compact connected manifold on which the finite group $G$ acts smoothly (and properly) without fixed points. Therefore, the quotient $G \backslash M$ is a smooth compact connected manifold, too, and the quotient map $M \rightarrow G \backslash M$ is a smooth covering map. Choose a Riemannian structure on the quotient $G \backslash M$ such that it becomes a closed Riemannian manifold, and make $M$ into a closed Riemannian manifold by giving it the pullback Riemannian structure. Now $G$ acts on $M$ by fixed-point free Riemannian isometries. 

Let $\iota \colon \partial X \hookrightarrow X$ denote the embedding of the boundary. 
The Mayer-Vietoris sequence for rational homology contains a short exact sequence
\begin{equation} \label{ses} \Ho_1(\partial X, \Q) \rightarrow  2 \cdot \Ho_1(X, \Q) \rightarrow \Ho_1(M, \Q). \end{equation} 
In \cite[Lemma 2.1]{CooperLong}, duality is used to show that $$\mathrm{im}(\iota_* \colon \Ho_1(\partial X, \Q) \rightarrow \Ho_1(X, \Q))=\rho_{G,\mathrm{reg}}$$ as $\Q[G]$\hyp{}modules, so that we can rewrite (\ref{ses}) as a sequence of $\Q[G]$\hyp{}modules 
$$ 0 \rightarrow \rho_{G,\mathrm{reg}} \rightarrow 2 \cdot \Ho_1(X, \Q) \rightarrow \Ho_1(M, \Q).$$ 
Decomposing all $\Q[G]$\hyp{}modules into irreducible representations, we notice that every irreducible constituent of $ 2 \cdot \Ho_1(X, \Q) $ occurs with even multiplicity. Since it has $\rho_{G,\mathrm{reg}}$ as subrepresentation and the regular representation is the sum of all irreducible representations of $G$, we find that all irreducible representations of $G$ occur with even multiplicity in $2 \cdot \Ho_1(X, \Q)$, so that we can split off a $\Q[G]$\hyp{}module $V$ with  $$ 0 \rightarrow \rho_{G,\mathrm{reg}} \rightarrow 2 \rho_{G,\mathrm{reg}} \oplus V \rightarrow \Ho_1(M, \Q).$$ 
We conclude that $\rho_{G,\mathrm{reg}}$ is a subrepresentation of $\Ho_1(M, \Q)$. 

Now $G$ acts trivially on $M'$, so $G$ acts by isometries on the cartesian product $M \times M'$. By the K\"unneth formula, $M \times M'$ has first homology group $$\Ho_1(M \times M', \Q) = \Ho_1(M,\Q) \oplus \Ho_1(M',\Q),$$ so 
$\rho_{G,\mathrm{reg}}$ is also a subrepresentation of $\Ho_1(M \times M', \Q)$, and the action of $G$ on $M \times M'$ is $\Q$\hyp{}homologically wide. 
\end{proof} 

In the other direction, Cooper and Long have shown that $$\Ho_1(M, \Q) = \rho_{G,\mathrm{reg}} \oplus (\rho_{G,\mathrm{reg}}-1)$$ if $\iota_*$ is surjective, and that through Dehn surgeries, it is possible to ``remove'' the canonical $\Q[G]$\hyp{}modules $\rho_{G,\mathrm{reg}}$ and $\rho_{G,\mathrm{reg}}-1$ from the homology representation to arrive at a rational homology $3$-sphere with a $G$-action. Further surgery along an embedded hyperbolic knot allows one to construct such a \emph{hyperbolic} (i.e., constant $-1$ curvature) manifold \cite[Theorem 2.6]{CooperLong}. By making the same amendments as at the start of the proof of Proposition \ref{hwdim>3} to change topological manifolds into smooth and Riemannian ones, the result is the following. 

\begin{proposition}\label{clex}
For any finite non-trivial group $G$, there exists a hyperbolic rational homology $3$\hyp{}sphere $M$ with a free action of $G$ by isometries on $M$; in particular, the action of $G$ on $M$ is not $\Q$\hyp{}homologically wide. \qed 
\end{proposition}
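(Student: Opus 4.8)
The plan is to deduce Proposition~\ref{clex} from the surgery-theoretic construction of Cooper and Long, upgrading it from the topological to the Riemannian setting in exactly the way the start of the proof of Proposition~\ref{hwdim>3} handles such passages. First I would run the Cooper--Long machine: start with a smooth closed $3$-manifold $N$ carrying a free smooth $G$-action together with a smooth simple closed curve $\gamma$ whose $G$-orbit is a disjoint union of $|G|$ simple closed curves, set $X := N - \mathcal{N}(G\cdot\gamma)$ and let $M$ be the double of $X$, this time arranging to be in the \emph{surjective} case (where $\iota_\ast\colon \Ho_1(\partial X,\Q)\to\Ho_1(X,\Q)$ is onto); as recalled in the text preceding the proposition, one then has $\Ho_1(M,\Q)\cong\rho_{G,\mathrm{reg}}\oplus(\rho_{G,\mathrm{reg}}-\one)$ as $\Q[G]$-modules. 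Cooper--Long \cite[Thm.\ 2.6]{CooperLong} show that $G$-equivariant Dehn surgeries can be performed to remove the two canonical summands $\rho_{G,\mathrm{reg}}$ and $\rho_{G,\mathrm{reg}}-\one$ from the homology representation, yielding a rational homology $3$-sphere with a free $G$-action, and that a further equivariant surgery along a $G$-orbit of copies of an embedded hyperbolic knot can be arranged so that the result is hyperbolic while remaining a rational homology sphere with a free $G$-action.

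Second, I would carry out the categorical and metric upgrade. In dimension $3$ the topological, PL and smooth categories coincide (Moise), so the Cooper--Long manifold $M$ inherits a smooth structure; since the $G$-action is free, $G\backslash M$ is a smooth closed $3$-manifold and $M\to G\backslash M$ is a smooth finite covering. Because $G\backslash M$ is finitely covered by the hyperbolic manifold $M$, geometrization (equivalently, Mostow--Prasad rigidity applied to the induced structure) endows $G\backslash M$ with a hyperbolic metric; pulling this metric back to $M$ turns the $G$-action into an action by fixed-point-free isometries, exactly as in the metric-pullback argument of Proposition~\ref{hwdim>3}. This produces the desired closed hyperbolic $3$-manifold $M$ which is a rational homology sphere and on which $G$ acts freely and isometrically.

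Finally, the concluding clause is immediate: a rational homology $3$-sphere has $\Ho_1(M,\Q)=0$, so the homology representation $h_\Q\colon G\to\Aut(\Ho_1(M,\Q))$ is the zero representation and cannot contain the regular representation $\rho_{G,\mathrm{reg}}$ of the nontrivial group $G$ (which has positive dimension $|G|\geq 2$); hence the action is not $\Q$-homologically wide. The only genuinely delicate point --- everything else being bookkeeping and appeals to Cooper--Long --- is ensuring that the finite group acts by \emph{isometries} of a hyperbolic metric rather than merely by diffeomorphisms; this is precisely what passing to the quotient $G\backslash M$ and invoking rigidity/geometrization secures, in parallel with the corresponding step in Proposition~\ref{hwdim>3}.
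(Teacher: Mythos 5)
Your proposal is correct and follows essentially the same route as the paper: cite Cooper--Long \cite[Thm.\ 2.6]{CooperLong} for the equivariant surgeries producing a hyperbolic rational homology $3$-sphere with a free $G$-action, then upgrade to the smooth/Riemannian category by passing to the quotient and pulling back a (here, the hyperbolic) metric, exactly as in the opening of the proof of Proposition~\ref{hwdim>3}. Your explicit remark that one should pull back the \emph{hyperbolic} metric on $G\backslash M$ (secured by rigidity/geometrization) is a helpful clarification of the paper's terser ``same amendments'' phrase, and the concluding observation that $\Ho_1(M,\Q)=0$ rules out homological wideness is exactly the paper's point.
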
 

We conclude that in dimension $3$ homological wideness is unrelated to hyperbolicity (in marked contrast to the case of dimension $2$). 

\begin{corollary} \label{nhwdim>3}
For any finite non-trivial group $G$, and any dimension $n\geq 3$, there exists an $n$-dimensional closed connected Riemannian manifold $M'$ with a free action of $G$ by isometries on $M'$ for which the action of $G$ on $M'$ is not $\Q$\hyp{}homologically wide. 
\end{corollary}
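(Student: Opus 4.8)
The goal is Corollary \ref{nhwdim>3}, which extends Proposition \ref{clex} from dimension $3$ to all dimensions $n \geq 3$. The plan is to reduce to the case $n=3$ already handled by Proposition \ref{clex} by taking a product with a suitable closed manifold carrying the trivial $G$-action, and then computing the homology representation via the K\"unneth formula. Concretely, fix a finite non-trivial group $G$. By Proposition \ref{clex} there is a hyperbolic rational homology $3$-sphere $M$ with a free isometric $G$-action, and this action is not $\Q$-homologically wide; in fact $\Ho_1(M,\Q) = 0$ since $M$ is a rational homology sphere. Let $M''$ be any closed connected smooth Riemannian manifold of dimension $n-3$ (for $n=3$, take $M'' = M$ itself and we are done with the one-point space convention, but more cleanly one just invokes Proposition \ref{clex} directly; for $n > 3$ take, e.g., $M'' = S^{n-3}$ with its round metric, or a flat torus, or any closed manifold of that dimension), equipped with the trivial $G$-action. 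Then set $M' := M \times M''$ with the product Riemannian metric and the product $G$-action, which is free (since it is free on the first factor) and isometric.

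The key computational step is to verify that the $G$-action on $M'$ is not $\Q$-homologically wide. By the K\"unneth formula for rational homology,
$$ \Ho_1(M', \Q) = \Ho_1(M \times M'', \Q) \cong \Ho_1(M,\Q) \otimes \Ho_0(M'',\Q) \ \oplus\ \Ho_0(M,\Q) \otimes \Ho_1(M'',\Q) \cong \Ho_1(M,\Q) \oplus \Ho_1(M'',\Q), $$
using that $M$ and $M''$ are connected. Since $G$ acts trivially on $M''$ it acts trivially on the summand $\Ho_1(M'',\Q)$, so as a $\Q[G]$-module $\Ho_1(M',\Q) \cong \Ho_1(M,\Q) \oplus (\text{trivial module})$. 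Now $\Ho_1(M,\Q) = 0$ because $M$ is a rational homology $3$-sphere, so $\Ho_1(M',\Q)$ is in fact a trivial $\Q[G]$-module. Since $G$ is non-trivial, the regular representation $\rho_{G,\mathrm{reg}}$ is not a subrepresentation of a trivial module (it contains non-trivial irreducible constituents, or simply: $\dim \rho_{G,\mathrm{reg}} = |G| > 1$ but even a single copy requires a non-trivial irreducible summand), so the $G$-action on $M'$ is not $\Q$-homologically wide, as desired.

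Strictly, one does not even need $\Ho_1(M,\Q)=0$: it suffices that the $G$-action on $M$ is not $\Q$-homologically wide, because tensoring with $M''$ only adds a trivial $\Q[G]$-summand, and adding a trivial summand cannot create a copy of $\rho_{G,\mathrm{reg}}$ — if $\rho_{G,\mathrm{reg}}$ embeds in $\Ho_1(M,\Q) \oplus \one^{\oplus k}$, then comparing multiplicities of any non-trivial irreducible $\rho$ gives $\langle \rho, \Ho_1(M,\Q)\rangle \geq \dim\rho$, and for the trivial irreducible one still needs $\langle \one, \Ho_1(M,\Q)\rangle + k \geq \dim\rho_{G,\mathrm{reg}}$-worth, but the non-trivial constraints already force $\Ho_1(M,\Q)$ to contain $\rho_{G,\mathrm{reg}}$ minus its trivial part plus possibly more, which combined with semisimplicity over $\Q$ contradicts non-wideness of $M$. (Since in our situation $\Ho_1(M,\Q)=0$ this subtlety is moot, and I would simply present the clean argument above.)

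I do not expect any real obstacle here: the only things to be careful about are (i) confirming the product action is free and isometric — immediate, since freeness on one factor of a product action suffices and the product metric makes the diagonal action isometric; (ii) applying K\"unneth over the field $\Q$ so there are no Tor terms and the splitting is as $\Q[G]$-modules (naturality of K\"unneth gives the $G$-equivariance); and (iii) the elementary representation-theoretic remark that a $\Q[G]$-module which is a direct sum of a $G$-action-free-of-wideness module and a trivial module is still not $\Q$-homologically wide. The mild "obstacle", if any, is bookkeeping the boundary case $n=3$, which is literally Proposition \ref{clex}, so the corollary is really just a one-paragraph consequence.

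\begin{proof}
Fix a finite non-trivial group $G$. For $n=3$ the statement is exactly Proposition \ref{clex}, so assume $n \geq 4$. By Proposition \ref{clex}, there is a hyperbolic rational homology $3$-sphere $M$ together with a free isometric action of $G$ on $M$; in particular $\Ho_1(M,\Q) = 0$. Let $M''$ be any closed connected smooth Riemannian manifold of dimension $n-3$ (for instance the round sphere $S^{n-3}$), equipped with the trivial $G$-action, and set $M' := M \times M''$ with the product Riemannian metric and the product (diagonal) $G$-action. This action is isometric, and it is free because the action on the first factor $M$ is free. Thus $M'$ is an $n$-dimensional closed connected Riemannian manifold with a free isometric $G$-action.

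It remains to check that the $G$-action on $M'$ is not $\Q$-homologically wide. Since $M$ and $M''$ are connected, the K\"unneth formula for rational homology gives a natural, hence $G$-equivariant, isomorphism of $\Q[G]$-modules
$$
\Ho_1(M',\Q) = \Ho_1(M \times M'',\Q) \cong \Ho_1(M,\Q) \oplus \Ho_1(M'',\Q).
$$
As $G$ acts trivially on $M''$, it acts trivially on the summand $\Ho_1(M'',\Q)$; and $\Ho_1(M,\Q) = 0$ because $M$ is a rational homology $3$-sphere. Therefore $\Ho_1(M',\Q)$ is a trivial $\Q[G]$-module. Since $G$ is non-trivial, the regular representation $\rho_{G,\mathrm{reg}}$ has an irreducible constituent which is non-trivial, so $\rho_{G,\mathrm{reg}}$ cannot be a subrepresentation of the trivial module $\Ho_1(M',\Q)$. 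Hence the action of $G$ on $M'$ is not $\Q$-homologically wide.
\end{proof}
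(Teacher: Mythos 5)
Your proof is correct and follows essentially the same route as the paper: take the rational homology $3$-sphere from Proposition \ref{clex}, multiply by $S^{n-3}$ (or any closed connected $(n-3)$-manifold) with trivial $G$-action, and apply the K\"unneth formula to see that $\Ho_1(M',\Q)$ is a trivial $\Q[G]$-module, which cannot contain the regular representation of a non-trivial group. Your explicit handling of the boundary case $n=3$ (where $S^0$ would be disconnected) is a small point of extra care over the paper's phrasing, but not a different argument.
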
 

\begin{proof} 
If $M$ is as in Proposition \ref{clex}, and we let $G$ act trivially on the $(n-3)$-dimensional sphere $S^{n-3}$, set $M' = M \times S^{n-3}$. By the K\"unneth formula, $\Ho_1(M',\Q) = 0$ for $n \neq 4$ and $\Ho_1(M',\Q) = \Q$ for $n=4$, so it is impossible for non-trivial $G$ to act homologically wide on $M'$. 
\end{proof} 

\begin{remark}
Bartel and Page have shown that  there exists a closed hyperbolic $3$\hyp{}manifold $M$ with a free action of any given finite group $G$ by isometries on $M$ such that additionally, $\Ho_1(M,\Q)$ is any given $\Q[G]$\hyp{}module \cite{BP}. 
\end{remark}

\subsection{Locally symmetric spaces of rank $\geq 2$} \label{lssr}  Let $\mathbf{G}$ denote a connected semisimple Lie group with trivial center, $\mathbf{K}$ a maximal compact subgroup of $\mathbf{G}$, and $\Gamma$ a discrete subgroup of $\mathbf{G}$ such that $\Gamma \backslash \mathbf{G}$ is compact. Consider the locally symmetric Riemannian manifold  $M:= \Gamma \backslash \mathbf{G} / \mathbf{K}$. If all factors of $\mathbf{G}$ have real rank $\geq 2$, then $\Gamma$ has Kazhdan's property (T), and hence $\Ho_1(M,\Q)= \Gamma^{\mathrm{ab}} \otimes_{\Z} \Q = \{0\}$ (see, e.g., \cite[Cor.\ 1.3.6]{PropertyT}). This shows the following. 

\begin{proposition}  Only the trivial group can have a $\Q$\hyp{}homologically wide action on a locally symmetric space of rank $\geq 2$. \qed
\end{proposition}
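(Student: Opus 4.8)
The plan is to reduce the proposition to the single fact, already recorded in the discussion preceding it, that $\Ho_1(M,\Q)=0$, and then to read off the conclusion directly from the definition of $\Q$\hyp{}homological wideness. First I would make the vanishing of $\Ho_1(M,\Q)$ explicit. Write $\tilde M=\mathbf{G}/\mathbf{K}$; since $\mathbf{G}$ is a connected semisimple Lie group with trivial centre and no compact factors and $\mathbf{K}$ is a maximal compact subgroup, $\tilde M$ is a symmetric space of non-compact type, hence contractible (the Cartan decomposition exhibits it diffeomorphically as a Euclidean space). Consequently $M=\Gamma\backslash\tilde M$ is aspherical, $\pi_1(M)\cong\Gamma$, and the Hurewicz isomorphism discussed earlier gives $\Ho_1(M,\Z)\cong\Gamma^{\ab}$.

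Next I would justify the finiteness of $\Gamma^{\ab}$. Because every simple factor of $\mathbf{G}$ has real rank $\geq 2$, Kazhdan's theorem gives that $\mathbf{G}$ has property (T); as $\Gamma$ is a cocompact lattice in $\mathbf{G}$, it is finitely generated and inherits property (T). A finitely generated group with property (T) has finite abelianisation, since property (T) forces $\Hom(\Gamma,\R)=0$ and hence $\Gamma^{\ab}$ to be torsion; therefore $\Ho_1(M,\Q)=\Gamma^{\ab}\otimes_{\Z}\Q=0$, which is the content of \cite[Cor.\ 1.3.6]{PropertyT}. Finally, suppose a finite group $G$ acts $\Q$\hyp{}homologically wide on $M$. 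By definition the homology representation $h_\Q\colon G\to\Aut(\Ho_1(M,\Q))$ then contains the regular representation $\Q[G]$ as a subrepresentation, so $\dim_{\Q}\Ho_1(M,\Q)\geq\dim_{\Q}\Q[G]=|G|$. Since the left-hand side is $0$ while $|G|\geq 2$ for any non-trivial group, this is impossible; hence only the trivial group can act $\Q$\hyp{}homologically wide on $M$.

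I do not expect a genuine obstacle here: the argument is a two-line consequence of the vanishing of the rational first homology, and the substantive input — that higher-rank semisimple groups and their lattices have property (T), and that property (T) implies finite abelianisation — is standard and already cited in the excerpt. The only point deserving a little care is to state the reduction with the correct hypotheses (trivial centre, no compact factors, all simple factors of real rank $\geq 2$) so that property (T) really applies; all of these are part of the standing assumptions, so nothing further is required.
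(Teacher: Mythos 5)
Your argument is correct and is essentially the paper's own proof: the paper likewise deduces $\Ho_1(M,\Q)=\Gamma^{\ab}\otimes_{\Z}\Q=0$ from Kazhdan's property (T) for higher-rank lattices and then observes that the regular representation of a non-trivial group cannot sit inside the zero module. You have merely filled in the routine intermediate steps (contractibility of $\tilde M$, the Hurewicz identification, finiteness of $\Gamma^{\ab}$) that the paper leaves implicit.
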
 

\subsection{Locally symmetric spaces of rank $1$} \label{hwls1} 
On the other hand (keeping the notations of Subsection \ref{lssr}), if $\mathbf{G}$ has rank $1$, the first Betti number of $M$ can be expressed in terms of representation theory via a formula of Matsushima's \cite{Matsushima}; more precisely, a sum of multiplicities of specific representation occurring in the representation $R_\Gamma$ of $\mathbf{G}$ by right multiplication on $L^2(\Gamma \backslash \mathbf{G})$. If $\mathbf{G}=\mathrm{SO}(n,1)$ for $n \geq 3$, there is a unique representation $J_1$ in that sum and $b_1(M)$ equals the multiplicity of the representation $J_1$ in $R_\Gamma$. Here, $J_1$ is the unique unitary irreducible representation with non-zero Lie algebra cohomology. Except for $n=3$, $J_1$ is not in the discrete or principal series (\cite[Thm.\ V.5; Rem.\ V.8; Prop.\ V.6]{Delorme}; \cite[Lemma 4.4]{HW} or \cite[VII.4.9]{BW}). For $n=3$, $J_1$ is the principal series representation of $\mathrm{PSL}(2,\C)$ on $L^2(\C)$ with Gelfand-Graev-Vilenkin parameters $(2,0)$, given explicitly as $$ J_1\left(\left(\begin{smallmatrix}a & b \\ c & d \end{smallmatrix} \right)\right) (f)(z):= (cz+d)^2 f\left(\frac{az+b}{cz+d}\right).$$ 

\begin{proposition} Let $M=\Gamma \backslash \mathbb{H}^n$ denote a closed hyperbolic $n$-manifold ($n \geq 3$), corresponding to a cocompact discrete subgroup $\Gamma$ in $\mathrm{SO}(n,1)$. If $G$ is a finite group acting $\Q$\hyp{}homologically widely on $M$, then 
$$ |G| \leq \langle J_1, R_\Gamma\rangle, $$
the multiplicity of the representation $J_1$ described above in the $\mathrm{SO}(n,1)$-representation $R_\Gamma$ given by right multiplication on $L^2(\Gamma \backslash \mathrm{SO}(n,1))$. \qed
\end{proposition}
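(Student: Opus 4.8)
The plan is to deduce the inequality from two ingredients: the elementary consequence of $\Q$\hyp{}homological wideness for the first Betti number of $M$, and Matsushima's identification of that Betti number with a representation multiplicity, both of which are already available from the preceding discussion.

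First I would unwind the definition of $\Q$\hyp{}homological wideness: it says precisely that the regular representation $\rho_{G,\mathrm{reg}}$ occurs as a $\Q[G]$\hyp{}subrepresentation of $\Ho_1(M,\Q)$ (equivalently, there is a class whose $G$\hyp{}orbit spans a $|G|$\hyp{}dimensional subspace). Since $\dim_\Q \rho_{G,\mathrm{reg}} = |G|$ and a subrepresentation cannot be larger than the ambient module, this immediately gives
$$ b_1(M) = \dim_\Q \Ho_1(M,\Q) \geq |G|. $$
By the universal coefficient theorem together with Poincar\'e duality, this $b_1(M)$ agrees with $\dim_{\mathbf{C}} H^1(M,\mathbf{C})$, which is the version of the first Betti number that Matsushima's formula addresses.

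Next I would invoke Matsushima's formula \cite{Matsushima}, exactly as recalled in the paragraph preceding the statement: for a compact quotient $M = \Gamma\backslash\mathbb{H}^n$ with $n \geq 3$, the first Betti number equals the multiplicity $\langle J_1, R_\Gamma\rangle$ of $J_1$ in the right regular representation $R_\Gamma$ on $L^2(\Gamma\backslash\mathrm{SO}(n,1))$. Under the hood this uses the decomposition $H^1(M,\mathbf{C}) \cong \bigoplus_\pi m(\pi,\Gamma)\, H^1(\mathfrak g,\mathbf{K};\pi)$ over unitary irreducibles $\pi$ of $\mathrm{SO}(n,1)$, the fact that the only $\pi$ with $H^1(\mathfrak g,\mathbf{K};\pi)\neq 0$ are the trivial representation (whose contribution vanishes for $n \geq 2$, since $\mathfrak p$ carries no non-zero $\mathbf{K}$\hyp{}invariant functional) and $J_1$, and that $H^1(\mathfrak g,\mathbf{K};J_1)$ is one-dimensional; all of this is standard and already quoted with references above, so I would simply cite it. Chaining $|G| \leq b_1(M) = \langle J_1, R_\Gamma\rangle$ then finishes the argument.

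I do not expect a genuine obstacle here: the proposition is essentially a formal combination of a dimension count with a cited spectral formula. The one point worth double-checking is that the $G$\hyp{}homology appearing in the definition of homological wideness and the $\mathbf{C}$\hyp{}cohomology computed by Matsushima refer to the same Betti number, which is the remark about universal coefficients and Poincar\'e duality made above; and that $G$ need not act freely for the inequality to hold, since the dimension bound $b_1(M) \geq |G|$ only uses the existence of the subrepresentation $\rho_{G,\mathrm{reg}}$ inside $\Ho_1(M,\Q)$, not any property of the quotient $G\backslash M$.
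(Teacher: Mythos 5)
Your argument is correct and is exactly the route the paper intends: homological wideness forces $\rho_{G,\mathrm{reg}}$ to embed in $\Ho_1(M,\Q)$, so $|G|\leq b_1(M)$, and Matsushima's formula (as recalled in the paragraph preceding the statement) identifies $b_1(M)$ with $\langle J_1,R_\Gamma\rangle$ for $n\geq 3$. The paper leaves this as an immediate consequence of that discussion, and your write-up supplies the same two ingredients with the appropriate care about coefficients.
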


\subsection{Hyperbolic manifolds; formulation in terms of uniform lattices} \label{sechyp}  If $M=\Gamma \backslash \mathbb{H}^n$ is a compact connected hyperbolic manifold of dimension $n \geq 3$ with finite full isometry group $\mathrm{Isom}(M)$, Mostow rigidity implies that $\mathrm{Isom}(M) \cong \mathrm{Out}(\Gamma)$, the outer automorphism group of $\Gamma$; indeed, $M$ is an Eilenberg-MacLane $K(\Gamma,1)$, and hence $\mathrm{Out}(\Gamma)$ is isomorphic to the group of homotopy self-equivalences up to free homotopy; but by Mostow rigidity, every homotopy equivalence is homotopic to an isometry \cite[Thm.\ 24.1']{Mostow}. Hence homological wideness of the action of a subgroup $G \hookrightarrow \mathrm{Isom}(M)$ on $M$ can be formulated in purely group theoretical terms.

\begin{proposition} The action of a finite group $G$ of isometries on a compact connected hyperbolic manifold $M=\Gamma \backslash \mathbb{H}^n$ of dimension $n \geq 3$ is $K$\hyp{}homologically wide if and only if  the representation 
$$ G \hookrightarrow \mathrm{Out}(\Gamma) \rightarrow \mathrm{Aut}(\Gamma^{\mathrm{ab}} \otimes_{\Z} K) \cong \mathrm{GL}(b_1(\Gamma),K) $$
contains the regular representation. \qed
\end{proposition}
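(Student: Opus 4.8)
The plan is to unwind the definition of $K$-homological wideness and identify the homology representation $h_K$ of $G$ with the representation appearing in the statement; once this identification is established the equivalence is immediate, since containing the regular representation $\rho_{G,\mathrm{reg}}$ is a property of the isomorphism class of a representation. First I would recall the standard identifications: by the Hurewicz isomorphism used throughout Section \ref{sec:covers} together with the universal coefficient theorem, $\Ho_1(M,K) \cong \Gamma^{\ab} \otimes_{\Z} K$, which gives $\mathrm{Aut}(\Ho_1(M,K)) \cong \mathrm{Aut}(\Gamma^{\ab} \otimes_{\Z} K) \cong \mathrm{GL}(b_1(\Gamma),K)$ on the level of the target groups. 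Next I would note that the finite group $G$ of isometries sits inside $\mathrm{Isom}(M)$, which by Mostow rigidity is identified with $\mathrm{Out}(\Gamma)$, exactly as recalled in the paragraph preceding the proposition (using that $M$ is a $K(\Gamma,1)$ and \cite[Thm.\ 24.1']{Mostow}); and since inner automorphisms act trivially on $\Gamma^{\ab}$, the conjugation action descends to a well-defined homomorphism $\mathrm{Out}(\Gamma) \to \mathrm{Aut}(\Gamma^{\ab}\otimes_{\Z} K)$, which is the second arrow in the statement.

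The core step is to verify that, under these identifications, the homology representation $h_K \colon G \to \mathrm{Aut}(\Ho_1(M,K))$ from the definition of homological wideness coincides with the composite $G \hookrightarrow \mathrm{Out}(\Gamma) \to \mathrm{Aut}(\Gamma^{\ab}\otimes_{\Z} K)$. This is precisely the content of Lemma \ref{lem:homology_and_fundgroup}: for $g \in G$ with a lift $\gamma_0 \in \Gamma_0$, the square relating $\conj_{\gamma_0}$ on $\Gamma$ with the $g$-action on $\Ho_1(M)$ commutes, because a loop representing $\conj_{\gamma_0}(\gamma)$ decomposes as a concatenation of $1$-cells whose homology class is $[\gamma_0] + g\cdot[\gamma] - [\gamma_0]$. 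The lemma is stated there over $\F_\ell$, but the decomposition-into-$1$-cells argument is insensitive to the coefficient field (alternatively one runs it over $\Z$ and then tensors with $K$), so it applies to general $K$; and since $\conj_{\gamma_0}$ represents the class of $g$ in $\mathrm{Out}(\Gamma)$, this class acting on $\Gamma^{\ab}\otimes_{\Z} K$ is the $g$-action on $\Ho_1(M,K)$. Combining, $h_K$ is isomorphic to the representation in the statement, and therefore one contains $\rho_{G,\mathrm{reg}}$ if and only if the other does.

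I do not expect a genuine obstacle; the only point requiring care is the compatibility of Mostow's identification $\mathrm{Isom}(M) \cong \mathrm{Out}(\Gamma)$ with the geometric picture, namely that this isomorphism is the one realised by deck transformations as in Remark \ref{outer} and diagram \eqref{isomconj}, so that Lemma \ref{lem:homology_and_fundgroup} can be invoked on the nose rather than merely up to an automorphism of $\mathrm{Out}(\Gamma)$. This follows from the construction of the Mostow isomorphism itself (an isometry is homotopic to a unique one, and the induced outer automorphism is realised by the lift to $\mathbb{H}^n$), so it is a matter of bookkeeping rather than a substantive step.
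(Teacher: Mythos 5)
Your proposal is correct and matches the paper's (implicit) argument: the proposition is stated with a \qed precisely because it is the unwinding of the definition of $K$\hyp{}homological wideness via the identifications $\Ho_1(M,K) \cong \Gamma^{\ab}\otimes_{\Z} K$, the Mostow isomorphism $\mathrm{Isom}(M)\cong\mathrm{Out}(\Gamma)$ recalled just before the statement, and the compatibility of the $g$-action on homology with outer conjugation established in Lemma \ref{lem:homology_and_fundgroup} (whose $1$-cell argument is indeed coefficient-independent). Your added care about the coefficient field and about which realisation of $\mathrm{Out}(\Gamma)$ is meant is exactly the right bookkeeping.
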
  
Recall again that Proposition \ref{clex} gives an example where this representation is trivial for $K=\Q$. Belolipetsky and Lubotzky \cite{BL} have shown that, given any finite group $G$, there exist infinitely many compact connected hyperbolic  manifolds with $G$ as isometry group. 

\begin{remark} Besson, Courtois and Gallot \cite[Th\'eor\`eme 9.1]{BCG} have proven that \emph{homeomorphic} oriented hyperbolic manifolds of the same dimension $n \geq 3$ with the same volume are isometric. In our situation, we start from a ``correspondence'' as in diagram (\ref{m0}) and a homeomorphism is not given. 
\end{remark}

\subsection{Using torsion homology for homological wideness} \label{torsionex} 
Instead of using $\Q$\hyp{}homological wideness, one may try to find suitable $\ell$ for which the action of $G$ on $\Ho_1(M)$ is $\F_\ell$\hyp{}homologically wide for specific $\ell$. For example, $\Ho_1(M)$ might be torsion, so that no non-trivial group acts $\Q$\hyp{}homologically wide, but nevertheless, $\Ho_1(M,\F_\ell)$ can contain $\F_\ell[G]$. 
We content ourselves with commenting on one example.  

\begin{example} \label{SWD} Let $M$ denote the Seifert--Weber dodecahedral space, a hyperbolic $3$-manifold with first Betti number zero; cf.\ \cite{SW}. By Mostow rigidity, $M$ is uniquely described by its fundamental group {\small $$ \langle a_1,\dots,a_6 | a_3^{-1} a_6 a_4^{-1} a_5 a_2, a_2^{-1} a_6 a_3^{-1} a_4 a_1, a_6 a_2^{-1} a_3 a_5 a_1^{-1}, a_2a_4 a_5^{-1}a_6 a_1^{-1}, a_3 a_4^{-1} a_6 a_5^{-1} a_1, a_4 a_2 a_5 a_3 a_1 \rangle.$$}  For the following facts, especially the computation of the homology representation, we refer to Mednykh \cite{Mednykh}: 
\begin{itemize} \item $\Ho_1(M,\Z) = \F_5^3$, 
admitting non-trivial maps to a cyclic group ${\Z}/\ell{\Z}$ for $\ell=5$.  
\item The full isometry group of $M$ is isomorphic to $S_5$. If we write generators as $r=(12)$ and $c=(12345)$, there is a faithful action on $\Ho_1(M,\F_\ell)$ through matrices in $\mathrm{GL}(3,5)$ given as 
$$ r = \left( \begin{smallmatrix} 4 & 2 & 4 \\ 0 & 0 & 2 \\ 0 & 3 & 0 
\end{smallmatrix}\right), \ c = \left( \begin{smallmatrix} 0 & 1 & 0 \\ 0 & 0 & 1 \\ 1 & 2 & 3 
\end{smallmatrix}\right).$$
\end{itemize} 
The isometry group $G=\langle r \rangle \cong {\Z}/{2{\Z}}$ of $M$ has a cyclic vector $(1,1,0)$ in $\Ho_1(M,\F_\ell)$, so the action of $G$ on $M$ is $\F_5$\hyp{}homologically wide (but not $\Q$\hyp{}homologically wide). Similarly, the isometry group $G=\langle crc^{-1}r \rangle \cong {\Z}/{3{\Z}}$ of $M$ has cyclic vector $(1,0,0)$. 
\end{example}

\begin{remark} 
The mod-$5$ homology representation $\rho \colon S_5 \rightarrow \mathrm{GL}(3,5)$ in Example \ref{SWD} is irreducible, and can be identified with 
\begin{equation} \label{isomrepSWD} \rho \cong   \mathrm{sgn} \otimes \psi, \end{equation} 
 where $\mathrm{sgn} \colon S_5 \rightarrow \F_5^*$ is the linear character given by the sign of a permutation (modulo $5$), and $\psi$ is the $3$-dimensional irreducible representation constructed as follows as composition factor of the standard permutation representation of $S_5$. The group $S_5$ acts on $V:=\F_5^5$ by permuting the standard basis vectors; consider the quotient  $W=V/L$  by the $S_5$-invariant line  $L:=\F_5 \cdot (1,1,1,1,1)$ spanned by the all-one vector, and consider the $S_5$-invariant hyperplane $U:=\{ [w=(w_1,\dots,w_5)] \in W \colon \sum w_i = 0 \}$ in $W$ (this makes sense since we work modulo $5$). Then the  natural induced action of $S_5$ on $U \cong \F_5^3$ is the $3$-dimensional faithful mod-$5$ representation $\psi$, that turns out to be irreducible. 
To see the isomorphism in (\ref{isomrepSWD}), we compute that the values of the Brauer character of $\rho$ on the conjugacy classes of elements of order coprime to $5$ (given by cycle type) are as in the table below. %\ref{brauer}. 
This equals the Brauer character of $\mathrm{sgn} \otimes \psi$, so $\rho$ has the same semi-simplification, but since $\mathrm{sgn} \otimes \psi$ is irreducible, it is actually isomorphic to $\rho$, that is then also automatically irreducible. 
\begin{table}[h]
\begin{center} 
\begin{tabular}{l||c|c|c|c|c|c} 
conjugacy class & $()$ & $(ab)$ & $(abc)$ & $(abcd)$ & $(ab) (cd)$ & $(ab) (cde)$ \\
%\hline 
matrix element & $1$ & $r$ & $rc^{-1}rc$ & $rc$ & $c^{-1}rc^2rc$ & $rc^{-1}rcrc^{-1}$ \\
%\hline
character value & $3$ & $-1$ & $0$ & $1$ & $-1$ & $2$ 
\end{tabular} 
\end{center}
%\caption{Brauer character of the mod-$5$ homology representation for the Seifert--Weber dodecahedral space} \label{brauer}
\end{table}    
\end{remark} 

\begin{remark}
If $M$ is a closed hyperbolic $3$\hyp{}manifold with large $\dim_{\F_\ell} \Ho_1(M,\F_\ell)$, there are sometimes explicit lower bounds on the volume of $M$, expounded in works of Culler and Shalen. For example, if for some prime $\ell$, $\dim_{\F_\ell} \Ho_1(M,\F_\ell) \geq 5$ then $\mathrm{vol}(M)>0.35$ \cite{CS}. 
\end{remark}

\begin{remark} The scope of the method of using $\ell$-torsion in $\Ho_1(M)$ in establishing homological wideness (or the weaker conditions we outlined) is unclear, although some heuristics can be set up by considering the size of the ($\ell$-)torsion subgroup. Bader, Gelander and Sauer \cite[Theorem 1.8]{BGS} have constructed, for any $\alpha \geq 0$, sequences $\{M_m\}$ of (non-arithmetic) closed hyperbolic rational homology $3$-spheres, converging in Benjamini--Schramm topology, for which \begin{equation} \label{growthtorsion} \# \Ho_1(M_m)_{\mathrm{tors}} \sim e^{\alpha\, \mathrm{vol}(M_m)}. \end{equation} (A conjecture of Bergeron and Venkatesh \cite[Conjecture 1.3 for $\mathrm{SO}(3,1)$, and bottom of page 122]{BV} states that if $M=\Gamma \backslash \mathbb{H}^3$ is closed hyperbolic \emph{arithmetic} manifold and $M_n:=\Gamma_m \backslash \mathbb{H}^3$ for a chain of congruence subgroups of $\Gamma_m$ of $\Gamma$ with trivial intersection, then the growth in (\ref{growthtorsion}) holds with $\alpha=1/(6 \pi).$)

In our setup, we would need to understand not just the size, but also more about the decomposition of $\Ho_1(M_m,\F_\ell)$ as a $\Z[\mathrm{Out}(\Gamma_m)]$-module. \end{remark} 

\section{Homological wideness, ``class field theory'' for covers, and a number theoretical analogue} \label{homwideNT} 

In this section, we relate homological wideness to the existence of geodesics with certain splitting behaviour. This allows us to study an analogue of homological wideness in the theory of extensions of number fields.  

\subsection{Abelian class field theory applied to the cover $M' \rightarrow M$} \label{CFT}  We briefly revert to the setup of 
Section \ref{geocon}, now under the stronger assumption that the action of $G$ on $M$ is $\F_\ell$\hyp{}homologically wide. In this situation, the intermediate covers $M_{g_i}''$ can be characterised in terms of the splitting behaviour of certain geodesics, as we now explain. 

First of all, fix $\omega \in \Ho_1(M,\F_\ell)$ to be a cyclic vector for the $G$-action, so that by assumption $$\Ho_1(M,\F_\ell) = \F_\ell[G] \omega \oplus U$$ for some complementary $\F_\ell[G]$-module $U$. Recall also that we have chosen a set of representatives $\{g_1,\dots,g_n\}$ for the left cosets $G/H_1$, so $\{g_1^{-1},\dots,g_n^{-1}\}$ is a set of representatives for the right cosets $H_1 \backslash G$ (by Hall's marriage theorem, there even exists a set $\{g_j\}$ that simultaneously represents the left and right cosets, so alternatively we could use the same set of representatives, if chosen suitably.)  This allows us to decompose 
$\Ho_1(M,\F_\ell) = \F_\ell[G] \omega \oplus U = U' \oplus U$  as direct sum of $\F_\ell[H_1]$-modules, where $U'=\bigoplus_{j=1}^n \F_\ell[H_1] g_j^{-1} \omega$ is, by the assumption of homological wideness, a free $\F_\ell[H_1]$-module with basis $\omega_j:=g_j^{-1} \omega$. Thus, the quotient map to the $H_1$-coinvariants on $U$ is as described in (\ref{transmapfree}), and since this can be identified with the map $q_{1*}$ by Lemma \ref{lem:coinvariants}, we find an isomorphism of $\F_\ell$-vector spaces $\Ho_1(M_1,\F_\ell) = \bigoplus \F_\ell \omega_j'  \oplus U_{H_1},$ where $\omega_j':=q_{1*}(\omega_j)$; in particular, the $\omega_j'$ are linearly independent. With these identifications, the map $q_{1*}$ is given as in the following diagram.  
$$ \xymatrix@C=0mm@R=4mm{**[r] \Ho_1(M,\F_\ell) \ar[r]^(.7){q_{1*}} \ar@<8mm>[d]^(0.35){\cong} &  **[r] \Ho_1(M_1,\F_\ell) \ar@<8mm>[d]^(0.35){\cong} \\
**[r] \bigoplus\limits_{j=1}^n \F_\ell[H_1] \omega_j \oplus U \ar[r]^(0.7){\trans} & **[r]\bigoplus\limits_{j=1}^n \F_\ell \omega'_j  \oplus U_{H_1} \\ 
**[r] {\sum\limits_{j=1}^n \sum\limits_{h \in H_1}  k_{j,h} h \omega_j + u}  \ar@{|->}[r] & **[r] {\sum\limits_{j=1}^n \left(\sum\limits_{h \in H_1}  k_{j,h}\right) \omega_j' +\trans(u) 
}
}$$ 
with $k_{i,h} \in \F_\ell, u \in U$. 
Since every Riemannian covering of $M$ is (isomorphic to) a quotient of $\tilde M$ by a subgroup of $\Gamma$, every  \emph{abelian} cover of $M$ is (isomorphic to) a quotient of $[\Gamma,\Gamma]\backslash \tilde M$, and hence Galois groups of abelian covers of $M$ correspond to quotient groups of $\Ho_1(M) \cong \Gamma^{\ab}$. 
The coverings $\varpi_i \colon M_{g_i}'' \rightarrow M$ are abelian, and they correspond, by construction, to the surjective maps 
\begin{align*} \varphi_i \colon \Ho_1(M)  \xrightarrow{\otimes \F_\ell} \Ho_1(M,\F_\ell) \xrightarrow{q_{1*}=\trans}   \Ho_1(M_1, \F_\ell)  = \bigoplus_{j=1}^n &\F_\ell \omega_j' \oplus U_{H_1} \rightarrow C\cong \F_\ell,  \\
&\sum_{j=1}^n k_j \omega_j' + u  \mapsto k_i \mbox{ mod } \ell 
\end{align*}  
with $k_{j} \in \F_\ell, u \in U.$
We let $\{\Omega_j\}$ denote a set of linearly independent elements of $\Ho_1(M)$ that map to $\{\omega_j\}$ in $\Ho_1(M, \F_\ell)$. 

The analogue of abelian class field theory for manifolds (described by Sunada in \cite[\S 5]{SunadaSurvey}, compare \cite[\S 4]{Sunada}) allows us to distinguish the different covers $\varpi_i$, as follows. 

We consider geodesics in $M$ (smooth closed curves in $M$ locally of minimal length) as oriented cycles, forgetting the parametrisation. A \emph{prime geodesic} is a geodesic that is not a multiple of another geodesic. Let $I_M$ denote the free abelian group generated by the prime geodesics of $M$, and let $I_M \rightarrow \Ho_1(M)$ denote the map that associates to a prime geodesic the homology class of the corresponding closed loop. We denote the kernel of this map by $I_M^0$, the subgroup of elements that are homologous to zero. The map is surjective: choose any lift of an element in $\Ho_1(M) \cong \Gamma^{\mathrm{ab}}$ to $\Gamma$, and consider the free homotopy class of free loops in $M$ corresponding to its conjugacy class; by shortening, that free homotopy class contains a closed geodesic (E.~Cartan's theorem, Note IV in his ``Le\c{c}ons sur la g\'eom\'etrie des espaces de Riemann''; see, e.g. \cite[Ch.\ 12, Thm.\ 2.2]{docarmo}; as is written in that reference, the result of Cartan does not require negative curvature), and that geodesic maps to the given element in  $\Ho_1(M)$. We conclude that there is an isomorphism $I_M/I_M^0 \cong \Ho_1(M)$. 

If $\fp$ is a prime geodesic on $M$, let $(\fp|\varpi_i) \in \F_\ell$ denote a generator for the stabiliser of (any) lift of $\fp$ to $M_{g_i}''$ (since the cover $\varpi_i$ is abelian, this does not depend on the chosen lift: in general,  the stabilisers of different lifts are conjugate). 
By the orbit-stabiliser theorem, the number of prime geodesics in $M_{g_i}''$ above $\fp$ is given by $|\F_\ell|/\langle (\fp|\varpi_i) \rangle$. This number is either $\ell$ (the prime geodesic ``splits'', and $(\fp|\varpi_i)=0$) or $1$ (the prime geodesic is ``inert'', and $(\fp|\varpi_i) \neq 0$). 

A main result in abelian class field theory for manifolds, \cite[Prop.\ 7]{SunadaSurvey}, says, given that the cover $\varpi_i$ is abelian, the group homomorphism 
$ I_M \rightarrow \F_\ell$ given by $\fp \mapsto (\fp|\varpi_i)$
is surjective with kernel $I_M^0 \cdot \varpi_i(I_{M_{g_i}''})$, and we have the following commutative diagram 
\begin{equation*}
  \xymatrix{0 \ar[r] & \ker(\varphi_i)\ar[d]^-{\cong}  \ar[r] & \Ho_1(M)\ar[d]^-{\cong} \ar[r]^-{\varphi_i} & \F_\ell \ar@{=}[d]\ar[r] & 0 \\ 
1 \ar[r] &   \varpi_i(I_{M_{g_i}''})\ar[r] & I_M/I_M^0 \ar[r]^-{(\cdot,\varpi_i)} & \F_\ell\ar[r] & 0}
\end{equation*} 
Since $\ker(\varphi_i)$ consists of the $H_1$-orbit of all homology classes spanned by, on the one hand, the classes $\Omega_j$ with $j \neq i$ and, on the other hand, the classes in the complement $U$, we deduce from this diagram the following result. 

\begin{proposition} The prime geodesics of $M$ that are inert in the cover $\varpi_i \colon M_{g_i}'' \rightarrow M$ are precisely the prime geodesics in the $H_1$-orbit of all geodesics whose homology class lies in the one-dimensional subspace $\langle \Omega_i \rangle$ of $\Ho_1(M)$.  \qed
\end{proposition}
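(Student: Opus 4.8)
The statement is essentially a transcription of the commutative diagram immediately preceding it, so the plan is to read off the characterisation of inert primes from the two exact rows and the identification of $\ker(\varphi_i)$. First I would recall that, by the paragraph on abelian class field theory for manifolds, a prime geodesic $\fp$ is inert in $\varpi_i \colon M''_{g_i} \to M$ exactly when $(\fp \mid \varpi_i) \neq 0$ in $\F_\ell$, i.e.\ exactly when the image of the homology class $[\fp] \in \Ho_1(M)$ under the composite $\Ho_1(M) \cong I_M/I_M^0 \xrightarrow{(\cdot,\varpi_i)} \F_\ell$ is nonzero; by the commutativity of the displayed diagram this composite is precisely $\varphi_i$, so $\fp$ is inert if and only if $\varphi_i([\fp]) \neq 0$, equivalently $[\fp] \notin \ker(\varphi_i)$.

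Next I would unwind the description of $\ker(\varphi_i)$ coming from the explicit formula for $\varphi_i$ given just above the proposition. By construction $\varphi_i$ factors as $\Ho_1(M) \xrightarrow{\otimes \F_\ell} \Ho_1(M,\F_\ell) \xrightarrow{q_{1*}} \Ho_1(M_1,\F_\ell) = \bigoplus_j \F_\ell \omega_j' \oplus U_{H_1} \to \F_\ell$, where the last map extracts the $\omega_i'$-coordinate. Tracing the effect of $q_{1*} = \trans$ on the decomposition $\Ho_1(M,\F_\ell) = \bigoplus_j \F_\ell[H_1]\omega_j \oplus U$, an element $\sum_{j}\sum_{h\in H_1} k_{j,h} h\omega_j + u$ maps to $\sum_j (\sum_h k_{j,h})\omega_j' + \trans(u)$, so its $\omega_i'$-coordinate is $\sum_{h\in H_1} k_{i,h}$. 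Hence $\ker(\varphi_i)$ consists of those homology classes whose $\F_\ell$-reduction, written in the $\F_\ell[H_1]$-basis $\{h\omega_j\}_{h\in H_1, j}$ together with $U$, has total $\omega_i$-block sum equal to zero; equivalently (after choosing the lifts $\Omega_j$ of $\omega_j$), $\ker(\varphi_i)$ is spanned over $\F_\ell$ by the $H_1$-orbit of the classes $\Omega_j$ with $j\neq i$, by the differences $h\Omega_i - \Omega_i$ for $h \in H_1$, and by the complement $U$. The complementary ``inert'' locus is therefore exactly the $H_1$-orbit of the homology classes lying in the line $\langle \Omega_i\rangle$: a prime geodesic is inert precisely when its class is, up to the $H_1$-action, a nonzero multiple of $\Omega_i$, which is the assertion of the proposition.

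I do not expect a genuine obstacle here, since all the representation-theoretic input (freeness of $U' = \bigoplus_j \F_\ell[H_1]\omega_j$ as an $\F_\ell[H_1]$-module under the homological wideness hypothesis, the identification of $q_{1*}$ with $\trans$ via Lemma~\ref{lem:coinvariants}, and the explicit transfer formula \eqref{transmapfree}) has already been established, and the class field theory diagram is taken as given. The only mildly delicate point is bookkeeping: one must be careful that ``homology class lies in $\langle \Omega_i\rangle$'' is meant modulo the $H_1$-action and modulo the irrelevant summand $U$, i.e.\ the correct statement is about the image in $\Ho_1(M_1,\F_\ell)$ being supported on the single coordinate $\omega_i'$, and that the geodesics realising each such class exist by E.~Cartan's theorem (already invoked to get $I_M/I_M^0 \cong \Ho_1(M)$). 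Assembling these observations into the one-line deduction from the diagram completes the proof, so the write-up can simply be: ``This is immediate from the commutative diagram above, since $\fp$ is inert in $\varpi_i$ iff $(\fp\mid\varpi_i)\neq 0$ iff $\varphi_i([\fp])\neq 0$ iff $[\fp]\notin\ker(\varphi_i)$, and $\ker(\varphi_i)$ is the $\F_\ell$-span of the $H_1$-orbit of $\{\Omega_j : j\neq i\}$ together with $U$ and $\{h\Omega_i-\Omega_i\}$.''
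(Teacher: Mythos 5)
Your proposal is correct and follows essentially the same route as the paper, which likewise presents the proposition as an immediate consequence of the class field theory diagram identifying $(\cdot\,,\varpi_i)$ with $\varphi_i$, together with the description of $\ker(\varphi_i)$; your chain ``inert $\Leftrightarrow (\fp\mid\varpi_i)\neq 0 \Leftrightarrow \varphi_i([\fp])\neq 0 \Leftrightarrow [\fp]\notin\ker(\varphi_i)$'' is exactly the intended deduction. Your bookkeeping of the kernel (explicitly including the differences $h\Omega_i-\Omega_i$ alongside the $H_1$-orbits of the $\Omega_j$, $j\neq i$, and the summand $U$) is if anything slightly more careful than the paper's informal phrasing, and your caveat that the statement must be read up to the $H_1$-action and modulo $U$ matches the intended reading.
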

Since $g_i$ represent different conjugacy classes of $H_1$ in $G$,  the subspaces $\langle \Omega_i \rangle$ of $\Ho_1(M)$ are distinct, and hence so are the covers $\varpi_i \colon M_{g_i}'' \rightarrow M$ in the fiber product \eqref{subcoverdiag}.

\subsection{Homological wideness and geodesics} \label{Cheb} The question whether the action of $G$ on $M$ is $\Q$-homo\-logically wide may be approached by splitting it into two separate questions: \begin{enumerate} \item[(a)] Does there exist of a prime closed geodesic on $M$ whose $G$-orbit consists of $|G|$ distinct geodesics? \item[(b)] Do the loops corresponding to these geodesics become homologous in $\Ho_1(M,\Q)$? 
\end{enumerate} 

We have no general framework to deal with the second question (notice the example of the loop separating the two tori in the connected sum $T^2 \# T^2$; it is non-trivial in the fundamental group, but becomes trivial in the first homology group, since it bounds one of the tori). However, concerning the first question, we can say the following. 

\begin{proposition} \label{anosov} 
Suppose that $M$ and $M_0$ are negatively curved closed Riemannian manifolds and $M \rightarrow M_0$ is a $G$-Galois Riemannian covering. Then there exists infinitely many closed prime geodesics in $M_0$ that lift to $|G|$ distinct closed prime geodesics in $M$, that hence form one $G$-orbit of such geodesics. 
\end{proposition}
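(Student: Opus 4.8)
The plan is to translate the statement into the language of the uniformising groups and then combine an elementary ramification count with a Chebotarev‑type equidistribution theorem for Anosov flows. Write $M_0 = \Gamma_0 \backslash \tilde M_0$, $M = \Gamma \backslash \tilde M_0$ with $\Gamma \vartriangleleft \Gamma_0$ and quotient map $F\colon \Gamma_0 \twoheadrightarrow G \cong \Gamma_0/\Gamma$. Since $M_0$ is a closed negatively curved manifold, $\Gamma_0$ is torsion‑free and each non‑trivial element is hyperbolic, with a unique invariant axis in $\tilde M_0$ and a unique primitive root; hence closed prime geodesics of $M_0$ correspond to $\Gamma_0$‑conjugacy classes of primitive elements. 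For a prime geodesic $\fp \leftrightarrow [\delta]_{\Gamma_0}$ define its \emph{Frobenius} to be the $G$‑conjugacy class of $F(\delta)$, and call $\fp$ \emph{split} when this class is trivial, i.e.\ $\delta \in \Gamma = \pi_1(M)$.

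First I would establish the local statement: \emph{every split prime geodesic $\fp$ of $M_0$ lifts to exactly $|G|$ distinct closed prime geodesics of $M$, which form a single $G$‑orbit.} Fixing $\delta \in \Gamma$ primitive representing $\fp$ with axis $A \subseteq \tilde M_0$, torsion‑freeness of $\Gamma_0$ gives $\mathrm{Stab}_{\Gamma_0}(A) = \langle \delta \rangle$, so the lifts of $\fp$ to $\tilde M_0$ are $\Gamma_0/\langle\delta\rangle$ and the closed geodesics of $M$ over $\fp$ are indexed by the double cosets $\Gamma \backslash \Gamma_0 / \langle \delta \rangle$. As $\delta \in \Gamma$, this set is $\Gamma\backslash\Gamma_0$, of size $|G|$; the component attached to $\Gamma\gamma_0$ is the circle $\langle \gamma_0 \delta \gamma_0^{-1}\rangle \backslash (\gamma_0 A)$, a \emph{prime} geodesic since $\gamma_0\delta\gamma_0^{-1}$ is primitive in $\Gamma_0$ hence in $\Gamma$ (a proper root in $\Gamma$ would be one in $\Gamma_0$). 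Finally $G \cong \Gamma_0/\Gamma$ permutes these $|G|$ components exactly as it acts on $\Gamma\backslash\Gamma_0$, so transitively, and by cardinality simply transitively: one orbit.

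It then remains to produce infinitely many split prime geodesics of $M_0$. Here I would invoke that the geodesic flow of the closed negatively curved manifold $M_0$ is a topologically transitive Anosov flow, that $M \to M_0$ is a $G$‑Galois cover, and apply the Chebotarev equidistribution theorem for Galois coverings of such flows (Parry--Pollicott; compare the ``class field theory for manifolds'' recalled in \S\ref{CFT} and \cite[\S 5]{SunadaSurvey}, \cite[\S 4]{Sunada}): the Frobenius classes of the prime closed geodesics of $M_0$ equidistribute among the conjugacy classes of $G$, so a positive proportion of them — in particular infinitely many — are split. (As an alternative avoiding the full dynamical input, one may compare the prime geodesic counting functions of $M$ and $M_0$, which share the same volume entropy $h$ because $M \to M_0$ is a Riemannian covering: the split prime geodesics of $M_0$ of length $\le L$ number $\sim \tfrac1{|G|}e^{hL}/(hL)$, since prime geodesics with Frobenius of order $f\ge 2$ contribute only $O(e^{hL/2})$.) Combining this with the local statement finishes the proof. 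The only genuinely delicate points are the use of torsion‑freeness to pin down the axis stabilisers (so that the count is exactly $|G|$ and the lifts are prime), and the citation of the dynamical Chebotarev theorem; the rest is bookkeeping.
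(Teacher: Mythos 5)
Your proposal is correct, but it reaches the conclusion by a different route than the paper. The paper's proof is a purely analytic contradiction argument with Ruelle zeta functions: assuming only finitely many completely split prime geodesics, it bounds $Z_M(s) \leq (Z_{M_0}(2s))^{|G|}\prod_{\fp \in S}(1+e^{-s\ell(\fp)})^{|G|}$ and derives a contradiction from the fact that $Z_M(s)$ has a pole at $s=h$ while $Z_{M_0}(2s)$ converges there; this needs only the pole of the zeta function at the entropy (\cite[Prop.\ 9]{PPPrime}), not equidistribution. Your primary route invokes the dynamical Chebotarev theorem of Parry--Pollicott, which is exactly the alternative the paper records in the remark immediately following its proof; it is heavier machinery but gives the stronger statement that split primes have density $1/|G|$. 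Your fallback counting argument (split primes of length $\le L$ are $\sim \tfrac{1}{|G|}e^{hL}/(hL)$ because non-split primes of $M_0$ contribute to $\pi_M(L)$ only through lifts of length $\ge 2\ell(\fp)$, hence $O(e^{hL/2})$ of them) is correct and is essentially the counting-function incarnation of the paper's zeta-function inequality, though it uses the full prime geodesic theorem asymptotic rather than just the location of the pole. A genuine added value of your write-up is the careful local analysis: the paper simply quotes the splitting formula $r_{\fp}f_{\fp}=|G|$ from Sunada, whereas you derive it from torsion-freeness of $\Gamma_0$, uniqueness of axes, and the double-coset decomposition $\Gamma\backslash\Gamma_0/\langle\delta\rangle$, including the check that the lifts of a split prime remain primitive. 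All steps are sound; the only point worth flagging is that when you say the lifts are indexed by $\Gamma_0/\langle\delta\rangle$ you are implicitly using that no element of the torsion-free group $\Gamma_0$ can reverse the axis, which you should state (an orientation-reversing stabiliser element would fix a point of the axis).
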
 
\begin{proof} 
We use the following analytical argument (similar to the analytic argument that there exist split primes in number fields). 
Let $Z_{M_0}(s):= \prod_{\fp} (1-e^{-s \ell(\fp)})^{-1}$ where $\fp$ runs over closed prime geodesics $\fp$ in $M_0$ of length $\ell(\fp)$ (i.e., the Ruelle zeta function for the geodesic flow on $M$). 

Let $h>0$ denote the volume entropy of the universal covering of $M$ (which is also the volume entropy of $M_0$). Since $M$ is negatively curved, the geodesic flow is weak mixing Anosov, so it follows that $Z_M(s)$ converges for $\mathrm{Re}(s)>h$ but has a pole at $s=h$ \cite[Prop.\ 9]{PPPrime}. 

Since the covering $M \twoheadrightarrow M_0$ is Galois, a prime geodesic $\fp$ splits into $r_{\fp}$ distinct prime geodesics in $M$, which are all of the same length, say, $f_{\fp}$ times the length of $\fp$. Then $r_{\fp} f_{\fp} = |G|$ (see \cite[\S 5]{SunadaSurvey} or \cite[\S 4]{Sunada}). 

Suppose, by contradiction, that the set $S$ of geodesics of $M_0$ that split completely into $|G|$ distinct prime geodesics in $M$ is finite. Then $f_{\fp} \geq 2$ for all $\fp \notin S$. We find, with $\mathfrak P$ running over the closed prime geodesics of $M$, and for real $s>h$,  
\begin{align} \label{ZZ} Z_M(s) & = \prod_{\mathfrak P} \left(1-e^{-s \ell(\mathfrak P)}\right)^{-1} \leq \prod_{\fp} \left(1-e^{-s f_{\fp} \ell(\fp)}\right)^{-r_{\fp}} \nonumber \\ & \leq 
\left(Z_{M_0}(2s)\right)^{|G|} \prod_{\fp \in S} \left(1+e^{-s\ell(\fp)}\right)^{|G|}. \end{align}  
This implies that $Z_{M_0}(2s)$ converges at $s=h$, and hence also the right hand side of the inequality (\ref{ZZ}) converges at $s=h$; this contradiction shows that $S$ is infinite. 
\end{proof} 

\begin{remark} The result also follows from the more general Riemannian covering version of the Chebotarev density theorem due to Parry and Pollicott \cite[Theorem 3]{PP} applied to the trivial conjugacy class in $G$. 
Kojima \cite[Prop.\ 2]{Kojima} gave a different proof for the case of closed orientable hyperbolic $3$-manifolds admitting a totally geodesic embedding of a Riemann surface of genus $\geq 3$, using projective laminations. 
\end{remark} 

\subsection{An analogue of homological wideness for number fields} 
Suppose that $G$ acts on $M$ with (orbifold) quotient $M_0$, and denote, as usual, the fundamental group of $M$ by $\Gamma$ and that of $M_0$ by $\Gamma_0$. Then, as in Remark \ref{outer}, $G$ acts by outer automorphisms on $\Gamma$, and hence it acts by automorphisms on the abelianisation $\Gamma^{\mathrm{ab}}$; in this interpretation, the homology representation is given by 
$ h \colon G \rightarrow \Aut(\Gamma^{\mathrm{ab}} \otimes_{\Z} \Q). $ 
This has the following analogue in number theory: if $K/\Q$ is a finite Galois extension with Galois group $G$, and $G_K:=\mathrm{Gal}(\bar \Q/K)$, $G_{\Q}:= \mathrm{Gal}(\bar \Q/\Q)$, we have a short exact sequence $1 \rightarrow G_K \rightarrow G_{\Q} \rightarrow G \rightarrow 1$, and hence an action of $G$ by outer automorphisms on $G_K$, given by conjugation by any lift of an element of $G$ to $G_{\Q}$. This also induces a group representation 
$$ \mathbf{h}  \colon G \rightarrow \Aut(G_K^{\mathrm{ab}} \otimes_{\Z} \Q). $$
The analogue of $\Q$-homological wideness in this context is the following. 

\begin{proposition}  The representation $\mathbf{h}$ contains the regular representation $\Q[G]$.
\end{proposition}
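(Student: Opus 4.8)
The plan is to reproduce, on the arithmetic side, the ``maximal possible'' behaviour of the homology representation by exhibiting a single $\Z[G]$‑cyclic vector whose $G$‑orbit is $\Q$‑linearly independent in $G_K^{\mathrm{ab}}\otimes_{\Z}\Q$; since the regular representation is cyclic, this is equivalent to the assertion. The natural source of such a vector is a \emph{completely split prime}, the arithmetic analogue of the $G$‑orbit of a prime closed geodesic studied in \S\ref{Cheb}.

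Concretely, I would first invoke the Chebotarev density theorem to produce a rational prime $p$, unramified in $K$, that splits completely: $p\mathcal O_K=\mathfrak P_1\cdots\mathfrak P_n$ with $n=|G|$, and $G=\mathrm{Gal}(K/\Q)$ acting simply transitively on $\{\mathfrak P_1,\dots,\mathfrak P_n\}$ (this is the ``trivial conjugacy class'' instance of Chebotarev, exactly parallel to Proposition \ref{anosov}). Since $p$ is totally split the stabiliser of each $\mathfrak P_i$ in $G$ is trivial, so one may fix a uniformiser $\pi$ at $\mathfrak P_1$ and set $\pi_i:=g_i(\pi)$ where $g_i\in G$ takes $\mathfrak P_1$ to $\mathfrak P_i$, regarded as the idele in $\mathbb A_K^{*}$ which is $\pi_i$ at $\mathfrak P_i$ and $1$ at every other place (including the archimedean ones). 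Let $x_i\in G_K^{\mathrm{ab}}$ be the image of the class of $\pi_i$ under the global reciprocity map. Reciprocity intertwines the Galois action on the idele class group with conjugation on $G_K^{\mathrm{ab}}$, and, because the stabilisers are trivial, $g\cdot x_i=x_{g(i)}$ on the nose; thus $\{x_1,\dots,x_n\}$ is a $G$‑orbit carrying the regular permutation action, and it suffices to prove these elements are $\Q$‑linearly independent in $G_K^{\mathrm{ab}}\otimes_{\Z}\Q$, for then $x_1$ is the sought cyclic vector.

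For the independence I would unwind reciprocity. A relation $\sum_i m_i x_i=0$ with $m_i\in\Z$ means $\prod_i\pi_i^{m_i}$ lies in the kernel of reciprocity, i.e.\ in $K^{*}$ times the connected component of $\mathbb A_K^{*}$; comparing components at every place this forces a totally positive $\beta\in K^{*}$ with $(\beta)=\prod_i\mathfrak P_i^{m_i}$ and, moreover, with the associated idele lying in the closure of the group of totally positive units. Running this over the infinitely many completely split primes $p$ and using that the narrow class group of $K$ is finite together with Dirichlet's unit theorem (the unit group, hence its closure, has finite rank) rules out such relations for at least one $p$, which finishes the argument. Equivalently — and more in the spirit of the Lefschetz computation in Proposition \ref{prop:homwidecritriemsurf} — one computes the $\Q[G]$‑module $G_K^{\mathrm{ab}}\otimes_{\Z}\Q$ directly from the idelic description of reciprocity (modulo the connected component), $1\to\overline{\mathcal O_K^{*}}\to\prod_{v}\mathcal O_v^{*}\times(K\otimes\R)^{*}\to G_K^{\mathrm{ab}}\to\mathrm{Cl}_K\to1$: the local unit groups at the primes above a completely split $p$ contribute $\prod_{v\mid p}\mathcal O_v^{*}\otimes_{\Z}\Q\cong\Q[G]\otimes_{\Q}\Q_p$, a full (indeed infinitely repeated) copy of the regular representation, whereas the global units contribute only $\mathcal O_K^{*}\otimes_{\Z}\Q\cong\Ind_{G_\infty}^{G}\one\ominus\one$ by Dirichlet (with $G_\infty\le G$ a decomposition group at an archimedean place) and $\mathrm{Cl}_K$ is finite; a multiplicity count of the type used in Lemma \ref{hwstars} then shows the regular representation survives in the quotient.

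The step I expect to be the genuine obstacle is precisely this last bookkeeping: controlling the connected component $D_K$ of the idele class group and the closure of the global units inside $\prod_v\mathcal O_v^{*}$ (a Leopoldt‑type phenomenon), and organising the cancellation ``regular minus small'' representation‑theoretically rather than by a naive dimension count, since the ambient module $G_K^{\mathrm{ab}}\otimes_{\Z}\Q$ is infinite‑dimensional. This is the arithmetic counterpart of the fixed‑point/Lefschetz input that made the surface case transparent, and it is here, rather than in the production of the split prime, that the real work lies.
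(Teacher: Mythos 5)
Your proposal follows the paper's proof almost verbatim: a totally split prime $p$ produced by Chebotarev (or by a zeta-function argument as in Proposition \ref{anosov}), the ideles supported at the primes $g(\mathfrak p)$ above $p$, and the global reciprocity map $\vartheta$ turning their images into a $G$-orbit $\{g(F)\}$ that serves as the candidate cyclic vector. The only divergence is at the final step: the paper disposes of the $\Q$-linear independence of $\{g(F)\}$ in $G_K^{\mathrm{ab}}\otimes_{\Z}\Q$ with the single remark that, since $\ker\vartheta$ is the closure of the diagonally embedded $K^*$, these are distinct commuting elements of infinite order, whereas you correctly isolate this independence as the substantive point and sketch the narrow-class-group/unit-closure bookkeeping needed to justify it --- so the ``genuine obstacle'' you flag is precisely the clause the paper asserts rather than argues, and your representation-theoretic count on $\widehat{\mathcal O_K}^*/\overline{\mathcal O_K^*}$ (infinitely many copies of $\Q[G]$ from the split local units versus the finite $\Q$-rank of the closed-up global units) is a legitimate way to make that clause precise.
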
  
\begin{proof} There exists a prime number $p$ that is totally split in $K/\Q$ (which follows from Chebotarev's theorem, or easier manipulation with zeta functions much as in the proof of Proposition \ref{anosov}). Let $\mathfrak p$ denote any prime ideal of $K$ above such $p$. Consider the reciprocity map from class field theory $$\vartheta \colon \mathbf{A}_{K,f}^* \rightarrow G_K^{\mathrm{ab}}$$ from the finite idele group $\mathbf{A}_{K,f}^*$ of $K$, let $\pi_{\mathfrak{p}}$ denote any uniformiser in the $\mathfrak p$-completion of $K$, and let $$F:=\vartheta((1,\dots,1,\pi_{\mathfrak{p}},1,\dots,1))$$ denote a ``Frobenius'' of $\mathfrak p$; then for any $g \in G$, $$g(F)=\vartheta((1,\dots,1,\pi_{g(\mathfrak{p})},1,\dots,1)).$$ By the assumption of total splitting, all $g(\mathfrak p)$ for $g$ running through $G$ and for $\mathfrak p$ a fixed prime above the given $p$ are distinct. Since the kernel of $\vartheta$ is the closure of the diagonally embedded $K^*$ in $\mathbf{A}_{K,f}^*$, we see that $\{ g(F) \colon g \in G\}$ are distinct commuting elements of infinite order in $G_K^{\mathrm{ab}}$, and hence $F$ is a cyclic vector for $G$. 
\end{proof} 

\begin{remark} 
Compared to the case of manifolds,  in the number theory case, the group $G_K^{\mathrm{ab}} \otimes_{\Z} \Q$ is of infinite rank (and captures all ramified abelian extensions), whereas $\Ho_1(M,\Q)$ is always of finite rank (and captures abelian topological covers). For number fields,  subproblem (a) as in Remark \ref{Cheb} is answered affirmatively by a similar splitting theorem as Proposition \ref{anosov} for manifolds; and problem (b)---linear combinations of geodesics becoming homologous---does not occur at all, due to the specific nature of the reciprocity map. 
\end{remark} 

\begin{remark} 
Continuing along the lines of the previous remark, in \cite{dynsys} it was shown that isomorphism of number fields is equivalent to topological conjugacy of associated dynamical systems built from the reciprocity map. The analogous question for manifolds becomes: associate to a manifold $M$ the dynamical system given by the monoid generated by prime geodesics acting on $\Ho_1(M,\Z)$, where a prime geodesic acts by adding the homology class of the corresponding closed loop; then under what conditions is isometry of two manifolds $M_1$ and $M_2$ equivalent to the topological conjugacy of the corresponding dynamical systems, where the identification of the prime geodesics is length-preserving? 
\end{remark}

\section{Examples concerning the main result}  \label{obstructions} 

\subsection{Examples where Theorem \ref{main} does not apply} We first give some examples where the conditions of Theorem \ref{main} are not met. 

\begin{example}
Sch\"uth constructed isospectral, non-isometric simply connected manifolds $M_1$ and $M_2$ \cite{SchuethAnn}: these are non-isometric manifolds that are indistinguisheable by any twisted spectrum on functions, simply because there is nothing to twist by. 

From the perspective of our two conditions, this example already violates the first: if a diagram like (\ref{m0}) would exist, then by simple connectedness, $M_1$ and $M_2$ would both be isometric to $\tilde M_0$, and hence isometric, which they are not. 
\end{example}

\begin{example} 
The isospectral compact Riemann surfaces constructed by Vign\'eras \cite{Vigneras} are described by commensurable arithmetic lattices and by \cite[Proposition 3]{Chen} there does not exist a diagram (\ref{m0}), so our first condition is violated.  The original examples have Euler characteristic $-201600$ (corrected value from \cite{LV}) but Linowitz and Voight have constructed similar examples of Euler characteristic $-10$ and that this is the maximal Euler characteristic that can occur for the class of ``unitive'' torsion free Fuchsian groups; cf. \cite{LV}. 
\end{example}

\begin{example}
Ikeda's isospectral non-isometric lens spaces from Example \ref{lens} have $M=S^5$, and $M_1=L(11;1,2,3)$ and $M_2=L(11,1,2,4)$. In this example, $H_i \cong {\Z}/{11}{\Z}$. The action is not $\Q$\hyp{}homologically wide; actually, the only cyclic cover of $M_1$ is $M$ itself.  
\end{example}

\begin{example} 
We consider Milnor's example \cite{Milnor} where $M_i = \Gamma_i \backslash \R^{16}$ with $\Gamma_1 = E_8 \oplus E_8$ and $\Gamma_2 = E_{16}$. The group $\langle \Gamma_1, \Gamma_2 \rangle = \Gamma_1 + \Gamma_2$ is a lattice, and, as Chen \cite[\S 3]{Chen} observed, there is a diagram of the form (\ref{sunadasetup}) with $M:=(\Gamma_1 \cap \Gamma_2) \backslash \R^{16}$, $G= (\Gamma_1 + \Gamma_2) / (\Gamma_1 \cap \Gamma_2)$ and $H_i = \Gamma_i / (\Gamma_1 \cap \Gamma_2)$. One easily checks, by writing down explicit lattice bases (e.g., using Magma), 
% I used the following code in the MAGMA online calculator:
%L0:=LatticeWithBasis(8,[2,0,0,0,0,0,0,0, -1,1,0,0,0,0,0,0, 0,-1,1,0,0,0,0,0, 0,0,-1,1,0,0,0,0, 0,0,0,-1,1,0,0,0, 0,0,0,0,-1,1,0,0, 0,0,0,0,0,01,1,0, 1/2,1/2,1/2,1/2,1/2,1/2,1/2,1/2]);
%L1:=DirectSum(L0,L0);
%L2:=LatticeWithBasis(16,[
%2,0,0,0,0,0,0,0,0,0,0,0,0,0,0,0,  -1,1,0,0,0,0,0,0,0,0,0,0,0,0,0,0,  0,-1,1,0,0,0,0,0,0,0,0,0,0,0,0,0, 0,0,-1,1,0,0,0,0,0,0,0,0,0,0,0,0,  0,0,0,-1,1,0,0,0,0,0,0,0,0,0,0,0,  0,0,0,0,-1,1,0,0,0,0,0,0,0,0,0,0,  0,0,0,0,0,-1,1,0,0,0,0,0,0,0,0,0,  0,0,0,0,0,0,-1,1,0,0,0,0,0,0,0,0,  0,0,0,0,0,0,0,-1,1,0,0,0,0,0,0,0,  0,0,0,0,0,0,0,0,-1,1,0,0,0,0,0,0,  0,0,0,0,0,0,0,0,0,-1,1,0,0,0,0,0,  0,0,0,0,0,0,0,0,0,0,-1,1,0,0,0,0,  0,0,0,0,0,0,0,0,0,0,0,-1,1,0,0,0,  0,0,0,0,0,0,0,0,0,0,0,0,-1,1,0,0, 0,0,0,0,0,0,0,0,0,0,0,0,0,-1,1,0,   1/2,1/2,1/2,1/2,1/2,1/2,1/2,1/2,1/2,1/2,1/2,1/2,1/2,1/2,1/2,1/2]);
%L3:=L1 meet L2;
%Index(L2,L3);
%Index(L1,L3);
%G:=(L1+L2)/L3;
%G;
 that $G=H_1 \times H_2$ is the Klein four-group with $H_i \cong {\Z}/2{\Z}$. Since $G$ is abelian, the subgroups $H_1$ and $H_2$ cannot be weakly conjugate (since they are not equal as subgroups of $G$).
 A cover realising the wreath product cannot exist: the group $G$ acts non-trivially on the coset space $G/H_1=H_2$, so the wreath product $C^2 \rtimes G$ is non-commutative, and hence cannot occur as subgroup of the (abelian) fundamental group of $M$.   
 
Chen also observed that for these $M_1,M_2$, there exists another diagram of the form (\ref{sunadasetup}) in which the corresponding groups \emph{are} weakly conjugate, see \cite[Prop.\ 1]{Chen}. In that example, the group $G$ contains an element involving a non-trivial translation. One easily computes the corresponding lattices bases (e.g., again using Magma) to see that in that case, $H_i \cong ({\Z}/2{\Z})^{12}$, 
%Ls:=LatticeWithBasis(4,[2,0,0,0, 0,2,0,0, 0,0,0,2, 1,1,1,1]);
%Lt:=DirectSum(Ls,Ls);
%L:=DirectSum(Lt,Lt);
%H1:=L1/L;
%H2:=L2/L;
%H1;
%H2;
but again, the wreath product cannot be realised for the same reason as above (the corresponding wreath product is non-commutative but the fundamental group of $M$ is commutative). 
\end{example} 

\begin{example} Doyle and Rossetti \cite{DR} constructed two isospectral, non-isometric closed flat $3$\hyp{}manifolds, called ``Tetra'' and ``Didi''. These are commensurable, given as quotients of $\R^3/({\Z}^2 \times 2{\Z})$ by the action of the two non-isomorphic groups of order $4$, but there is no diagram (\ref{m0}) (if so, they would be strongly isospectral, but they cannot be isospectral on $1$-forms, since they have different first Betti numbers). 
\end{example}

\subsection{Examples from Sunada's construction} 

If $M_1$ and $M_2$ are isospectral via the Sunada construction, then a diagram of the form (\ref{sunadasetup}) exists by default (possibly with orbifold $M_0$). We discuss some ``small'' examples of closed surfaces (where a group $G$ is realised by choosing a compact hyperbolic Riemann surface $M_0$ whose genus is larger than or equal to the number of generators of $G$, so that there is a surjection $\pi_1(M_0) \twoheadrightarrow G$). These examples satisfy the requirements of Theorem A and illustrate numerically that, whereas our auxiliary construction involves manifolds and groups of relatively large order / negative Euler characteristic, the dimension of the representations and the number of required twists can be rather small (dictated by the degrees of the coverings $M \rightarrow M_i$). 

\begin{example} Sunada lists three examples in \cite[\S 1, Example 1--3]{Sunada}, for which we indicate in the first three rows of Table \ref{tablesunadaex} the dimensions of the representations by which one needs to twist, as well as how many equalities of multiplicities of zero suffice in Theorem \ref{main}. This is straightforward, except for the last case, named ``Komatsu''; here, we need to find a prime $\ell$ coprime to $|G|=(p^3)!$, and if we observe that all prime divisors of $|G|$ are $<p^3$ and use Bertrands postulate/Chebyshev's theorem that there is a prime between $p^3$ and $2p^3-2$, we can certainly find $\ell \leq 2p^3-3$. We also
use that $|H_i^{\ab}| \leq p^2$; for that, note the commutator identities
\[ 
\left[\left( \begin{smallmatrix} 1 & 1 &  \\  & 1 &  \\  &  & 1 
\end{smallmatrix}\right), \left( \begin{smallmatrix} 1 &  &  \\  & 1 & 1 \\  &  & 1 
\end{smallmatrix}\right)\right]=\left( \begin{smallmatrix} 1 &  & 1  \\  & 1 &  \\  &  & 1 
\end{smallmatrix}\right), \ \ 
 \left[\left( \begin{smallmatrix} 1 & 1 &  \\  & 1 &  \\  &  & 1 
\end{smallmatrix}\right), \left( \begin{smallmatrix} 1 &  & 1  \\  & 1 &  \\  &  & 1 
\end{smallmatrix}\right)\right]=
 \left[\left( \begin{smallmatrix} 1 &  & 1 \\  & 1 &  \\  &  & 1 
\end{smallmatrix}\right), \left( \begin{smallmatrix} 1 &  &  \\  & 1 & 1 \\  &  & 1 
\end{smallmatrix}\right)\right]=
\left( \begin{smallmatrix} 1 &  &   \\  & 1 &  \\  &  & 1 
\end{smallmatrix}\right), 
 \]
which imply that the abelianisation of the group of upper triangular matrices in $\mathrm{GL}(3,\F_p)$ with all diagonal entries equal to $1$ is isomorphic to 
$({\Z}/{p\Z})^2$. 
\end{example}

We now discuss in some more detail follow-up examples of Brooks--Tse and Barden--Kang that have the smallest known genus (in variable or constant curvature). The results are summarised in the fourth and fifth row of Table \ref{tablesunadaex}. 

\begin{example} 
Brooks and Tse (see \cite{BT} and \cite{BAMM}) constructed closed surfaces of Euler characteristic $-4$ (genus $3$) that are isospectral but not isometric for well-chosen metrics whose curvature is not constant. Here, $G=\mathrm{PSL}(2,7)=\mathrm{(P)SL}(3,2)$ is the unique simple group of order $168=2^3 \cdot 3 \cdot 7$ \cite[6.14(4) \& 6.15]{Huppert}, the automorphism group of the Fano plane $\mathbf{P}^2(\mathbf{F}_2)$, and $H_1,H_2$ are index 7 subgroups given as $3 \times 3$ matrices in $\mathrm{SL}(3,2)$ in which the first column, respectively the first row, is $(1,0,0)$ (stabilisers of a point and a dual hyperplane in $\mathbf{P}^2(\mathbf{F}_2)$). 

In this case, $M_0$ is an orbifold sphere with $3$ singular points of order $7$, and $\chi_{M} = - 2^5 \cdot 3$. The smallest possible $\ell$ that can be chosen is $\ell=5$, and then $\tilde G$ is of order $5^7 \cdot 168 \approx 13 \cdot 10^6$ and $\chi_{M'} = -2^5 \cdot 3 \cdot 5^7$. The dimension of the required representations in Proposition \ref{art}, on the other hand, is relatively low: $7$. Recall that $S_4$ has a unique normal subgroup isomorphic to the Klein four-group (generated by products of two two-cycles) and quotient isomorphic to $S_3 \cong \mathrm{GL}(2,2)$ \cite[5.2]{Huppert}. Since $H_i \cong ({\Z}/2{\Z} \times {\Z}/2{\Z}) \rtimes \mathrm{GL}(2,2) \cong S_4$ has commutator subgroup $A_4$, the number of spectral equalities that needs to be checked is only $2\cdot5 \cdot |{H^{\mathrm{ab}}_2}| =20$ (cf.\ Proposition \ref{numberofchecks}). 
\end{example} 

\begin{example}
In the same reference, Brooks and Tse (see \cite{BT} and \cite{BAMM}) used a different representation of the same group $G$ and the same subgroups $H_i$ to construct  isospectral non-isometric compact Riemann surfaces (surfaces of \emph{constant negative curvature $-1$}) of Euler characteristic $-6$ (genus $4$). Here, $M_0$ is an orbifold torus with a single singular point of order $7$. Then $\chi_{M'} = - 2^4 \cdot 3^2 \cdot 5^7$, but one needs to check only $20$ equalities of spectral multiplicities of $7$-dimensional representations. 
\end{example} 

\begin{example} 
A similar construction of such surfaces (not of constant curvature) of Euler characteristic $-2$ (genus $2$) by Barden and Kang \cite{BK} has the largest currently known Euler characteristic. In their case, $G$ has order $96$, and $H_1$ and $H_2$ are of index $12$ in $G$, both isomorphic to ${\Z}/{2}{\Z} \times {\Z}/{4}{\Z}$. We can choose $\ell=5$ and $\tilde G$ of order $5^{12} \cdot 96 \approx 2 \cdot 10^{10}$. With $\chi_{M_i}=-2$ for $i=1,2$, we have $\chi_{M} = -16$ and the dimension of the required representations is $12$. In this case, one needs to check $80$ equalities of multiplicities of zero in various spectra. 
\end{example}

\begin{example} 
We consider an example where the order of $|G|$ is odd (the situation can be realised using Riemann surfaces of sufficiently high genus, as above). Note that $G$ is forcedly solvable, by the Feit--Thompson theorem. Let $p$ denote an odd prime number. As in Guralnick \cite[Example 4.1]{Guralnick}, consider the group $G$ of order $p^5$ given as the semidirect product $$G = A \rtimes H, \mbox{ where } A=({\Z}/{p^2{\Z}} \times {\Z}/{p{\Z}}) \mbox{ and } H=({\Z}/{p{\Z}} \times {\Z}/{p{\Z}})$$ 
with the action $a \mapsto a^h$ of $h\in H$ on $a\in A$ determined by 
$$ (1,0)^{(1,0)} = (1,1), (0,1)^{(1,0)} = (p,1), (1,0)^{(0,1)} = (p+1,0), (0,1)^{(0,1)} = (0,1). $$  
Now $G$ has two subgroups $H_i \cong {\Z}/{p{\Z}} \times {\Z}/{p{\Z}}$ that are not weakly conjugate; more precisely, in the above description, $$H_1= H = \langle ((0,0),(1,0)), ((0,0),(1,0))\rangle \mbox{\, and } H_2=\langle ((0,0),(1,0)), ((p,0),(0,1))\rangle.$$ Using the smallest prime $\ell \geq 3$ coprime to $|G|$ in the main theorem, we need to check the following number of equalities: (a) if $p=3$, $90$ equalities, using $\ell=5$;  (b) if $p \neq 3$, $6p^2$ equalities, using $\ell=3$. In this case, a better result is possible using Pintonello's method from Remark \ref{pint}, where one can choose $\ell=2$ at the cost of adding extra identities, leading to $4p^2+2$ equalities to be checked (this number equals $38<90$ for $p=3$ and is always smaller than $6p^2$). Note that an earlier similar example with $|G|=p^6, |H_i|=p^2$ can be found in \cite[Ch.\ IV, Ex.\ 2, p.\ 63]{aldukair}. 
\end{example} 

\begin{center}
\begin{table}[t]
\begin{tabular}{llllll}
\hline
Name & $|G|$ & $|H_i|$ & smallest $\ell$ & dim.\ of reps.\ & number of equalities $\leq$ \\ 
\hline 
``Gerst'' & $32$ & $4$ & $3$ & $8$ & $24$ \\
``Ga{\ss}mann'' & $720$ & $4$ & $7$ & $180$ & $56$ \\
``Komatsu'' & $(p^3)!$ & $p^3$ & $ \leq 2p^3-3$ & $(p^3-1)!$ & $2 p^2(2p^3-3)$ \\
\hspace*{1em} e.g., $p=3$ & $\approx 10^{28}$ & $27$ &29& $\approx 4 \cdot 10^{26}$ & $522$ \\
Brooks-Tse & $168$ & $24$ & $5$ & $7$  & $20$ \\
Barden-Kang & $96$ & $8$ & $5$ & $12$ & $80$ \\
Guralnick & $p^5$ & $p^2$ & $2$  & $p^3$ & $4p^2+2$ \\
\hline\\
\end{tabular} 
\caption{Examples of Sunada triples (with $p$ an odd prime number) from  \cite[Example 1]{Sunada}, \cite{BT}, \cite{BK} and \cite[Example 4.1]{Guralnick}. We indicate the dimension of the representations involved in Theorem \ref{main}, as well as an upper bound on the number of equalities of spectra that need to be checked. In the last line, we choose $\ell=2$ and use the bound in Remark \ref{pint}.} \label{tablesunadaex} 
\end{table}
\end{center}

\subsection{Flat manifolds isospectral for all twists by linear characters} 

\begin{example} Miatello and Rossetti constructed non-isometric closed flat manifolds $M_1$ and $M_2$ that admit non-trivial twists but are twisted isospectral on functions and forms for all twists by linear characters (and hence also any representation that decomposes as a direct sum of such)
\cite[4.5]{Miatello}. Twisted Laplacians of such linear characters act on sections of flat line bundles. 

The manifolds are constructed as follows (our notations differ from \cite{Miatello}). Consider the group  of affine transformations $\R^4 \rtimes \mathrm{O}(4)$ of $\R^4$. Let $\tau_v$ denote the translation by $v \in \R^4$, let $$\Lambda:=\{\tau_v \colon v \in \Z^4\} \cong \Z^4,$$ and denote the standard basis vectors as $e_1,\dots,e_4$. Consider the two orthogonal matrices $$A:=\mathrm{diag}(1,1,-1,-1) \mbox{ and } A':=\mathrm{diag}(1,-1,1,-1)$$ and the vectors $$b_1=(e_2+e_4)/2, b_1'=e_3/2, b_2=e_2/2, b_2'=e_1/2.$$  The manifolds are $M_i = \Gamma_i \backslash \R^4$ with $\Gamma_i=\langle A \tau_{b_i}, A' \tau_{b'_i}, \Lambda \rangle$. The $\Gamma_i$ are Bieberbach groups fitting into an exact sequence
$$ 1 \rightarrow \Lambda \cong  \Z^4 \rightarrow \Gamma_i \rightarrow \langle A, A' \rangle \cong ({\Z}/{{2}{\Z}})^2  \rightarrow 1. $$

Concerning our two conditions, the situation in this example is as follows. If $A \in \mathrm{O}(4)$ is of order two and $b \in 1/2 \Z^4$, then $(A\tau_b)^2 = \tau_{b+Ab} \in \Lambda$, since $b+Ab \in \Z^4$; hence for any $\gamma \in \Gamma_i$, we have $\gamma^2 \in \Lambda \leq \Gamma_1 \cap \Gamma_2$. Therefore, with $\Gamma_0 := \langle \Gamma_1, \Gamma_2 \rangle$ we have $\Gamma_0 / \Gamma_i \cong ({\Z}/{{2}{\Z}})^2$, and a diagram such as (\ref{m0}) exists with $M_0:=\Gamma_0 \backslash \R^4$. Then we can set $M:=\Gamma \backslash \R^4$ with $\Gamma:=\Gamma_1 \cap \Gamma_2$ to get a diagram (\ref{sunadasetup}). From the presentation of $\Gamma_i$, one may compute the intersection to be  $\Gamma = \Lambda \cong \Z^4$, so $M$ is a $4$-torus. Since $G = \Gamma_0 / \Gamma$ is a group of exponent $2$ and order $16$, we find 
$G = H_1 \times H_2 \cong ({\Z}/{{2}{\Z}})^4$ with $H_i = ({\Z}/{{2}{\Z}})^2$. 

We now look at the second condition. Since $G/H_1 \cong H_2$ with left $G$-action induced by the multiplication in $H$, it follows that $\Ind_{H_1}^G \one \cong \Z[H_2]$ is the ($4$-dimensional) regular representation of $H_2$, i.e., the direct sum of the four linear characters of the Klein four-group. Note that, similarly, $\Ind_{H_2}^G \one \cong \Z[H_1]$, and hence $\Ind_{H_1}^G \one \cong \Ind_{H_2}^G \one,$ meaning that $H_1$ and $H_2$ are weakly conjugate subgroups of $G$.  

To analyse the homology representation, we note that 
$ \Ho_1(M) \cong \Gamma^{\mathrm{ab}} \cong \Z^4$. The group $G$ acts on this through conjugation by (outer) automorphisms, i.e., if $A\tau_b$ represents an element of $G$ (with $A \in \mathrm{O}(4)$ of order two, $b \in 1/2 \Z^2$), then it acts by 
$ \tau_{-b} A \tau_v A \tau_b = \tau_{Av}.$ We conclude that the representation of $G$ on $ \Ho_1(M) $ factors through the representation of $\langle A, A' \rangle \cong ({\Z}/{{2}{\Z}})^2$ in $\mathrm{GL}(4,\Z)$. Looking at characters, this is the regular representation of the Klein four-group. We conclude that 
$$ \Ho_1(M) \cong \Ind_{H_1}^G \one $$
as $G$-modules. 

We conclude that our two conditions are satisfied, and, using $\ell=2$ as in Remark \ref{pint}, we find that the manifolds $M_1$ and $M_2$ can be distinguished by $18$ equalities of twisted spectra for $4$-dimensional representations. These representation are then forcedly not all direct sums of linear characters.  

In a different direction, Gordon, Ouyang and Sch\"uth \cite{GOS} \cite{SchuethTAMS} have shown how to distinguish the manifolds in this example using a  \emph{non-flat} Hermitian \emph{line} bundle. 
\end{example}

\subsection{An example that does not arise from Sunada's construction} 

\begin{example}
We consider a finite group $G$ with two non-weakly conjugate subgroups $H_1$ and $H_2$. Let $M_0$ denote a compact  hyperbolic Riemann surface of genus larger or equal to the number of generators of $G$, and fix a surjective morphism $\pi_1(M_0) \twoheadrightarrow G$. This leads to a diagram of the form (\ref{sunadasetup}), and homological wideness is satisfied by Proposition \ref{prop:homwidecritriemsurf}. For example, set $G=S_4$ with $H_1 = \langle (1234) \rangle \cong {\Z}/4{\Z}$ and $H_2 = \langle (12)(34), (13)(24) \rangle \cong {\Z}/2{\Z} \times {\Z}/2{\Z}$. Since the cycle types in $H_1$ and $H_2$ are different, they are not weakly conjugate. We choose $M_0$ to be a genus two compact Riemann surface and let $\ell=5$; now we can distinguish the genus $7$ Riemann surfaces $M_1$ and $M_2$ using $40$ equalities of multiplicities of zero in the spectra of Laplacians twisted by $6$-dimensional representations. 
\end{example}

\section{Length spectrum} \label{length} 

\subsection{$L$-series} We assume that $M$ is a connected negatively curved oriented closed Riemannian manifold. Then the following properties hold: 
\begin{enumerate} 
\item each free homotopy class $[\gamma]$ contains a unique closed geodesic \cite[Thm.\ 3.8.14]{Klingenberg}. We write $\ell(\gamma)$ for the length of that geodesic. 
\item the geodesic flow is of Anosov type, and the topological entropy equals the \emph{volume entropy}, defined as $h_M:=\lim\limits_{R \rightarrow + \infty} (\log \mathrm{vol}(B(x,R)))/R, $ where $B(x,R)$ is a geodesic ball of radius $R$ in the universal cover $\tilde M$ of $M$ centered at some point $x \in \tilde M$ \cite{Manning}. \end{enumerate}
Recall some elements of the theory of prime geodesics (for general $M$ but an abelian cover, we treated this in Subsection \ref{CFT}). We call $[\gamma]$ \emph{prime} if the associated geodesic is not a multiple of another geodesic (in the sense of oriented cycles). Let $\varpi \colon M' \rightarrow M$ denote a $G$-cover for a finite group $G$. Notice that $h_{M'}=h_M$ by definition. Above each fixed prime geodesic of $M$ lie finitely many prime geodesics of $M'$, the set of which carries a transitive $G$-action. For a fixed $\gamma'$ mapping to $\gamma$, we let $(\varpi | \gamma)$ denote any element of $G$ that generates the stabiliser of $\gamma'$.  All such elements are conjugate in $G$ (in the abelian case, the element does not depend on the choice of $\gamma'$, cf.\ Subsection \ref{CFT}). 

Let $\rho \colon G \rightarrow \Un_N(\C)$ denote a representation. Since the determinant takes the same value on conjugate matrices, we have a well-defined  associated \emph{$L$-series} 
$$ L_M(\rho,s):= \prod_{[\gamma]} \det(1_N - \rho((\varpi|{\gamma})) e^{-s \ell(\gamma)} )^{-1},$$ where $1_N$ is the identity matrix of size $N \times N$. If $\rho_1$ and $\rho_2$ are two representations of $G$, then \cite[p,\ 135]{PP} \begin{equation} \label{artin} L_M(\rho_1 \oplus \rho_2) = L_M(\rho_1)L_M(\rho_2).\end{equation} 
Choosing $\rho=\one$, $L_M(\one,s)$ is related to analogues of the Selberg zeta function. Standard theory of Dirichlet series implies that knowledge of $L_M(\one,s)$ is equivalent to knowledge of the multiset of lengths $\{ \ell(\gamma) \}$. 
Parry and Pollicott and Adachi and Sunada have proven: 

\begin{lemma}[{\cite[Thm.\ 1 \& 2]{PP} and \cite[Thm.\ A]{ASJFA}}] \label{ext} The function $L_M(\rho,s)$ converges absolutely for $\mathrm{Re}(s)>h_M$ and can be analytically continued to an open set $D$ that contains the half-plane $\mathrm{Re(s)} \geq h_M$. For an irreducible representation $\rho$, $L(\rho,s)$ is holomorphic in $D$ unless $\rho=\one$. Furthermore, $L_M(\one,s)$ has a simple pole at $s=h_M$.\qed
\end{lemma}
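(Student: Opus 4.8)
\textbf{Proof plan for Lemma \ref{ext}.}

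The plan is to treat this as a citation-driven lemma: the analytic continuation and non-vanishing/pole properties of the $L$-series attached to the geodesic flow are exactly the content of the references to Parry--Pollicott \cite{PP} and Adachi--Sunada \cite{ASJFA}, so the "proof" amounts to explaining how our $L_M(\rho,s)$ matches the objects studied there and then invoking their theorems. First I would recall that, since $M$ is negatively curved and closed, the geodesic flow on the unit tangent bundle is a transitive (weak-mixing) Anosov flow, and the closed orbits of this flow are in bijection with the free homotopy classes $[\gamma]$, each carrying its period $\ell(\gamma)$; this is the setting of \cite{PP}. The topological entropy of the flow equals the volume entropy $h_M$ by Manning's theorem \cite{Manning}, as recalled just above the lemma, so the abscissa of convergence in the references, normally phrased in terms of topological entropy, is $h_M$ here.

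Next I would address the representation-twisted version. The $G$-cover $\varpi\colon M'\to M$ corresponds to a surjection $\pi_1(M)\twoheadrightarrow G$ (or, equivalently, the monodromy homomorphism sending a free homotopy class to the conjugacy class of its Frobenius $(\varpi|\gamma)$), and $L_M(\rho,s)$ is precisely the Ruelle-type $L$-function associated to the unitary representation $\rho$ of $G$ pulled back along this homomorphism, in the sense of \cite[\S 2]{PP}. I would then simply quote \cite[Thm.\ 1 \& 2]{PP}: $L_M(\rho,s)$ converges absolutely for $\mathrm{Re}(s)>h_M$, extends analytically to a neighborhood $D$ of the closed half-plane $\mathrm{Re}(s)\geq h_M$, is non-vanishing and holomorphic there when $\rho$ is irreducible and non-trivial, and for $\rho=\one$ has a simple pole at $s=h_M$ coming from the simple leading eigenvalue of the transfer operator (equivalently, the simple pole of the Ruelle zeta function $Z_M(s)=L_M(\one,s)$ at $s=h_M$). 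The refinement that the continuation holds on an \emph{open} set strictly containing the half-plane, rather than merely up to the line $\mathrm{Re}(s)=h_M$, is the content of \cite[Thm.\ A]{ASJFA}, which I would cite for that point. One routine remark to include: it suffices to prove the statement for irreducible $\rho$ and then use the Artin-type factorization \eqref{artin} to pass to a general $\rho=\bigoplus \rho_i$, since a finite product of functions holomorphic (resp.\ with a simple pole) on $D$ is again of the stated type, the order of the pole at $s=h_M$ being the multiplicity $\langle\rho,\one\rangle$ of the trivial representation in $\rho$.

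The main (and only real) obstacle is notational/bookkeeping rather than mathematical: one must check that the normalization of the $L$-series in \cite{PP,ASJFA} — which is typically written multiplicatively over closed orbits of the flow with weight $e^{-s\ell(\gamma)}$ and a determinant twist $\det(1_N-\rho((\varpi|\gamma))e^{-s\ell(\gamma)})^{-1}$ — agrees verbatim with our definition of $L_M(\rho,s)$, including the fact that only \emph{prime} closed geodesics appear in the product and that $(\varpi|\gamma)$ is well defined up to conjugacy so that $\det(\cdots)$ is unambiguous. Once this identification is made, the lemma follows immediately; accordingly, in the write-up I would state the identification explicitly and then end with "hence the assertion is \cite[Thm.\ 1 \& 2]{PP} together with \cite[Thm.\ A]{ASJFA}."
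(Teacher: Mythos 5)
Your proposal is correct and matches the paper exactly: the lemma is stated with a \qed and no proof, being quoted directly from \cite{PP} and \cite{ASJFA}, so the "proof" is precisely the identification of $L_M(\rho,s)$ with the Ruelle-type $L$-functions of those references (via the Anosov property of the geodesic flow and Manning's theorem equating topological and volume entropy) followed by a citation. Your additional remark on reducing to irreducible $\rho$ via the factorization \eqref{artin} is consistent with how the paper itself uses the lemma in Lemma \ref{multL}.
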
 

\subsection{Main result for the length spectrum} In this setup, the analogue of Lemma \ref{mult} is the following: 
\begin{lemma} \label{multL} Let $G$ be a finite group acting by fixed-point free isometries on a negatively curved Riemannian manifold $M'$ with quotient $M=G \backslash M'$. Set $h:=h_M=h_{M'}$. 
If $\rho$ is any unitary representation of $G$, then the multiplicity $\langle \rho, \one \rangle$ of the trivial representation in the decomposition of $\rho$ into irreducibles equals  $-\mathrm{ord}_{s=h} L_M(\rho,s)$, the order of the pole of $L_M(\rho,s)$ at $s=h$. 
\end{lemma}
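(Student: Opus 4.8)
The plan is to mimic the proof of Lemma \ref{mult} exactly, replacing the spectral-zeta input by the analytic properties of the $L$-series from Lemma \ref{ext}. First I would decompose $\rho$ into irreducibles, $\rho = \bigoplus_i \langle \rho_i, \rho\rangle \rho_i$, and use the Artin-type additivity \eqref{artin} to write $L_M(\rho,s) = \prod_i L_M(\rho_i,s)^{\langle \rho_i,\rho\rangle}$. Taking $-\mathrm{ord}_{s=h}$ of both sides turns the product into a sum: $-\mathrm{ord}_{s=h} L_M(\rho,s) = \sum_i \langle \rho_i,\rho\rangle \cdot \bigl(-\mathrm{ord}_{s=h} L_M(\rho_i,s)\bigr)$, provided each factor is meromorphic (and nonzero) near $s=h$, which is exactly what Lemma \ref{ext} guarantees on the open set $D \supseteq \{\mathrm{Re}(s)\geq h\}$.

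Next I would evaluate the local contributions. By Lemma \ref{ext}, for an irreducible $\rho_i \neq \one$ the function $L_M(\rho_i,s)$ is holomorphic on $D$, so it has no pole at $s=h$ and $-\mathrm{ord}_{s=h} L_M(\rho_i,s) \leq 0$; in fact one needs it to be $0$, i.e.\ $L_M(\rho_i,h)\neq 0$. For $\rho_i = \one$, Lemma \ref{ext} says $L_M(\one,s)$ has a simple pole at $s=h$, giving $-\mathrm{ord}_{s=h} L_M(\one,s) = 1$. Granting these, the sum collapses to the single term with $\rho_i=\one$, yielding $-\mathrm{ord}_{s=h}L_M(\rho,s) = \langle \one,\rho\rangle = \langle \rho,\one\rangle$, which is the claim.

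The main obstacle is the non-vanishing statement $L_M(\rho_i,h)\neq 0$ for irreducible $\rho_i\neq\one$: Lemma \ref{ext} as quoted only asserts holomorphy on $D$, not that the value at $s=h$ is nonzero, whereas a zero of $L_M(\rho_i,s)$ at $s=h$ would make $-\mathrm{ord}_{s=h}L_M(\rho_i,s)$ strictly negative and wreck the bookkeeping. I would resolve this by the standard positivity argument used for such zeta/$L$-functions: apply the identity to the regular representation, $L_M(\rho_{G,\mathrm{reg}},s) = \prod_i L_M(\rho_i,s)^{\dim\rho_i}$, and identify $L_M(\rho_{G,\mathrm{reg}},s)$ with $L_{M'}(\one,s)$ (the analogue of \eqref{cup2}, via the covering $M'\to M$ with group $G$ and $\Ind_{\{1\}}^G\one = \C[G]$). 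Since $h_{M'}=h_M=h$ and, by Lemma \ref{ext} applied to $M'$, $L_{M'}(\one,s)$ has a \emph{simple} pole at $s=h$ and is otherwise nonzero and holomorphic in a neighbourhood of $s=h$, the product $\prod_i L_M(\rho_i,s)^{\dim\rho_i}$ has total order $-1$ at $s=h$; as the $\one$-factor already contributes order $-1$ and every other $L_M(\rho_i,s)$ is holomorphic at $s=h$ (order $\geq 0$) with positive exponent $\dim\rho_i$, each of them must have order exactly $0$ there, i.e.\ $L_M(\rho_i,h)\neq 0$. (One should check that $L_M$ has no zero or pole at $s=h$ beyond what is forced — but the simple-pole-and-otherwise-holomorphic statement for $L_{M'}(\one,s)$ at $s=h$ from \cite{PP}, \cite{ASJFA} gives precisely this.) With this in hand the proof is complete; I would write it in two short paragraphs, one reducing to the irreducible case via \eqref{artin} and one disposing of the local orders using Lemma \ref{ext} on both $M$ and $M'$.
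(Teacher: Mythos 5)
Your proposal follows the same core route as the paper's proof: decompose $\rho$ into irreducibles, use the multiplicativity \eqref{artin}, and read off the order at $s=h$ factor by factor from Lemma \ref{ext}. Where you go beyond the paper is in the last step. The paper simply writes ``Applying Lemma \ref{ext}, we find from this product decomposition that the order at $s=h$ is $\langle\one,\rho\rangle$'', silently treating ``holomorphic in $D$'' as ``order zero at $s=h$''; as you correctly observe, holomorphy alone would permit a zero at $s=h$ and hence a strictly positive order, which would break the bookkeeping. In the underlying references (Parry--Pollicott, Adachi--Sunada) the $L$-function of a non-trivial irreducible is in fact non-vanishing on $\mathrm{Re}(s)\geq h$ --- this is exactly the input needed for their Chebotarev theorem --- so the paper is implicitly invoking a slightly stronger statement than its Lemma \ref{ext} records. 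Your positivity argument via the regular representation, $L_M(\rho_{G,\mathrm{reg}},s)=L_{M'}(\one,s)$ having a simple pole at $s=h$ while every non-trivial factor $L_M(\rho_i,s)^{\dim\rho_i}$ has non-negative order, is a clean, self-contained way to deduce the required non-vanishing from only what Lemma \ref{ext} literally asserts, and it is correct (the induction identity $L_{M_0}(\Ind_H^G\rho,s)=L_{M_1}(\rho,s)$ you need is Sunada's and independent of the lemma being proved). So your proof is valid and, if anything, more complete than the one in the paper; the only cosmetic remark is that the paper's final displayed identity has the sign of $\mathrm{ord}_{s=h}$ flipped relative to the statement, whereas yours is consistent.
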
 

\begin{proof} 
Let $D$ denote the extended region of convergence as in Lemma \ref{ext}. Decompose $\rho = \bigoplus\limits_{i=1}^N \langle \rho_i,\rho\rangle \rho_i$ as a sum over irreducible representations $\rho_i$. Then by formula (\ref{artin}), we have $$L_M(\rho,s) = \prod_{i=1}^N L(\rho_i,s)^{\langle \rho_i,\rho\rangle},$$ and set $\rho_1 = \one$ for convenience.
Applying Lemma \ref{ext}, we find from this product decomposition that $\mathrm{ord}_{s=h} L_M(\rho,s) = \langle \one,\rho\rangle$.
\end{proof} 

We have the following further two analogues of results previously shown for the Laplace spectra: 
\begin{enumerate}
\item The analogue of Lemma \ref{SNT} for $L$-series states that \emph{if $M \rightarrow M_1 \rightarrow M_0$ is a tower of finite Riemannian coverings and $M \rightarrow M_0$ is Galois with group $G$, $M \rightarrow M_1$ with group $H$, and $\rho \colon H \rightarrow \Un_N(\C)$ a representation, then $$ L_{M_0}(\Ind_{H}^G \rho,s) = L_{M_1}(\rho,s), $$}see \cite[Remark 2 after Lemma 1]{SunadaNT}. 
\item The analogue of Proposition \ref{solo} for $L$-series states: suppose that we have diagram \eqref{sunadasetup} and set $h:=h_{M_1}=h_{M_2}=h_M$; then,  
\emph{for two linear characters $\chi_1 \in \check{H}_1$ and $\chi_2 \in \check{H}_2$, a representation isomorphism $\Ind_{H_1}^G \chi_1 \cong \Ind_{H_2}^G \chi_2$ is equivalent to 
\begin{equation} \label{Leq} \mathrm{ord}_{s=h} L_{M_i}(\bar \chi_i \otimes \Res_{H_i}^G \Ind_{H_j}^G \chi_j,s) \end{equation} being the same for the pairs $(i,j)$ given by $(1,1),(2,1)$ and $(1,2),(2,2)$}. The proof is essentially the same as that of Proposition \ref{solo}, but now using Lemma \ref{multL} instead of Lemma \ref{mult}. 
\end{enumerate}

We can adopt the reasoning in the proof of the main theorem to find: 

\begin{theorem} \label{mainL} Suppose we have a diagram \textup{(\ref{m0})} of negatively curved Riemannian manifolds with (common) volume entropy $h$, and suppose that the action of $G$ on $M$ in the extended diagram \textup{(\ref{sunadasetup})} is homologically wide. 
Then $M_1$ and $M_2$ are equivalent Riemannian covers of $M_0$ if and only if the pole orders at $s=h$ of a finite number of specific $L$-series of representations on $M_1$ and $M_2$ is equal.  

More specifically, using the notation of Theorem \ref{maindetail}, $M_1$ and $M_2$ are equivalent Riemannian covers of $M_0$
if and only if there exists a linear character $\chi \colon \tilde H_2 \rightarrow \C^*$ such that 
$$  \mathrm{ord}_{s=h} L_{M_1}(\bar \Xi \otimes \Res_{\tilde H_1}^{\tilde G} \Ind_{\tilde H_1}^{\tilde G} \Xi) =  \mathrm{ord}_{s=h} L_{M_2}(\bar \chi \otimes \Res_{\tilde H_2}^{\tilde G} \Ind_{\tilde H_1}^{\tilde G} \Xi) $$ 
and
$$  \mathrm{ord}_{s=h} L_{M_1}(\bar \Xi \otimes \Res_{\tilde H_1}^{\tilde G} \Ind_{\tilde H_2}^{\tilde G} \chi) = 
 \mathrm{ord}_{s=h} L_{M_2}(\bar \chi \otimes \Res_{\tilde H_2}^{\tilde G} \Ind_{\tilde H_2}^{\tilde G} \chi). $$
Again, there are  $\ell |H_2^{\ab}|$ linear characters $\chi$ on $\tilde H_2$, and the dimension of the representations involved is the index $[G:H_2]$. \qed 
\end{theorem}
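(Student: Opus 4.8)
The plan is to mirror exactly the proof of Theorem \ref{maindetail}, substituting the length-spectrum analogues of each Laplace-spectrum ingredient. By Proposition \ref{fromdiagramtodiagram} the diagram \eqref{m0} extends to a diagram \eqref{sunadasetup}, and homological wideness (taken here over $\Q$, hence over $\F_\ell$ for any $\ell$ coprime to $|G|$ by Lemma \ref{fromQtoell}, or directly $\F_\ell$-homological wideness) implies condition $(\ast)$ for any subgroup $H_1 \leq G$ by Lemma \ref{hwstars}. Fixing a prime $\ell \geq 3$ coprime to $|G|$, Proposition \ref{maininterprop0} produces a diagram \eqref{leftpartextend} realising the wreath product $\tilde G = C^n \rtimes G$ with its subgroups $\tilde H_1, \tilde H_2$, and Proposition \ref{solitary} gives the $\tilde G$-solitary character $\Xi$ on $\tilde H_1$ described by formula \eqref{defxi}.

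The core of the argument is the length-spectrum analogue of Corollary \ref{art}. First I would invoke the two analogues listed just before the theorem statement: (1) the $L$-series version of Lemma \ref{SNT}, namely $L_{M_0}(\Ind_H^G \rho,s) = L_{M_1}(\rho,s)$ for a tower $M \to M_1 \to M_0$ with $M \to M_0$ Galois with group $G$ and $M \to M_1$ with group $H$; and (2) the $L$-series version of Proposition \ref{solo}, which asserts that for linear characters $\chi_1 \in \check H_1$, $\chi_2 \in \check H_2$, the isomorphism $\Ind_{H_1}^G \chi_1 \cong \Ind_{H_2}^G \chi_2$ is equivalent to equality of the pole orders $\mathrm{ord}_{s=h} L_{M_i}(\bar\chi_i \otimes \Res_{H_i}^G \Ind_{H_j}^G \chi_j,s)$ over the four relevant pairs $(i,j)$. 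Applying (2) with $G$ replaced by $\tilde G$, $H_i$ by $\tilde H_i$, $\chi_1$ by $\Xi$ and $\chi_2$ by an arbitrary linear character $\chi$ on $\tilde H_2$, the displayed pole-order equalities in the theorem are equivalent to $\Ind_{\tilde H_1}^{\tilde G}\Xi \cong \Ind_{\tilde H_2}^{\tilde G}\chi$. Since $\Xi$ is $\tilde G$-solitary, Lemma \ref{lem:solitarycharconjgroups} then gives that $\tilde H_1$ and $\tilde H_2$ are conjugate in $\tilde G$; because $C^n \trianglelefteq \tilde H_i$ with quotient $H_i$, so $\tilde H_i \backslash M' = H_i \backslash M = M_i$, this conjugacy descends to an isometry $M_1 \to M_2$ over $M_0$, i.e.\ $M_1$ and $M_2$ are equivalent Riemannian covers of $M_0$ (Lemma \ref{over}). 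Conversely, if they are equivalent covers, $H_1$ and $H_2$ — hence $\tilde H_1$ and $\tilde H_2$ — are conjugate, so the induced representations are isomorphic and all pole orders agree. The count of $\ell|H_2^{\ab}|$ linear characters $\chi$ and the dimension $[\tilde G:\tilde H_2] = [G:H_2]$ of the representations is exactly as in Proposition \ref{numberofchecks} and its proof via \cite[Cor.\ 4.9]{Meldrum}.

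The only genuinely new input is the $L$-series analogue of Proposition \ref{solo}, and its proof is word-for-word the same as that of Proposition \ref{solo}: the implication $(\ref{one}) \Rightarrow$ equality of data uses the same Frobenius-reciprocity manipulation to reduce to $\Ind_{\tilde H_1}^{\tilde G}(\bar\Xi \otimes \Res \Ind \Xi) = \Ind_{\tilde H_2}^{\tilde G}(\bar\chi \otimes \Res \Ind \chi)$ followed by the $L$-series version of Lemma \ref{SNT}; the reverse implication computes $\langle \psi,\psi\rangle$ for $\psi = \Ind_{\tilde H_1}^{\tilde G}\Xi - \Ind_{\tilde H_2}^{\tilde G}\chi$ from the four pole orders, now using Lemma \ref{multL} (pole order at $s=h$ equals the multiplicity of $\one$) in place of Lemma \ref{mult}. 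The one point requiring a little care — and the step I expect to be the main obstacle, though a mild one — is checking that Lemma \ref{multL} applies to all the twisted representations in play: they are unitary representations of $\tilde G$ factoring through the fixed-point-free action on $M'$, so $L_{M'} $ and the tower structure are as required, and the analytic continuation and pole behaviour come from Lemma \ref{ext} applied to each irreducible constituent together with the factorisation \eqref{artin}. No curvature issue arises beyond the standing hypothesis that the manifolds in \eqref{m0} (hence all of \eqref{sunadasetup} and \eqref{leftpartextend}, being Riemannian covers) are negatively curved with common volume entropy $h$, which is exactly what is assumed.
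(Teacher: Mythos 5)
Your proposal is correct and follows essentially the same route as the paper: the paper proves Theorem \ref{mainL} by exactly this substitution, establishing the $L$-series analogues of Lemma \ref{SNT} and Proposition \ref{solo} (the latter via Lemma \ref{multL} in place of Lemma \ref{mult}) and then rerunning the argument of Corollary \ref{art}, Proposition \ref{numberofchecks} and Theorem \ref{maindetail}. Nothing further is needed.
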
  

\begin{remark} In case of a surface of constant curvature $-1$, Theorem \ref{main} and Theorem \ref{mainL} are equivalent using a twisted version of the Selberg trace formula as in \cite[III.4.10]{Hejhal}.
\end{remark} 

\begin{remark} 
In \cite[Theorem 3.1]{Lseries}, it was proven that global function fields $K$ and $L$ of algebraic curves over finite fields are isomorphic if and only if there is a group isomorphism between their abelianised absolute Galois groups such that the corresponding $L$-series are equal. In \cite{Voloch}, it is shown that one may restrict to unramified characters by admitting extensions of the ground field. 

For negatively curved (e.g., hyperbolic) manifolds $M_1$ and $M_2$, the corresponding question is whether they are isometric if and only if there is an isomorphism $\psi \colon \Ho_1(M_1,\Z) \cong \Ho_1(M_2,\Z)$ such that $L_{M_2}(\chi,s)=L_{M_1}(\chi \circ \psi,s)$ for all $\chi \in \Hom(\pi_1(M_2),\C^*)= \Hom(\Ho_1(M_2,\Z),\C^*)$.
\end{remark}

\begin{remark} The geodesic length function 
defines the \emph{marked length spectrum} $[\gamma] \mapsto \ell(\gamma)$ as a function from conjugacy classes in $\pi_1(M)$ to $\R_{>0}$. Croke and Otal \cite{Croke} \cite{Otal} showed that this characterises the isometry class of $M$ in dimension two (in arbitrary dimension, this is an open conjecture of Burns and Katok). This motivates the question: what is the relation between the marked length spectrum and the information encoded in the $L$-series $L_M(\rho,s)$ where $\rho$ runs over all unitary representations of $\pi_1(M)$ (this information might be called the ``twisted length spectrum'')? 
\end{remark}

\bibliographystyle{amsplain} 
\providecommand{\bysame}{\leavevmode\hbox to3em{\hrulefill}\thinspace}
\providecommand{\MR}{\relax\ifhmode\unskip\space\fi MR }
% \MRhref is called by the amsart/book/proc definition of \MR.
\providecommand{\MRhref}[2]{%
  \href{http://www.ams.org/mathscinet-getitem?mr=#1}{#2}
}
\providecommand{\href}[2]{#2}

\end{document}